\newtheorem{Theorem}{Theorem}[section]
\newtheorem{theorem}{Theorem}[section]
\newtheorem{defn}[theorem]{Definition}
\newtheorem{lemma}[theorem]{Lemma}
\newtheorem{eple}[theorem]{Example}
\newtheorem{rmk}[theorem]{Remarks}
\newtheorem{dsc}[theorem]{Discussion}
\newtheorem{nota}[theorem]{Notation}
\newsavebox{\indbin}
\savebox{\indbin}{\begin{picture}(0,0)
\newlength{\gnu}
\settowidth{\gnu}{$\smile$} \setlength{\unitlength}{.5\gnu}
\put(-1,-.65){$\smile$} \put(-.25,.1){$|$}
\end{picture}}
\newcommand{\be}{\begin{enumerate}}
\newcommand{\bd}{\begin{defn}}
\newcommand{\bt}{\begin{theorem}}
\newcommand{\bl}{\begin{lemma}}
\newcommand{\ee}{\end{enumerate}}
\newcommand{\ed}{\end{defn}}
\newcommand{\et}{\end{theorem}}
\newcommand{\el}{\end{lemma}}
\begin{document}
\title{A Theory of Duality for Algebraic Curves}
\author{Tristram de Piro}
\address{Flat 1, 98 Prestbury Road, Cheltenham, GL52 2DJ}
 \email{depiro100@gmail.com}
\thanks{}
\begin{abstract}
We formulate a refined theory of $g_{n}^{r}$, using the methods of
\cite{depiro6}, and use the theory to give a geometric interpretation
of the genus of an algebraic curve. Using principles of duality, we prove
generalisations of Plucker's formulae for algebraic curves. The results hold
for arbitrary characteristic of the base field $L$, with some occasional
exceptions when $char(L)=2$, which we observe in the course of the paper.
\end{abstract}
\maketitle
\begin{section}{A refined theory of $g_{n}^{r}$}

The purpose of this section is to refine the general theory of
$g_{n}^{r}$, in order to take into account the notion of a branch
for a projective algebraic curve. We will rely heavily on results
proved in \cite{depiro6}. We also refer the reader there for the
relevant notation. Unless otherwise stated, we will assume that
the characteristic of the field $L$ is zero, making the modifications
for non-zero characteristic in the final section. By an algebraic curve, we always mean
a projective irreducible variety of dimension $1$.\\

\begin{defn}

Let $C\subset P^{w}$ be a projective algebraic curve of degree $d$
and let $\Sigma$ be a linear system of dimension $R$, contained in
the space of algebraic forms of degree $e$ on $P^{w}$. Let
$\phi_{\lambda}$ belong to $\Sigma$, having finite intersection
with $C$. Then, if $p\in C\cap\phi_{\lambda}$ and $\gamma_{p}$ is
a branch centred at $p$, we define;\\

$I_{p}(C,\phi_{\lambda})=I_{italian}(p,C,\phi_{\lambda})$\\

$I_{p}^{\Sigma}(C,\phi_{\lambda})=I_{italian}^{\Sigma}(p,C,\phi_{\lambda})$\\

$I_{p}^{\Sigma,mobile}(C,\phi_{\lambda})=I_{italian}^{\Sigma,mobile}(p,C,\phi_{\lambda})$\\

$I_{\gamma_{p}}(C,\phi_{\lambda})=I_{italian}(p,\gamma_{p},C,\phi_{\lambda})$\\

$I_{\gamma_{p}}^{\Sigma}(C,\phi_{\lambda})=I_{italian}^{\Sigma}(p,\gamma_{p},C,\phi_{\lambda})$\\

$I_{\gamma_{p}}^{\Sigma,mobile}(C,\phi_{\lambda})=I_{italian}^{\Sigma,mobile}(p,\gamma_{p},C,\phi_{\lambda})$\\

where $I_{italian}$ was defined in \cite{depiro6}.

\end{defn}

It follows that, as $\lambda$ varies in $Par_{\Sigma}$, we obtain
a series of weighted sets;\\

$W_{\lambda}=\{n_{\gamma_{p_{1}}^{1}},\ldots,n_{\gamma_{p_{1}}^{n_{1}}},\ldots,n_{\gamma_{p_{m}}^{1}},\ldots,n_{\gamma_{p_{m}}^{n_{m}}}\}$\\

where;\\

$\{p_{1},\ldots,p_{i},\ldots,p_{m}\}=C\cap\phi_{\lambda}$, for $1\leq i\leq m$,\\

$\{\gamma_{p_{i}}^{1},\ldots,\gamma_{p_{i}}^{j(i)},\ldots,\gamma_{p_{i}}^{n_{i}}\}$,
for $1\leq j(i)\leq n_{i}$, consists of the branches of $C$ centred at $p_{i}$\\

and\\

$I_{\gamma_{p_{i}}^{j(i)}}(C,\phi_{\lambda})=n_{\gamma_{p_{i}}^{j(i)}}$\\

By the branched version of the Hyperspatial Bezout Theorem, see
\cite{depiro6}, the total weight of any of these sets, which we
will occasionally abbreviate by $C\sqcap\phi_{\lambda}$, is always
equal to $de$. Let $r$ be the least integer such that every
weighted set $W_{\lambda}$ is defined by a linear subsystem
$\Sigma'\subset\Sigma$ of dimension $r$.\\

\begin{defn}

We define;\\

$Series(\Sigma)=\{W_{\lambda}:\lambda\in Par_{\Sigma}\}$\\

$dimension(Series(\Sigma))=r$\\

$order(Series(\Sigma))=de$\\

\end{defn}

We then claim the following;\\

\begin{theorem}.\\

(i). $r\leq R$, with equality iff every weighted set $W_{\lambda}$
of the series is cut out by a \emph{single} form of $\Sigma$.\\

(ii). $r\lneqq R$ iff there exists a form $\phi_{\lambda}$ in
$\Sigma$, containing all of $C$.
\end{theorem}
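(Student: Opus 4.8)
The plan is to analyze how the linear system $\Sigma$ restricts to $C$. The key object is the restriction map $\rho \colon \Sigma \to \mathcal{O}_C(e)$ sending a form $\phi$ to its restriction $\phi|_C$; here $\Sigma$ is an $(R+1)$-dimensional vector space of forms (projectively dimension $R$). I would split into the case where $C \subseteq \phi_\lambda$ for some $\lambda$ (equivalently $\ker\rho \neq 0$) and the case where $\rho$ is injective.

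**Step 1 (the injective case, proving (i)).** Suppose no form of $\Sigma$ contains $C$, so $\rho$ is injective and $\Sigma$ embeds as an $(R+1)$-dimensional space of sections of a line bundle of degree $de$ on (the normalization of) $C$. Then for a generic $\lambda$, the weighted set $W_\lambda = C \sqcap \phi_\lambda$ is the branch divisor cut by the single form $\phi_\lambda$, and by the branched Hyperspatial Bezout Theorem it has total weight $de$. I would argue that the minimal linear subsystem $\Sigma' \subseteq \Sigma$ whose forms all cut out $W_\lambda$ is exactly the line spanned by $\phi_\lambda$ — because if another independent form $\psi$ also cut out $W_\lambda$ (with the same branch multiplicities), then $\psi|_C$ and $\phi_\lambda|_C$ would be two sections of the degree-$de$ bundle vanishing on the same degree-$de$ effective divisor, hence proportional, contradicting injectivity of $\rho$ on the span of $\phi_\lambda,\psi$. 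Thus every $W_\lambda$ is "defined" by a subsystem of dimension $0$... but wait — this shows $r$ could be as small as $0$, not $R$. The point of the definition of $r$ is that $r$ is the least integer such that \emph{every} $W_\lambda$ is defined by \emph{some} dimension-$r$ subsystem; since each individual $W_\lambda$ is cut by the dimension-$0$ subsystem $\langle \phi_\lambda\rangle$, one must re-read: $r = R$ should hold here, which means I have the definition backwards. Let me reconsider: $r$ must be the least integer such that for every $\lambda$, \emph{the set of forms in $\Sigma$ cutting out $W_\lambda$} is a linear subsystem of dimension \emph{at most} $r$ — no. Reading again: "every weighted set $W_\lambda$ is \emph{defined by} a linear subsystem $\Sigma' \subset \Sigma$ of dimension $r$" should mean $\Sigma'$ is the subsystem of \emph{all} forms cutting $W_\lambda$, i.e. $\Sigma' = \{\phi : \phi \sqcap C \geq W_\lambda\}$ realized with equality; $r$ is the common dimension (for generic $\lambda$) of this "defining subsystem," which measures the moving part. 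Then in the injective case this defining subsystem is $\langle\phi_\lambda\rangle$ of dimension $0$ only when $W_\lambda$ already uses up all the freedom — in general it has dimension $R$ when the series is "complete but rigid"... I think the cleanest reading, and the one making the theorem true, is: $r = \dim \overline{\Sigma}$ where $\overline{\Sigma}$ is $\Sigma$ modulo the subspace of forms containing $C$, i.e. $r = \dim \rho(\Sigma) - 1$. Then (i) says $r \leq R$ with equality iff $\rho$ is injective iff no $W_\lambda$ is a proper "sub-cut," and (ii) is immediate: $r < R$ iff $\ker\rho \neq 0$ iff some form of $\Sigma$ contains all of $C$.

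**Step 2 (proving (ii)).** Granting the interpretation $r+1 = \dim\rho(\Sigma)$ (equivalently, $R - r = \dim\ker\rho$), part (ii) is a restatement: $r \lneq R$ iff $\dim\ker\rho \geq 1$ iff there is a nonzero $\phi\in\Sigma$ with $\phi|_C \equiv 0$, i.e. $\phi_\lambda \supseteq C$ since $C$ is irreducible. I would cite the branched Hyperspatial Bezout Theorem to guarantee that for any $\phi$ \emph{not} containing $C$ the intersection is finite with total branch-weight $de$, so the $W_\lambda$'s in Step 1 are genuinely the cut-out divisors and $\rho$ on the complement of $\ker\rho$ behaves as claimed.

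**The main obstacle** will be pinning down, from the paper's notation in \cite{depiro6}, the precise meaning of "$W_\lambda$ is defined by a linear subsystem $\Sigma'$ of dimension $r$" and hence the exact definition of $\dim(\mathrm{Series}(\Sigma)) = r$ — whether $r$ counts the moving part of the defining subsystem or the projective dimension of the image $\rho(\Sigma)$. Once that is fixed, both parts reduce to elementary linear algebra of the restriction map together with one invocation of branched Bezout; the characteristic-zero hypothesis is used only to ensure branches behave well (separability), and plays no essential role in the argument. I would therefore devote the first paragraph of the actual proof to fixing this definitional point carefully, then dispatch (i) and (ii) in a few lines each.
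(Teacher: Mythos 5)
Your proposal lands on the same underlying mechanism as the paper: the fibres of the map $\lambda\mapsto W_{\lambda}$ are exactly the cosets of the subsystem $H$ of forms containing $C$ (your $\mathbb{P}(\ker\rho)$), so that $r=R-h-1$ where $h=\dim H$, and (ii) becomes the statement $h\geq 0$. Your key lemma --- two sections of a degree-$de$ bundle vanishing on the same effective divisor of degree $de$ are proportional --- is the same fact the paper establishes by its pencil argument: if $\phi_{\lambda_{1}},\phi_{\lambda_{2}}$ cut the same weighted set $W$, every member of the pencil they span cuts out $W$ by linearity of branch multiplicity, and the member passing through an extra point $p\notin W$ has total intersection $\geq de+1$, hence contains $C$ by branched Bezout. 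So the two arguments are the same in different clothing; yours is a cleaner linear-algebra packaging, the paper's stays entirely inside its own toolkit (pencils, hyperplane conditions, branched Bezout) without invoking line bundles on the normalization.

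The one genuine gap is the definitional point you flag but do not actually close. You resolve the ambiguity in ``$r$ is the least integer such that every $W_{\lambda}$ is defined by a linear subsystem of dimension $r$'' by \emph{adopting} the reading $r=\dim\rho(\Sigma)-1$ on the grounds that it makes the theorem true --- but that makes (ii) a tautology rather than a theorem. The intended reading is that a \emph{single} subsystem $\Sigma'$ of dimension $r$ must account for \emph{all} the weighted sets of the series (not one subsystem per $\lambda$, which is the reading that led you to $r=0$). Under that reading the identity $r=R-h-1$ still has to be proved, and the paper does so by constructing a maximal subsystem $K$ complementary to $H$, of dimension $R-h-1$, and showing via the codimension count $\mathrm{codim}(K\cap\langle H,\phi_{\lambda}\rangle)\leq R$ that every weighted set is cut out by some form of $K$, and via the proportionality argument that this form is unique --- so the series is defined by $K$ and by no subsystem of smaller dimension. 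You have all the ingredients for this (your injectivity-modulo-kernel lemma is exactly the uniqueness step); you just need to assemble them into the existence-and-uniqueness statement for a complement of $H$ rather than stipulating the value of $r$.
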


\begin{proof}
We first show the equivalence of $(i)$ and $(ii)$. Suppose that
$(i)$ holds and $r\lneqq R$. Then, we can find a weighted set $W$
and distinct elements $\{\lambda_{1},\lambda_{2}\}$ of
$Par_{\Sigma}$ such that $W=W_{\lambda_{1}}=W_{\lambda_{2}}$. Let
$\{\phi_{\lambda_{1}},\phi_{\lambda_{2}}\}$ be the corresponding
algebraic forms of $\Sigma$ and consider the pencil
$\Sigma_{1}\subset \Sigma$ defined by these forms. We claim
that;\\

$W=C\sqcap(\mu_{1}\phi_{\lambda_{1}}+\mu_{2}\phi_{\lambda_{2}})$,
for $[\mu_{1}:\mu_{2}]\in P^{1}$ $(*)$\\

This follows immediately from the results in \cite{depiro6} that
the condition of multiplicity at a branch is \emph{linear} and the
branched version of the Hyperspatial Bezout Theorem. Now choose a
point $p\in C$ not contained in $W$. Then, the condition that an
algebraic form $\phi_{\lambda}$ passes through $p$ defines a
hyperplane condition on $Par_{k}$, hence, intersects
$Par_{\Sigma_{1}}$ in a point. Let $\phi_{\lambda_{0}}$ be the
algebraic form in $\Sigma_{1}$ defined by this parameter. Then, by
$(*)$, we have that;\\

$W\cup\{p\}\subseteq C\sqcap\phi_{\lambda_{0}}$\\

Hence, the total multiplicity of intersection of
$\phi_{\lambda_{0}}$ with $C$ is at least equal to $de+1$. By the
branched version of the Hyperspatial Bezout Theorem, $C$ must be
contained in $\phi_{\lambda_{0}}$. Conversely, suppose that $(i)$
holds and there exists a form $\phi_{\lambda_{0}}$ in $\Sigma$
containing all of $C$. Let $W$ be cut out by $\phi_{\lambda_{1}}$
and consider the pencil $\Sigma_{1}\subset\Sigma$ generated by
$\{\phi_{\lambda_{0}},\phi_{\lambda_{1}}\}$. By the same argument
as above, we can find $\phi_{\lambda_{2}}$ in $\Sigma_{1}$,
distinct from $\phi_{\lambda_{1}}$, which also cuts out $W$.
Hence, by $(i)$, we must have that $r\lneq R$. Therefore, $(ii)$
holds. \\

The argument that $(ii)$ implies $(i)$ is similar.\\

We now prove that $(ii)$ holds. Using the Hyperspatial Bezout
Theorem, the condition on $Par_{\Sigma}$ that a form
$\phi_{\lambda}$ contains $C$ is linear. Let $H$ be the linear
subsystem of $\Sigma$, consisting of forms containing $C$ and let
$h=dim(H)$. Let $K\subset\Sigma$ be a maximal linear subsystem,
having finite intersection with $C$. Then $K$ has no form in
common with $H$ and $dim(K)=R-h-1$. We claim that every weighted
set in $Series(\Sigma)$ is cut out by a unique form from $K$. For
suppose that $W=C\sqcap\phi_{\lambda}$ is such a weighted set and
consider the linear system defined by $<H,\phi_{\lambda}>$. If
$\phi_{\mu}$ belongs to this system and has finite intersection
with $C$, then clearly $(C\cap\phi_{\lambda})=(C\cap\phi_{\mu})$.
Using linearity of multiplicity at a branch and the Hyperspatial
Bezout Theorem again (by convention, a form containing $C$ has
infinite multiplicity at a branch), we must have that
$(C\sqcap\phi_{\lambda})=(C\sqcap\phi_{\mu})$. Now consider
$K\cap <H,\phi_{\lambda}>$. We have that;\\

$codim(K\cap <H,\phi_{\lambda}>)\leq
codim(K)+codim(<H,\phi_{\lambda}>)$\\
$. \ \ \ \ \ \ \ \ \ \ \ \ \ \ \ \ \ \ \ \ \ \ \ \ \ \ \ \ \ \ \ \ =(h+1)+(R-(h+1))=R$.\\

 Hence,
$dim(K\cap <H,\phi_{\lambda}>)\geq 0$. We can, therefore, find a
form $\phi_{\mu}$ belonging to $K$ such that
$W=(C\sqcap\phi_{\mu})$. We need to show that $\phi_{\mu}$ is the
unique form in $K$ defining $W$. This follows by the argument
given above. It follows immediately that $r=dim(K)=R-h-1$. Hence,
$r\lneq R$ iff $h\geq 0$. Therefore, $(ii)$ is shown.

\end{proof}

Using this theorem, we give a more refined definition of a
$g_{n}^{r}$.\\

\begin{defn}
Let $C\subset P^{w}$ be a projective algebraic curve. By a
$g_{n}^{r}$ on $C$, we mean the collection of weighted sets,
without repetitions, defined by $Series(\Sigma)$ for \emph{some}
linear system $\Sigma$, such that $r=dimension(Series(\Sigma))$
and $n=order(Series(\Sigma))$. If a branch $\gamma_{p}^{j}$
appears with multiplicity at least $s$ in every weighted set of a
$g_{n}^{r}$, as just defined, then we allow the possibility of
removing some multiplicity contribution $s'\leq s$ from each
weighted set and adjusting $n$ to $n'=n-s'$.

\end{defn}

\begin{rmk}
The reader should observe carefully that a $g_{n}^{r}$ is defined
independently of a particular linear system. However, by the
previous theorem, for any $g_{n}^{r}$, there exists a $g_{n'}^{r}$
with $n\leq n'$ such that the following property holds. The
$g_{n'}^{r}$ is defined by a linear system of dimension $r$,
having finite intersection with $C$, such that each there is a
bijection between the weighted sets $W$ in the $g_{n'}^{r}$ and
the $W_{\lambda}$ in $Series(\Sigma)$. The original $g_{n}^{r}$ is
obtained from the $g_{n'}^{r}$ by removing some fixed point
contribution.
\end{rmk}

We now reformulate the results of Section 2 and Section 5 in
\cite{depiro6} for this new definition of a $g_{n}^{r}$. In order
to do this, we require the following definition;\\

\begin{defn}
Suppose that $C\subset P^{w}(L)$ is a projective algebraic curve
and $C^{ext}\subset P^{w}(K)$ is its non-standard model. Let a
$g_{n}^{r}$ be given on $C$, defined by a linear system $\Sigma$
after removing some fixed point contribution. We define the
extension $g_{n}^{r,ext}$ of the $g_{n}^{r}$ to the nonstandard
model $C^{ext}$ to be the collection of weighted sets, without
repetitions, defined by $Series(\Sigma)$ on $C^{ext}$, after
removing the same fixed point contribution. Note that, by
definability of multiplicity at a branch, see Theorem 6.5 of
\cite{depiro6}, if $\gamma_{p}^{j}$ is a branch of $C$ and;\\

$I_{italian}(p,\gamma_{p}^{j},C,\phi_{\lambda})\geq k$,
$(\lambda\in Par_{\Sigma(L)})$\\

then;\\

$I_{italian}(p,\gamma_{p}^{j},C,\phi_{\lambda})\geq k$,
$(\lambda\in Par_{\Sigma(K)})$\\

Hence, it \emph{is} possible to remove the same fixed point
contribution
of $Series(\Sigma)$ on $C^{ext}$. See also the proof of Lemma 1.7.\\

\end{defn}

It is a remarkable fact that, after introducing the notion of a
branch, the definition is independent of the particular linear
system $\Sigma$. This is the content of the following lemma;\\

\begin{lemma}
The previous definition is independent of the particular choice of
linear system $\Sigma$ defining the $g_{n}^{r}$.
\end{lemma}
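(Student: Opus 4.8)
The plan is to reduce the independence statement to the (already-established) fact that a $g_n^r$ on $C$ itself does not depend on the linear system, and then to transport this along the non-standard embedding $C \hookrightarrow C^{ext}$. First I would fix two linear systems $\Sigma_1, \Sigma_2$ on $P^w(L)$, both defining the same $g_n^r$ on $C$ after removing fixed-point contributions; by the Remark following the refined definition, I may pass to associated systems $\Sigma_i'$ of dimension exactly $r$ having finite intersection with $C$, so that $Series(\Sigma_i')$ is literally (in bijection with) the $g_{n'}^r$ with no repetitions, and the two fixed-point contributions being removed are the \emph{same} weighted multiset of branches $\{s_j \cdot \gamma_{p}^{j}\}$ on $C$. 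The point of this reduction is that on the non-standard model the fixed-point contribution removed is, by Definition 1.6, exactly the extension of that same branch data, so it suffices to prove $Series(\Sigma_1')$ and $Series(\Sigma_2')$ give the same collection of weighted sets on $C^{ext}$.

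Next I would invoke definability. The key earlier input is Theorem 6.5 of \cite{depiro6}: the condition $I_{italian}(p,\gamma_p^j,C,\phi_\lambda)\geq k$ is a definable (indeed, by the results quoted, essentially a closed/linear) condition on the parameter $\lambda$ and on the branch. Combined with the branched Hyperspatial Bezout Theorem, which forces the total weight of $C \sqcap \phi_\lambda$ to equal $de$ for every $\lambda \in Par_\Sigma$ with finite intersection, the assignment $\lambda \mapsto W_\lambda$ is a definable family of weighted sets of fixed total weight. The statement "the set of weighted sets occurring in $Series(\Sigma_1')$ equals the set occurring in $Series(\Sigma_2')$" is therefore a first-order statement in the language with parameters from $L$; since it holds in the standard model $C$ by hypothesis (the two systems define the same $g_n^r$), it holds in the elementary extension $C^{ext}$. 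This is exactly the transfer principle underlying the non-standard model, so $Series(\Sigma_1')$ and $Series(\Sigma_2')$ determine the same $g_{n'}^{r,ext}$, and removing the common extended fixed-point contribution gives the same $g_n^{r,ext}$.

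The step I expect to be the main obstacle is the \emph{uniformity} needed to make the transfer argument legitimate: one must check that "being a weighted set of the series" can be expressed by a single formula whose parameters all come from $L$, rather than merely that each individual multiplicity inequality is definable. Concretely, the branches $\gamma_p^j$ of $C^{ext}$ include non-standard branches not defined over $L$, and one must argue that no \emph{new} weighted set appears in $Series(\Sigma_i')$ on $C^{ext}$ beyond the extensions of those on $C$ — equivalently, that the finite list of weighted sets (there are finitely many up to the discrete data of which branches carry which multiplicities, because total weight is bounded by $de$) is already pinned down over $L$. I would handle this by writing, for each possible "shape" of weighted set (a choice of points, branches at those points, and a partition of $de$ among them), the definable condition on $Par_{\Sigma_i'}$ that $W_\lambda$ has that shape, using Theorem 6.5 and linearity of multiplicity at a branch; the disjunction over shapes is then a sentence over $L$ asserting equality of the two series, and transfer applies. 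The remaining bijection-with-$Series(\Sigma)$ and the bookkeeping of fixed-point removal are routine given Definition 1.6 and the Remark.
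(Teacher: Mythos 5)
Your overall strategy --- reduce to linear systems of dimension exactly $r$ having finite intersection with $C$, then express the coincidence of the two series as a first-order statement over $L$ and transfer it to the elementary extension --- is exactly the route the paper takes (its Cases 1--3 are your reduction step plus the transfer step; note that Case 3 exists precisely because the two reduced systems may have different degrees $n_1\neq n_2$, so the removed fixed contributions have different total multiplicities $n_1-n$ and $n_2-n$ and are \emph{not} the same weighted set, contrary to what you assert; that part is repairable by adjusting the multiplicity comparison at the finitely many affected branches). The genuine gap is in your final paragraph, where you try to make the transfer uniform. First, what you propose to verify --- that no new weighted set appears in $Series(\Sigma_i')$ on $C^{ext}$ beyond the extensions of those on $C$ --- is false and is not what is needed: $Par_{\Sigma_i'}(K)$ is strictly larger than $Par_{\Sigma_i'}(L)$, so the extended series contains many genuinely new weighted sets (this is the whole point of Definition 1.6 and of the specialisation operator that follows). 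What must be shown is that the two extended series contain the \emph{same} new weighted sets. Second, your proposed formula --- a finite disjunction over ``shapes'', where a shape includes a choice of points, branches at those points, and a partition of $de$ --- is not a finite disjunction of $L$-formulas: the partition data is finite, but the points and branches range over a continuum and so must be quantified, not enumerated.

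The paper resolves both difficulties at once by passing to a non-singular model $C^{ns}$, so that branches of $C$ correspond bijectively to points of $C^{ns}$, and realizing the branch multiplicities as definable predicates $j_{k,\Sigma}(\lambda,x)$ via finite covers $W_i\subset Par_{\Sigma_i'}\times C^{ns}$ (Section 5 and Theorem 6.5 of \cite{depiro6}). The uniform sentence is then $(\forall\lambda\in Par_{\Sigma_1'})(\exists!\lambda'\in Par_{\Sigma_2'})(\forall x\in C^{ns})\bigwedge_{k=1}^{n}(j_{k}(\lambda,x)\leftrightarrow j_{k}(\lambda',x))$, together with its converse; this quantifies over parameters and points rather than enumerating configurations, holds in $\langle P^{1}(L),C_{i}\rangle$ because the two systems cut the same series on $C$, and transfers to $\langle P^{1}(K),C_{i}\rangle$ by elementarity. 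If you replace your shape-enumeration with this sentence, and handle unequal fixed contributions as in the paper's Case 3, your argument becomes the paper's proof.
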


\begin{proof}

We divide the proof into the following cases;\\

Case 1. $\Sigma\subset\Sigma'$;\\

By the proof of Theorem 1.3, we can find a linear system
$\Sigma_{0}\subset\Sigma\subset\Sigma'$ of dimension $r$, having
finite intersection with $C$, such that the $g_{n}^{r}$ is defined
by removing some fixed contribution from $\Sigma_{0}$. Here, we
have also used the fact that the base point contributions (at a
branch) of $\{\Sigma_{0},\Sigma,\Sigma'\}$ are the same. Again, by
Theorem 1.3, if $W_{\lambda'}$ is a weighted set defined by
$\Sigma'$ on $C^{ext}$, then it appears as a weighted set
$V_{\lambda''}$ defined by $\Sigma_{0}$ on $C^{ext}$. Hence, it
appears as a weighted set $V_{\lambda''}$ defined by $\Sigma$ on
$C^{ext}$. By the converse argument and the remark on base point
contributions, the proof is shown.\\

Case 2. $\Sigma$ are $\Sigma'$ are both linear systems of
dimension $r$, having finite intersection with $C$, such that $degree(\Sigma)=degree(\Sigma')=n$;\\

By Theorem 1.3, every weighted set $W$ in the $g_{n}^{r}$ is
defined uniquely by weighted sets $W_{\lambda_{1}}$ and
$V_{\lambda_{2}}$ in $Series(\Sigma_{1})$ and $Series(\Sigma_{2})$
respectively. Let $(C^{ns},\Phi^{ns})$ be a non-singular model of
$C$. Using the method of Section 5 in \cite{depiro6} to avoid the
technical problem of presentations of $\Phi^{ns}$ and base point
contributions, we may, without loss of generality, assume that
there exist finite covers $W_{1}\subset Par_{\Sigma}\times C^{ns}$ and $W_{2}\subset Par_{\Sigma'}\times C^{ns}$ such that;\\

$j_{k,\Sigma}(\lambda,p_{j})\equiv
Mult_{(W_{1}/Par_{\Sigma})}(\lambda,p_{j})\geq k$ iff
$I_{italian}(p,\gamma_{p}^{j},C,\phi_{\lambda})\geq k$\\

$j_{k,\Sigma'}(\lambda',p_{j})\equiv
Mult_{(W_{2}/Par_{\Sigma'})}(\lambda',p_{j})\geq k$ iff
$I_{italian}(p,\gamma_{p}^{j},C,\psi_{\lambda'})\geq k$\\

Then consider the sentences;\\

$(\forall \lambda\in Par_{\Sigma})(\exists!\lambda'\in
Par_{\Sigma'})\forall x\in
C^{ns}[\bigwedge_{k=1}^{n}(j_{k}(\lambda,x)\leftrightarrow
j_{k}(\lambda',x))]$\\

$(\forall \lambda'\in Par_{\Sigma})(\exists!\lambda\in
Par_{\Sigma})\forall x\in
C^{ns}[\bigwedge_{k=1}^{n}(j_{k}(\lambda',x)\leftrightarrow
j_{k}(\lambda,x))]$ (*)\\

in the language of $<P^{1}(L),C_{i}>$, considered as a Zariski
structure with predicates $\{C_{i}\}$ for Zariski closed subsets
defined over $L$, (see \cite{Z}). We have, again by results of
\cite{Z} or \cite{depiro4}, that $<P^{1}(L),C_{i}>\prec
<P^{1}(K),C_{i}>$, for the nonstandard model $P(K)$ of $P(L)$. It
follows immediately from the algebraic definition of $j_{k}$ in
\cite{Z}, that, for any weighted set $W_{\lambda_{1}'}$ defined by
$Series(\Sigma)$ on $C^{ext}$, there exists a unique weighted set
$V_{\lambda_{2}'}$ defined by $Series(\Sigma')$ on $C^{ext}$ such
that $W_{\lambda_{1}'}=V_{\lambda_{2}'}$, and conversely. Hence,
the proof is shown.\\

Case 3. $\Sigma$ are $\Sigma'$ are both linear systems of
dimension $r$, having finite intersection with $C$;\\

Let $n_{1}=degree(\Sigma)$ and $n_{2}=degree(\Sigma')$. Then the
original $g_{n}^{r}$ is obtained from $Series(\Sigma)$, by
removing a fixed point contribution of multiplicity $n_{1}-n$,
and, is obtained from $Series(\Sigma')$, by removing a fixed point
contribution of multiplicity $n_{2}-n$. We now imitate the proof
of Case 2, with the slight modification that, in the construction
of the sentences given by $(*)$, we make an adjustment of the
multiplicity statement at the finite number of branches where a
fixed point contribution has been removed. The details are left to
the reader.
\end{proof}

Now, using Definition 1.6, we construct a specialisation operator
$sp:g_{n}^{r,ext}\rightarrow g_{n}^{r}$. We first require the following simple
lemma;\\

\begin{lemma}
Let $C\subset P^{w}(L)$ be a projective algebraic curve and let
$C^{ext}\subset P^{w}(K)$ be its nonstandard model. Let $p'\in
C^{ext}$ be a non-singular point, with specialisation $p\in C$.
Then there exists a unique branch $\gamma_{p}^{j}$ such that
$p'\in\gamma_{p}^{j}$.

\end{lemma}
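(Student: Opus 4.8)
The plan is to reduce the statement to a fact about the non-singular model. Recall from \cite{depiro6} that the branches of $C$ centred at $p$ are in bijection with the finitely many points $\{q_{1},\ldots,q_{n_{p}}\}$ of $C^{ns}$ lying over $p$ under the (finite, birational, surjective) morphism $\Phi^{ns}\colon C^{ns}\to C$, and that, for a point $p'$ of $C^{ext}$ specialising to $p$, the relation $p'\in\gamma_{p}^{j}$ holds precisely when $p'$ is the $\Phi^{ns,ext}$-image of a point $q'\in C^{ns,ext}$ with $sp(q')=q_{j}$. So the lemma comes down to showing that such a lift $q'$ exists and that its specialisation is forced to be exactly one of the $q_{j}$.

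First I would establish existence. Since $\Phi^{ns}$ is surjective, so is $\Phi^{ns,ext}$ by transfer, so there is $q'\in C^{ns,ext}$ with $\Phi^{ns,ext}(q')=p'$. Because $\Phi^{ns}$ is a finite morphism (and $C^{ns}$ is a complete non-singular curve), the fibre of $\Phi^{ns,ext}$ over the monad of $p$ is the monad of the finite set $(\Phi^{ns})^{-1}(p)$; hence $q'$ has a specialisation $q=sp(q')\in C^{ns}$, and by continuity of $\Phi^{ns}$ together with uniqueness of specialisations, $\Phi^{ns}(q)=sp(\Phi^{ns,ext}(q'))=sp(p')=p$. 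Thus $q=q_{j}$ for some $j$, and $p'\in\gamma_{p}^{j}$.

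For uniqueness the point is that the lift $q'$ is itself unique, and this is exactly where the hypothesis that $p'$ is \emph{non-singular} enters. The morphism $\Phi^{ns}$ restricts to an isomorphism $C^{ns}\setminus(\Phi^{ns})^{-1}(\mathrm{Sing}(C))\to C\setminus\mathrm{Sing}(C)$; transferring this, $\Phi^{ns,ext}$ restricts to an isomorphism over the non-singular locus of $C^{ext}$, which coincides with $C^{ext}\setminus\mathrm{Sing}(C)^{ext}$ since $\mathrm{Sing}(C^{ext})=\mathrm{Sing}(C)^{ext}$. As $p'$ lies in this locus it has a \emph{unique} preimage $q'$, so its specialisation $q_{j}$ is uniquely determined; in particular $p'$ is not the image of any point in the monad of $q_{k}$ for $k\neq j$, and therefore $\gamma_{p}^{j}$ is the only branch centred at $p$ containing $p'$. (When $p$ is already non-singular on $C$ there is a single branch and the statement is immediate; the content is the case where $p\in\mathrm{Sing}(C)$ but $p'\notin\mathrm{Sing}(C)^{ext}$.)

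I expect the main obstacle to be bookkeeping rather than substance: one must match the definition of a branch and of the incidence relation $p'\in\gamma_{p}^{j}$ used in \cite{depiro6} with the description above, and check that the non-standard constructions behave as expected --- that $(C^{ns})^{ext}$ with $\Phi^{ns,ext}$ is a legitimate lift of $(C^{ns},\Phi^{ns})$, that specialisation commutes with $\Phi^{ns}$, and that ``non-singular on $C^{ext}$'' means precisely ``not an extension of a singular point of $C$''. Once these identifications are in place, the proof is the two ingredients isolated above: surjectivity plus finiteness/completeness for existence, and the isomorphism over the non-singular locus for uniqueness.
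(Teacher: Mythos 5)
Your proposal is correct and follows essentially the same route as the paper: lift $p'$ to the non-singular model, use the fact that a non-singular $p'$ lies in the canonical set (the locus where $\Phi^{ns}$ is biunivocal) to get a \emph{unique} preimage $q'$, and then observe that $q'$ specialises to a unique point $p_{j}$ of the fibre over $p$, which by the definition of a branch gives $p'\in\gamma_{p}^{j}$ and nothing else. The only cosmetic difference is that the paper disposes of the degenerate case $p'=p$ at the outset, whereas you fold the corresponding observation in at the end.
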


\begin{proof}
We may assume that $p'\neq p$, otherwise $p$ would be non-singular
and, by Lemma 5.4 of \cite{depiro6}, would be the origin of a
single branch $\gamma_{p}$. Let $(C^{ns},\Phi)$ be a non-singular
model of $C$, then $p'$ must belong to the canonical set
$V_{[\Phi]}$, hence there exists a unique $p''\in C^{ns}$ such
that $\Phi(p'')=p'$. By properties of specialisations, $p''\in
C^{ns}\cap{\mathcal V}_{p_{j}}$ for some
$p_{j}\in\Gamma_{[\Phi]}(x,p)$. Hence, by definition of a branch
given in Definition 5.15 of \cite{depiro6}, we must have that
$p'\in\gamma_{p}^{j}$. The uniqueness statement follows as well.
\end{proof}

We now make the following definition;\\

\begin{defn}
Let $C\subset P^{w}(L)$ be a projective algebraic curve and let
$C^{ext}\subset P^{w}(K)$ be its non-standard model. Given a
$g_{n}^{r}$ on $C$ with extension $g_{n}^{r,ext}$ on $C^{ext}$,
we define the specialisation operator;\\

$sp:g_{n}^{r,ext}\rightarrow g_{n}^{r}$\\

by;\\

$sp(\gamma_{p'})=\gamma_{p}^{j}$, for $p'\in NonSing(C^{ext})$ and $\gamma_{p}^{j}$ as in Lemma 1.8.\\

$sp(\gamma_{p}^{j})=\gamma_{p}^{j}$, for $p\in
Sing(C^{ext})=Sing(C)$ and
$\{\gamma_{p}^{1},\ldots,\gamma_{p}^{j},\ldots,\gamma_{p}^{s}\}$
\indent\indent\indent\indent\indent\indent\  enumerating the branches at $p$.\\

$sp(n_{1}\gamma_{p_{1}}^{j_{1}}+\ldots+n_{r}\gamma_{p_{r}}^{j_{r}})=n_{1}sp(\gamma_{p_{1}}^{j_{1}})+\ldots+n_{r}sp(\gamma_{p_{r}}^{j_{r}})$,\\

for a linear combination of branches with $n_{1}+\ldots+n_{r}=n$\\

\end{defn}

It is also a remarkable fact that, after introducing the notion of
a branch, the specialisation operator $sp$ is well defined. This is the content of the following lemma;\\

\begin{lemma}
Let hypotheses be as in the previous definition, then, if $W$ is a
weighted set belonging to $g_{n}^{r,ext}$, its specialisation
$sp(W)$ belongs to $g_{n}^{r}$.

\end{lemma}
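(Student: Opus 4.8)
The plan is to verify the claim by tracking a single weighted set $W \in g_n^{r,ext}$ through the construction and showing that $sp(W)$ arises as a weighted set of a suitable linear system on $C$. Fix a linear system $\Sigma$ on $C$ (of dimension $r$, finite intersection with $C$, degree $n$ after fixed point removal, which we may assume by Lemma 1.5 and the Remark) so that $g_n^{r,ext}$ is obtained from $Series(\Sigma)$ on $C^{ext}$. Then $W = W_{\lambda'}$ for some $\lambda' \in Par_{\Sigma(K)}$. The first step is to specialise the parameter: since $Par_{\Sigma(K)}$ is the nonstandard model of $Par_{\Sigma(L)}$ and is a projective variety, $\lambda'$ has a well-defined specialisation $\lambda \in Par_{\Sigma(L)}$, giving a form $\phi_\lambda$ on $C$. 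The natural candidate is then $sp(W) = W_\lambda \in Series(\Sigma)$, and the real content of the lemma is that this agrees with the branchwise specialisation operator of Definition 1.9.

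The second step is the branchwise comparison. I would use the finite cover $W_1 \subset Par_\Sigma \times C^{ns}$ from the proof of Lemma 1.5, Case 2, together with the multiplicity functions $j_{k,\Sigma}$, which by Theorem 6.5 of \cite{depiro6} capture $I_{italian}(p,\gamma_p^j,C,\phi_\lambda) \geq k$ in a definable (indeed algebraic, in the sense of \cite{Z}) way. For a point $p' \in NonSing(C^{ext})$ appearing in $W_{\lambda'}$, Lemma 1.8 produces the unique branch $\gamma_p^j$ with $p' \in \gamma_p^j$, where $p = sp(p')$; and the multiplicity with which $p'$ (as a simple point) contributes to $W_{\lambda'}$ must, by the properties of specialisations on the finite cover $W_1$ and the defining equivalence for $j_{k}$, be bounded by the multiplicity $I_{italian}(p,\gamma_p^j,C,\phi_\lambda)$. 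Summing over the finitely many branches and using that the total weight is preserved (it is $de$ minus the removed fixed contribution, i.e. $n$, on both sides by the branched Hyperspatial Bezout Theorem), I get that $sp(W)$, as computed branchwise, is exactly the weighted set $W_\lambda$. Hence $sp(W) \in Series(\Sigma)$, and after removing the same fixed point contribution — which is legitimate by the definability argument recorded in Definition 1.6 — it lies in $g_n^{r}$.

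I expect the main obstacle to be the inequality-to-equality passage in the second step: a priori, specialisation of the cover only gives that the sum of the specialised multiplicities at branches over $p$ is \emph{at least} the number of points of $W_{\lambda'}$ specialising to $p$, so one must rule out a strict drop. This is where conservation of total weight is essential: if the multiplicity strictly increased at some branch while total weight is fixed on both sides, it would have to strictly decrease elsewhere, contradicting the lower-semicontinuity of multiplicity under specialisation (the $j_k$ being closed conditions). A secondary technical point is that for singular points $p \in Sing(C) = Sing(C^{ext})$ one must check that the branchwise multiplicities of $W_{\lambda'}$ at the branches $\gamma_p^1,\ldots,\gamma_p^s$ specialise correctly to those of $W_\lambda$; this again follows from applying the elementary extension $\langle P^1(L),C_i\rangle \prec \langle P^1(K),C_i\rangle$ to the sentences $(*)$-type statements recording the values of the $j_{k,\Sigma}$, exactly as in Case 2 and Case 3 of Lemma 1.5. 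The routine verification that $sp$ respects linear combinations of branches is immediate from its definition and the additivity of weights.
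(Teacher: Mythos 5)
Your proposal is correct, and its skeleton matches the paper's: reduce to a single linear system $\Sigma$ of dimension $r$ cutting out both series, write $W\cup W_{0}=C\sqcap\phi_{\lambda'}$, specialise the parameter $\lambda'$ to $\lambda$, and show that the branchwise specialisation of $C\sqcap\phi_{\lambda'}$ is exactly $C\sqcap\phi_{\lambda}$. Where you genuinely diverge is in how that exact branchwise identity is established. The paper passes to the non-singular model, lifts the forms, and invokes an exact ``summability of specialisation'' identity from \cite{depiro4},
$$I_{q}(C^{ns},\overline{\phi_{\lambda}})=I_{q}(C^{ns},\overline{\phi_{\lambda'}})+\sum_{q'\in C^{ns}\cap({\mathcal V}_{q}\setminus q)}I_{q'}(C^{ns},\overline{\phi_{\lambda'}}),$$
which yields the required equality locally at each branch in one stroke. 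You instead establish only the one-sided inequality --- each branch multiplicity for $\phi_{\lambda}$ dominates the total multiplicity of the points of $W_{\lambda'}$ specialising into that branch, via closedness of the $j_{k}$ conditions and Lemma 1.8's unique assignment of each non-singular $p'$ to a branch over its specialisation --- and then force equality globally by conserving the total weight $de$ on both sides through the branched Hyperspatial Bezout Theorem. Both arguments are sound. Your route is more self-contained, needing only lower semicontinuity of the definable multiplicity conditions rather than the full summability statement, at the cost of a global counting argument; the paper's is purely local once the quoted identity from \cite{depiro4} is granted, and it also records explicitly (via the second non-standard extension $K'$ of $K$) how the branches of $C^{ext}$ at points $p'\in\gamma_{p}\setminus p$ are realised on the non-singular model, a point your argument leaves implicit but which Definition 1.9 and Lemma 1.8 already supply.
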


\begin{proof}
We may assume that there exists a linear system $\Sigma$, having
finite intersection with $C$, such that $dimension(\Sigma)=r$ and
$degree(\Sigma)=n_{1}$, with the $g_{n}^{r}$ and $g_{n}^{r,ext}$
both defined by $Series(\Sigma)$, after removing some fixed point
contribution $W_{0}$ of multiplicity $n_{1}-n$. Let $W$ be a
weighted set of the $g_{n}^{r,ext}$, then $W\cup
W_{0}=(C\sqcap\phi_{\lambda'})$, for some unique $\lambda'\in
Par_{\Sigma}$. We claim that $sp(W\cup
W_{0})=C\sqcap\phi_{\lambda}$, for the specialisation $\lambda\in
Par_{\Sigma}$ of $\lambda'$ $(*)$. As $sp(W_{0})=W_{0}$, it then
follows immediately from linearity of $sp$, that $sp(W)$ belongs
to the $g_{n}^{r}$ as required. We now show $(*)$. Let $p\in C$ and
let $\gamma_{p}$ be a branch centred at $p$. By $\gamma_{p}^{ext}$, we mean the branch
at $p$, where $p$ is considered as an element of $C^{ext}$. We now claim that;\\

$I_{\gamma_{p}}(C,\phi_{\lambda})=I_{\gamma_{p}^{ext}}(C,\phi_{\lambda'})+\sum_{p'\in(\gamma_{p}\setminus
p)}I_{\gamma_{p'}^{ext}}(C,\phi_{\lambda'})$ $(**)$\\

Let $(C^{ns},\Phi)\subset P^{w'}(L)$ be a non-singular model of
$C$, such that $\gamma_{p}$ corresponds to $C^{ns,ext}\cap
{\mathcal V}_{q}$, where $q\in\Gamma_{[\Phi]}(x,p)$ and ${\mathcal
V}_{q}$ is defined relative to the specialisation from $P(K)$ to
$P(L)$. Let $C^{ns,ext,ext}\subset P^{w'}(K')$ be a non-standard
model of $C^{ns,ext}$, such that $\gamma_{q}^{ext}$ corresponds to
$C^{ns,ext,ext}\cap {\mathcal V}_{q}$, where ${\mathcal V}_{q}$ is
defined relative to the specialisation from $P(K')$ to $P(K)$.
Then, for $p'\in {(\gamma_{p}\setminus p)}$, we can find $q'\in
{\mathcal V}_{q}\cap C^{ns,ext}$ such that $\gamma_{p'}$
corresponds to ${\mathcal V}_{q'}\cap C^{ns,ext,ext}$. We may
choose a suitable presentation $\Phi_{\Sigma_{1}}$ of $\Phi$, such
that $Base(\Sigma_{1})$ is disjoint from $\Gamma_{[\Phi]}(x,p)$,
and, therefore, disjoint from $\Gamma_{[\Phi]}(x,p')$, for
$p'\in{(\gamma_{p}\setminus p)}$. Let
$\{\overline{\phi_{\lambda}}\}$ denote the lifted family of on
$C^{ns}$ from the presentation $\Phi_{\Sigma'}$. In this case,
we have, by results of \cite{depiro6}, that;\\

$I_{\gamma_{p}}(C,\phi_{\lambda})=I_{q}(C^{ns},\overline{\phi_{\lambda}})$\\

$I_{\gamma_{p}^{ext}}(C,\phi_{\lambda'})=I_{q}(C^{ns},\overline{\phi_{\lambda'}})$\\

$I_{\gamma_{p'}^{ext}}(C,\phi_{\lambda'})=I_{q'}(C^{ns},\overline{\phi_{\lambda'}})$ $(1)$\\

By summability of specialisation, see \cite{depiro4};\\

$I_{q}(C^{ns},\overline{\phi_{\lambda}})=I_{q}(C^{ns},\overline{\phi_{\lambda'}})+\sum_{q'\in
C^{ns}\cap{({\mathcal
V}_{q}}\setminus q)}I_{q'}(C^{ns},\overline{\phi_{\lambda'}})$ $(2)$\\

Combining $(1)$ and $(2)$, the result $(**)$ follows, as required.
Now, suppose that a branch $\gamma_{p}$ occurs with non-trivial
multiplicity in\\ $sp(C\sqcap\phi_{\lambda'})$. By Definition 1.9,
the contribution must come from either
$I_{\gamma_{p}^{ext}}(C,\phi_{\lambda'})$ or
$I_{\gamma_{p'}^{ext}}(C,\phi_{\lambda'})$, for some
$p'\in({\gamma_{p}\setminus p})$. Applying $sp$ to $(**)$, one
sees that the branch $\gamma_{p}$ occurs with multiplicity
$I_{\gamma_{p}}(C,\phi_{\lambda})$. It follows that
$sp(C\sqcap\phi_{\lambda'})=C\sqcap\phi_{\lambda}$, hence $(*)$ is
shown. The lemma then follows.

\end{proof}

We can now reformulate the results of Section 2 and Section 5 of
\cite{depiro6} in the language of this refined theory of $g_{n}^{r}$. We first make the
following definition;\\

\begin{defn}
Let $C\subset P^{w}$ be a projective algebraic curve and let a
$g_{n}^{r}$ be given on $C$. Let $W$ be a weighted set in this
$g_{n}^{r}$ or its extension $g_{n}^{r,ext}$ and let $\gamma_{p}$
be a branch centred at $p$. Then we say
that;\\

$\gamma_{p}$ is $s$-fold ($s$-plo) for $W$ if it appears with
multiplicity at least $s$.\\

$\gamma_{p}$ is multiple for $W$ if it appears with multiplicity
at least $2$.\\

$\gamma_{p}$ is simple for $W$ if it is not multiple.\\

$\gamma_{p}$ is counted (contato) $s$-times in $W$ if it appears
with multiplicity exactly $s$.\\

$\gamma_{p}$ is a base branch of the $g_{n}^{r}$ if it
appears in \emph{every} weighted set.\\

$\gamma_{p}$ is $s$-fold for the $g_{n}^{r}$ if it is $s$-fold in
$W$ for \emph{every} weighted set $W$ of the $g_{n}^{r}$.\\

$\gamma_{p}$ is counted $s$-times for the $g_{n}^{r}$ if it is
$s$-fold for the $g_{n}^{r}$ and is counted $s$-times in some
weighted set $W$ of the $g_{n}^{r}$.
\end{defn}

We then have the following;\\

\begin{theorem}{Local Behaviour of a $g_{n}^{r}$}\\

Let $C$ be a projective algebraic curve and let a $g_{n}^{r}$ be
given on $C$. Let $\gamma_{p}$ be a branch centred at $p$, such
that $\gamma_{p}$ is counted $s$-times for the $g_{n}^{r}$. If
$\gamma_{p}$ is counted $t$ times in a given weighted set $W$,
then there exists a weighted set $W'$ in $g_{n}^{r,ext}$ such that
$sp(W')=W$ and $sp^{-1}(t\gamma_{p})$ consists of the branch
$\gamma_{p}$ counted $s$-times and $t-s$ other distinct branches
$\{\gamma_{p_{1}},\ldots,\gamma_{p_{t-s}}\}$, each counted once in
$W'$.
\end{theorem}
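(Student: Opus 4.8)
The plan is to work with a linear system $\Sigma$ of dimension $r$ and degree $n_{1}$ realising the $g_{n}^{r}$ after removing a fixed point contribution $W_{0}$ of multiplicity $n_{1}-n$, exactly as in the proof of Lemma 1.10. Write $W \cup W_{0} = C\sqcap\phi_{\lambda}$ for the unique $\lambda\in Par_{\Sigma(L)}$, so that $\gamma_{p}$ is counted $t$ times in $W$ means $I_{\gamma_{p}}(C,\phi_{\lambda})$ equals $t$ plus the multiplicity of $\gamma_{p}$ in $W_{0}$; since $\gamma_{p}$ is counted $s$-times for the $g_{n}^{r}$, one checks (using Definition 1.12) that the $W_{0}$-contribution at $\gamma_{p}$ together with the ``$s$-fold in every weighted set'' property pins down the relevant numbers, and we may reduce to the case where no fixed contribution sits at $\gamma_{p}$, so that $I_{\gamma_{p}}(C,\phi_{\lambda}) = t$ and the generic (minimal) value of $I_{\gamma_{p}}(C,\phi_{\mu})$ over $\mu\in Par_{\Sigma}$ is $s$.

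First I would pass to the non-singular model $(C^{ns},\Phi)$ and a presentation $\Phi_{\Sigma_{1}}$ whose base locus avoids $\Gamma_{[\Phi]}(x,p)$, lifting the family to $\{\overline{\phi_{\mu}}\}$ on $C^{ns}$, so that by the results of \cite{depiro6} the branch multiplicity $I_{\gamma_{p}}(C,\phi_{\mu})$ becomes the ordinary intersection multiplicity $I_{q}(C^{ns},\overline{\phi_{\mu}})$ at the point $q\in\Gamma_{[\Phi]}(x,p)$ corresponding to $\gamma_{p}$. The statement to prove is now a purely local one about a one-parameter family of divisors on a smooth curve: a member of the family has a point of multiplicity $t$ at $q$, the generic member has multiplicity exactly $s$ there, and we want, in the non-standard extension, a member specialising to our chosen one whose divisor near $q$ consists of $q$ with multiplicity $s$ together with $t-s$ further distinct points each of multiplicity one, lying on the fibre $\mathcal{V}_{q}\cap C^{ns,ext}$. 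This is the standard ``spreading out'' of an excess intersection: take a member $\overline{\phi_{\lambda'}}$ of the extended family $Series(\Sigma(K))$ that is generic subject to specialising to $\overline{\phi_{\lambda}}$ (such exists by the dimension count, since the condition to specialise to $\overline{\phi_{\lambda}}$ cuts out a non-empty subvariety of $Par_{\Sigma(K)}$ on which the generic member has the generic local behaviour). Then by summability of specialisation, $I_{q}(C^{ns},\overline{\phi_{\lambda}}) = I_{q}(C^{ns},\overline{\phi_{\lambda'}}) + \sum_{q'\in C^{ns}\cap(\mathcal{V}_{q}\setminus q)} I_{q'}(C^{ns},\overline{\phi_{\lambda'}})$, and since the generic member meets $C^{ns}$ at $q$ with multiplicity exactly $s$ and is reduced away from the base locus, the first term is $s$ and the sum consists of $t-s$ ones at distinct points $q'_{1},\dots,q'_{t-s}$; pulling back through $\Phi$ and applying Lemma 1.8 identifies these $q'_{i}$ with distinct simple branches $\gamma_{p_{i}}$ of $C^{ext}$, each counted once, and identifies $q$ with $\gamma_{p}$ counted $s$-times. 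Taking $W' = (C^{ext}\sqcap\phi_{\lambda'}) \setminus W_{0}$ and invoking Lemma 1.10 gives $sp(W') = W$.

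The main obstacle is the claim that one can choose the extended parameter $\lambda'$ to be generic while still specialising to $\lambda$, and in such a way that the local picture is exactly ``$s$ at $q$ plus simple points elsewhere on $\mathcal{V}_{q}$'': one must argue that the locus $\{\lambda'\in Par_{\Sigma(K)} : sp(\lambda') = \lambda\}$ is non-empty and positive-dimensional (or at least not contained in the bad locus where the branch multiplicity at $q$ exceeds $s$ after deformation, or where two of the nearby points collide), and this requires knowing that the ``base branch multiplicity $s$'' of the $g_{n}^{r}$ really is the generic value along \emph{this} subvariety of parameters, not merely along all of $Par_{\Sigma(L)}$. I would handle this by the same Zariski-structure elementarity argument used in Case 2 of Lemma 1.6's proof: the statements ``$I_{q}(C^{ns},\overline{\phi_{\mu}}) \geq s+1$'' and ``two points of $\mathcal{V}_{q}\cap C^{ns}$ in the divisor coincide'' are each closed conditions definable over $L$, hence if they fail for the generic member over $L$ they fail for the generic member of the fibre over $K$ as well, which is exactly what is needed. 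The remaining bookkeeping — matching up the $W_{0}$ contribution at $\gamma_{p}$, and checking that removing $W_{0}$ from $C^{ext}\sqcap\phi_{\lambda'}$ does not disturb the $t-s$ new simple branches (which it cannot, since $W_{0}$ is a fixed contribution supported away from any branch that is merely generically present) — is routine and I would leave it to the reader.
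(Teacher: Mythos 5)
Your proposal follows essentially the same route as the paper's proof: pass to a linear system of dimension $r$, perturb the parameter to a generic $\lambda'\in\mathcal{V}_{\lambda}$, and observe that the excess intersection at the branch spreads into $\gamma_{p}$ counted $s$ times plus $t-s$ distinct transverse points of $\gamma_{p}\setminus p$, with $sp(W')=W$ supplied by the argument of Lemma 1.10. The only difference is that the "main obstacle" you isolate (that the generic member of the infinitesimal neighbourhood of $\lambda$ realises the generic local behaviour, and that the mobile multiplicity of a base branch counted $s$-times is $t-s$) is exactly what the paper outsources to Lemmas 5.25, 5.27 and 5.28 of \cite{depiro6}, whereas you sketch a re-derivation via summability of specialisation and $L$-definability of the bad loci; that sketch is consistent with what those lemmas assert.
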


\begin{proof}
Without loss of generality, we may assume that the $g_{n}^{r}$ is
defined by a linear system $\Sigma$ of dimension $r$, having
finite intersection with $C$. Let $W$ be the weighted set defined
by $\phi_{\lambda}$ in $\Sigma$. Suppose that $s=0$, then
$\gamma_{p}$ is not a base branch for $\Sigma$. Hence, by Lemma
5.25 of \cite{depiro6}, we can find $\lambda'\in{\mathcal
V}_{\lambda}$, generic in $Par_{\Sigma}$, and distinct
$\{p_{1},\ldots,p_{t}\}=C^{ext}\cap\phi_{\lambda'}\cap
({\gamma_{p}\setminus p})$ such that the intersections at these
points are transverse. Let $W'$ be the weighted set defined by
$\phi_{\lambda'}$ in $g_{n}^{r,ext}$. By the proof of $(*)$ in
Lemma 1.10, we have that $sp(W')=W$. By the construction of $sp$
in Definition 1.9, we have that $sp^{-1}(t\gamma_{p})$ consists of
the distinct branches $\{\gamma_{p_{1}},\ldots,\gamma_{p_{t}}\}$,
each counted once in $W'$. If $s\geq 1$, then $\gamma_{p}$ is a
base branch for $\Sigma$. By Lemma 5.27 of \cite{depiro6}, we have
that
$I^{\Sigma,mobile}_{italian}(p,\gamma_{p},C,\phi_{\lambda})=t-s$.
The result then follows by application of Lemma 5.28 in
\cite{depiro6} and the argument given above.
\end{proof}

We now note the following;\\

\begin{lemma}
Let a $g_{n}^{r}$ be given on a projective algebraic curve $C$.
Let $W_{0}$ be \emph{any} weighted set on $C$ with total
multiplicity $n'$. Then the collection of weighted sets given by
$\{W\cup W_{0}\}$ for the weighted sets $W$ in the $g_{n}^{r}$
defines a $g_{n+n'}^{r}$.

\end{lemma}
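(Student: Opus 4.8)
The plan is to reduce everything to the previous machinery: realise the given $g_{n}^{r}$ as $Series(\Sigma)$ of a linear system $\Sigma$ of dimension $r$ having finite intersection with $C$ (after removing a fixed point contribution, as guaranteed by the Remark following Theorem 1.3), and then absorb $W_0$ into the linear system by restricting to those forms of $\Sigma$ that contain $W_0$. More precisely, I would first reduce to the case where $W_0$ is a \emph{single branch} $\gamma_p$ counted once, since an arbitrary $W_0$ of total multiplicity $n'$ is a finite sum of such, and one can iterate. For a single branch $\gamma_p$, the condition ``$I_{\gamma_p}(C,\phi_\lambda)\geq 1$'' is, by the results of \cite{depiro6} on linearity of multiplicity at a branch, a single linear condition on $Par_\Sigma$. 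Let $\Sigma'\subseteq\Sigma$ be the linear subsystem cut out by this condition.

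The key steps, in order, are: (1) verify that $\Sigma'$ still has finite intersection with $C$ — this holds provided $\Sigma$ has \emph{some} form not vanishing on all of $C$ and not through $\gamma_p$; if the linear condition happened to be trivial (i.e.\ $\gamma_p$ is already a base branch of $\Sigma$), then $\Sigma'=\Sigma$ and adding $\gamma_p$ is just incrementing $n$, handled directly. (2) Check $\dim\Sigma' = \dim\Sigma = r$: the condition drops the dimension by at most $1$, and if it drops it by exactly $1$ we need to argue it does not — actually here I expect the cleanest route is not to preserve $R$ but to invoke Theorem 1.3(i)/(ii): the dimension $r$ of the \emph{series} is an invariant not of $\Sigma'$ as a projective space but of the family of weighted sets it cuts, and one shows directly that the series $Series(\Sigma')$ realises the weighted sets $\{W\cup W_0\}$ without repetition and with the same parameter-dimension $r$. (3) Show $order(Series(\Sigma')) = n+n'$: every form in $\Sigma'$ meets $C$ with total branch-multiplicity equal to $\deg(\Sigma)$, and by construction it contains $W_0$, so the mobile part is the corresponding $W$ from the original series; the branched Hyperspatial Bezout Theorem gives the count $n+n'$. (4) Confirm the map $W\mapsto W\cup W_0$ is a bijection onto $Series(\Sigma')$ modulo the fixed point contribution, using Theorem 1.3 to match parameters.

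The main obstacle I anticipate is step (2): controlling the dimension of the \emph{series} (not the ambient linear system) after imposing that $W_0$ be a fixed part. One must rule out that forcing the forms through $W_0$ collapses two previously distinct weighted sets into one, or conversely that it fails to realise some weighted set $W$ of the original $g_n^r$ at all. The resolution is that $W_0$ is a \emph{fixed} part of every form in $\Sigma'$, so it contributes nothing to distinguishing parameters; the residual (mobile) intersection of $\phi_\lambda\in\Sigma'$ with $C$ is governed exactly as before, and Lemma 1.10 together with the uniqueness clause in Theorem 1.3 shows that the assignment $\lambda\mapsto W_\lambda\mapsto W_\lambda\cup W_0$ has the same fibres as $\lambda\mapsto W_\lambda$. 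Hence $dimension(Series(\Sigma'))=dimension(Series(\Sigma))=r$. The non-zero-characteristic bookkeeping and the $\char(L)=2$ caveats do not enter here, since no inseparability or tangency genericity is used — only linearity of branch-multiplicity and Bezout, both stated for arbitrary characteristic in \cite{depiro6}.
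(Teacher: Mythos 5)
There is a genuine gap, and it is located at the very first move. You propose to ``absorb $W_{0}$ into the linear system by restricting to those forms of $\Sigma$ that contain $W_{0}$.'' But by the branched Hyperspatial Bezout Theorem, \emph{every} weighted set cut on $C$ by a form of $\Sigma$ has the same total multiplicity $de=n''$; no subsystem of $\Sigma$ can ever cut a weighted set of total multiplicity $n+n'$. Restricting $\Sigma$ to the forms through $\gamma_{p}$ does not append $\gamma_{p}$ to each $W$ of the series: it merely selects the subfamily of the \emph{original} weighted sets that already happen to contain $\gamma_{p}$, leaving the total multiplicity unchanged and the mobile part smaller. So the collection produced is not $\{W\cup W_{0}\}$. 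Your worry in step (2) is also not resolvable in the way you suggest: a nontrivial linear condition cuts $Par_{\Sigma}$ down to projective dimension $r-1$, and by Theorem 1.3 the dimension of the series of an $(r-1)$-dimensional subsystem is at most $r-1$, so the series dimension genuinely drops. The iteration over single branches does not rescue this, because each step compounds both defects.

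The fix requires raising the \emph{degree} of the forms rather than shrinking the system, and this is what the paper does. With basis $\{\phi_{0},\ldots,\phi_{r}\}$ for $\Sigma$ and $W_{0}=\{n_{1}\gamma_{p_{1}}^{j_{1}},\ldots,n_{m}\gamma_{p_{m}}^{j_{m}}\}$, one chooses hyperplanes $H_{i}$ through the centres $p_{i}$, sets $G=H_{1}^{n_{1}}\cdots H_{m}^{n_{m}}$ (degree $n'$), and takes $\Sigma'$ with basis $\{G\phi_{0},\ldots,G\phi_{r}\}$. The parameter space is unchanged, so the dimension $r$ is preserved for free; the additivity $I_{\gamma}(C,\phi_{\lambda}G)=I_{\gamma}(C,\phi_{\lambda})+I_{\gamma}(C,G)$ (proved by reduction to plane curves via conic projection and the results of \cite{depiro5}) shows each weighted set of $\Sigma'$ is the corresponding one of $\Sigma$ plus the \emph{fixed} set $W_{1}=C\sqcap G$ of multiplicity $n'\deg(C)$, with $W_{0}\subset W_{1}$; and removing the fixed contribution $J\cup(W_{1}\setminus W_{0})$ yields exactly the $g_{n+n'}^{r}$ with weighted sets $\{W\cup W_{0}\}$. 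Your instinct that only linearity of branch multiplicity and Bezout are needed (no characteristic issues) is correct, but the multiplicative construction is the essential missing idea.
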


\begin{proof}
Let the original $g_{n}^{r}$ be obtained from a linear system
$\Sigma$ of dimension $r$ and degree $n''$, having finite
intersection with $C$, after removing some fixed point
contribution $J$ of total multiplicity $n''-n$. Let
$\{\phi_{0},\ldots,\phi_{r}\}$ be a basis for $\Sigma$ and let
$\{n_{1}\gamma_{p_{1}}^{j_{1}},\ldots,n_{m}\gamma_{p_{m}}^{j_{m}}\}$
be the branches appearing in $W_{0}$ with total multiplicity
$n_{1}+\ldots+n_{m}=n'$ $(\dag)$. Let $\{H_{1},\ldots,H_{m}\}$ be
hyperplanes passing through the points $\{p_{1},\ldots,p_{m}\}$
and let $G$ be the algebraic form of degree $n'$ defined by
$H_{1}^{n_{1}}\centerdot\ldots\centerdot H_{m}^{n_{m}}$. Let
$\Sigma'$ be the linear system of dimension $r$ defined by the
basis\\
 $\{G\centerdot\phi_{0},\ldots,G\centerdot\phi_{r}\}$. As
we may assume that $C$ is not contained in any hyperplane section,
$\Sigma'$ has finite intersection with $C$. We claim that
$g_{n''}^{r}(\Sigma)\subset g_{n''+n'deg(C)}^{r}(\Sigma')$, in the
sense that every weighted set $W_{\lambda}$ defined by
$g_{n''+n'deg(C)}^{r}(\Sigma')$ is obtained from the corresponding
$V_{\lambda}$ in $g_{n''}^{r}(\Sigma)$ by adding a \emph{fixed}
weighted set $W_{1}\supset W_{0}$ of total multiplicity $n'deg(C)$
$(*)$. The proof then follows as we can recover the original
$g_{n}^{r}$ by removing the fixed point contribution
$J\cup(W_{1}\setminus W_{0})$ from
$g_{n''+n'deg(C)}^{r}(\Sigma')$. In order to show $(*)$, let
$W_{1}$ be the weighted set defined by $C\sqcap G$. By the
branched version of the Hyperspatial Bezout Theorem, see Theorem
5.13 of \cite{depiro6}, this has total multiplicity $n'deg(C)$. We
claim that $W_{0}\subset W_{1}$ $(**)$. Let $\gamma_{p}^{j}$ be a
branch appearing in $(\dag)$ with multiplicity $s$. By
construction, we can factor $G$ as $H^{s}\centerdot R$, where $H$
is a hyperplane
passing through $s$. We need to show that;\\

$I_{\gamma_{p}^{j}}(C,H^{s}\centerdot R)\geq s$\\

or equivalently,\\

$I_{p_{j}}(C^{ns},\overline{H^{s}\centerdot R})=I_{p_{j}}(C^{ns},{\overline H}^{s}\centerdot\overline{R})\geq s$\\

for a suitable presentation $C^{ns}$ of a non-singular model of
$C$, see Lemma 5.12 of \cite{depiro6}, where we have used the
"lifted" form notation there. Using the method of conic
projections, see section 4 of \cite{depiro6}, we can find a plane
projective curve $C'$ birational to $C^{ns}$, such that the point
$p_{j}$ corresponds to a
non-singular point $q$ of $C'$ and;\\

$I_{p_{j}}(C^{ns},\overline{H}^{s}\centerdot\overline
R)=I_{q}(C',\overline{{\overline
H}^{s}\centerdot\overline{R}})=I_{q}(C',\overline{{\overline
H}}^{s}\centerdot\overline{\overline{R}})$\\

The result then follows by results of the paper \cite{depiro5} for
the intersections of plane projective curves. This shows $(**)$.
We now need to prove that, for an algebraic form $\phi_{\lambda}$
in $\Sigma$ and a branch $\gamma_{p}^{j}$ of $C$;\\

$I_{\gamma_{p}^{j}}(C,\phi_{\lambda}\centerdot
G)=I_{\gamma_{p}^{j}}(C,\phi_{\lambda})+I_{\gamma_{p}^{j}}(C,G)$\\

This follows by exactly the same argument, reducing to the case of
intersections between plane projective curves and using the
results of \cite{depiro5}. The result is then shown.

\end{proof}

\begin{theorem}{Birational Invariance of a $g_{n}^{r}$}\\

Let $\Phi:C_{1}\leftrightsquigarrow C_{2}$ be a birational map
between projective algebraic curves. Then, given a $g_{n}^{r}$ on
$C_{2}$, there exists a canonically defined $g_{n}^{r}$ on
$C_{1}$, depending only on the class $[\Phi]$ of the birational
map. Conversely, given a $g_{n}^{r}$ on $C_{1}$, there exists a
canonically defined $g_{n}^{r}$ on $C_{2}$, depending only on the
class $[\Phi^{-1}]$ of the birational map. Moreover, these
correspondences are inverse.

\end{theorem}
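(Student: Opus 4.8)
The plan is to reduce the birational invariance to the setup already developed for linear systems. First I would fix a representative $\Phi:C_{1}\leftrightsquigarrow C_{2}$ of the class $[\Phi]$ and choose a linear system $\Sigma$ on $C_{2}$, of dimension $r$ and having finite intersection with $C_{2}$, which defines the given $g_{n}^{r}$ after removal of a fixed point contribution (possible by the Remark following Theorem 1.3). The key observation is that a birational map pulls linear systems back to linear systems: if $\Sigma$ is spanned by forms $\{\phi_{0},\ldots,\phi_{r}\}$ of degree $e$ on $P^{w_{2}}$, then composing with the rational map defining $\Phi$ and clearing denominators yields a linear system $\Phi^{*}\Sigma$ of the same projective dimension on $P^{w_{1}}$, possibly with a forced fixed point contribution coming from the indeterminacy locus of $\Phi$ and the fixed components introduced in clearing denominators. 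I would define the candidate $g_{n}^{r}$ on $C_{1}$ to be $Series(\Phi^{*}\Sigma)$ with the corresponding fixed point contribution removed.

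The main technical content is to verify that this is well defined, i.e. independent of all the choices, and that the weighted sets transform correctly. Here I would pass to a common non-singular model: let $(C^{ns},\Phi_{1},\Phi_{2})$ be a non-singular curve dominating both $C_{1}$ and $C_{2}$ via the birational maps $\Phi_{1}:C^{ns}\to C_{1}$, $\Phi_{2}:C^{ns}\to C_{2}$ with $\Phi\circ\Phi_{1}=\Phi_{2}$ (as classes of birational maps). By Lemma 5.4 and the branch formalism of \cite{depiro6}, branches of $C_{i}$ correspond bijectively to points of $C^{ns}$ lying in the relevant canonical sets, and the Italian intersection multiplicity $I_{\gamma_{p}}(C_{i},\psi_{\lambda})$ equals the ordinary intersection multiplicity $I_{q}(C^{ns},\overline{\psi_{\lambda}})$ at the corresponding point $q\in C^{ns}$ for the lifted form, for a suitable presentation. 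Since the lift of $\phi_{\lambda}$ from $C_{2}$ and the lift of the corresponding form of $\Phi^{*}\Sigma$ from $C_{1}$ differ, on $C^{ns}$, only by a \emph{fixed} divisor independent of $\lambda$ (the fixed part absorbed into the fixed point contribution), the mobile weighted sets of $Series(\Sigma)$ on $C_{2}$ and of $Series(\Phi^{*}\Sigma)$ on $C_{1}$ are carried to the \emph{same} family of weighted sets on $C^{ns}$. Invariance under change of $\Sigma$ within its $g_{n}^{r}$ is then exactly Lemma 1.7 applied on $C^{ns}$, and invariance under change of the representative $\Phi$ within $[\Phi]$ follows because two representatives agree on a dense open set and hence induce the same map $C^{ns}\to C^{ns}$ (the identity) at the level of the non-singular model, so they produce the same weighted sets on $C^{ns}$ and therefore the same $g_{n}^{r}$ on $C_{1}$.

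The construction in the reverse direction is obtained by running the same argument with $\Phi^{-1}$ in place of $\Phi$, and the statement that the two correspondences are inverse is immediate from the non-singular model picture: both $g_{n}^{r}$ on $C_{1}$ and on $C_{2}$ are, after removing fixed contributions, the image under the respective $\Phi_{i}$ of one and the same $g_{n}^{r}$ on $C^{ns}$, and $[\Phi^{-1}]\circ[\Phi]=[\mathrm{id}]$ on $C^{ns}$. Finally, one must check the order $n$ is preserved: on $C^{ns}$ the mobile degree is intrinsic, so after removing the (possibly different) fixed point contributions on $C_{1}$ and $C_{2}$ one obtains the same number $n$; any discrepancy in the raw degrees of $\Sigma$ and $\Phi^{*}\Sigma$ is absorbed into the fixed point contribution, exactly as in Case 3 of Lemma 1.7 and in Lemma 1.12.

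I expect the main obstacle to be the careful bookkeeping of base point and fixed-component contributions: the indeterminacy locus of $\Phi$, the fixed components introduced when clearing denominators, and the base branches of $\Sigma$ must all be shown to contribute a \emph{$\lambda$-independent} fixed weighted set on $C^{ns}$, so that they can be stripped off uniformly; this is precisely the kind of technical issue handled by the presentation machinery of Section 5 of \cite{depiro6}, and I would invoke that machinery (choosing presentations $\Phi_{\Sigma}$ whose base loci avoid the finitely many relevant branches) rather than reprove it. Once the problem is transported to $C^{ns}$ via compatible presentations, everything reduces to the already-established invariance statements (Lemma 1.7, Lemma 1.10) plus the elementary fact that birational maps induce the identity on a non-singular model.
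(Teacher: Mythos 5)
Your proposal is correct and follows essentially the same route as the paper: transport everything to a common non-singular model $C^{ns}$ via presentations, identify the mobile intersection multiplicities $I_{\gamma_{p}^{j}}^{\Sigma,mobile}(C_{2},\phi_{\lambda})=I_{p_{j}}^{\Sigma,mobile}(C^{ns},\overline{\phi_{\lambda}})$, and absorb all base-locus and denominator-clearing contributions into a $\lambda$-independent fixed part handled by Lemma 1.13. The only organizational difference is that the paper first defines the transported series intrinsically via the branch bijection $[\Phi]^{*}$ of Lemma 5.7 of \cite{depiro6} (making canonicity and invertibility immediate) and then proves it is cut out by the lifted linear system, whereas you define it as $Series(\Phi^{*}\Sigma)$ and then prove independence of choices; the technical core is identical.
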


\begin{proof}
By Lemma 5.7 of \cite{depiro6}, $[\Phi]$ induces a bijection;\\

$[\Phi]^{*}:\bigcup_{O\in C_{2}}\gamma_{O}\rightarrow\bigcup_{O\in
C_{1}}\gamma_{O}$\\

of branches, with inverse given by ${[\Phi^{-1}]}^{*}$.

Then $[\Phi]^{*}$ extends naturally to a map on weighted sets of
degree $n$ by the formula;\\

$[\Phi]^{*}(n_{1}\gamma_{p_{1}}^{j_{1}}+\ldots+n_{r}\gamma_{p_{r}}^{j_{r}})=n_{1}[\Phi]^{*}(\gamma_{p_{1}}^{j_{1}})+\ldots+n_{r}[\Phi]^{*}(\gamma_{p_{r}}^{j_{r}})$\\

for a linear combination of branches
$\{\gamma_{p_{1}}^{j_{1}},\ldots,\gamma_{p_{r}}^{j_{r}}\}$ with\\
$n=n_{1}+\ldots+n_{r}$. Therefore, given a $g_{n}^{r}$ on $C_{2}$,
we obtain a canonically defined collection $[\Phi]^{*}(g_{n}^{r})$
of weighted sets on $C_{1}$ of degree $n$ $(*)$. It is trivial to
see that $[\Phi^{-1}]^{*}\circ[\Phi]^{*}(g_{n}^{r})$ recovers the
original $g_{n}^{r}$ on $C_{2}$, by the fact the map $[\Phi]^{*}$
on branches is invertible, with inverse given by
$[\Phi^{-1}]^{*}$. Let $C^{ns}$ be a non-singular model of $C_{1}$
and $C_{2}$ with morphisms $\Phi_{1}:C^{ns}\rightarrow C_{1}$ and
$\Phi_{2}:C^{ns}\rightarrow C_{2}$ such that
$\Phi\circ\Phi_{1}=\Phi_{2}$ and $\Phi^{-1}\circ\Phi_{2}=\Phi_{1}$
as birational maps (see the proof of Lemma 5.7 in \cite{depiro6}).
We then have that
$[\Phi]^{*}(g_{n}^{r})=[\Phi_{1}^{-1}]^{*}\circ[\Phi_{2}]^{*}(g_{n}^{r})$.
It remains to prove that this collection given by $(*)$ defines a
$g_{n}^{r}$ on $C_{1}$. We will prove first that
$[\Phi_{2}]^{*}(g_{n}^{r})$ defines a $g_{n}^{r}$ on $C^{ns}$
$(\dag)$. Let the original $g_{n}^{r}$ on $C_{2}$ be defined by a
linear system $\Sigma$, having finite intersection with $C_{2}$,
such that $dimension(\Sigma)=r$ and $degree(\Sigma)=n'$, after
removing some fixed branch contribution of multiplicity $n'-n$. We
may assume that $n'=n$, as if the fixed branch contribution in
question is given by $W_{0}$ and $g_{n}^{r}\cup W_{0}=g_{n'}^{r}$,
then $[\Phi_{2}]^{*}(g_{n}^{r})\cup
[\Phi_{2}]^{*}(W_{0})=[\Phi_{2}]^{*}(g_{n'}^{r})$, hence it is
sufficient to prove that $[\Phi_{2}]^{*}(g_{n'}^{r})$ defines a
$g_{n'}^{r}$. Let $W_{1}$ be the fixed branch contribution of the
$g_{n}^{r}$ on $C_{2}$ and let $g_{n''}^{r}\subset g_{n}^{r}$ be
obtained by removing this fixed branch contribution. It will be
sufficient to prove that $[\Phi_{2}]^{*}(g_{n''}^{r})$ defines a
$g_{n''}^{r}$ on $C^{ns}$ as
$[\Phi_{2}]^{*}(g_{n}^{r})=[\Phi_{2}]^{*}(g_{n''}^{r})\cup
[\Phi_{2}]^{*}(W_{1})$ and we may then use Lemma 1.13. Let
$\Phi_{\Sigma_{1}}$ and $\Phi_{\Sigma_{2}}$ be presentations of
the morphisms $\Phi_{1}$ and $\Phi_{2}$. We may assume that $
Base(\Sigma_{1})$ and $Base(\Sigma_{2})$ are disjoint. Let
$\{\overline{\phi_{\lambda}}\}$ denote the lifted family of forms
on $C^{ns}$, defined by the linear system $\Sigma$ and the
presentation $\Phi_{\Sigma_{2}}$. We claim that
$[\Phi_{2}]^{*}(g_{n''}^{r})$ is defined by this system after
removing its fixed branch contribution. In order to see this, we first show
that for any branch $\gamma_{p}^{j}$ of $C$;\\

$I_{\gamma_{p}^{j}}^{\Sigma,mobile}(C,\phi_{\lambda})=I_{p_{j}}^{\Sigma,mobile}(C^{ns},\overline{\phi_{\lambda}})$ $(*)$ $(1)$\\

where $p_{j}$ corresponds to $\gamma_{p}^{j}$ in the fibre
$\Gamma_{[\Phi_{2}]}(x,p)$, see Section 5 of \cite{depiro6}.
By Definition 2.20 and Lemma 5.23 of \cite{depiro6}, we have that;\\

$I_{p_{j}}^{\Sigma,mobile}(C^{ns},\overline{\phi_{\lambda}})=Card(C^{ns}\cap
({\mathcal V}_{p_{j}}\setminus
p_{j})\cap\overline{\phi_{\lambda'}})$ for
$\lambda'\in {\mathcal V}_{\lambda}$, generic\\
\indent \ \ \ \ \ \ \ \ \ \ \ \ \ \ \ \ \ \ \ \ \ \ \ \ \ \ \ \ \ \ \ \ \ \ \ \ \ \ \ \ \ \ \ \ \ \ \ \ \ \ \ \ \ \ \ \ \ \ \ \ \ \ \ \ in $Par_{\Sigma}$\\

$I_{\gamma_{p}^{j}}^{\Sigma,mobile}(C,\phi_{\lambda})=Card(C\cap
(\gamma_{p}^{j}\setminus p)\cap\phi_{\lambda'})$ for $\lambda'\in
{\mathcal V}_{\lambda}$, generic in $Par_{\Sigma}$\\

As $(\gamma_{p}^{j}\setminus p)$ is in biunivocal correspondence
with ${({\mathcal V}_{p_{j}}\setminus p_{j})}$ under the morphism
$\Phi_{2}$, we obtain immediately the result $(*)$. Now, using
Lemma 5.27 of \cite{depiro6}, we have that, if $\gamma_{p}^{j}$
appears in a weighted set $W_{\lambda}$ of the $g_{n''}^{r}$ with
multiplicity $s$, then the corresponding branch $\gamma_{p_{j}}$
appears in the weighted set $[\Phi_{2}]^{*}(W_{\lambda})$ with
multiplicity equal to
$s=I_{p_{j}}^{mobile}(C^{ns},\overline{\phi_{\lambda}})$. Again,
using Lemma 5.27 of \cite{depiro6}, we obtain that
$[\Phi_{2}]^{*}(W_{\lambda})$ is given by
$C^{ns}\sqcap\overline{\phi_{\lambda}}$, after removing all fixed
point contributions of the linear system $\Sigma$. We, therefore,
obtain that $[\Phi_{2}]^{*}(g_{n''}^{r})$ is defined by $\Sigma$,
after removing all fixed point contributions, as required. This
proves $(\dag)$. We now claim that, for the given $g_{n}^{r}$ on
$C^{ns}$, $[\Phi^{-1}]^{*}(g_{n}^{r})$ defines a $g_{n}^{r}$ on
$C_{1}$, $(\dag\dag)$. Let $\Phi_{\Sigma_{3}}$ be a presentation
of the morphism $\Phi^{-1}$. If $\phi_{\lambda}$ is a form
belonging to the linear system $\Sigma$ defined on $C^{ns}$, using
the presentations $\Phi_{\Sigma_{1}}$ and $\Phi_{\Sigma_{3}}$ of
$\Phi$ and $\Phi^{-1}$, we obtain a lifted form
$\overline{\phi_{\lambda}}$ on $C_{1}$ and a lifted form
$\overline{\overline{\phi_{\lambda}}}$ on $C^{ns}$ again. We now
claim that, for $p\in C^{ns}$;\\

$I_{p}^{\Sigma,mobile}(C^{ns},\phi_{\lambda})=I_{p}^{\Sigma,mobile}(C^{ns},\overline{\overline{\phi_{\lambda}}})$ $(2)$\\

In order to see this, first observe that we can obtain the lifted
system of forms $\{\overline{\overline{\phi_{\lambda}}}\}$
directly from the linear system $\Sigma_{4}$, obtained by
composing bases of the linear systems $\Sigma_{1}$ and
$\Sigma_{3}$. The corresponding morphism $\Phi_{\Sigma_{4}}$
defines a birational map of $C^{ns}$ to itself, which is
equivalent to the identity map $Id$. Now the result follows
immediately from Definition 2.20 and Lemma 2.16 of \cite{depiro6},
both multiplicities are witnessed inside the canonical set $W$ of
$\Phi_{\Sigma_{4}}$, which, in this case, is just the domain of
definition of $\Phi_{\Sigma_{4}}$ on $C^{ns}$, see Definition 1.30
of \cite{depiro6}. Now, returning to the proof of $(\dag\dag)$, we
may suppose that the given $g_{n}^{r}$ on $C^{ns}$ is defined by
the linear system $\Sigma$, after removing all fixed point
contributions. Combining $(1)$ and $(2)$, we have that, for a branch
 $\gamma_{p}^{j}$ of $C_{1}$;\\

$I_{\gamma_{p}^{j}}^{\Sigma,mobile}(C_{1},\overline{\phi_{\lambda}})=I_{p_{j}}^{\Sigma,mobile}(C^{ns},\overline{\overline{\phi_{\lambda}}})=I_{p_{j}}^{\Sigma,mobile}(C^{ns},\phi_{\lambda})$\\

The result now follows from the same argument as above, using
Lemma 5.27 of \cite{depiro6}. This completes the theorem.

\begin{rmk}
Using the quoted Theorem 1.33 of \cite{depiro6}, one can use the
Theorem to reduce calculations involving $g_{n}^{r}$ on projective
algebraic curves to calculations on plane projective curves. We
will use this property extensively in the following sections.
\end{rmk}
\end{proof}

We finally note the following;\\

\begin{lemma}
For a given $g_{n}^{r}$, we always have that $r\leq n$.
\end{lemma}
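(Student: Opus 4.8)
The plan is to reduce the statement to the case where the $g_{n}^{r}$ is cut out by a genuine linear system of dimension $r$ having finite intersection with $C$, and then to run a dimension count on parameter space. By the remark following Theorem 1.3, every $g_{n}^{r}$ is obtained from some $g_{n'}^{r}$ with $n' \geq n$ by removing a fixed branch contribution, and the $g_{n'}^{r}$ is defined by a linear system $\Sigma$ of dimension $r$ having finite intersection with $C$ via the bijection $\lambda \mapsto W_{\lambda}$ on $Series(\Sigma)$. Since $n \leq n'$, it suffices to prove $r \leq n'$; so without loss of generality we may assume the $g_{n}^{r}$ itself is $Series(\Sigma)$ for such a $\Sigma$, with $degree(\Sigma) = n$ and $dimension(Series(\Sigma)) = r = \dim(\Sigma)$.

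First I would fix a form $\phi_{\lambda_{0}} \in \Sigma$ with finite intersection with $C$, and let $W_{\lambda_{0}} = C \sqcap \phi_{\lambda_{0}}$ be the corresponding weighted set, of total weight $de = n$ by the branched Hyperspatial Bezout Theorem. Pick $n$ branch–points counted with multiplicity inside $W_{\lambda_{0}}$; more precisely, choose (possibly with repetition) branches $\gamma_{p_{1}}, \ldots, \gamma_{p_{n}}$ so that the condition ``$I_{\gamma_{p_{i}}}(C, \phi_{\mu}) \geq (\text{the number of } k \leq i \text{ with } \gamma_{p_{k}} = \gamma_{p_{i}})$'' for all $i$ is satisfied by $\mu = \lambda_{0}$. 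Each such condition is linear on $Par_{\Sigma}$ — this is precisely the linearity of multiplicity at a branch used repeatedly in the proof of Theorem 1.3 and in Lemma 1.10 — so imposing all $n$ of them cuts $Par_{\Sigma} \cong P^{r}$ down in codimension at most $n$, and the resulting linear subvariety $\Pi$ is nonempty (it contains $\lambda_{0}$). If $r > n$, then $\dim \Pi \geq r - n \geq 1$, so $\Pi$ contains a form $\phi_{\mu}$ distinct from $\phi_{\lambda_{0}}$. The branched Bezout bound forces $C \sqcap \phi_{\mu}$ to have total weight exactly $n$, and since it already contains the prescribed multiplicities at $\gamma_{p_{1}}, \ldots, \gamma_{p_{n}}$ (which account for weight $n$), we get $W_{\mu} = W_{\lambda_{0}}$; but the $g_{n}^{r}$ is a collection \emph{without repetitions} and $\lambda \mapsto W_{\lambda}$ is the defining bijection, contradicting $\mu \neq \lambda_{0}$.

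The main obstacle I anticipate is bookkeeping around the two degenerate possibilities: (a) that the $n$ branch–conditions are not independent, in which case $\mathrm{codim}\,\Pi < n$ and the argument only gets easier; and (b) that some of the $n$ prescribed multiplicity conditions ``overflow'' — i.e. that any $\phi_{\mu} \in \Pi$ actually meets $C$ with total weight exceeding $n$ at the $p_{i}$, which by branched Bezout would force $C \subset \phi_{\mu}$ and so a form of $\Sigma$ contains $C$. But in that situation one is in case (ii) of Theorem 1.3 applied to $\Sigma$, whence $r = \dim(\Sigma)$ strictly drops below $R = \dim(\Sigma)$ — impossible — so this case does not arise once $\Sigma$ has finite intersection with $C$; alternatively one simply observes that such fixed overflow contributes a base branch which should have been removed in passing to the reduced $g_{n}^{r}$. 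I would handle (b) by invoking that $\Sigma$ has finite intersection with $C$ and that, by construction in the remark after Theorem 1.3, $\Sigma$ has had its fixed branch contribution stripped, so no form of $\Sigma$ contains $C$ and the weight of $C \sqcap \phi_{\mu}$ is exactly $de = n$ for every $\mu \in Par_{\Sigma}$. The remaining details — choosing a consistent enumeration of branches with multiplicity and checking the linear conditions are as claimed — are routine given the machinery of \cite{depiro6} already cited in the proofs of Theorem 1.3 and Lemma 1.10.
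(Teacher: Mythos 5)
Your core dimension count on $Par_{\Sigma}$ is the right idea (and essentially the classical argument behind Lemma 2.24 of \cite{depiro6}, to which the paper defers), but your opening reduction is logically backwards and leaves a genuine gap. You write that the given $g_{n}^{r}$ arises from a $g_{n'}^{r}$ with $n'\geq n$ cut out exactly by a linear system $\Sigma$ of dimension $r$, and that ``since $n\leq n'$, it suffices to prove $r\leq n'$.'' It does not: $r\leq n'$ is \emph{weaker} than $r\leq n$ when $n'>n$, and the case $n'>n$ (a series with fixed branch contribution removed) is precisely where the lemma has content. For instance, the pencil of lines through a point of multiplicity $d-1$ on a plane curve of degree $d$ cuts a $g_{d}^{1}$ whose mobile part is a $g_{1}^{1}$; your reduction would only give $1\leq d$, while the lemma asserts the sharp bound $1\leq 1$. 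After your ``without loss of generality'' everything you prove is $r\leq degree(\Sigma)$, not $r\leq n$.

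The fix is available inside your own argument and you should make it explicit rather than discard the fixed contribution at the outset. Keep $\Sigma$ of dimension $r$ and degree $n'$, with fixed branch contribution $W_{0}$ of weight $s'=n'-n$, and impose on $Par_{\Sigma}\cong P^{r}$ all $n'$ multiplicity conditions defining $C\sqcap\phi_{\lambda_{0}}$. The $s'$ conditions coming from $W_{0}$ are satisfied by \emph{every} $\lambda\in Par_{\Sigma}$ (that is what it means for $W_{0}$ to be a fixed contribution), so they each cut codimension $0$; only the remaining $n$ conditions can be effective, each of codimension at most $1$ by linearity of multiplicity at a branch. Since the full set of conditions cuts out exactly $\{\lambda_{0}\}$ (uniqueness from Theorem 1.3(i), as $\dim\Sigma=r=dimension(Series(\Sigma))$), you get $0\geq r-n$, i.e.\ $r\leq n$. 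Your discussion of obstacle (b) also conflates two different things --- the finite-intersection hypothesis on $\Sigma$ (no form contains $C$) and the stripping of fixed branches --- and should be separated accordingly; the former is what licenses the Bezout count, the latter is what makes $s'$ of the conditions vacuous.
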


\begin{proof}
The proof is almost identical to Lemma 2.24 of \cite{depiro6}. We
leave the details to the reader.

\end{proof}

\end{section}
\begin{section}{A Theory of Complete Linear Series on an Algebraic
Curve}

We now develop further the theory of $g_{n}^{r}$ on an algebraic
curve $C$, analogously to classical results for divisors on
non-singular algebraic curves. We will first assume that $C$ is a
plane projective algebraic curve, defined by some homogeneous
polynomial $F(X,Y,Z)$. Without loss of generality, we will use the
coordinates $x=X/Z$ and $y=Y/Z$ for local calculations on the
curve $C$, defined in this system by $f(x,y)=0$. Using Theorem
1.14, we will later derive general results for $g_{n}^{r}$
on an algebraic curve from the corresponding calculations for the plane case.\\

We consider first the case when $r=1$. By results of the previous
section, a $g_{n}^{1}$ is defined by a pencil $\Sigma$ of
algebraic curves $\{\phi(x,y)+\lambda\phi'(x,y)=0\}_{\lambda\in
P^{1}}$ (in affine coordinates), after removing some fixed point
contribution, where, by convention, we interpret the algebraic
curve $\phi(x,y)+\infty\phi'(x,y)=0$ to be $\phi'(x,y)=0$. We
assume that the $g_{n}^{1}$ is, in fact, cut out by this pencil.
Now suppose that $\gamma_{p}$ is a branch of $C$. We may assume
that $p$ corresponds to the origin $O$ of the affine coordinate
system $(x,y)$, (use a linear transformation and the result of
Lemma 2.1) By Theorem 6.1 of \cite{depiro6}, we can find algebraic
power series $\{x(t),y(t)\}$, with $x(t)=y(t)=0$, parametrising
$\gamma_{p}$. We can now substitute the power series in order to
obtain a formal expression of the form;\\

${\phi(x(t),y(t))\over\phi'(x(t),y(t))}={t^{i}u(t)\over
t^{j}v(t)}=t^{i-j}u(t)v(t)^{-1}$, where
$\{u(t),v(t),u(t)v(t)^{-1}\}$\\
\indent \ \ \ \ \ \ \ \ \ \ \ \ \ \ \ \ \ \ \ \ \ \ \ \ \ \ \ \ \
\ \ \ \ \ \ \ \ \ \ \ \ \ \ \ \ \ \ \ \  are units in $L[[t]]$.

We then define;\\

 $ord_{\gamma_{p}}({\phi\over\phi'})=i-j$,\\

$val_{\gamma_{p}}({\phi\over\phi'})=0$,\ \ \ \ \ \ \ \ \ \ \ \ \ \ \ \ \ \ \ \ \ \ if $i>j$, (${\phi\over\phi'}$ has a zero of order $i-j$)\\

 $ord_{\gamma_{p}}({\phi\over\phi'})=j-i$,\\

 $val_{\gamma_{p}}({\phi\over\phi'})=\infty$,\ \ \ \ \ \ \ \ \ \ \ \ \ \ \ \ \ \ \ \ \ if $i<j$, (${\phi\over\phi'}$ has a pole of order $j-i$)\\

 $ord_{\gamma_{p}}({\phi\over\phi'})=ord_{t}(h(t)-h(0))$,\\

 $val_{\gamma_{p}}({\phi\over\phi'})=h(0)$,\ \ \ \ \ \ \ \ \ \ \ \ \ \ \ \ \ \ \ if $i=j$ and $h(t)=u(t)v(t)^{-1}$\\

Observe that in all cases, $ord_{\gamma_{p}}$ gives a
\emph{positive} integer, while $val_{\gamma_{p}}$ determines an
element of $P^{1}$. In order to see that this construction does
not depend on the particular power series representation of
the branch, we require the following lemma;\\

\begin{lemma}
Let $\{C,\gamma_{p},\phi,\phi',g_{n}^{1},\Sigma\}$ be as defined above, then;\\

$ord_{\gamma_{p}}({\phi\over\phi'})=I_{\gamma_{p}}(C,\phi-\lambda\phi')$,
\ \ \ \ \ \ if $\gamma_{p}$ is not a base branch for the
$g_{n}^{1}$ \indent \ \ \ \ \ \ \ \ \ \ \ \ \ \ \ \ \ \ \ \ \ \ \
\ \ \ \ \ \
\ \ \ \ \ \ \ \ \ \ \ and ${\phi\over\phi'}(p)=val_{\gamma_{p}}({\phi\over\phi'})=\lambda$.\\

$ord_{\gamma_{p}}({\phi\over\phi'})=I_{\gamma_{p}}^{\Sigma,mobile}(C,\phi-\lambda\phi')$,
if $\gamma_{p}$ is a base branch for the $g_{n}^{1}$ and\\
\indent \ \ \ \ \ \ \ \ \ \ \ \ \ \ \ \ \ \ \  \ \ \ \ \ \ \  \ \
\  \ \ \ \ \ \ \ \ \ \ \ $\lambda=val_{\gamma_{p}}({\phi\over\phi'})$ is unique such that,\\
\indent\ \ \ \ \ \ \ \ \ \ \ \ \ \ \ \ \ \ \ \ \ \ \ \ \ \ \ \ \ \ \ \ \ \ \ \ \ \ \ \  for $\mu\neq\lambda$;\\
\indent \ \ \ \ \ \ \ \ \ \ \ \ \ \ \ \ \ \ \ \ \ \ \ \ \ \ \ \ \
 \ \ \ \ \ \ \ \ \ \ \
$I_{\gamma_{p}}(C,\phi-\lambda\phi')>I_{\gamma_{p}}(C,\phi-\mu\phi').$

\end{lemma}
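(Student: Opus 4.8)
The plan is to unwind the definitions of $ord_{\gamma_p}$ and $val_{\gamma_p}$ on the rational function $\phi/\phi'$ and to match them against the Italian intersection multiplicity $I_{\gamma_p}$ (respectively its mobile version $I_{\gamma_p}^{\Sigma,mobile}$) by passing to a non-singular model. Concretely, pick algebraic power series $\{x(t),y(t)\}$ with $x(0)=y(0)=0$ parametrising $\gamma_p$, as supplied by Theorem 6.1 of \cite{depiro6}. Then $I_{\gamma_p}(C,\psi) = ord_t\,\psi(x(t),y(t))$ for any form $\psi$ not vanishing identically along $\gamma_p$; this is precisely the characterisation of $I_{italian}$ at a branch from \cite{depiro6} that lets one compute the branched intersection multiplicity by substitution into the parametrising series. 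With $\phi(x(t),y(t)) = t^i u(t)$ and $\phi'(x(t),y(t)) = t^j v(t)$, units $u,v \in L[[t]]$, we immediately read off $I_{\gamma_p}(C,\phi) = i$ and $I_{\gamma_p}(C,\phi') = j$.

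First I would treat the non-base-branch case. If $\gamma_p$ is not a base branch for the $g_n^1$ cut out by $\Sigma = \{\phi - \lambda\phi'\}$, then some member of the pencil has finite intersection with $\gamma_p$; equivalently, not every $\phi - \lambda\phi'$ vanishes along $\gamma_p$. Substituting, $(\phi - \lambda\phi')(x(t),y(t)) = t^i u(t) - \lambda t^j v(t)$. Now distinguish the three cases of the definition. If $i > j$, the lowest-order term is $-\lambda t^j v(t)$ unless $\lambda = 0$; thus for $\lambda = 0 = val_{\gamma_p}(\phi/\phi')$ (the value $(\phi/\phi')(p)$, which is a genuine zero of order $i-j$) we get $ord_t = i$... — wait, here one must be slightly careful: the claim identifies $ord_{\gamma_p}(\phi/\phi')$ with $I_{\gamma_p}(C,\phi - \lambda\phi')$ where $\lambda$ is the \emph{value} of $\phi/\phi'$ at the branch, and $ord_{\gamma_p}(\phi/\phi') = i - j$. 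So in the case $i>j$ one takes $\lambda = 0$, and $I_{\gamma_p}(C,\phi - 0\cdot\phi') = I_{\gamma_p}(C,\phi) = i$, which is \emph{not} $i-j$ in general; so the correct reading must be that $\phi'$ is then a base form and one is implicitly working with the mobile multiplicity, or the identification is up to the fixed contribution $j = I_{\gamma_p}(C,\phi')$. I would therefore recheck the conventions of \cite{depiro6}: the cleanest formulation is $I_{\gamma_p}(C,\phi-\lambda\phi') - \min_\mu I_{\gamma_p}(C,\phi-\mu\phi') = ord_{\gamma_p}(\phi/\phi')$, and in the non-base case the min over $\mu$ is $0$ attained generically, forcing $ord_{\gamma_p}(\phi/\phi') = i-j$ exactly when $\min(i,j)$ is subtracted; when $i=j$, writing $h(t) = u(t)v(t)^{-1}$ with $h(0) = \lambda$, we get $(\phi - \lambda\phi')(x(t),y(t)) = t^i v(t)(h(t) - \lambda)$, so $ord_t = i + ord_t(h(t)-h(0)) = i + ord_{\gamma_p}(\phi/\phi')$, and for $\mu \neq \lambda$ the order is exactly $i$, confirming both the formula and the strict inequality $I_{\gamma_p}(C,\phi-\lambda\phi') > I_{\gamma_p}(C,\phi-\mu\phi')$.

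For the base-branch case, $\gamma_p$ lies in every member of the pencil, so $t^i u(t) - \lambda t^j v(t) \equiv 0$ is impossible for all $\lambda$ simultaneously only if $i=j$ and $h(t) \equiv \lambda$ — but that would make $\phi/\phi'$ constant along the branch; the genuine base-branch situation is that $\min(i,j) \geq 1$, so every $\phi - \lambda\phi'$ has positive order at $\gamma_p$, and the fixed (base) contribution of $\Sigma$ at $\gamma_p$ is $\min(i,j)$ — except at the unique value $\lambda = val_{\gamma_p}(\phi/\phi')$ (taken as $0$, $\infty$, or $h(0)$ according to the three cases) where the order jumps. By definition, $I_{\gamma_p}^{\Sigma,mobile}(C,\phi-\lambda\phi')$ is $I_{\gamma_p}(C,\phi-\lambda\phi')$ with the fixed base contribution of $\Sigma$ removed, and the base contribution is exactly $\min_\mu I_{\gamma_p}(C,\phi-\mu\phi')$, which equals $\min(i,j)$ and is attained at all but one $\mu$. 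The computation above then gives $I_{\gamma_p}^{\Sigma,mobile}(C,\phi-\lambda\phi') = I_{\gamma_p}(C,\phi-\lambda\phi') - \min(i,j) = |i-j|$ when $i \neq j$, and $= ord_t(h(t)-h(0))$ when $i=j$ — in all cases equal to $ord_{\gamma_p}(\phi/\phi')$; uniqueness of $\lambda$ and the stated strict inequality are the same formal manipulation. Throughout I would invoke Lemma 5.27 of \cite{depiro6} (linearity of multiplicity at a branch and the characterisation of mobile multiplicity via a generic member of the pencil) to justify that "remove the fixed contribution" coincides with "subtract $\min_\mu$", and Theorem 6.1 of \cite{depiro6} to know the parametrising series exist and are independent — up to reparametrisation $t \mapsto t\cdot(\text{unit})$ — of choices, which is exactly what makes $ord_{\gamma_p}$ and $val_{\gamma_p}$ well-defined. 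The main obstacle I anticipate is bookkeeping the $\min(i,j)$ fixed-contribution correctly against the paper's sign and $P^1$-valued conventions for $val_{\gamma_p}$ (the pole case $i<j$ and the point-at-infinity convention $\phi - \infty\phi' := \phi'$), rather than anything deep; the rest is a direct substitution argument.
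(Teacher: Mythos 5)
Your proposal is correct and follows essentially the same route as the paper: parametrise the branch, write $\phi=t^{i}u(t)$, $\phi'=t^{j}v(t)$, case-split on $val_{\gamma_{p}}({\phi\over\phi'})\in\{0,\infty,h(0)\}$ and on base versus non-base branch, and identify $I^{\Sigma,mobile}_{\gamma_{p}}$ with $I_{\gamma_{p}}$ minus the fixed contribution $\min_{\mu}I_{\gamma_{p}}(C,\phi-\mu\phi')=\min(i,j)$ via Lemma 5.27 of \cite{depiro6}. The worry you flag mid-proof dissolves exactly as you suspect: $\gamma_{p}$ not being a base branch forces $\min(i,j)=0$, so $I_{\gamma_{p}}(C,\phi-\lambda\phi')=|i-j|=ord_{\gamma_{p}}({\phi\over\phi'})$ with no mobile correction needed, which is precisely how the paper's explicit case analysis ($\lambda=0$, $\lambda=\infty$, $\lambda\notin\{0,\infty\}$) proceeds.
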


\begin{proof}
Suppose that $\gamma_{p}$ is not a base branch for the
$g_{n}^{1}$, then ${\phi\over\phi'}(p)=\lambda$ is well defined,
if we interpret $(c/0)=\infty$ for $c\neq 0$, and
$\phi-\lambda\phi'$ is the unique curve in the pencil passing
through $p$. It is trivial to check, using the facts that
$\phi(p)=\phi(x(0),y(0))$ and $\phi'(p)=\phi'(x(0),y(0))$, that,
in all cases, $val_{\gamma_{p}}({\phi\over\phi'})=\lambda$ as
well. By Theorem 6.1 of \cite{depiro6}, we have that;\\

$I_{\gamma_{p}}(C,\phi-\lambda\phi')=ord_{t}[(\phi-\lambda\phi')(x(t),y(t))]$\\

If $\lambda=0$, then $\phi(p)=0$ and $\phi'(p)\neq 0$, hence, by a
straightforward algebraic calculation,
$\phi(x(t),y(t))=t^{i}u(t)$, for some $i\geq 1$, and
$\phi'(x(t),y(t))=v(t)$ for $\{u(t),v(t)\}$ units in $L[[t]]$.
Therefore,
$ord_{\gamma_{p}}({\phi\over\phi'})=ord_{t}\phi(x(t),y(t))$ and
the result follows.\\

If $\lambda=\infty$, then $\phi(p)\neq 0$ and $\phi(p)=0$, hence,
$\phi(x(t),y(t))=u(t)$ and $\phi'(x(t),y(t))=t^{j}v(t)$, for some
$j\geq 1$, and $\{u(t),v(t)\}$ units in $L[[t]]$. Therefore,
$ord_{\gamma_{p}}({\phi\over\phi'})=ord_{t}\phi'(x(t),y(t))$ and
the result follows.\\

If $\lambda\neq\{0,\infty\}$, then $\phi(x(t),y(t))=u(t)$ and
$\phi'(x(t),y(t))=v(t)$ with $\{u(t),v(t)\}$ units in $L[[t]]$. As
$v(t)$ is a unit in $L[[t]]$, we have that;\\

$ord_{t}({u(t)\over v(t)}-{u(0)\over
v(0)})=ord_{t}(v(t)({u(t)\over
v(t)}-{u(0)\over v(0)}))=ord_{t}(u(t)-{u(0)\over v(0)}v(t))$\\

Hence, by definition of $ord_{\gamma_{p}}$;\\

$ord_{\gamma_{p}}({\phi\over\phi'})=ord_{t}[(\phi-\lambda\phi')(x(t),y(t))]$\\

and the result follows.\\

Now suppose that $\gamma_{p}$ is a base branch for the
$g_{n}^{1}$, then $\phi(p)=\phi'(p)=0$ and we have that
$\phi(x(t),y(t))=t^{i}u(t)$ and $\phi'(x(t),y(t))=t^{j}v(t)$, for
some $i,j\geq 1$ and $\{u(t),v(t)\}$ units in $L[[t]]$. Again, we
divide the proof into the following cases;\\

$i>j$. In this case, by definition, $val_{\gamma_{p}}({\phi\over\phi'})=0$. We compute;\\

$ord_{t}(\phi(x(t),y(t))-\lambda\phi'(x(t),y(t)))=ord_{t}(t^{i}u(t)-\lambda
t^{j}v(t))$\\

When $\lambda=0$, we obtain, by Theorem 6.1 of \cite{depiro6},
that $I_{\gamma_{p}}(C,\phi)=i$ and, for $\lambda\neq 0$, that
$I_{\gamma_{p}}(C,\phi-\lambda\phi')=j$. Using Lemma 5.27 of
\cite{depiro6}, we obtain that
$I_{\gamma_{p}}^{\Sigma,mobile}(C,\phi)=i-j=ord_{\gamma_{p}}({\phi\over\phi'})$, as required.\\

$i<j$. In this case, by definition,
$val_{\gamma_{p}}({\phi\over\phi'})=\infty$. The computation for
$ord_{\gamma_{p}}$ is similar, with the critical value
being $\lambda=\infty$.\\

$i=j$. We compute;\\

$ord_{t}(\phi(x(t),y(t))-\lambda\phi'(x(t),y(t)))=ord_{t}[t^{i}(u(t)-\lambda
v(t))]$\\

Again, there exists a unique value of $\lambda={u(0)\over
v(0)}=val_{\gamma_{p}}({\phi\over\phi'})\neq\{0,\infty\}$ such
that $ord_{t}(u(t)-\lambda v(t))=k\geq 1$. By the same calculation
as above, we have that
$I_{\gamma_{p}}^{\Sigma,mobile}(C,\phi-\lambda\phi')=k$, for this
critical value of $\lambda$. By a similar algebraic calculation to
the above, using the fact that $v(t)$ is a unit, we also compute
$ord_{\gamma_{p}}({\phi\over\phi'})=k$, hence the result
follows.\\
\end{proof}

We now show the following;\\

\begin{lemma}
Given any algebraic curve $C\subset P^{w}$, with function field
$L(C)$, for a non-constant rational function $f\in L(C)$ and a
branch $\gamma_{p}$, we can unambiguously define
$ord_{\gamma_{p}}(f)$ and $val_{\gamma_{p}}(f)$.
\end{lemma}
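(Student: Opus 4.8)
The plan is to reduce the general case to the plane-curve case already handled in Lemma 2.2, using the birational invariance of branches and of the local data $ord$ and $val$.

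First I would recall that, by Theorem 6.1 of \cite{depiro6}, every branch $\gamma_p$ of $C$ admits a parametrisation by algebraic power series, and that a non-constant $f\in L(C)$ can be written locally as a quotient $\phi/\phi'$ of two algebraic forms of the same degree (or, after clearing denominators in the affine coordinates, as a ratio of polynomials). Substituting the power-series parametrisation $\{x(t),y(t)\}$ of $\gamma_p$ into $f=\phi/\phi'$ produces a Laurent-type expression $t^{i-j}u(t)v(t)^{-1}$ exactly as in the display preceding Lemma 2.2, and this lets us \emph{define} $ord_{\gamma_p}(f)$ and $val_{\gamma_p}(f)$ by the same three-case recipe (zero of order $i-j$, pole of order $j-i$, or finite value $h(0)$ with $ord$ measuring $ord_t(h(t)-h(0))$). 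So the content of the lemma is purely the \emph{well-definedness}: independence of (a) the representation of $f$ as $\phi/\phi'$, and (b) the choice of power-series parametrisation of the branch.

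For (a), I would argue that two representations $\phi/\phi'=\psi/\psi'$ of the same rational function differ by the relation $\phi\psi'=\psi\phi'$ in $L(C)$, so after substituting the parametrisation the two Laurent expressions in $L((t))$ agree; since $ord_t$ and the ``constant term after normalising'' are intrinsic to an element of $L((t))$, the values $ord_{\gamma_p}(f)$ and $val_{\gamma_p}(f)$ coincide. For (b), I would invoke the fact (again from \cite{depiro6}, Theorem 6.1 and the discussion of branches there) that any two algebraic power-series parametrisations of the same branch $\gamma_p$ differ by a formal reparametrisation $t\mapsto \tau(t)$ with $\tau(t)=ct+\cdots$, $c\neq 0$, a unit substitution of $L[[t]]$; such a substitution preserves $ord_t$ of any element of $L((t))$ and fixes its value at $t=0$, so both quantities are unchanged. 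Alternatively, and more in the spirit of the surrounding material, I would use Theorem 1.14: pass to a plane model $C'$ of $C$ via conic projection so that $\gamma_p$ corresponds to a branch $\gamma_q$ of $C'$ and $f$ pulls back to a rational function on $C'$; then Lemma 2.2 together with the birational invariance of the intersection multiplicities $I_{\gamma_p}$ and $I^{\Sigma,mobile}_{\gamma_p}$ shows $ord$ and $val$ transfer correctly, and the plane case is already established.

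The main obstacle I expect is bookkeeping around the case $i=j$ and around base branches: one must check that the ``critical value'' $\lambda=val_{\gamma_p}(f)$ singled out by the condition $I_{\gamma_p}(C,\phi-\lambda\phi')>I_{\gamma_p}(C,\phi-\mu\phi')$ for $\mu\neq\lambda$ is genuinely independent of the chosen pencil through which $f$ is presented, and that when $\gamma_p$ \emph{is} a base branch the relevant order is the \emph{mobile} intersection number, matching Lemma 2.2. This is handled by observing that the defining property of $\lambda$ — being the unique point of $P^1$ over which the fibre multiplicity of $f$ jumps — is manifestly intrinsic to $f$ and $\gamma_p$, while Lemma 5.27 of \cite{depiro6} converts the raw order $ord_{\gamma_p}(f)$ into $I^{\Sigma,mobile}_{\gamma_p}$ precisely in the base-branch situation. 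Everything else is a routine transcription of the plane-curve argument of Lemma 2.2, so I would state these reductions and leave the verbatim repetitions to the reader.
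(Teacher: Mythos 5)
Your proposal is correct, but your primary argument for representation-independence is genuinely different from (and more elementary than) the paper's. The paper proves independence of the presentation $\phi_{1}/\phi_{2}=\phi_{3}/\phi_{4}$ geometrically: it uses Lemma 2.1 to convert $ord_{\gamma_{p}}$ into the intersection multiplicities $I_{\gamma_{p}}$ and $I^{\Sigma,mobile}_{\gamma_{p}}$, witnesses these by transverse intersections at infinitesimally nearby points of the canonical set $V$ (Lemma 5.25 of \cite{depiro6}), where the two presentations literally coincide, and finishes with a monotonicity argument to identify the two critical values in the base-branch case. You instead observe that $\phi_{1}\phi_{4}-\phi_{3}\phi_{2}$ vanishes on $C$, hence vanishes identically after substituting the branch parametrisation, so the two Laurent expressions coincide as a single element of $L((t))$; since $ord_{\gamma_{p}}$ and $val_{\gamma_{p}}$ are read off from that element alone (its $t$-adic order and, when that order is zero, its constant term), all cases --- including $i=j$ and base branches --- are handled uniformly, and the ``bookkeeping around the critical value'' you worry about in your last paragraph is in fact unnecessary on your route. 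What the paper's longer argument buys is the explicit by-product that the critical value and the mobile multiplicities are themselves presentation-independent, in the form quoted later (e.g.\ in the proof of Lemma 2.3); your argument recovers these facts only indirectly through Lemma 2.1. For independence of the parametrisation you invoke the fact that two parametrisations of a branch differ by a unit substitution $t\mapsto ct+\cdots$ with $c\neq 0$; the paper instead simply cites the proof of Lemma 2.1, which anchors everything to the intrinsically defined $I_{\gamma_{p}}$ --- either justification works, and your alternative reduction to a plane model via Theorem 1.14 is also sound, though it is not what the paper does for this lemma.
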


\begin{proof}
The proof is similar to the above. We may, without loss of
generality, assume that $p$ corresponds to the origin of a
coordinate system $(x_{1},\ldots,x_{w})$. Using Theorem 6.1 of
\cite{depiro6}, we can find algebraic power series
$(x_{1}(t),\ldots,x_{w}(t))$ parametrising the branch
$\gamma_{p}$. By the assumption that $f$ is non-constant, we can
find a representation of $f$ as a rational function
$\phi(x_{1},\ldots,x_{w})\over\phi'(x_{1},\ldots,x_{w})$ in this
coordinate system, such that the pencil $\Sigma$ defined by
$\{\phi,\phi'\}$ has finite intersection with $C$, hence defines a
$g_{n}^{1}$. Using the method above, we can define
$ord_{\gamma_{p}}({\phi\over\phi'})$ and
$val_{\gamma_{p}}({\phi\over\phi'})$ for this representation. The
proof of Lemma 2.1 shows that these are defined independently of
the particular power series parametrising the branch. We need to
check that they are also defined independently of the particular
representation of $f$. Suppose that
$\{\phi_{1},\phi_{2},\phi_{3},\phi_{4}\}$ are algebraic forms with
the property that
${\phi_{1}\over\phi_{2}}={\phi_{3}\over\phi_{4}}$ as rational
functions on $C$. We claim that, for any branch $\gamma_{p}$ of
$C$,
$ord_{\gamma_{p}}({\phi_{1}\over\phi_{2}})=ord_{\gamma_{p}}({\phi_{3}\over\phi_{4}})$
and
$val_{\gamma_{p}}({\phi_{1}\over\phi_{2}})=val_{\gamma_{p}}({\phi_{3}\over\phi_{4}})$,
$(*)$. In order to see this, let $U\subset NonSing(C)$ be an open
subset of $C$, on which ${\phi_{1}\over\phi_{2}}$ and
${\phi_{3}\over\phi_{4}}$ are defined and equal. Let $g_{n}^{1}$
and $g_{m}^{1}$ on $C$ be defined by the pencils
$\Sigma_{1}=\{\phi_{1}-\lambda\phi_{2}\}_{\lambda\in P^{1}}$ and
$\Sigma_{2}=\{\phi_{3}-\lambda\phi_{4}\}_{\lambda\in P^{1}}$. Let
$V=U\setminus Base(\Sigma_{1})\cup Base(\Sigma_{2})$. Then
$V\subset U$ is also an open subset of $C$, which we will refer to
as the canonical set. Now, suppose that $\gamma_{p}\subset V$. We
will prove $(*)$ for this branch. As both
${\phi_{1}\over\phi_{2}}$ and ${\phi_{3}\over\phi_{4}}$ are
defined and equal at $p$, using the argument in Lemma 2.1, we have
that
$val_{\gamma_{p}}({\phi_{1}\over\phi_{2}})=val_{\gamma_{p}}({\phi_{3}\over\phi_{4}})$.
It is therefore sufficient, again by Lemma
2.1, to show that;\\

$I_{\gamma_{p}}(C,\phi_{1}-\lambda\phi_{2})=I_{\gamma_{p}}(C,\phi_{3}-\lambda\phi_{4})$,
for ${\phi_{1}\over\phi_{2}}(p)={\phi_{3}\over\phi_{4}}(p)=\lambda$ $(\dag)$\\

Suppose that $I_{\gamma_{p}}(\phi_{1}-\lambda\phi_{2})=m$, then,
by Lemma 5.25 of \cite{depiro6}, we can find $\lambda'\in{\mathcal
V}_{\lambda}\cap P^{1}$ and $\{p_{1},\ldots,p_{m}\}=V\cap
{\mathcal V}_{p}\cap (\phi_{1}-\lambda'\phi_{2})=0$ witnessing
this multiplicity. As $\{p,p_{1},\ldots,p_{m}\}$ lie inside $V$,
we also have that $\{p_{1},\ldots,p_{m}\}\subset V\cap{\mathcal
V}_{p}\cap(\phi_{3}-\lambda'\phi_{4})=0$, hence
$I_{\gamma_{p}}(C,\phi_{3}-\lambda\phi_{4})\geq m$. The result
$(\dag)$ then follows from the converse argument.\\

Now, suppose that $\gamma_{p}$ is one of the finitely many
branches of $C$, not lying inside $V$. We will just consider the
case when $\gamma_{p}$ is a base branch for both the $g_{n}^{1}$
and the $g_{m}^{1}$ defined above, the other cases being similar.
In order to prove $(*)$ for this branch, it is sufficient, by
Lemma 2.1, to show that;\\

$I_{\gamma_{p}}^{\Sigma_{1},mobile}(C,\phi_{1}-\lambda\phi_{2})=I_{\gamma_{p}}^{\Sigma_{2},mobile}(C,\phi_{3}-\mu\phi_{4})$,
for the critical values\\
\indent \ \ \ \ \ \ \ \ \ \ \ \ \ \ \ \ \ \ \ \ \ \ \ \ \ \ \ \ \ \ \ \ \ \ \ \ \ \ \ \ \ \ \ \ \ \ \ \ \ \ \ \ \ \ \ \ \ \ \ \ \ \ $\{\lambda,\mu\}$\\

and that the critical values $\{\lambda,\mu\}$ coincide,
$(\dag\dag)$.\\

Using the argument to prove $(\dag)$, witnessing the corresponding
multiplicities in the canonical set $V$, it follows that for
$\emph{any}$ $\nu\in P^{1}$;\\

$I_{\gamma_{p}}^{\Sigma_{1},mobile}(C,\phi_{1}-\nu\phi_{2})=I_{\gamma_{p}}^{\Sigma_{2},mobile}(C,\phi_{3}-\nu\phi_{4})$, $(\dag\dag\dag)$\\

If the critical values $\{\lambda,\mu\}$ were distinct, we would
have that;\\

$I_{\gamma_{p}}^{\Sigma_{1},mobile}(C,\phi_{1}-\lambda\phi_{2})>I_{\gamma_{p}}^{\Sigma_{1},mobile}(C,\phi_{1}-\mu\phi_{2})$\\
\indent\ \ \ \ \ \ \ \ \ \ \ \ \ \ \ \ \ $||$\ \ \ \ \ \ \ \ \ \ \
\ \ \ \ \ \ \ \ \ \ \ \ \ \ \ \ \ \ \ \ $||$\ \ \ \ \ \ \ \ \
\\
\indent $I_{\gamma_{p}}^{\Sigma_{2},mobile}(C,\phi_{3}-\lambda\phi_{4})<I_{\gamma_{p}}^{\Sigma_{2},mobile}(C,\phi_{3}-\mu\phi_{4})$\\

which is clearly a contradiction. Hence, $\lambda=\mu$ and the
result $(\dag\dag)$ follows from $(\dag\dag\dag)$. The lemma is
shown.
\end{proof}

\begin{lemma}{Birational Invariance of $ord_{\gamma_{p}}$ and $val_{\gamma_{p}}$}\\

Let $\Phi:C_{1}\leftrightsquigarrow C_{2}$ be a birational map
between projective algebraic curves with corresponding
isomorphisms $\Phi^{*}:L(C_{2})\rightarrow L(C_{1})$ and\\
$[\Phi]^{*}:\bigcup_{p\in C_{2}}\gamma_{p}\rightarrow\bigcup_{q\in
C_{1}}\gamma_{q}$ . Then, for non-constant $f\in L(C_{2})$ and
$\gamma_{p}$ a branch of $C_{2}$,
$ord_{\gamma_{p}}(f)=ord_{[\Phi]^{*}\gamma_{p}}(\Phi^{*}f)$ and
$val_{\gamma_{p}}(f)=val_{[\Phi]^{*}\gamma_{p}}(\Phi^{*}f)$.

\end{lemma}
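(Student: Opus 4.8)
The plan is to reduce the statement to the defining computation of $ord_{\gamma_p}$ and $val_{\gamma_p}$ given in Lemma 2.2, exploiting that the birational map $\Phi$ simultaneously transports rational functions (via $\Phi^*$) and branches (via $[\Phi]^*$) compatibly. First I would fix a representation of $f$ as a ratio $\phi/\phi'$ of algebraic forms on $C_2$, chosen so that the pencil $\Sigma$ generated by $\{\phi,\phi'\}$ has finite intersection with $C_2$; by Lemma 2.3 the values $ord_{\gamma_p}(f)$ and $val_{\gamma_p}(f)$ do not depend on this choice. Then $\Phi^*f$ is represented on $C_1$ by the corresponding ratio $\psi/\psi'$, where $\psi,\psi'$ are obtained by substituting the components of a presentation $\Phi_{\Sigma_1}$ of (a morphism lifting) $\Phi^{-1}$ into $\phi,\phi'$; again by Lemma 2.3 it suffices to compute $ord_{[\Phi]^*\gamma_p}$ and $val_{[\Phi]^*\gamma_p}$ using this particular representation.

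Next I would pass to a common non-singular model. Let $(C^{ns},\Phi_1,\Phi_2)$ with morphisms $\Phi_i:C^{ns}\to C_i$ resolve $\Phi$ as in the proof of Theorem 1.14, so that $[\Phi]^*=[\Phi_1^{-1}]^*\circ[\Phi_2]^*$ on branches and $\Phi^*=\Phi_2^*\circ(\Phi_1^*)^{-1}$ on function fields. The key point is that a branch $\gamma_p$ of $C_2$ corresponds, under $[\Phi_2]^*$, to the branch $\gamma_{p_j}$ of $C^{ns}$ determined by a point $p_j\in\Gamma_{[\Phi_2]}(x,p)$ (Definition 5.15 of \cite{depiro6}), and likewise on the $C_1$ side; moreover the lifted forms $\overline{\phi_\lambda},\overline{\psi_\mu}$ agree as rational functions on $C^{ns}$ because both pull back $f$, i.e. $\overline{(\phi/\phi')}=\overline{(\psi/\psi')}$ on $C^{ns}$ — this is exactly the identity $(2)$ established inside the proof of Theorem 1.14 (the composite presentation $\Phi_{\Sigma_4}$ is birationally the identity on $C^{ns}$). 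Having reduced both sides to the \emph{same} rational function on $C^{ns}$ evaluated at the \emph{same} branch $\gamma_{p_j}$, I would then invoke Lemma 2.2 twice: once on $C_2$ to identify $ord_{\gamma_p}(f)$ with $ord_t$ of $(\phi-\lambda\phi')$ along a power series parametrisation, and once on $C^{ns}$ / $C_1$, using that under the morphism $\Phi_i$ the set $\gamma_p\setminus p$ is in biunivocal correspondence with $\mathcal V_{p_j}\setminus p_j$ (the same correspondence used to prove $(1)$ in Theorem 1.14), so that the ``$ord$'' and ``$val$'' data are preserved.

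Concretely, the skeleton of the argument is: (a) choose representations and apply Lemma 2.3 to make them canonical; (b) split into the cases ``$\gamma_p$ is a base branch'' versus ``$\gamma_p$ is not a base branch'' of the associated $g_n^1$, noting that $\Phi$ carries base branches to base branches since $[\Phi]^*$ is a bijection preserving incidence with every weighted set (Theorem 1.14); (c) in the non-base case, $val_{\gamma_p}(f)$ is just $f(p)$, which is manifestly $\Phi^*$-equivariant, and $ord_{\gamma_p}(f)=I_{\gamma_p}(C_2,\phi-\lambda\phi')$ transforms correctly because $I_{\gamma_p}$ is birationally invariant — this is precisely the branch-multiplicity transport in Theorem 1.14, applied to the $g_n^1$ cut by $\Sigma$; (d) in the base-branch case, use instead the equality $ord_{\gamma_p}(f)=I^{\Sigma,mobile}_{\gamma_p}(C_2,\phi-\lambda\phi')$ from Lemma 2.2 together with the birational invariance of $I^{\Sigma,mobile}$ established via $(1)$ and $(2)$ in the proof of Theorem 1.14, and observe that the critical value $\lambda$ characterising $val_{\gamma_p}(f)$ is transported because it is characterised purely by the inequality $I_{\gamma_p}(C,\phi-\lambda\phi')>I_{\gamma_p}(C,\phi-\mu\phi')$ for $\mu\neq\lambda$, a condition invariant under $[\Phi]^*$.

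The main obstacle I anticipate is the bookkeeping around presentations and base-point contributions: a priori the pencil $\Sigma$ on $C_2$ need not lift to a pencil on $C_1$ with the \emph{same} mobile part at every branch, because the presentations $\Phi_{\Sigma_1},\Phi_{\Sigma_3}$ introduce their own base loci. The way around this — following Section 5 of \cite{depiro6} and the proof of Theorem 1.14 — is to choose the presentations so that $Base(\Sigma_1)$ and $Base(\Sigma_3)$ are disjoint from the relevant fibres $\Gamma_{[\Phi_i]}(x,p)$, so that no spurious fixed contribution appears at $\gamma_{p_j}$ and the mobile intersection numbers match on the nose. Once this genericity of presentations is arranged, identities $(1)$ and $(2)$ from Theorem 1.14 do all the work and the two sides of the claimed equalities literally become the same $ord_t$ computation on $C^{ns}$.
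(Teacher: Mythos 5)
Your proposal is correct and follows essentially the same route as the paper's proof: represent $f$ by the pencil $\{\phi_{1}-\lambda\phi_{2}\}_{\lambda\in P^{1}}$, lift it through a presentation of $\Phi$, transport the (mobile) branch multiplicities via the identity $I_{\gamma_{p}}^{\Sigma,mobile}(C_{2},\phi_{1}-\lambda\phi_{2})=I_{[\Phi]^{*}\gamma_{p}}^{\Sigma,mobile}(C_{1},\overline{\phi_{1}}-\lambda\overline{\phi_{2}})$ from the proof of Theorem 1.14, split on whether $\gamma_{p}$ is a base branch, match the critical values by their monotonicity characterisation from Lemma 2.1, and conclude by independence of the representation (the paper's Lemma 2.2, which you cite under a shifted number). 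Your detour through the common non-singular model and the care about presentation base loci is just the internal machinery of Theorem 1.14 made explicit, and covers the paper's mixed Case 2 in which $[\Phi]^{*}\gamma_{p}$ acquires base-branch status from the presentation.
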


\begin{proof}
Let $f$ be represented as a rational function by
${\phi_{1}\over\phi_{2}}$, as in Lemma 2.2, and consider the
$g^{1}_{n}$ on $C_{2}$, defined by the linear system
$\Sigma=\{\phi_{1}-\lambda\phi_{2}\}_{\lambda\in P^{1}}$. Let
$\Phi_{\Sigma_{1}}$ be a presentation of the birational map
$\Phi$. Using this presentation, we may lift the system $\Sigma$
to a corresponding linear system
$\{\overline{\phi_{1}}-\lambda\overline{\phi_{2}}\}_{\lambda\in
P^{1}}$. It is trivial to check that $\Phi^{*}f$ is represented by
the rational function
${\overline{\phi_{1}}\over\overline{\phi_{2}}}$. The proof of
Theorem 1.14 shows that, for a branch $\gamma_{p}$ of $C_{2}$;\\

$I_{\gamma_{p}}^{\Sigma,mobile}(C_{2},\phi_{1}-\lambda\phi_{2})=I_{[\Phi]^{*}\gamma_{p}}^{\Sigma,mobile}(C_{1},\overline{\phi_{1}}-\lambda\overline{\phi_{2}})$, $(*)$\\

We now need to consider the following cases;\\

Case 1. $\gamma_{p}$ and $[\Phi]^{*}\gamma_{p}$ are not base
branches for $\Sigma$ on $C_{2}$ and $C_{1}$.\\

Case 2. $\gamma_{p}$ is not a base branch, but
$[\Phi]^{*}\gamma_{p}$ is a base branch for $\Sigma$ on\\
\indent \ \ \ \ \ \ \ \ \ \ \  $C_{2}$ and $C_{1}$.\\

Case 3. $\gamma_{p}$ is a base branch and $[\Phi]^{*}\gamma_{p}$
is a base branch for $\Sigma$ on $C_{2}$\\
\indent\ \ \ \ \ \ \ \ \ \ \  and $C_{1}$.\\

For Case 1, we have, by Lemma 2.1 and $(*)$;\\

$ord_{\gamma_{p}}({\phi_{1}\over\phi_{2}})=I_{\gamma_{p}}(C_{2},\phi_{1}-\lambda\phi_{2})=I_{[\Phi]^{*}\gamma_{p}}(C_{1},\overline{\phi_{1}}-\lambda\overline{\phi_{2}})=ord_{[\Phi]^{*}\gamma_{p}}({\overline{\phi_{1}}\over\overline{\phi_{2}}})$\\

where
${\phi_{1}\over\phi_{2}}(p)={\overline{\phi_{1}}\over\overline{\phi_{2}}}(q)=val_{\gamma_{p}}({\phi_{1}\over\phi_{2}})=val_{\gamma_{q}}({\overline{\phi_{1}}\over\overline{\phi_{2}}})=\lambda$
and $[\Phi]^{*}\gamma_{p}=\gamma_{q}$.\\

For Case 3, we have, by Lemma 2.1, $(*)$ and a similar argument to
the previous lemma, to show that the critical value
$\lambda=val_{\gamma_{p}}({\phi_{1}\over\phi_{2}})$ is also the
critical value
$val_{\gamma_{q}}({\overline{\phi_{1}}\over\overline{\phi_{2}}})$
for the lifted system at the
corresponding branch $[\Phi]^{*}\gamma_{p}$, that;\\

$ord_{\gamma_{p}}({\phi_{1}\over\phi_{2}})=I_{\gamma_{p}}^{\Sigma,mobile}(C_{2},\phi_{1}-\lambda\phi_{2})=I_{[\Phi]^{*}\gamma_{p}}^{\Sigma,mobile}(C_{1},\overline{\phi_{1}}-\lambda\overline{\phi_{2}})=ord_{[\Phi]^{*}\gamma_{p}}({\overline{\phi_{1}}\over\overline{\phi_{2}}})$\\

Case 2 is similar, we leave the details to the reader.\\

The lemma now follows from the previous lemma, that the
definitions of $ord_{\gamma_{p}}(f)$,
$ord_{[\Phi]^{*}\gamma_{p}}(\Phi^{*}f)$,$val_{\gamma_{p}}(f)$ and
$val_{[\Phi]^{*}\gamma_{p}}(\Phi^{*}f)$ are independent of their
particular representations.

\end{proof}

We now show;\\

\begin{lemma}
Let $C$ be a projective algebraic curve, then, to any non-constant
rational function $f$ on $C$, we can associate a $g_{n}^{1}$ on
$C$, which we will denote by $(f)$, where $n=deg(f)$, (flatness?).
\end{lemma}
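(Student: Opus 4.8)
The plan is to realize the $g_{n}^{1}$ associated to $f$ as the series cut out by the pencil of level sets of $f$, and then to verify that its order equals $\deg(f)$ by a valuation-theoretic count of poles. First I would pick a representation of $f$ as a ratio $\phi/\phi'$ of algebraic forms of the same degree $e$ on $C\subset P^{w}$, chosen so that the pencil $\Sigma=\{\phi-\lambda\phi'\}_{\lambda\in P^{1}}$ has finite intersection with $C$; this is possible exactly as in the proof of Lemma 2.2, and it defines a $g_{m}^{1}$ on $C$ for some $m$ after removing the fixed branch contribution of $\Sigma$. Call $(f)$ this $g_{m}^{1}$. By Lemma 2.3 (birational invariance of $ord_{\gamma_{p}}$ and $val_{\gamma_{p}}$) together with Theorem 1.14 (birational invariance of a $g_{n}^{1}$), the resulting $g_{m}^{1}$ depends only on $f$ and not on the chosen representation $\phi/\phi'$, so $(f)$ is well defined; in particular, passing to a nonsingular model $C^{ns}$ of $C$ via Theorem 1.14, we may compute $m=\deg(f)$ there.

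Next I would fix a value $\lambda_{0}\in P^{1}$ which is generic for the pencil, i.e. such that $\phi-\lambda_{0}\phi'$ meets $C$ transversely at all its mobile intersection points and misses every base branch of $\Sigma$ in its mobile part; such $\lambda_{0}$ exists by Lemma 5.25 of \cite{depiro6}. The weighted set $W_{\lambda_{0}}$ in $(f)$ is then $C\sqcap(\phi-\lambda_{0}\phi')$ with the fixed branch contribution removed. The key point is to identify this weighted set branch-by-branch with the divisor of zeros of $f-\lambda_{0}$: by Lemma 2.1, for a branch $\gamma_{p}$ that is not a base branch, the multiplicity of $\gamma_{p}$ in $W_{\lambda_{0}}$ is $I_{\gamma_{p}}(C,\phi-\lambda_{0}\phi')$, which equals $ord_{\gamma_{p}}(f-\lambda_{0})=ord_{\gamma_{p}}(f)$ when $val_{\gamma_{p}}(f)=\lambda_{0}$, and is $0$ when $val_{\gamma_{p}}(f)\ne\lambda_{0}$; for a base branch $\gamma_{p}$, the mobile part contributes $I_{\gamma_{p}}^{\Sigma,mobile}(C,\phi-\lambda_{0}\phi')=ord_{\gamma_{p}}(f)$ at the unique critical value. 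Hence the total weight of $W_{\lambda_{0}}$ equals $\sum_{\gamma_{p}:\,val_{\gamma_{p}}(f)=\lambda_{0}}ord_{\gamma_{p}}(f)$, the degree of the divisor of zeros of the function $f-\lambda_{0}$ on the nonsingular model.

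Finally I would invoke the standard fact that on a nonsingular projective curve the number of zeros of a non-constant rational function, counted with multiplicity over all branches, equals its degree as a map to $P^{1}$, and that this count is the same for every fibre $f^{-1}(\lambda_{0})$; this is precisely $\deg(f)$. Since by the branched Hyperspatial Bezout Theorem the total weight of $C\sqcap(\phi-\lambda_{0}\phi')$ is $e\cdot\deg(C)$, and subtracting the (constant) total weight of the fixed branch contribution of $\Sigma$ leaves a number independent of $\lambda_{0}$, we get $order((f))=\deg(f)$, i.e. $n=\deg(f)$, and $dimension((f))=1$ since the level sets of a single function $f$ form genuinely a one-parameter family (no base branch can absorb it, as $f$ is non-constant). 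The main obstacle I anticipate is the bookkeeping at the base branches of $\Sigma$: one must check carefully, using Lemma 2.1 and Lemma 5.27 of \cite{depiro6}, that removing the fixed branch contribution converts $I_{\gamma_{p}}$ into $I_{\gamma_{p}}^{\Sigma,mobile}$ uniformly and that this mobile count is exactly $ord_{\gamma_{p}}(f)$, so that no multiplicity is lost or double-counted when one passes between the pencil description and the divisor-of-zeros description; the parenthetical ``(flatness?)'' in the statement suggests the author also wants to know that the fibres $f^{-1}(\lambda)$ all have the same weight, which is exactly the flatness of $f$ over $P^{1}$ and is what makes $order((f))$ well defined.
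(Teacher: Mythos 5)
Your proposal is correct and follows essentially the same route as the paper: both define $(f)$ via the pencil $\{\phi-\lambda\phi'\}_{\lambda\in P^{1}}$, match the weighted sets $(f=\lambda)$ with $C\sqcap(\phi-\lambda\phi')$ branch-by-branch through Lemmas 2.1 and 2.2 (splitting into the fixed-branch and mobile cases), and obtain $n=\deg(f)$ by evaluating a generic fibre where all branches are simple, with constancy of the total weight coming from the branched Bezout theorem minus the fixed contribution. The only cosmetic difference is that the well-definedness of $(f)$ independent of the representation $\phi/\phi'$ rests on Lemma 2.2 rather than on Lemma 2.3/Theorem 1.14, since the weighted sets are defined purely in terms of $val_{\gamma}(f)$ and $ord_{\gamma}(f)$.
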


\begin{proof}
We define the weighted set $(f=\lambda)$ as follows;\\

$(f=\lambda):=\{n_{\gamma_{1}},\ldots,n_{\gamma_{r}}\}$\\

where
$\{\gamma_{1},\ldots,\gamma_{r}\}=\{\gamma:val_{\gamma}(f)=\lambda\}$ and $n_{\gamma}=ord_{\gamma}(f)$.\\

As $\lambda$ varies over $P^{1}$, we obtain a series of weighted
sets $W_{\lambda}$ on $C$. We claim that this series does in fact
define a $g_{n}^{1}$. In order to see this, let $f$ be represented
as a rational function by $\phi\over\phi'$. As before, we consider
the pencil $\Sigma$ of forms defined by
$(\phi-\lambda\phi')_{\lambda\in P^{1}}$. We claim that the series
is defined by this system $\Sigma$, after removing its fixed
branch contribution, $(*)$. In order to see this, we compare the
weighted sets $(f=\lambda)$ and $C\sqcap (\phi-\lambda\phi')$. For
a branch $\gamma_{p}$ which is not a fixed branch of the system
$\Sigma$,
we have, using Lemmas 2.1 and 2.2, that;\\

 $\gamma_{p}\in (f=\lambda)$ iff $val_{\gamma_{p}}(f)=\lambda$ iff
${\phi\over\phi'}(p)=\lambda$ iff $p\in
C\cap(\phi-\lambda\phi')$\\

In this case, by Lemmas 2.1 and 2.2, we have that;\\

$n_{\gamma_{p}}=ord_{\gamma_{p}}(f)=ord_{\gamma_{p}}({\phi\over\phi'})=I_{\gamma_{p}}(C,\phi-\lambda\phi')$\\

For a branch $\gamma_{p}$ which is a fixed branch of the system
$\Sigma$, we have, by Lemmas 2.1 and 2.2, that;\\

$\gamma_{p}\in (f=\lambda)$ iff
$val_{\gamma_{p}}({\phi\over\phi'})=\lambda$ iff $p\in
C\cap(\phi-\lambda\phi')$ and $\lambda$ is a critical value for
the system $\Sigma$ at $\gamma_{p}$.\\

In this case, by Lemmas 2.1 and 2.2, we have that;\\

$n_{\gamma_{p}}=ord_{\gamma_{p}}(f)=ord_{\gamma_{p}}({\phi\over\phi'})=I_{\gamma_{p}}^{\Sigma,mobile}(C,\phi-\lambda\phi')$ $(1)$\\

Let $I_{\gamma_{p}}=min_{\mu\in
P^{1}}I_{\gamma_{p}}(C,\phi-\mu\phi')$ be the fixed branch
contribution of $\Sigma$ at $\gamma_{p}$. Then, at the critical
value $\lambda$ for the system $\Sigma$;\\

 $I_{\gamma_{p}}^{\Sigma,mobile}(\phi-\lambda\phi')=I_{\gamma_{p}}(C,\phi-\lambda\phi')-I_{\gamma_{p}}$ $(2)$\\

Hence, the result $(*)$ follows from $(1),(2)$ and the definition
of\\
 $C\sqcap(\phi-\lambda\phi')$.\\

Finally, we show that $n=deg(f)$. Let $\Gamma_{f}$ be the
correspondence determined by the rational map $f:C\rightsquigarrow
P^{1}$. By classical arguments, $deg(f)$ is equal to the
cardinality of the generic fibre $\Gamma_{f}(\lambda)$, for
$\lambda\in P^{1}$. Fixing a presentation ${\phi\over\phi'}$ for
$f$, if $U\subset NonSing(C)$ is the canonical set for this
presentation, one may assume that the generic fibre
$\Gamma_{f}(\lambda)$ lies inside $U$. By Lemma 2.17 of
\cite{depiro6}, one may also assume that the corresponding
weighted set of the $g_{n}^{1}$ defined by $(f=\lambda)$ consists
of $n$ distinct branches, centred at the points of the generic
fibre $\Gamma_{f}(\lambda)$. Therefore, the result follows.

\end{proof}

\begin{rmk}
By convention, for a non-zero rational function $c\in
{L\setminus\{0\}}$, we define $(c=0)$ and $(c=\infty)$ to be the
empty weighted sets. The notion of a weighted set in a
$g_{n}^{1}$, generalises the classical notion of the divisor on a
non-singular curve. Using the above theorem, we can make sense of
the notion of linear equivalence of weighted sets.
\end{rmk}

We make the following definition;\\

\begin{defn}{Linear equivalence of weighted sets}\\

Let $C$ be an algebraic curve and let $A$ and $B$ be weighted sets
on $C$ of the same total multiplicity. We define $A\equiv B$ if
there exists a $g_{n}^{r}$ on $C$ such that $A$ and $B$ belong to
this $g_{n}^{r}$ as weighted sets.

\end{defn}

\begin{theorem}
Let hypotheses be as in the previous definition. If $A\equiv B$,
then there exists a rational function $g$ on $C$, such that $A$ is
defined by $(g=0)$ and $B$ is defined by $(g=\infty)$, possibly
after adding some fixed branch contribution.

\end{theorem}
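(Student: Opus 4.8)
The plan is to reduce to the case $r=1$ (a pencil) and then invoke Lemma 2.4. Since $A\equiv B$, by Definition 2.7 there is a $g_{n}^{r}$ on $C$ containing both $A$ and $B$ as weighted sets. First I would pass to a linear system $\Sigma$ of dimension $r$ with finite intersection with $C$ defining this $g_{n}^{r}$ after removal of some fixed branch contribution $W_{0}$; so $A\cup W_{0}=C\sqcap\phi_{\lambda_{A}}$ and $B\cup W_{0}=C\sqcap\phi_{\lambda_{B}}$ for parameters $\lambda_{A},\lambda_{B}\in Par_{\Sigma}$. Replacing $A,B$ by $A\cup W_{0},B\cup W_{0}$ is harmless, since adding a fixed branch contribution is exactly the operation allowed in the statement; so without loss of generality I may assume $A$ and $B$ are cut out by single forms $\phi_{\lambda_{A}},\phi_{\lambda_{B}}$ of $\Sigma$, with no fixed contribution removed.

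Next I would pass to the pencil $\Sigma_{1}\subset\Sigma$ spanned by $\{\phi_{\lambda_{A}},\phi_{\lambda_{B}}\}$. By the argument in the proof of Theorem 1.3 (equation $(*)$ there), the weighted sets $C\sqcap(\mu_{1}\phi_{\lambda_{A}}+\mu_{2}\phi_{\lambda_{B}})$ for $[\mu_{1}:\mu_{2}]\in P^{1}$ form a $g_{m}^{1}$ (for some $m\leq n$, after removing the fixed branch contribution $J$ of $\Sigma_{1}$), and this $g_{m}^{1}$ contains the images of $A$ and $B$. Now set $g=\phi_{\lambda_{A}}/\phi_{\lambda_{B}}$, a rational function on $C$; it is non-constant precisely because $A\neq B$ as weighted sets (if $g$ were constant, $\phi_{\lambda_{A}}$ and $\phi_{\lambda_{B}}$ would be proportional and cut the same set). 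By Lemma 2.4 the $g_{m}^{1}$ associated to $g$, namely $(g)$, is exactly the series cut by this pencil after removing its fixed branch contribution, and by the definition of $(g=\lambda)$ together with Lemmas 2.1 and 2.2, the weighted set $(g=0)$ is the ``mobile part'' of $C\sqcap\phi_{\lambda_{A}}$ and $(g=\infty)$ is the mobile part of $C\sqcap\phi_{\lambda_{B}}$. Therefore $A=(g=0)\cup J'$ and $B=(g=\infty)\cup J'$, where $J'$ is the fixed branch contribution of $\Sigma_{1}$ (restricted appropriately); this is the desired conclusion, with $J'$ playing the role of the ``fixed branch contribution'' in the statement.

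The main obstacle I anticipate is bookkeeping at the base branches of the pencil $\Sigma_{1}$: one must check carefully that the critical value of $g$ at such a branch $\gamma_{p}$ is $0$ exactly when $\phi_{\lambda_{A}}$ has strictly higher multiplicity there than the generic member, $\infty$ exactly when $\phi_{\lambda_{B}}$ does, and that otherwise the branch contributes the same fixed amount to both $A$ and $B$ (so it goes into $J'$). This is precisely the content of the case analysis ($i>j$, $i<j$, $i=j$) in the proof of Lemma 2.1, so the verification amounts to quoting that lemma branch-by-branch; the genuinely new point is only that $A$ and $B$, having the same total multiplicity $n$ and arising from a common pencil, differ by exactly the divisor of zeros minus divisor of poles of $g$ once the common fixed part $J'$ is stripped off. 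One should also note the degenerate possibility $A=B$, handled by the Remark 2.6 convention that $(c=0)$ and $(c=\infty)$ are empty, taking $g$ constant and $J'=A$.
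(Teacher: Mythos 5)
Your proposal is correct, and it is essentially the paper's strategy (reduce to a pencil through $A$ and $B$, produce a rational function from that pencil, and let the common part of $A$ and $B$ be absorbed into a fixed branch contribution), but executed with a genuine shortcut. The paper takes $f=\phi/\phi'$ built from the pencil's \emph{generators}, so that $A=(f=\lambda_{1})$ and $B=(f=\lambda_{2})$ for two finite values, and then must compose with the M\"obius transformation $\alpha(z)=\frac{z-\lambda_{1}}{z-\lambda_{2}}$ and verify by an explicit three-case power-series computation that $ord_{\gamma}$ is preserved and $val_{\gamma}$ transforms by $\alpha$; that verification is the bulk of the paper's proof. You instead define $g=\phi_{\lambda_{A}}/\phi_{\lambda_{B}}$ directly, which is literally the same function as the paper's $\alpha\circ f=(\phi-\lambda_{1}\phi')/(\phi-\lambda_{2}\phi')$, but arrived at in a way that lets Lemma 2.4 immediately identify $(g=0)$ and $(g=\infty)$ with the mobile parts of $C\sqcap\phi_{\lambda_{A}}$ and $C\sqcap\phi_{\lambda_{B}}$ --- no M\"obius bookkeeping needed. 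Your treatment of shared branches also differs cosmetically: the paper strips $A\cap B$ off by hand before running the argument and adds it back at the end, whereas you let the pencil's fixed contribution $J'$ do this automatically; this works because, by linearity of multiplicity at a branch, the fixed contribution of the pencil at a shared branch is exactly $\min$ of the two multiplicities, which is the point your final paragraph correctly locates in the case analysis of Lemma 2.1. Two small points worth tightening: (i) your reduction ``replace $A,B$ by $A\cup W_{0},B\cup W_{0}$'' proves $A\cup W_{0}=(g=0)+J'$, so to recover the stated form $A=(g=0)+J$ you need $W_{0}\subseteq J'$; this holds because $W_{0}$ is fixed for all of $\Sigma$, hence for the sub-pencil, but it should be said; (ii) non-constancy of $g$ requires not just $A\neq B$ but that $\Sigma$ has finite intersection with $C$ (so no nontrivial combination of $\phi_{\lambda_{A}},\phi_{\lambda_{B}}$ vanishes on $C$), which is exactly what Theorem 1.3 supplies and what you implicitly use.
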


\begin{proof}
If $r=0$ in the definition, then we must have that $A=B$. Hence,
we obtain the statement of the theorem by adding the fixed branch
contribution $A$ to the empty $g_{0}^{0}$, defined by
$(c=0)=(c=\infty)$, for a non-constant $c\in L^{*}$. Otherwise, by
the definition of a $g_{n}^{r}$, we may, without loss of
generality, find a pencil $\Sigma$ of algebraic forms,
$\{\phi-\lambda\phi'\}_{\lambda\in P^{1}}$, having finite intersection with $C$, such that;\\

$A=C\sqcap(\phi-\lambda_{1}\phi)$,\\

$B=C\sqcap(\phi-\lambda_{2}\phi')$\ \ \ \ \  $(\lambda_{1}\neq
\lambda_{2})$\\

Let $f$ be the rational function on $C$ defined by
${\phi\over\phi'}$. If $A$ and $B$ have no branches in common
(with multiplicity), $(\dag)$, then the pencil $\Sigma$ can have
no fixed
branches and, by Lemma 2.4, we have that;\\

$A=(f=\lambda_{1})$\\

$B=(f=\lambda_{2})$\ \ \ \ \ \ \ $(\lambda_{1}\neq\lambda_{2})$\\

Now we can find an algebraic automorphism $\alpha$ of $P^{1}$,
taking $\lambda_{1}$ to $0$ and $\lambda_{2}$ to $\infty$. We will
assume that $\{\lambda_{1},\lambda_{2}\}\neq\infty$, in which case
$\alpha$ can be given, for a coordinate $z$ on $P^{1}$, by the
Mobius transformation ${z-\lambda_{1}\over z-\lambda_{2}}$. The
other cases are left to the reader. Let $g$ be the rational
function on $C$ defined by $\alpha\circ f$. Now, suppose that
$\gamma$ is a branch of $C$, with $val_{\gamma}(f)=\lambda$ and
$ord_{\gamma}(f)=m$. Then, we claim that
$val_{\gamma}(g)=\alpha(\lambda)$ and $ord_{\gamma}(g)=m$, $(*)$.
If $\lambda\neq\{\lambda_{2},\infty\}$, using the method before
Lemma 2.1, we obtain
the following power series representation of $g$ at $\gamma$;\\

${(\lambda+\mu t^{m}+o(t^{m}))-\lambda_{1}\over(\lambda+\mu
t^{m}+o(t^{m}))-\lambda_{2}}=[(\lambda-\lambda_{1})+\mu t^{m}+o(t^{m})]\centerdot{1\over (\lambda-\lambda_{2})}[1-{\mu\over(\lambda-\lambda_{2})}t^{m}+o(t^{m})]$\\
\indent \ \ \ \ \ \ \ \ \ \ \ \ \ \ \ \ \ \ \ \ \
$={\lambda-\lambda_{1}\over
\lambda-\lambda_{2}}+t^{m}[{\mu(\lambda-\lambda_{2})-\mu(\lambda-\lambda_{1})\over
(\lambda-\lambda_{2})^{2}}]+o(t^{m})$\\
\indent \ \ \ \ \ \ \ \ \ \ \ \ \ \ \ \ \ \ \ \ \
$={\lambda-\lambda_{1}\over
\lambda-\lambda_{2}}+t^{m}[{\mu(\lambda_{1}-\lambda_{2})\over
(\lambda-\lambda_{2})^{2}}]+o(t^{m})$\\

and the claim $(*)$ follows from the assumption that
$\lambda_{1}\neq \lambda_{2}$. If $\lambda=\lambda_{2}$, we obtain
the following power series representation of $g$ at
$\gamma$;\\

${(\lambda+\mu t^{m}+o(t^{m}))-\lambda_{1}\over(\mu
t^{m}+o(t^{m}))}={1\over t^{m}}\centerdot
[(\lambda-\lambda_{1})+\mu t^{m}+o(t^{m})]\centerdot [\mu+o(1)]^{-1}$\\

which gives that $val_{\gamma}(g)=\infty=\alpha(\lambda_{2})$ and
$ord_{\gamma}(g)=m$, using the fact that $\lambda\neq\lambda_{1}$.
Finally, if $\lambda=\infty$, the Mobius transformation at
$\infty$ is given by ${{1\over z}-\lambda_{1}\over {1\over
z}-\lambda_{2}}={1-\lambda_{1}z\over 1-\lambda_{2}z}$ and $g$ may
be represented at $\gamma$ by
${\phi-\lambda_{1}\phi'\over\phi-\lambda_{2}\phi'}$. We then
obtain the power series representation of $g$ at $\gamma$;\\

${(t^{i}u(t)-\lambda_{1}t^{i+m}v(t))\over(t^{i}u(t)-\lambda_{2}t^{i+m}v(t))}={(u(t)-\lambda_{1}t^{m}v(t))\over(u(t)-\lambda_{2}t^{m}v(t))}={[1-\lambda_{1}t^{m}{v(t)\over
u(t)}]\over [1-\lambda_{2}t^{m}{v(t)\over u(t)}]}$\\
$\indent \ \ \ \ \ \ \ \ \ \ \ \ \ \ \ \ \ \ \ \ =1+(\lambda_{2}-\lambda_{1})t^{m}w(t)+o(t^{m})$, for $\{u(t),v(t),w(t)\}$\\
\indent \ \ \ \ \ \ \ \ \ \ \ \ \ \ \ \ \ \ \ \ \ \ \ \ \ \ \ \ \ \ \ \ \ \ \ \ \ \ \ \ \ \ \ \ \ \ \ \ \ \ \ \ \ \ \ \ \ \ \ \ \ \ \ \ \ \ \ \  units in $L[[t]]$\\

which gives that $val_{\gamma}(g)=1=\alpha(\infty)$ and
$ord_{\gamma}(g)=m$, using the fact that
$\lambda_{1}\neq\lambda_{2}$ again. This gives the claim $(*)$. It
follows that the weighted sets $(f=\lambda)$ correspond exactly to
the weighted sets $(g=\alpha(\lambda))$, in particularly the
$g_{n}^{1}$ defined by $(f)$ and $(g)$, as in Lemma 2.4, is the
same. With this new parametrisation of the $g_{n}^{1}$, we then
have that;\\

$A=(g=0)$\\

$B=(g=\infty)$\\

Hence, the result follows, with the assumption $(\dag)$. If $A$
and $B$ have branches in common, with multiplicity, we let $A\cap
B$ denote the weighted set consisting of these common branches
(with multiplicity). Then, the same argument holds, replacing $A$
by $A\setminus B=A-(A\cap B)$ and $B$ by $B\setminus A=B-(A\cap
B)$. After adding the fixed branch contribution $(A\cap B)$ to the
$g_{n}^{1}$ defined by $(g)$, we then obtain the result. Note
that, by Lemma 1.13, this addition defines a $g_{n+n'}^{1}$, where
$n'$ is the total multiplicity of $(A\cap B)$. \\

\end{proof}

\begin{rmk}
The definition we have given of linear equivalence of weighted
sets on a projective algebraic curve $C$ generalises the modern
definition of linear equivalence for effective divisors on a
smooth projective algebraic curve. More precisely we have;\\

Modern Definition; Let $A$ and $B$ be effective divisors on a
smooth projective algebraic curve $C$, then $A\equiv B$ iff
$A-B=div(g)$, for some $g\in L(C)^{*}$.\\

See, for example, p161 of \cite{Shaf} for relevant definitions and
notation. We now show that our definition is the same in this
case. First, observe that there exists a natural bijection between
the set of effective divisors on $C$, in the sense of \cite{Shaf},
and the collection of weighted sets on $C$, $(*)$. This follows
immediately from the fact, given in Lemma 5.29 of \cite{depiro6},
that, for each point $p\in C$, there exists a unique branch
$\gamma_{p}$, centred at $p$. Secondly, observe that the notion of
$div(g)$, for $g\in L(C)$, as given in \cite{Shaf}, is the same as
the notion of $div(g)$ which we give in Definition 2.9 below,
(taking into account the identification $(*)$), $(\dag)$. This
amounts to checking that, for a point $p\in C$, with corresponding
branch $\gamma_{p}$;\\

$v_{p}(g)=ord_{\gamma_{p}}(g)$ $(\dag\dag)$\\

where $v_{p}(g)$ is defined in p152 of \cite{Shaf}, and we temporarily adopt
the convention that $ord_{\gamma_{p}}(g)=0$ if $val_{\gamma}(g)\neq\{0,\infty\}$
and $ord_{\gamma_{p}}(g)$ is counted negatively if $val_{\gamma_{p}}(g)=\infty$.
First, one can use the fact, given in Lemma 4.9 of \cite{depiro6}, together with
remarks from the final section of this paper, that there exists a
birational map $\phi:C\leftrightsquigarrow C'$, such that $C'$ is
a plane projective algebraic curve, and $p$ corresponds to a
non-singular point $p'\in C'$ with $\{p,p'\}$ lying inside the
canonical sets associated to $\phi$. Using the calculation given
below, in Lemma 2.10, for $ord_{\gamma_{p}}$, and the definition
of $v_{p}$, one can assume that $v_{p}(g)\geq 0$ and $g\in
O_{p,C}$. Let $g'\in L(C')$ denote the corresponding rational
function to $g$ on $L(C)$. It is then a trivial algebraic
calculation, using the fact that the local rings $O_{p,C}$ and
$O_{p',C'}$ are isomorphic, to show that $v_{p}(g)=v_{p}(g')$. It
also follows from Lemma 2.3 that
$ord_{\gamma_{p}}(g)=ord_{\gamma_{p'}}(g')$. Hence, it is
sufficient to check $(\dag)$ for the plane projective curve $C'$.
We may, without loss of generality, assume that $v_{p}(g')\geq 1$
and that $g'$ is represented in some choice of affine coordinates
$\{x,y\}$ by the polynomial $q(x,y)$. If $Q(X,Y,Z)$ denotes the
projective equation of this polynomial and $p$ corresponds to the
origin of this coordinate system, then;\\

$v_{p}(g')=I_{p}(C,Q)=length({L[x,y]\over <h,q>})$\\

where $h$ is a defining equation for $C'$ in the coordinate system
$\{x,y\}$ and $I_{p}$ is the algebraic intersection multiplicity.
It also follows from Lemma 2.1, that;\\

$ord_{\gamma_{p}}(g')=I_{\gamma_{p}}(C,Q)$\\

Hence, it is sufficient to check that;\\

$I_{p}(C,Q)=I_{\gamma_{p}}(C,Q)$\\

This calculation was done in the paper \cite{depiro5}, hence
$(\dag\dag)$ and therefore $(\dag)$ is shown. Thirdly, it remains
to check that the definitions of linear equivalence are the same.
In order to see this, observe that we can write (for effective
divisors or weighted sets $A$ and $B$);\\

$A-B=(A\setminus B)+(A\cap B)]-[(B\setminus A)+(A\cap B)]=(A\setminus B)-(B\setminus A)$, $(\dag\dag\dag)$\\

If $A\equiv B$ in the sense of weighted sets (Definition 2.6),
then the calculation $(\dag\dag\dag)$ (which removes the fixed
branch contribution) and Theorem 2.7 shows that $A-B=div(g)$, for
some rational function $g\in L(C)$, where, here, $div(g)$ is as
defined in Definition 2.9. By $(\dag)$, it then follows that
$A\equiv B$ as effective divisors. Conversely, if $A\equiv B$ as
effective divisors, then there exists a rational function $g\in
L(C)$ such that $A-B=div(g)$, in the sense of the modern
definition given above. The above calculations $(\dag\dag\dag)$
and $(\dag)$ then show that $div(g)=(A\setminus B)-(B\setminus
A)$, in the sense of Definition 2.9 below. It follows, by Lemma
2.4, that there exists a $g_{n}^{1}$ to which $(A\setminus B)$ and
$(B\setminus A)$ belong as weighted sets. Adding the fixed branch
contribution $(A\cap B)$ to this $g_{n}^{1}$, we then obtain that
$A\equiv B$ in the sense of Definition 2.6, as required.
\end{rmk}

\begin{defn}
Let $C$ be a projective algebraic curve and let $f$ be a non-zero
rational function on $C$. Then we define $div(f)$ to
be the weighted set $A-B$ where;\\

$A=(f=0)$,\indent $B=(f=\infty)$\\

\end{defn}

We now require the following lemma;\\

\begin{lemma}
Let $C$ be a projective algebraic curve, and let $f$ and $g$ be
non-zero rational functions on $C$. Then;\\

$div({1\over f})=-div(f)$\\

$div(fg)=div(f)+div(g)$\\

$div({f\over g})=div(f)-div(g)$\\

\end{lemma}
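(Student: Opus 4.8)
The plan is to work branch by branch. Fix a branch $\gamma_{p}$ of $C$ and, using Theorem 6.1 of \cite{depiro6}, choose algebraic power series $(x_{1}(t),\ldots,x_{w}(t))$ parametrising $\gamma_{p}$ with $x_{i}(0)=0$. Represent $f$ and $g$ as rational functions $\phi_{1}/\phi_{2}$ and $\psi_{1}/\psi_{2}$ in these coordinates, so that $1/f$, $fg$ and $f/g$ are represented by $\phi_{2}/\phi_{1}$, $\phi_{1}\psi_{1}/\phi_{2}\psi_{2}$ and $\phi_{1}\psi_{2}/\phi_{2}\psi_{1}$ respectively. By Lemma 2.2 the quantities $ord_{\gamma_{p}}$ and $val_{\gamma_{p}}$ are independent of the chosen representation, so it suffices to compute with these particular representatives. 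Substituting the power series, write $\phi_{1}(x(t))=t^{i}u(t)$, $\phi_{2}(x(t))=t^{j}v(t)$, $\psi_{1}(x(t))=t^{k}a(t)$, $\psi_{2}(x(t))=t^{\ell}b(t)$, with $u,v,a,b$ units in $L[[t]]$ (allowing $i,j,k,\ell=0$ when the corresponding form is a unit at the branch).

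The core of the argument is then an elementary computation of $ord_{t}$ of the resulting Laurent-type expressions, following exactly the case analysis in the definition preceding Lemma 2.1. For $div(1/f)=-div(f)$: swapping $\phi_{1}$ and $\phi_{2}$ swaps the roles of $i$ and $j$, so the order is unchanged while the "zero versus pole" alternative is interchanged; hence $\gamma_{p}$ contributes to $(1/f=0)$ exactly when it contributes to $(f=\infty)$ and vice versa, with the same multiplicity, and in the case $i=j$ one has $val_{\gamma_{p}}(1/f)=val_{\gamma_{p}}(f)^{-1}\notin\{0,\infty\}$, so neither side sees the branch. For $div(fg)=div(f)+div(g)$: the product $\phi_{1}\psi_{1}/\phi_{2}\psi_{2}$ has power-series valuation $(i+k)-(j+\ell)$, which is $(i-j)+(k-\ell)$; one then checks the three relevant sub-cases (both $f$ and $g$ have a zero/pole at $\gamma_{p}$, exactly one does, neither does) and sees that the coefficient of $\gamma_{p}$ in $div(fg)$ equals its coefficient in $div(f)+div(g)$, where one uses the convention that a branch with $val_{\gamma_{p}}\notin\{0,\infty\}$ contributes $0$. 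The identity $div(f/g)=div(f)-div(g)$ then follows by combining the first two, since $f/g=f\cdot(1/g)$.

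The main obstacle is bookkeeping at the finitely many branches where cancellation of orders occurs: if $f$ has a zero of order $m$ at $\gamma_{p}$ and $g$ has a pole of order $m$ at $\gamma_{p}$, then $fg$ has $val_{\gamma_{p}}(fg)\in L\setminus\{0\}$ and contributes nothing to $div(fg)$, while the right-hand side contributes $m\gamma_{p}-m\gamma_{p}=0$, so the identity still holds as an equality of weighted sets (formal $\mathbb{Z}$-linear combinations of branches); one must be careful that subtraction of weighted sets is interpreted with this cancellation, and that a branch appearing with a strictly positive order in $(fg=0)$ genuinely has $val_{\gamma_{p}}(fg)=0$, which forces $i+k>j+\ell$. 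Apart from this, the argument is a routine verification on algebraic power series in $L[[t]]$, using that sums, products and inverses of units are units, and requires no further geometric input beyond Lemmas 2.1 and 2.2 and the definitions of $ord_{\gamma_{p}}$, $val_{\gamma_{p}}$ and $(f=\lambda)$ already established. The details are left to the reader.
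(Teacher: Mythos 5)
Your proposal is correct and follows essentially the same route as the paper: represent $f$, $g$, $1/f$, $fg$ by the obvious ratios of forms, substitute a power series parametrisation of each branch, and verify the identities by a case analysis on whether the branch is a zero, a pole, or neither for each function, deducing the third identity from the first two. Your explicit treatment of the cancellation case (a zero of $f$ against a pole of $g$ of equal order, where the branch drops out of $div(fg)$ while the right-hand side contributes $m\gamma_{p}-m\gamma_{p}=0$) is in fact slightly more careful than the paper's Case 2, which states $ord_{\gamma}(fg)=|m-n|$ without singling out $m=n$.
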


\begin{proof}
In order to prove the first claim, it is sufficient to show that,
for a branch $\gamma$ of $C$;\\

$val_{\gamma}(f)=0$ iff $val_{\gamma}({1\over f})=\infty$\\

$val_{\gamma}(f)=\infty$ iff $val_{\gamma}({1\over f})=0$\\

and $ord_{\gamma}$ is preserved in both cases. This follows
trivially from the relevant power series calculation at a branch.
Namely, we can represent $f$ by ${\phi\over\phi'}$ and ${1\over
f}$ by ${\phi'\over\phi}$. Substituting the branch
parametrisation, we obtain that;\\

$val_{\gamma}(f)=0, ord_{\gamma}(f)=m$ iff $f\sim t^{m}u(t)$, \
$m\geq 1,u(t)\in L[[t]]$ a
unit.\\
\indent \ \ \ \ \ \ \ \ \ \ \ \ \ \ \ \ \ \ \ \ \ \ \ \ \ \ \ \ \
\ \ \ \ \ iff ${1\over f}\sim
t^{-m}u(t)^{-1}$\\
\indent \ \ \ \ \ \ \ \ \ \ \ \ \ \ \ \ \ \ \ \ \ \ \ \ \ \ \ \ \
\ \ \ \ \ iff $val_{\gamma}(f)=\infty, ord_{\gamma}(f)=m$\\

and the calculation for $val_{\gamma}(f)=\infty,
ord_{\gamma}(f)=m$ is similar.\\

In order to prove the second claim, we need to verify the following cases at a branch $\gamma$ of $C$;\\

Case 1. If $val_{\gamma}(f)=val_{\gamma}(g)\in\{0,\infty\}$,
$ord_{\gamma}(f)=m$ and
$ord_{\gamma}(g)=n$\\

\indent \ \ \ \ \ \ \ \ \ \ \  then $val_{\gamma}(fg)\in\{0,\infty\}$ and $ord_{\gamma}(fg)=m+n$\\

Case 2. If $val_{\gamma}(f)\neq val_{\gamma}(g)\in\{0,\infty\}$,
$ord_{\gamma}(f)=m$ and $ord_{\gamma}(g)=n$\\

\indent \ \ \ \ \ \ \ \ \ \ \ \ then
$val_{\gamma}(fg)\in\{0,\infty\}$ and $ord_{\gamma}(fg)=|m-n|$\\

Case 3. If exactly one of $val_{\gamma}(f)$ and $val_{\gamma}(g)$
is in $\{0,\infty\}$, with\\
\indent \ \ \ \ \ \ \  \ \ \ \ $ord_{\gamma}(f)$ or $ord_{\gamma}(g)=m$\\

\indent \ \ \ \ \ \ \ \ \ \ \ then $val_{\gamma}(fg)\in\{0,\infty\}$, with $ord_{\gamma}(fg)=m$.\\

Case 4. If neither of $val_{\gamma}(f)$ and $val_{\gamma}(g)$ are
in $\{0,\infty\}$\\

\indent \ \ \ \ \ \ \ \ \ \ \ \ then $val_{\gamma}(fg)$ is not in
$\{0,\infty\}$\\

If $f$ is represented by ${\phi\over\phi'}$ and $g$ is represented
by ${\psi\over\psi'}$, then we can represent $fg$ by
${\phi\psi\over\phi'\psi'}$. The proof of these cases then follow by elementary power series calculations at the branch $\gamma$.
For example, for Case 2, if $val_{\gamma}(f)=0$ and $ord_{\gamma}(f)=m$, $val_{\gamma}(g)=\infty$ and $ord_{\gamma}(g)=n$, then we have;\\

$f\sim t^{n}u(t)$, $g\sim t^{-m}v(t)$, $fg\sim
t^{n}t^{-m}u(t)v(t)=t^{n-m}w(t)$,\\
\indent \ \ \ \ \ \ \ \ \ \ \ \ \ \ \ \ \ \ \ \ \ \ \ \ \ \ \ \ \
\ \ \ \ \ \ \ \  for $\{u(t),v(t),w(t)\}$ units in
$L[[t]]$.\\

The third claim follows from the first two claims.

\end{proof}

We now claim the following;\\

\begin{theorem}{Transitivity of Linear Equivalence}\\

Let $C'$ be an algebraic curve. If $A,B,C$ are weighted sets on
$C'$ of the same total multiplicity, then, if $A\equiv B$ and
$B\equiv C$, we must have that $A\equiv C$.

\end{theorem}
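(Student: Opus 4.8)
The plan is to reduce transitivity of linear equivalence to the corresponding statement for divisors via $\mathrm{div}(\cdot)$, using the characterisation of linear equivalence proved in Theorem 2.7 together with the multiplicativity of $\mathrm{div}$ in Lemma 2.10. First I would apply Theorem 2.7 to the hypothesis $A\equiv B$: there is a rational function $f$ on $C'$ such that $A\cup E_{1}=(f=0)$ and $B\cup E_{1}=(f=\infty)$ for some fixed branch contribution $E_{1}$, i.e. after the identification $(\dag\dag\dag)$ of Remark 2.8 we have $A-B=\mathrm{div}(f)$. Likewise, from $B\equiv C$ we obtain a rational function $g$ with $B-C=\mathrm{div}(g)$. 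By Lemma 2.10, $\mathrm{div}(fg)=\mathrm{div}(f)+\mathrm{div}(g)=(A-B)+(B-C)=A-C$, so the product $h=fg$ witnesses $A-C=\mathrm{div}(h)$.

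It then remains to turn the relation $A-C=\mathrm{div}(h)$ back into the statement $A\equiv C$ in the sense of Definition 2.6, i.e. to exhibit a $g_{n}^{r}$ containing both $A$ and $C$ as weighted sets. Here I would run the last paragraph of the argument in Remark 2.8 in reverse: writing $A-C=(A\setminus C)-(C\setminus A)$ by $(\dag\dag\dag)$, and comparing with $\mathrm{div}(h)=(h=0)-(h=\infty)$, one checks that $(A\setminus C)=(h=0)$ and $(C\setminus A)=(h=\infty)$ after possibly absorbing a fixed branch contribution (the function $h$ may need to be composed with a Möbius transformation of $P^{1}$, exactly as in the proof of Theorem 2.7, to arrange that the two distinguished values are $0$ and $\infty$). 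By Lemma 2.4 the function $h$ determines a $g_{n}^{1}=(h)$ to which $(A\setminus C)$ and $(C\setminus A)$ both belong; adding the fixed branch contribution $(A\cap C)$ to this $g_{n}^{1}$, which by Lemma 1.13 again defines a linear series, we obtain a $g_{n'}^{1}$ containing both $A$ and $C$, hence $A\equiv C$.

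One point that needs care, and which I expect to be the main obstacle, is the degenerate case where one of the two given equivalences has $r=0$, so that $f$ (or $g$) is a non-zero constant and $A=B$ (or $B=C$). In that situation $\mathrm{div}(f)$ is the zero weighted set and the product trick still goes through formally, but one must fall back on the $r=0$ branch of the proof of Theorem 2.7 — adjoining the fixed branch contribution $A$ to the empty $g_{0}^{0}$ defined by $(c=0)=(c=\infty)$ — to produce the required linear series. A second, more routine, subtlety is bookkeeping the fixed branch contributions: the functions $f$ and $g$ produced by Theorem 2.7 already incorporate the pieces $A\cap B$ and $B\cap C$, so when multiplying one should work throughout with the "reduced" weighted sets $A\setminus B$, $B\setminus A$, etc., and only reinstate the common parts at the very end via Lemma 1.13; I would simply remark that this is the same $(\dag\dag\dag)$-manipulation already carried out in Remark 2.8 and leave the arithmetic to the reader.
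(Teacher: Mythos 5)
Your proposal is correct and follows essentially the same route as the paper: apply Theorem 2.7 to each hypothesis to obtain $div(f)=(A\setminus B)-(B\setminus A)$ and $div(g)=(B\setminus C)-(C\setminus B)$, use the additivity of $div$ under products (Lemma 2.10) together with the set-theoretic identity reducing the sum to $(A\setminus C)-(C\setminus A)$, and then recover $A\equiv C$ from the $g_{n}^{1}$ defined by $(fg)$ after reinstating the fixed branch contribution $A\cap C$ via Lemma 1.13. The paper makes the middle step explicit with a Venn-diagram computation where you invoke the $(\dag\dag\dag)$ identity of Remark 2.8, but the argument is the same.
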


\begin{proof}
By Theorem 2.7, we can find rational functions $f$ and $g$ on
$C'$, such that;\\

$(A\setminus B)-(B\setminus A)=div(f)$\\

$(B\setminus C)-(C\setminus B)=div(g)$\\

By Lemma 2.9, we have that;\\

$div(fg)=(A\setminus B)-(B\setminus A)+(B\setminus C)-(C\setminus
B)$\\

By drawing a Venn diagram, one easily checks that;\\

$(A\setminus B)-(B\setminus A)=(A\cap B^{c}\cap C^{c})+(A\cap
B^{c}\cap C)-(A^{c}\cap B\cap C^{c})-\\
\indent \ \ \ \ \ \ \ \ \ \ \ \ \ \ \ \ \ \ \ \ \ \ \ \ \ \ \ (A^{c}\cap B\cap C)$\\
\indent \ \ \ \ \ \ \ \ \ \ $+$\\
\indent $(B\setminus C)-(C\setminus B)=(A\cap B\cap
C^{c})+(A^{c}\cap
B\cap C^{c})-(A^{c}\cap B^{c}\cap C)-\\
\indent \ \ \ \ \ \ \ \ \ \ \ \ \ \ \ \ \ \ \ \ \ \ \ \ \ \ \
(A\cap B^{c}\cap C)$\\
\indent \ \ \ \ \ \ \ \ \ \ \ $||$\\
\indent $(A\setminus C)-(C\setminus A)=(A\cap B^{c}\cap
C^{c})+(A\cap
B\cap C^{c})-(A^{c}\cap B^{c}\cap C)-\\
\indent \ \ \ \ \ \ \ \ \ \ \ \ \ \ \ \ \ \ \ \ \ \ \ \ \ \ \ (A^{c}\cap B\cap C)$\\

Hence, $div(fg)=(A\setminus C)-(C\setminus A)$. Now, given the
$g_{n}^{1}$ defined by the rational function $fg$, as in Lemma
2.4, it follows that $(A\setminus C)$ and $(C\setminus A)$ belong
to this $g_{n}^{1}$ as weighted sets. We can now add the fixed
branch contribution $A\cap C$ to this $g_{n}^{1}$, giving a
$g_{n+n'}^{1}$, to which $A$ and $C$ belong as weighted sets.
Therefore, the result follows.

\end{proof}

As an immediate corollary, we have;\\

\begin{theorem}
Let $C$ be a projective algebraic curve, then $\equiv$ is an
equivalence relation on weighted sets for $C$ of a given
multiplicity.

\end{theorem}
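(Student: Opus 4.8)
The statement to prove is Theorem 2.12: that $\equiv$ is an equivalence relation on the weighted sets of $C$ of a fixed total multiplicity $n$. The plan is to check the three defining properties separately, and to observe that two of them are essentially free given what has already been established.

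First I would dispose of symmetry. Definition 2.6 declares $A\equiv B$ to hold precisely when there is a $g_n^r$ on $C$ to which both $A$ and $B$ belong as weighted sets; this condition is visibly symmetric in the pair $\{A,B\}$, so $A\equiv B$ immediately gives $B\equiv A$. Next, transitivity is nothing other than Theorem 2.11 (Transitivity of Linear Equivalence), which has just been proved: if $A\equiv B$ and $B\equiv C$ then $A\equiv C$, all three having the common total multiplicity. So the only point requiring a (tiny) argument is reflexivity.

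For reflexivity I would argue that every weighted set $A$ of total multiplicity $n$ belongs to at least one $g_n^r$, whence $A\equiv A$ by taking $B=A$ in Definition 2.6. To see this, start from the empty $g_0^0$, which is a legitimate linear series defined by $(c=0)=(c=\infty)$ for a non-constant $c\in L^{*}$ (see Remark 2.5). Applying Lemma 1.13 with this $g_0^0$ and the fixed weighted set $W_0:=A$ of total multiplicity $n$, the collection $\{W\cup W_0\}$ over the (single, empty) weighted set $W$ of the $g_0^0$ is a $g_{n}^{0}$, and $A$ is its unique weighted set. Hence $A$ belongs to a $g_n^0$, so $A\equiv A$.

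Combining the three items, $\equiv$ is reflexive, symmetric, and transitive on weighted sets of total multiplicity $n$, which is the assertion of the theorem. The only conceivable obstacle is the reflexivity step, and even there the difficulty is purely bookkeeping: one must make sure that the trivial series produced by adjoining $A$ as a fixed branch contribution genuinely satisfies the definition of a $g_n^r$, which is exactly guaranteed by Lemma 1.13. No new computation is needed.
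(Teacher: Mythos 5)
Your proposal is correct and matches the paper's (essentially unstated) argument: the paper presents this theorem as an immediate corollary of Theorem 2.11, with symmetry built into Definition 2.6 and reflexivity left tacit. Your explicit reflexivity step — realising $A$ as the unique weighted set of a $g_n^0$ by adjoining $A$ as a fixed branch contribution to the empty $g_0^0$ via Lemma 1.13 — is exactly the right way to fill that small gap.
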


We also have;\\

\begin{theorem}{Linear Equivalence preserved by Addition}\\

Let $C'$ be a projective algebraic curve and suppose that
$\{A,B,C,D\}$ are weighted sets on $C'$ with;\\

$A\equiv B$ and $C\equiv D$\\

then;\\

$A+C\equiv B+D$\\

\end{theorem}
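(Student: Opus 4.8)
The plan is to follow the pattern of the proof of Theorem 2.10, reducing the assertion to Lemma 2.4 and Lemma 1.13 by means of the multiplicativity of $div$ established in Lemma 2.9. First I would check that the statement is meaningful: since $A$ and $B$ have the same total multiplicity, and $C$ and $D$ have the same total multiplicity, the weighted sets $A+C$ and $B+D$ also have the same total multiplicity. By Theorem 2.7 applied to $A\equiv B$, there is a rational function $f$ on $C'$ with $A=(f=0)+K_{1}$ and $B=(f=\infty)+K_{1}$ for a common fixed branch contribution $K_{1}$; equivalently, $A-B=div(f)$ as a formal difference of weighted sets. Similarly, $C\equiv D$ yields a rational function $g$ on $C'$ with $C-D=div(g)$.

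Next, by Lemma 2.9 I would compute
\[
div(fg)=div(f)+div(g)=(A-B)+(C-D)=(A+C)-(B+D).
\]
Set $E=(fg=0)$ and $F=(fg=\infty)$, so that $div(fg)=E-F$ by Definition 2.9. No branch $\gamma$ can satisfy both $val_{\gamma}(fg)=0$ and $val_{\gamma}(fg)=\infty$, so $E$ and $F$ have disjoint support; hence $E-F$ is the reduced form of the formal difference $(A+C)-(B+D)$. Writing $G=(A+C)\cap(B+D)$ for the common branches of $A+C$ and $B+D$ (counted with multiplicity), this gives $A+C=E+G$ and $B+D=F+G$.

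Finally, assuming $fg$ is non-constant, Lemma 2.4 produces a $g_{n}^{1}$ on $C'$, with $n=deg(fg)$, to which both $E=(fg=0)$ and $F=(fg=\infty)$ belong as weighted sets; in particular $E\equiv F$. Adding the fixed branch contribution $G$ to this $g_{n}^{1}$ produces, by Lemma 1.13, a $g_{n+n'}^{1}$ (with $n'$ the total multiplicity of $G$) among whose weighted sets are $E+G=A+C$ and $F+G=B+D$; hence $A+C\equiv B+D$ by Definition 2.6. The only degenerate situation is that $fg$ is constant, equivalently $div(fg)=0$, equivalently $A+C=B+D$ as weighted sets; in that case one argues as in the $r=0$ case of the proof of Theorem 2.7, adding the fixed branch contribution $A+C$ to the empty $g_{0}^{0}$ defined by $(c=0)=(c=\infty)$ for a non-zero $c\in L^{*}$. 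The only points needing care are the bookkeeping passage to reduced form — verifying that no further cancellation is concealed in $(A+C)-(B+D)$ — and the separation of the constant case; the substance of the argument is carried entirely by Lemmas 2.4, 2.9 and 1.13.
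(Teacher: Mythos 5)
Your argument is correct, but it takes a noticeably heavier route than the paper's. The paper proves this in two lines: it takes the $g_{n}^{r}$ containing $C$ and $D$, adds $A$ as a fixed branch contribution via Lemma 1.13 to get $A+C\equiv A+D$; then adds $D$ to the series containing $A$ and $B$ to get $A+D\equiv B+D$; and concludes by transitivity (Theorem 2.11). You instead extract rational functions $f$ and $g$ from Theorem 2.7, use additivity of $div$ to get $div(fg)=(A+C)-(B+D)$, identify $(fg=0)$ and $(fg=\infty)$ with the reduced parts of this difference via uniqueness of the disjoint effective decomposition, and then reassemble with Lemma 2.4 and Lemma 1.13 --- which is precisely the machinery the paper deploys to prove \emph{transitivity} itself (Theorem 2.11, including the Venn-diagram bookkeeping you replace with the uniqueness-of-reduced-form observation). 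In effect you have inlined the proof of transitivity rather than invoking it. Your version is sound (the degenerate case $fg$ constant, i.e.\ $A+C=B+D$, is handled correctly, and there is no circularity since Theorem 2.7 and the $div$ calculus are established before this point), and it has the modest advantage of exhibiting an explicit rational function $fg$ witnessing the equivalence; the paper's version is shorter, avoids any appeal to Theorem 2.7, and keeps the dependence structure cleaner by reusing transitivity as a black box.
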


\begin{proof}
By Definition 2.6, we can find a $g_{n}^{r}$ containing $C$ and
$D$ as weighted sets. If $s$ is the total multiplicity of $A$,
then, by Lemma 1.13, we can add the weighted set $A$ as a fixed
branch contribution to this $g_{n}^{r}$ and obtain a
$g_{n+s}^{r}$, containing $A+C$ and $A+D$ as weighted sets. Hence,
by Definition 2.6 again, we have that;\\

$A+C\equiv A+D$ $(1)$\\

Similarily, one shows, by adding $D$ as a fixed branch contribution to the $g_{n'}^{r'}$ containing
$A$ and $B$ as weighted sets, that;\\

$A+D\equiv B+D$ $(2)$\\

The result then follows immediately by combining $(1)$, $(2)$ and
using Theorem 2.11.

\end{proof}

We now develop further the theory of $g_{n}^{r}$ on a projective
algebraic curve $C$. We begin with the following definition;\\

\begin{defn}{Subordinate $g_{n}^{r}$}\\

Let  $\{g_{n}^{r},g_{n}^{t}\}$ be given on $C$ with the
\emph{same} order $n$. Then we say that;\\

$g_{n}^{r}\subseteq g_{n}^{t}$\\

if \emph{every} weighted set in $g_{n}^{r}$ is included in the
weighted sets of the $g_{n}^{t}$.

\end{defn}

We now claim the following;\\

\begin{theorem}{Amalgamation of $g_{n}^{r}$}\\

Let $\{g_{n}^{r},g_{n}^{s}\}$ be given on $C$, having a common
weighted set $G$, then there exists $t$ with $r\leq t, s\leq t$
and a $g_{n}^{t}$ such that $g_{n}^{r}\subseteq g_{n}^{t}$ and
$g_{n}^{s}\subseteq g_{n}^{t}$.
\end{theorem}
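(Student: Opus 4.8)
The plan is to realise both series by linear systems of forms of a single common degree, arranged so that they share a distinguished member, then to take the linear span of the two systems and cut it down by a common fixed branch contribution. Using the remark following Definition 1.4 together with Theorem 1.3, I would first choose a linear system $\Sigma_1$ of forms of degree $e_1$ on $P^{w}$, of dimension $r$ and having finite intersection with $C$, such that $g_n^r$ is obtained from $g_{n_1}^r(\Sigma_1)$, with $n_1=e_1\deg(C)$, by removing a fixed branch contribution $J_1$ of total multiplicity $n_1-n$; and similarly a linear system $\Sigma_2$ of forms of degree $e_2$, of dimension $s$, having finite intersection with $C$, with $g_n^s$ obtained from $g_{n_2}^s(\Sigma_2)$ by removing a fixed branch contribution $J_2$ of multiplicity $n_2-n$. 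Since $G\in g_n^r$, Theorem 1.3 provides a unique $\phi\in\Sigma_1$ with $C\sqcap\phi=G\cup J_1$, and likewise a unique $\psi\in\Sigma_2$ with $C\sqcap\psi=G\cup J_2$; neither $\phi$ nor $\psi$ contains $C$.

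Next I would pass to the linear systems $\psi\cdot\Sigma_1=\{\psi\chi:\chi\in\Sigma_1\}$ and $\phi\cdot\Sigma_2=\{\phi\chi:\chi\in\Sigma_2\}$, both consisting of forms of the common degree $e_1+e_2$, of dimensions $r$ and $s$ respectively, both having finite intersection with $C$, and both containing the form $\phi\psi$. By the additivity of intersection multiplicity at a branch over products of forms (established inside the proof of Lemma 1.13, by reduction to plane curves and the results of \cite{depiro5}), $C\sqcap(\psi\chi)=(C\sqcap\psi)\cup(C\sqcap\chi)=G\cup J_2\cup(C\sqcap\chi)$ for each $\chi\in\Sigma_1$. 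Hence $g_n^r$ is obtained from $g_{n_1+n_2}^r(\psi\cdot\Sigma_1)$ by removing the fixed branch contribution $J_1\cup G\cup J_2$, of total multiplicity $(n_1-n)+n+(n_2-n)=n_1+n_2-n$; symmetrically, $g_n^s$ is obtained from $g_{n_1+n_2}^s(\phi\cdot\Sigma_2)$ by removing the \emph{same} fixed branch contribution $J_1\cup G\cup J_2$.

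Finally I would take $\Sigma=\langle\,\psi\cdot\Sigma_1,\ \phi\cdot\Sigma_2\,\rangle$, the linear span inside the space of forms of degree $e_1+e_2$; a generic member of $\psi\cdot\Sigma_1$ does not contain $C$, so $\Sigma$ has finite intersection with $C$. Put $t=\dim(\mathrm{Series}(\Sigma))$. Since $\psi\cdot\Sigma_1$ and $\phi\cdot\Sigma_2$ are subsystems of $\Sigma$ of dimensions $r$ and $s$ having finite intersection with $C$, they are disjoint from the subsystem of $\Sigma$ of forms containing $C$, so the argument in the proof of Theorem 1.3 yields $\max(r,s)\leq t$; and because the two subspaces meet in the point $[\phi\psi]$ one also has $t\leq\dim\Sigma\leq r+s$. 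A general form of $\Sigma$ has the shape $\chi_1\psi+\chi_2\phi$ with $\chi_i\in\Sigma_i$, and at an arbitrary branch $\gamma$,
\[
I_\gamma(C,\chi_1\psi)=I_\gamma(C,\chi_1)+I_\gamma(C,\psi)\ \geq\ \mathrm{mult}_\gamma(J_1)+\mathrm{mult}_\gamma(G\cup J_2)=\mathrm{mult}_\gamma(J_1\cup G\cup J_2),
\]
because every form of $\Sigma_1$ vanishes at $\gamma$ to order at least $\mathrm{mult}_\gamma(J_1)$; the same bound holds for $I_\gamma(C,\chi_2\phi)$, and hence, by linearity of multiplicity at a branch, for $I_\gamma(C,\chi_1\psi+\chi_2\phi)$. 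Therefore $J_1\cup G\cup J_2$ is a fixed branch contribution of $g_{n_1+n_2}^t(\Sigma)$, and removing it (total multiplicity $n_1+n_2-n$) produces a genuine $g_n^t$ with $r\leq t$ and $s\leq t$. Since each weighted set of $g_n^r$ is obtained from $C\sqcap(\psi\chi)$, for a suitable $\chi\in\Sigma_1$, by removing $J_1\cup G\cup J_2$, and since $\psi\chi\in\Sigma$, each such weighted set is a weighted set of $g_n^t$; thus $g_n^r\subseteq g_n^t$, and likewise $g_n^s\subseteq g_n^t$, as required.

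The step I expect to be the main obstacle is the verification that $J_1\cup G\cup J_2$ is a fixed branch contribution of the \emph{span} $\Sigma$ — concretely, that the mixed forms $\chi_1\psi+\chi_2\phi$ vanish to the full order $\mathrm{mult}_\gamma(J_1)+\mathrm{mult}_\gamma(G)+\mathrm{mult}_\gamma(J_2)$ even at branches where $J_1$ and $J_2$ overlap; this is exactly where one must combine the additivity of branch-multiplicity over products of forms with its linearity in the form. A secondary point demanding care is the bookkeeping of which fixed contributions are removed at each stage, arranged so that the single set $J_1\cup G\cup J_2$ serves both subordinate series simultaneously.
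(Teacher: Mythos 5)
Your proof is correct and follows essentially the same route as the paper: multiply each system by the distinguished form of the other cutting out $G$, take the span $\langle\psi\cdot\Sigma_{1},\phi\cdot\Sigma_{2}\rangle$, and use additivity of branch multiplicity over products together with the $\min$-inequality for sums to exhibit a common fixed branch contribution whose removal yields the amalgamating $g_{n}^{t}$. The only difference is organisational — you treat the general case (arbitrary fixed contributions $J_{1},J_{2}$ and degrees $e_{1}\neq e_{2}$) uniformly, where the paper first does the fixed-contribution-free, equal-degree case and leaves the remaining cases to the reader — and you prove only containment of $J_{1}\cup G\cup J_{2}$ in the fixed part rather than the paper's exactness claim, which is all the theorem actually needs.
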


\begin{proof}
Assume first that $\{g_{n}^{r},g_{n}^{s}\}$ have no fixed branch
contribution and are defined exactly by linear systems. Then we
can find algebraic forms
$\{\phi_{0},\psi_{0}\}$ such that;\\

$G=(C\sqcap\phi_{0}=0)=(C\sqcap\psi_{0}=0)$\\

and;\\

$g_{n}^{r}$ is defined by
$C\sqcap(\epsilon_{0}\phi_{0}+\epsilon_{1}\phi_{1}+\ldots+\epsilon_{r}\phi_{r}=0)$\\

$g_{n}^{s}$ is defined by $C\sqcap(\eta_{0}\psi_{0}+\eta_{1}\psi_{1}+\ldots+\eta_{s}\psi_{s}=0)$\\

Now consider the linear system $\Sigma$ defined by;\\

$\epsilon\phi_{0}\psi_{0}+\psi_{0}(\epsilon_{1}\phi_{1}+\ldots+\epsilon_{r}\phi_{r})+\phi_{0}(\eta_{1}\psi_{1}+\ldots+\eta_{s}\psi_{s})=0$\\

and let $g_{m}^{t}$ be defined by $\Sigma$. As
$deg(\psi_{0}\phi_{0})=deg(\psi_{0})+deg(\phi_{0})$, we have that
$m=2n$. We claim that the fixed branch contribution of
$g_{2n}^{t}$ is exactly $G$, $(*)$. In order to see this, observe
that we can write an algebraic form in $\Sigma$ as;\\

$\psi_{0}\phi_{\bar\epsilon}+\phi_{0}\psi_{\bar\eta}$\\

If $\gamma$ is a branch counted $w$-times in $G$,
then, using the proof at the end of Lemma 1.13 and linearity of multiplicity at a branch,
see \cite{depiro6};\\

$I_{\gamma}(C,\psi_{0}\phi_{\bar\epsilon})=I_{\gamma}(C,\psi_{0})+I_{\gamma}(C,\phi_{\bar\epsilon})\geq
w$\\

$I_{\gamma}(C,\phi_{0}\psi_{\bar\eta})=I_{\gamma}(C,\phi_{0})+I_{\gamma}(C,\psi_{\bar\eta})\geq
w$\\

$I_{\gamma}(C,\psi_{0}\phi_{\bar\epsilon}+\phi_{0}\psi_{\bar\eta})=
min\{I_{\gamma}(C,\psi_{0}\phi_{\bar\epsilon}),I_{\gamma}(C,\phi_{0}\psi_{\bar\eta})\}\geq
w$ $(\dag)$\\

Hence, $\gamma$ is $w$-fold for the $g_{2n}^{t}$ and $G$ is
contained in the fixed branch contribution of the $g_{2n}^{t}$. In
order to obtain the exactness statement, $(*)$, first observe
that, if $\gamma$ is a fixed branch of the $g_{2n}^{t}$, then, in
particular, it belongs to $(C\sqcap \phi_{0}\psi_{0}=0)$. Hence,
it belongs either to $(C\sqcap \phi_{0}=0)$ or
$(C\sqcap\psi_{0}=0)$. Hence, it belongs to $G$. Now, using the
fact that the original $\{g_{n}^{r},g_{n}^{s}\}$ had no fixed
branch contribution, we can easily find $\phi_{\bar\epsilon_{0}}$
and $\psi_{\bar\eta_{0}}$ with $G$ disjoint from both $(C\sqcap
\phi_{\bar\epsilon_{0}}=0)$ and $(C\sqcap\psi_{\bar\eta_{0}}=0)$.
Then, by the same argument $(\dag)$, we obtain, for a branch $\gamma$ of $G$;\\

$I_{\gamma}(C,\psi_{0}\phi_{\bar\epsilon_{0}}+\phi_{0}\psi_{\bar\eta_{0}})=w$\\

hence, $\gamma$ is counted $w$-times in
$C\sqcap(\psi_{0}\phi_{\bar\epsilon_{0}}+\phi_{0}\psi_{\bar\eta_{0}}=0)$
and, therefore, $(*)$ holds, as required. Now, as $G$ had total
multiplicity $n$, removing this fixed branch contribution from the
$g_{2n}^{t}$, we obtain a $g_{n}^{t}$. We then claim that
$g_{n}^{r}\subseteq g_{n}^{t}$ and $g_{n}^{s}\subseteq g_{n}^{t}$,
$(**)$. By Definition 2.14, it is sufficient to check that, if
$\{W_{1},W_{2}\}$ are weighted sets appearing in
$\{g_{n}^{r},g_{n}^{s}\}$, defined by $(C\sqcap
\phi_{\bar\epsilon}=0)$ and $(C\sqcap\psi_{\bar\eta}=0)$, then
they appear in the $g_{n}^{t}$. We clearly have that both
$\psi_{0}\phi_{\bar\epsilon}$ and $\phi_{0}\psi_{\bar\eta}$ belong
to $\Sigma$ and the calculation $(\dag)$ shows that;\\

$C\sqcap(\psi_{0}\phi_{\bar\epsilon}=0)=W_{1}+G$\\

$C\sqcap(\phi_{0}\psi_{\bar\eta}=0)=W_{2}+G$\\

Hence, the result $(**)$ follows after removing the fixing branch
contribution $G$. The fact that $r\leq t$ and $s\leq t$ then
follows easily from the definition of the dimension of a $g_{n}^{r}$
and Theorem 1.3.\\

Now consider the case when the $\{g_{n}^{r},g_{n}^{s}\}$ are
defined exactly by linear systems and \emph{have} a fixed branch
contribution. Let $G_{1}\subseteq G$ and $G_{2}\subseteq G$ be
these fixed branch contributions and let $G_{3}=G_{1}\cap G_{2}$.
We claim that the fixed branch contribution of the $g_{2n}^{t}$
defined by $\Sigma$, as given above, in this case is exactly
$G_{3}+G$. The proof is similar to the above and left to the
reader. Now, removing the fixed branch contribution $G$, we obtain
a series $g_{n}^{t}$ with fixed branch contribution $G_{3}$. A
similar proof to the above, left to the reader, shows that this
$g_{n}^{t}$ contains the original series
$\{g_{n}^{r},g_{n}^{s}\}$. Finally, we need to consider the case
when the $\{g_{n}^{r},g_{n}^{s}\}$ are defined, after removing
some fixed branch contribution from linear series. Let $G_{1}$ and
$G_{2}$, with total multiplicity $r_{1}$ and $r_{2}$, be these
fixed branch contributions and let
$\{g_{n+r_{1}}^{r},g_{n+r_{2}}^{s}\}$ be the series obtained from
adding these fixed branch contributions to
$\{g_{n}^{r},g_{n}^{s}\}$. In this case, the linear system
$\Sigma$, as given above, defines a $g_{2n+r_{1}+r_{2}}^{t}$. We
claim that the weighted set $G\cup G_{1}\cup G_{2}$, of total
multiplicity $(n+r_{1}+r_{2})$, is contained in the fixed branch
contribution of this series. This follows from a similar
calculation, using the method above, the details are left to the
reader. Removing this weighted set from the
$g_{2n+r_{1}+r_{2}}^{t}$, we obtain a $g_{n}^{t}$ and a similar
calculation shows that this contains the original
$\{g_{n}^{r},g_{n}^{s}\}$, again the details are left to the
reader.
\end{proof}

As a corollary, we have;\\

\begin{theorem}
Let a $g_{n}^{r}$ be given on $C$, then there exists a
\emph{unique} $g_{n}^{t}$ on $C$, with $r\leq t\leq n$, such that;\\

$g_{n}^{r}\subseteq g_{n}^{t}$\\

and, for \emph{any} $g_{n}^{s}$ such that $g_{n}^{r}\subseteq
g_{n}^{s}$, we have that;\\

$g_{n}^{s}\subseteq g_{n}^{t}$\\
\end{theorem}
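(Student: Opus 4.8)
The plan is to produce the required series as a $g_{n}^{t}$ of \emph{maximal} dimension among all $g_{n}^{s}$ on $C$ that contain $g_{n}^{r}$. Such a maximal dimension exists because, by Lemma 1.16, every $g_{n}^{s}$ satisfies $s\leq n$, and $g_{n}^{r}$ is itself one of the series being considered; so we may fix a $g_{n}^{t}$ with $g_{n}^{r}\subseteq g_{n}^{t}$ and $t$ as large as possible, which immediately gives $r\leq t\leq n$. What then remains is to show that this $g_{n}^{t}$ contains every competitor and that it is unique, and the engine for the first of these is the Amalgamation Theorem 2.15.

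Before carrying this out I would isolate the auxiliary fact that is really the heart of the matter: if $g_{n}^{a}\subseteq g_{n}^{b}$ and $dimension(g_{n}^{a})=dimension(g_{n}^{b})$, then $g_{n}^{a}=g_{n}^{b}$ as collections of weighted sets. To see this, use the Remark following Definition 1.4 to realise $g_{n}^{b}$, after removing a fixed branch contribution, by a linear system $\Sigma$ of dimension $t:=dimension(g_{n}^{b})$ having finite intersection with $C$; since no form of $\Sigma$ then contains $C$, Theorem 1.3 (in the case $r=R=t$) shows that $\lambda\mapsto C\sqcap\phi_{\lambda}$ gives a bijection between $Par_{\Sigma}\cong P^{t}$ and the weighted sets of $g_{n}^{b}$. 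Realising $g_{n}^{a}$ in the same way yields a bijection between $P^{t}$ and the weighted sets of $g_{n}^{a}$, and the inclusion $g_{n}^{a}\subseteq g_{n}^{b}$ then reads as an injective map $P^{t}\to P^{t}$. This map is definable over the Zariski structure $<P^{t}(L),C_{i}>$ (by exactly the kind of argument used for the sentences $(*)$ in the proof of Lemma 1.5), hence is a morphism, and in characteristic zero an injective endomorphism of $P^{t}$ is automatically surjective; this forces $g_{n}^{a}=g_{n}^{b}$. I expect this to be the main obstacle: the passage from an inclusion of series to a morphism of parameter spaces has to be set up with some care, and one has to invoke (or reprove, using the pencil argument from Theorem 1.3) the fact that an injective endomorphism of projective space is onto.

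Granting the auxiliary fact, existence is quick. Let $g_{n}^{s}$ be any series with $g_{n}^{r}\subseteq g_{n}^{s}$. Since $g_{n}^{r}$ is non-empty it contains a weighted set $G$, and $G$ then lies in both $g_{n}^{t}$ and $g_{n}^{s}$; applying Theorem 2.15 to the pair $\{g_{n}^{t},g_{n}^{s}\}$ produces a $g_{n}^{u}$ with $t\leq u$, $s\leq u$, $g_{n}^{t}\subseteq g_{n}^{u}$ and $g_{n}^{s}\subseteq g_{n}^{u}$. Since $g_{n}^{r}\subseteq g_{n}^{t}\subseteq g_{n}^{u}$, the series $g_{n}^{u}$ is among the ones over which $t$ was maximal, so $u\leq t$ and hence $u=t$; the auxiliary fact, applied to $g_{n}^{t}\subseteq g_{n}^{u}$, now gives $g_{n}^{u}=g_{n}^{t}$. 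Therefore $g_{n}^{s}\subseteq g_{n}^{t}$, as required.

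For uniqueness, suppose $g_{n}^{t}$ and $g_{n}^{t'}$ both have the two stated properties. Applying the second property of $g_{n}^{t'}$ to the series $g_{n}^{t}$ --- legitimate because $g_{n}^{r}\subseteq g_{n}^{t}$ by the first property --- gives $g_{n}^{t}\subseteq g_{n}^{t'}$, and by symmetry $g_{n}^{t'}\subseteq g_{n}^{t}$; so the two series have exactly the same weighted sets. Since the dimension of a series is determined by its collection of weighted sets (this is implicit in Definition 1.4 together with the Remark following it, the maximal fixed branch contribution being recoverable from the collection), it follows that $t=t'$ and the two series coincide.
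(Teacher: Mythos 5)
Your proof follows the same skeleton as the paper's: take $t$ maximal among the dimensions of series of order $n$ containing the given $g_{n}^{r}$ (the bound $t\leq n$ coming from Lemma 1.16), use the Amalgamation Theorem 2.15 to show that any competitor $g_{n}^{s}$ is absorbed into the maximal one, and deduce uniqueness from the resulting universal property. The one place you go beyond the paper is in isolating and proving the auxiliary fact that an inclusion $g_{n}^{a}\subseteq g_{n}^{b}$ of series of the same order and the same dimension forces equality; the paper dispatches exactly this step with the phrase ``elementary dimension considerations,'' so your argument via the parametrisations supplied by Theorem 1.3 and the surjectivity of injective definable self-maps of $P^{t}$ is a genuine filling-in of a gap rather than a detour. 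One small inaccuracy: a definable injective map between parameter spaces need not be a morphism (its graph is only constructible), but this does not damage the argument, since the injective-implies-surjective principle holds for constructible self-maps of a variety over an algebraically closed field (and in every characteristic, not only characteristic zero), so the conclusion $g_{n}^{a}=g_{n}^{b}$, and with it your existence and uniqueness arguments, still stands.
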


\begin{proof}
By Lemma 1.16, we can find $r\leq t\leq n$ and a $g_{n}^{t}$ on
$C$, with $g_{n}^{r}\subseteq g_{n}^{t}$ and $t$ maximal with this
property. If $g_{n}^{r}\subseteq g_{n}^{s}$, then
$\{g_{n}^{s},g_{n}^{t}\}$ would contain a common weighted set. By
Theorem 2.15, we could then find $t'\leq n$ such that $s\leq t'$,
$t\leq t'$ and $g_{n}^{s}\subseteq g_{n}^{t'}$,
$g_{n}^{t}\subseteq g_{n}^{t'}$. If $g_{n}^{s}\varsubsetneq
g_{n}^{t}$, then, by elementary dimension considerations, we would
have that $t<t'\leq n$ and $g_{n}^{r}\subset g_{n}^{t'}$,
contradicting maximality of $t$. Hence, $g_{n}^{s}\subseteq
g_{n}^{t}$. The uniqueness statement also follows from a similar
amalgamation argument, using Theorem 2.15.

\end{proof}

We can then make the following definition;\\

\begin{defn}
We call a $g_{n}^{r}$ on $C$ complete if it cannot be strictly
contained in a $g_{n}^{t}$ of greater dimension. If $G$ is any
weighted set on $C$ of total multiplicity $n$, then we define
$|G|$ to be the unique complete $g_{n}^{t}$ to which $G$ belongs.
\end{defn}

We then have that;\\

\begin{theorem}
Let $G$ be a weighted set on $C$, then, $G\equiv G'$ if and only
if $G'$ belongs to $|G|$. In particular, $G\equiv G'$ if and only
if $|G|=|G'|$.

\end{theorem}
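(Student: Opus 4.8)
The plan is to establish both directions by combining the structural results just proved, especially Theorem 2.7, Theorem 2.11 (transitivity), and Theorem 2.16 (existence and uniqueness of the maximal $g_n^t$ containing a given $g_n^r$). First I would unwind the definitions: $G \equiv G'$ means, by Definition 2.6, that there is \emph{some} $g_n^r$ on $C$ containing both $G$ and $G'$ as weighted sets; and $|G|$ is, by Definition 2.17, the unique complete $g_n^t$ to which $G$ belongs, which exists by applying Theorem 2.16 to the $g_n^0$ consisting of the single weighted set $G$ (or any $g_n^r$ containing $G$).

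For the ``only if'' direction, suppose $G \equiv G'$. Then there is a $g_n^r$ containing both $G$ and $G'$. By Theorem 2.16 applied to this $g_n^r$, there is a maximal (hence complete) $g_n^t$ with $g_n^r \subseteq g_n^t$; this $g_n^t$ contains both $G$ and $G'$ as weighted sets. Now I would argue that this complete $g_n^t$ is forced to be $|G|$: since $G$ belongs to $g_n^t$ and $g_n^t$ is complete, uniqueness in Definition 2.17 gives $g_n^t = |G|$. The same argument with $G'$ in place of $G$ gives $g_n^t = |G'|$, so in fact $|G| = |G'|$, and in particular $G'$ belongs to $|G|$. This simultaneously delivers the ``in particular'' clause.

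For the ``if'' direction, suppose $G'$ belongs to $|G|$. Since $G$ also belongs to $|G|$ by definition, the $g_n^t = |G|$ is itself a $g_n^r$ (with $r = t$) containing both $G$ and $G'$, so $G \equiv G'$ directly from Definition 2.6. Finally, for the last sentence, the forward direction was already obtained above ($G \equiv G' \implies |G| = |G'|$), and conversely if $|G| = |G'|$ then $G'$ belongs to $|G'| = |G|$, so by the ``if'' direction $G \equiv G'$.

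The only real subtlety — and the step I would be most careful about — is the appeal to uniqueness of the complete series in the ``only if'' direction: one must check that the complete $g_n^t$ produced by Theorem 2.16 from an \emph{arbitrary} $g_n^r$ containing $G$ coincides with the canonical $|G|$. This is exactly the content of the uniqueness assertion in Definition 2.17 together with Theorem 2.16 (any two complete series both containing $G$ would each be maximal over the other via Theorem 2.15, forcing equality), so no new work is needed, but it is the hinge of the argument and should be stated explicitly rather than glossed over. Everything else is a routine unwinding of Definitions 2.6 and 2.17.
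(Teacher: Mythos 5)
Your proposal is correct and follows essentially the same route as the paper: both directions reduce to Definition 2.6, Definition 2.17 and the amalgamation/uniqueness result of Theorem 2.16, with the only cosmetic difference being that the paper starts from a $g_{n}^{1}$ containing $G$ and $G'$ while you work with a general $g_{n}^{r}$. Your explicit attention to why the complete series produced from an arbitrary containing $g_{n}^{r}$ must coincide with $|G|$ is exactly the hinge the paper also relies on, so nothing further is needed.
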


\begin{proof}
The proof of the first part of the theorem is quite
straightforward. By definition, if $G'$ belongs to $|G|$, then
$G\equiv G'$. Conversely, if $G'\equiv G$, then, by Definition
2.6, we can find a $g_{n}^{1}$, containing the given weighted sets
$G$ and $G'$. By Theorem 2.16, we can find a unique complete
$g_{n}^{t}$ on $C$, with $1\leq t\leq n$, such that
$g_{n}^{1}\subseteq g_{n}^{t}$. As $G$ belongs to this $g_{n}^{t}$
as a weighted set, it follows by Definition 2.17 that
$|G|=g_{n}^{t}$. Hence, $G'$ belongs to $|G|$ as required. For the
second part, if $G\equiv G'$, then, by the first part, $G'$
belongs to $|G|$. It follows immediately from Definition 2.17 and
Theorem 2.16, that $|G|\subseteq |G'|$. Reversing this argument,
we have that $|G'|\subseteq |G|$, hence $|G|=|G'|$ as required.
Conversely, if $|G|=|G'|$, then clearly $G\equiv G'$ by Definition
2.6.
\end{proof}

We now make the following definition;\\

\begin{defn}{Linear System of a Weighted Set}\\

Let $G$ be a weighted set on a projective algebraic curve $C$,
then we define the Riemann-Roch space ${\mathcal L}(C,G)$ or
${\mathcal L}(G)$ to be the vector space defined as;\\

$\{g\in L(C)^{*}:div(g)+G\geq 0\}\cup\{0\}$\\

where $div(g)$ was defined in Definition 2.9.

\end{defn}

\begin{rmk}
That ${\mathcal L}(G)$ defines a vector space follows easily from
Lemma 2.10, the fact that, for non-constant rational functions
$\{f,g,f+g\}\subset
L(C)$ and a branch $\gamma$ of $C$, we have that;\\

$ord_{\gamma}(f+g)\geq min\{ord_{\gamma}(f),ord_{\gamma}(g)\}$, $(*)$\\

where, for this remark only, $ord_{\gamma}$ is counted
\emph{negatively} if $val_{\gamma}$ is infinite, and an argument
on constants, $(**)$.
 We now give a brief proof of $(*)$;\\

We just consider the following $2$ cases;\\

Case 1. $val_{\gamma}(f)<\infty$ and $val_{\gamma}(g)<\infty$\\

We then have, substituting the relative parametrisations, that;\\

$f\sim c+c_{1}t^{m}+\ldots$ and $g\sim d+d_{1}t^{n}+\ldots$, where
$ord_{\gamma}(f)=m\geq 1$, $ord_{\gamma}(g)=n\geq 1$ and
$\{c_{1},d_{1}\}\subset L$ are non-zero. Then;\\

$f+g\sim (c+d)+c_{1}t^{m}+d_{1}t^{n}+\ldots$\\

If $(f+g)-(c+d)\equiv 0$, as an algebraic power series in
$L[[t]]$, then $(f+g)=(c+d)$ as a rational function on $C$,
contradicting the assumption. Hence, we obtain that
$ord_{\gamma}(f+g)=min\{ord_{\gamma}(f),ord_{\gamma}(g)\}$, if
$m\neq n$ or $m=n$ and $c_{1}+d_{1}\neq 0$, and
$ord_{\gamma}(f+g)>min\{ord_{\gamma}(f),ord_{\gamma}(g)\}$
otherwise. Hence, $(*)$ is shown in this case.\\

Case 2. $val_{\gamma}(f)=val_{\gamma}(g)=\infty$\\

We then have that;\\

$f\sim c_{1}t^{-m}+\ldots$ and $g\sim d_{1}t^{-n}+\ldots$, where
$ord_{\gamma}(f)=-m\leq -1$, $ord_{\gamma}(g)=-n\leq -1$ and
$\{c_{1},d_{1}\}\subset L$ are non-zero. Then;\\

$f+g\sim c_{1}t^{-m}+d_{1}t^{-n}+\ldots$\\

By the assumption that $f+g$ is not a constant, if $m=n$ and
$c_{1}+d_{1}=0$, we must have higher order terms in $t$ in the
Cauchy series for $(f+g)$, hence
$ord_{\gamma}(f+g)>min\{ord_{\gamma}(f),ord_{\gamma}(g)\}$.
Otherwise, we have that
$ord_{\gamma}(f+g)=min\{ord_{\gamma}(f),ord_{\gamma}(g)\}$, hence
$(*)$ is shown in this case as well.\\

The remaining cases are left to the reader. One should also
consider the case of constants, $(**)$. Technically, one cannot
define $ord_{\gamma}$ for a constant in $L$. However, we did, by
convention, define $div(c)=0$, for $c\in L^{*}$, in Remarks 2.5.\\
\end{rmk}

We now show the following;\\

\begin{lemma}
For a weighted set $G$, $dim({\mathcal L}(G))=t+1$, where $t$ is
given in Definition 2.17. In particular, ${\mathcal L}(G)$ is
finite dimensional.

\end{lemma}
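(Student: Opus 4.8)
The plan is to establish a natural bijection between the Riemann--Roch space $\mathcal{L}(G)$ and the collection of weighted sets in the complete $g_n^t = |G|$, and then to compute the dimension of the latter. First I would observe that for $g \in \mathcal{L}(G) \setminus \{0\}$, the weighted set $\mathrm{div}(g) + G$ is effective of total multiplicity $n$ (since $\mathrm{div}(g)$ has degree $0$ by the branched Bezout theorem underlying Lemma~2.4, as $(g=0)$ and $(g=\infty)$ are weighted sets in the same $g_n^1$). By Theorem~2.7 and Theorem~2.18, $\mathrm{div}(g)+G \equiv G$, so $\mathrm{div}(g)+G$ belongs to $|G| = g_n^t$. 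This gives a map $\Theta: \mathcal{L}(G)\setminus\{0\} \to g_n^t$. Conversely, if $G'$ is a weighted set in $g_n^t$, then $G' \equiv G$ by Theorem~2.18, so by Theorem~2.7 there is a rational function $g$ with $G' - G = \mathrm{div}(g)$ (absorbing the common part $G \cap G'$ into the fixed branch contribution and using $(\dag\dag\dag)$ of Remark~2.8 to identify $\mathrm{div}(g)$ with $(G'\setminus G) - (G\setminus G')$); this $g$ lies in $\mathcal{L}(G)$ and $\Theta(g) = G'$.

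Next I would identify the fibres of $\Theta$: $\Theta(g_1) = \Theta(g_2)$ iff $\mathrm{div}(g_1) = \mathrm{div}(g_2)$ iff $\mathrm{div}(g_1/g_2) = 0$ (by Lemma~2.10) iff $g_1/g_2$ is a non-zero constant in $L^*$ (a rational function with no zeros or poles on a projective curve is constant --- this needs the convention in Remarks~2.5 together with the fact that a non-constant $f$ genuinely has $\mathrm{div}(f) \neq 0$, which follows since $\deg(f) = n \geq 1$ in Lemma~2.4). Hence $\Theta$ descends to a bijection between $\mathbb{P}(\mathcal{L}(G))$ and the set of weighted sets of $g_n^t$. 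The final step is to count: a $g_n^t$, by the Definition following Theorem~1.3 together with Theorem~1.3 itself, is parametrised (after adjoining any fixed branch contribution and passing to a linear system $\Sigma$ of dimension $t$ with finite intersection with $C$, as in the Remark after Definition~1.4) by $\mathrm{Par}_\Sigma \cong \mathbb{P}^t$, with distinct parameters giving distinct weighted sets; thus the set of weighted sets in $g_n^t$ has the structure of a $t$-dimensional projective space, so $\dim \mathcal{L}(G) = t+1$. Finite-dimensionality is then immediate since $t \leq n < \infty$ by Theorem~2.16.

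The main obstacle I expect is the careful handling of the fixed branch contribution throughout, in particular making the correspondence $\Theta$ well-behaved when $G$ and $G'$ share branches: one must check that the $g_n^1$ produced by Theorem~2.7 from $(G'\setminus G)$ and $(G\setminus G')$, after re-adjoining $G \cap G'$ via Lemma~1.13, actually sits inside the \emph{same} complete $g_n^t = |G|$ and that every weighted set of $g_n^t$ arises this way --- this is exactly where Theorem~2.18 (the equivalence of "belongs to $|G|$" with "$\equiv G$") does the work, but the bookkeeping between the "weighted set" viewpoint and the "divisor class" viewpoint must be done cleanly. A secondary point requiring care is showing that the parametrisation of the weighted sets of a complete $g_n^t$ by $\mathbb{P}^t$ is injective on parameters; this follows from Theorem~1.3(i) (equality $r = R$ holds iff each weighted set is cut by a single form) once we know, as in the Remark after Definition~1.4, that the complete series is represented by a linear system of dimension exactly $t$ with finite intersection with $C$.
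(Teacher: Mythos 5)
Your overall strategy---setting up a correspondence between $\mathcal{L}(G)$ and the weighted sets of the complete $g_{n}^{t}=|G|$---is genuinely different from the paper's, which works directly with linear independence: it exhibits $t+1$ independent functions $\{1,\phi_{1}/\phi_{0},\ldots,\phi_{t}/\phi_{0}\}$ in $\mathcal{L}(G)$ to get $\dim(\mathcal{L}(G))\geq t+1$, and conversely clears denominators in any linearly independent family $\{1,f_{1},\ldots,f_{k}\}\subset\mathcal{L}(G)$ to build a linear system of dimension $k$ whose series contains $G$ exactly, forcing $k\leq t$ by Definition 2.17. However, your final counting step has a genuine gap. You conclude $\dim\mathcal{L}(G)=t+1$ from the existence of a \emph{set-theoretic} bijection between $\mathbb{P}(\mathcal{L}(G))$ and the set of weighted sets of $g_{n}^{t}$, which you identify with $\mathbb{P}^{t}=Par_{\Sigma}$. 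Over an algebraically closed field all positive-dimensional projective spaces have the same cardinality, so a bijection of underlying sets carries no dimensional information whatsoever: $\mathbb{P}^{1}(L)$ and $\mathbb{P}^{2}(L)$ are in bijection. To extract the dimension you must show that your map $\Theta$ is induced by a \emph{linear} isomorphism $\mathcal{L}(G)\to\langle\phi_{0},\ldots,\phi_{t}\rangle$, e.g.\ $g\mapsto g\cdot\phi_{0}$ after fixing a form $\phi_{0}$ with $C\sqcap\phi_{0}=G\cup W$.

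Establishing that linearity is not a formality: it requires showing that every $g\in\mathcal{L}(G)$ is actually of the form $\phi/\phi_{0}$ for a form $\phi$ in the system $\Sigma$ of degree $t$ (so that the putative inverse lands in the right finite-dimensional space), and this is precisely the hard direction that the paper's ``conversely'' argument carries out: one takes linearly independent elements of $\mathcal{L}(G)$, equates denominators, checks via the $I_{\gamma}^{\Sigma,mobile}$ argument that the resulting weighted set $G_{0}$ satisfies $G_{0}\subseteq G$, and then adjusts fixed branch contributions with Lemma 1.13 to land back in a $g^{k}$ containing $G$. Your proposal defers exactly this content to the unexamined phrase ``has the structure of a $t$-dimensional projective space.'' The remainder of your argument (well-definedness of $\Theta$ via Theorems 2.7 and 2.18, the identification of the fibres with $L^{*}$, and the observation that $\mathrm{div}(f)\neq 0$ for non-constant $f$ because $\deg(f)\geq 1$ in Lemma 2.4) is sound and would survive the repair, but as written the proof does not close.
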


\begin{proof}
Let $t$ be given by Definition 2.17. If $t=0$, then $G=(0)$ and
${\mathcal L}(G)=L$. This follows easily from the well known fact
that the only regular functions on a projective algebraic curve
are the constants (see, for example, \cite{Shaf}, p59). In this
case, we then have that $dim({\mathcal L}(G))=1$, as required.
Otherwise, let $t\geq 1$ be given as in Definition 2.17, with the
unique complete $g_{n}^{t}$ containing $G$. After adding some
fixed branch contribution $W$, we can find a linear system
$\Sigma$, having finite intersection with $C$, with basis
$\{\phi_{0},\ldots,\phi_{j},\ldots,\phi_{t}\}$ defining this
$g_{n}^{t}$. Moreover, we may assume that $C\sqcap \phi_{0}=G\cup
W$, $(*)$. Let $\{f_{1},\ldots,f_{j},\ldots,f_{t}\}$ be the
sequence of rational functions on $C$ defined by
$f_{j}={\phi_{j}\over\phi_{0}}$. We claim that;\\

$div(f_{j})+G\geq 0$, for $1\leq j\leq t$ $(**)$\\

In order to show $(**)$, it is sufficient to prove that, for a
branch $\gamma$ with $val_{\gamma}(f_{j})=\infty$, we have that
$\gamma$ belong to $G$ and, moreover, that $\gamma$ is counted at
least $ord_{\gamma}(f_{j})$ times in $G$. Let $\Sigma_{j}$ be the
pencil of forms defined by $(\phi_{j}-\lambda\phi_{0})_{\lambda\in
P^{1}}$. By the proof of Lemma 2.4, we have that $(f_{j}=\infty)$
is defined by $(C\sqcap\phi_{0})$, after removing the fixed branch
contribution of this pencil. By $(*)$ and the fact that the fixed
branch contribution of $\Sigma_{j}$ includes $W$, we have that
$(f_{j}=\infty)\subseteq G$. Hence, $(**)$ is shown as required.
By Definition 2.19, we then have that $f_{j}$ belongs to
${\mathcal L}(G)$. We now claim that there do \emph{not} exist
constants $\{c_{0},\ldots,c_{j},\ldots,c_{t}\}\subset L$ such that;\\

$c_{0}+c_{1}f_{1}+\ldots+c_{j}f_{j}+\ldots+c_{t}f_{t}=0$ $(***)$\\

as rational functions on $C$. If so, we would have that;\\

$c_{0}\phi_{0}+c_{1}\phi_{1}+\ldots+c_{j}\phi_{j}+\ldots+c_{t}\phi_{t}$\\

vanished identically on $C$, contradicting the fact that $\Sigma$
has finite intersection with $C$. Hence, by $(***)$,
$\{1,f_{1},\ldots,f_{t}\}\subset {\mathcal L}(G)$ are linearly
independent and $dim({\mathcal L}(G))\geq t+1$. Conversely,
suppose that $dim({\mathcal L}(G))\geq k+1$, then we can find
$\{1,f_{1},\ldots,f_{j},\ldots,f_{k}\}\subset {\mathcal L}(G)$
which are linearly independent, $(\dag)$. By the usual method of
equating denominators, we can find algebraic forms
$\{\phi_{0},\ldots,\phi_{k}\}$ of the same degree, such that
$f_{j}$ is represented by $\phi_{j}\over\phi_{0}$, for $1\leq
j\leq k$. Let $\Sigma$ be the linear system defined by this
sequence of forms. By $(\dag)$, $\Sigma$ has finite intersection
with $C$. Let $W$, having total multiplicity $n'$, be the fixed
branch contribution of this system and let
$(C\sqcap\phi_{0})=G_{0}\cup W$. We claim that $G_{0}\subseteq G$,
$(\dag\dag)$. Suppose not, then there exists a branch $\gamma$
with $I_{\gamma}^{\Sigma,mobile}(C,\phi_{0})=s$, where $\gamma$ is
counted strictly less than $s$-times in $G$. By the definition of
$I_{\gamma}^{\Sigma,mobile}$, we can find a form $\phi_{\lambda}$
belonging to $\Sigma$, distinct from $\phi_{0}$, witnessing this
multiplicity. Consider the pencil $\Sigma_{\lambda}$ defined by
$(\phi_{\lambda}-\mu\phi_{0})_{\mu\in P^{1}}$. We then clearly
have that $I_{\gamma}^{\Sigma_{\lambda},mobile}(C,\phi_{0})=s$ as
well, $(\dag\dag\dag)$. Let
$f_{\lambda}={\phi_{\lambda}\over\phi_{0}}$. By the proof of Lemma
2.4, we have that $(f_{\lambda}=\infty)$ is defined by
$(C\sqcap\phi_{0})$, after removing the fixed branch contribution
of $\Sigma_{\lambda}$. By $(\dag\dag\dag)$, it follows that the
branch $\gamma$ is counted $s$-times in $(f_{\lambda}=\infty)$ and
therefore $div(f_{\lambda})+G\ngeq 0$. However, $f_{\lambda}$ is a
linear combination of $\{1,\ldots,f_{k}\}$, hence
$f_{\lambda}\in{\mathcal L}(G)$, which is a contradiction. Hence,
$(\dag\dag)$ is shown. Now, consider the $g_{n}^{k}$ defined by
$\Sigma$. Let $W'$ be the weighted set $G\setminus G_{0}$ of total
multiplicity $n''$. By Lemma 1.13, we can add the weighted set
$W'$ to the $g_{n}^{k}$ and obtain a $g_{n+n''}^{k}$ with fixed
branch contribution $W'\cup W$. Now, removing the fixed branch
contribution $W$ from this $g_{n+n'}^{k}$, we obtain a
$g_{n+n''-n'}^{k}$ containing $G$ exactly as a weighted set. It
follows, from Definition 2.17, that $k\leq t$. Hence, in
particular, $dim({\mathcal L}(G))$ is finite and $dim({\mathcal
L}(G)\leq t+1$. Therefore, the lemma is proved.

\end{proof}

We now extend the notion of linear equivalence to include virtual,
or non-effective, weighted sets.\\

\begin{defn}
We define a generalised weighted set $G$ on $C$ to be a linear
combination of branches;\\

$n_{1}\gamma_{p_{1}}^{j_{1}}+\ldots+n_{r}\gamma_{p_{r}}^{j_{r}}$\\

where $\{n_{1},\ldots,n_{r}\}$ belong to ${\mathcal Z}$. If
$\{n_{1},\ldots,n_{r}\}$ belong to ${\mathcal Z}_{\geq 0}$, we
call the weighted set effective. Otherwise, we call the weighted
set virtual. We define $n=n_{1}+\ldots+n_{r}$ to be the total
multiplicity or degree of $G$.
\end{defn}

\begin{rmk}
It is an easy exercise to see that there exist well defined
operations of addition and subtraction on generalised weighted
sets. It is also easy to check that any generalised weighted set
$G$ may be written uniquely as $G_{1}-G_{2}$, where
$\{G_{1},G_{2}\}$ are \emph{disjoint effective} weighted sets.
\end{rmk}

\begin{defn}
Let $A$ and $B$ be generalised weighted sets on $C$ of the same
total multiplicity. Let $\{A_{1},A_{2}\}$ and $\{B_{1},B_{2}\}$ be
the unique effective weighted sets, as given by the previous
remark. Then we define;\\

$(A_{1}-A_{2})\equiv (B_{1}-B_{2})$ iff $(A_{1}+B_{2})\equiv
(B_{1}+A_{2})$\\

and;\\

$A\equiv B$ iff $(A_{1}-A_{2})\equiv (B_{1}-B_{2})$\\

\end{defn}

\begin{rmk}
Note that if $\{A_{1}',A_{2}'\}$ and $\{B_{1}',B_{2}'\}$ are
\emph{any} effective weighted sets such that;\\

$A=A_{1}'-A_{2}'$ and $B=B_{1}'-B_{2}'$\\

then $A\equiv B$ iff $A_{1}'+B_{2}'\equiv B_{1}'+A_{2}'$\\

The proof is just manipulation of effective weighted sets. We
clearly have that;\\

$A_{1}+A_{2}'=A_{1}'+A_{2}$ and $B_{1}+B_{2}'=B_{1}'+B_{2}$ $(*)$\\

We then have;\\

\indent $A\equiv B$\indent iff \ \ \ \ \ \ \ $A_{1}+B_{2}\equiv B_{1}+A_{2}$(Definition 2.20)\\
\indent \ \ \ \ \ \ \ \ \ \ \ iff $A_{1}+A_{2}'+B_{2}\equiv
B_{1}+A_{2}+A_{2}'$
(Theorem 2.12)\\
\indent \ \ \ \ \ \ \ \ \ \ \ iff $A_{1}'+A_{2}+B_{2}\equiv
B_{1}+A_{2}+A_{2}'$ (by (*))\\
\indent \ \ \ \ \ \ \ \ \ \ \ iff $\ \ \ \ \ \ \
A_{1}'+B_{2}\equiv B_{1}+A_{2}'$
(Theorem 2.12)\\
\indent \ \ \ \ \ \ \ \ \ \ \ iff $A_{1}'+B_{2}+B_{1}'\equiv
B_{1}+B_{1}'+A_{2}'$ (Theorem 2.12)\\
\indent \ \ \ \ \ \ \ \ \ \ \ iff $A_{1}'+B_{1}+B_{2}'\equiv
B_{1}+B_{1}'+A_{2}'$ (by (*))\\
\indent \ \ \ \ \ \ \ \ \ \ \ iff $\ \ \ \ \ \ \
A_{1}'+B_{2}'\equiv
B_{1}'+A_{2}'$ (Theorem 2.12)\\
\end{rmk}

We then have;\\

\begin{theorem}{Transitivity of Linear Equivalence}\\

Let $C'$ be an algebraic curve. If $A,B,C$ are generalised
weighted sets on $C'$ of the same total multiplicity, then, if
$A\equiv B$ and $B\equiv C$, we must have that $A\equiv C$.

\end{theorem}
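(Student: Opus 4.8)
The plan is to reduce the statement to the additivity of the divisor map from Lemma 2.10. The key intermediate claim I would isolate is $(*)$: for generalised weighted sets $A,B$ on $C'$ of the same total multiplicity, $A\equiv B$ if and only if $A-B=div(h)$ for some $h\in L(C')^{*}$. Granting $(*)$, transitivity is immediate: if $A\equiv B$ and $B\equiv C$, pick $f,g$ with $A-B=div(f)$ and $B-C=div(g)$; then by Lemma 2.10, $A-C=(A-B)+(B-C)=div(f)+div(g)=div(fg)$, and $(*)$ applied to $fg$ gives $A\equiv C$. So the whole proof rests on $(*)$.

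I would establish $(*)$ first for effective weighted sets $P,Q$. For the forward direction, $P\equiv Q$ together with Theorem 2.7 (more precisely the construction in its proof) produces $h$ with $P\setminus Q=(h=0)$ and $Q\setminus P=(h=\infty)$, whence $P-Q=(P\setminus Q)-(Q\setminus P)=div(h)$ by Definition 2.9. For the converse, suppose $P-Q=div(h)=(h=0)-(h=\infty)$; if $h$ is constant then $P=Q$ and we are done by Theorem 2.12, so assume $h$ non-constant. Now $(h=0),(h=\infty)$ and $(P\setminus Q),(Q\setminus P)$ are both pairs of disjoint effective weighted sets — no branch can have $val$ equal to both $0$ and $\infty$ — and their differences both equal $P-Q$, so by the uniqueness of the disjoint decomposition (Remark 2.23) we get $P\setminus Q=(h=0)$ and $Q\setminus P=(h=\infty)$. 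By Lemma 2.4 these two weighted sets both occur in the $g_{n}^{1}$ determined by $(h)$, hence $P\setminus Q\equiv Q\setminus P$; adding the fixed branch contribution $P\cap Q$ to that $g_{n}^{1}$ (Lemma 1.13, or Theorem 2.13 with reflexivity) gives $P\equiv Q$.

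Finally I would bootstrap to arbitrary generalised weighted sets. Writing $A=A_{1}-A_{2}$ and $B=B_{1}-B_{2}$ with $A_{i},B_{i}$ disjoint effective, Definition 2.24 together with Remark 2.25 says $A\equiv B$ iff $A_{1}+B_{2}\equiv B_{1}+A_{2}$ as effective weighted sets; by the effective case this holds iff $(A_{1}+B_{2})-(B_{1}+A_{2})=div(h)$ for some $h\in L(C')^{*}$; and $(A_{1}+B_{2})-(B_{1}+A_{2})=(A_{1}-A_{2})-(B_{1}-B_{2})=A-B$. This is precisely $(*)$, so the proof is complete. I expect the main obstacle to be the converse direction of the effective case — passing from $P-Q=div(h)$ back to $P\equiv Q$ — since that is where uniqueness of disjoint decompositions and Lemma 2.4 are genuinely used; but it parallels the concluding step of the proof of Theorem 2.11, so it should require only routine adaptation.
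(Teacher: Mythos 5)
Your proposal is correct, but it takes a genuinely different route from the paper. The paper's proof is a short formal manipulation entirely inside the calculus of weighted sets: it writes down the two hypotheses as $(A_{1}+B_{2})\equiv(B_{1}+A_{2})$ and $(B_{1}+C_{2})\equiv(C_{1}+B_{2})$, adds them using Theorem 2.12 to get $(A_{1}+B_{1}+B_{2}+C_{2})\equiv(C_{1}+B_{1}+B_{2}+A_{2})$, observes that $(B_{1}+B_{2})$ sits in the fixed branch contribution of the $g_{n}^{1}$ witnessing this equivalence, and strips it off to obtain $A_{1}+C_{2}\equiv C_{1}+A_{2}$. You instead first establish the characterisation $(*)$: $A\equiv B$ iff $A-B=div(h)$, and then get transitivity from $div(fg)=div(f)+div(g)$ (Lemma 2.10). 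Your intermediate claim $(*)$ is essentially the content of Remarks 2.8 and 2.28 in the paper (the comparison with the modern definition of linear equivalence), and your proof of it --- Theorem 2.7 for the forward direction, uniqueness of the disjoint decomposition plus Lemma 2.4 and Lemma 1.13 for the converse --- matches the arguments given there. The two routes ultimately rest on the same foundations, since Theorem 2.12 is itself proved via the effective-case transitivity (Theorem 2.11), which uses the $div(fg)$ argument; the paper's version is shorter because it reuses that machinery as a black box, while yours is longer but extracts the $div$ characterisation as a reusable statement and makes the dependence on Lemma 2.10 explicit. One small point to tidy: in the degenerate cases ($h$ constant, or $A=C$) you should appeal to reflexivity of $\equiv$, which comes from the empty $g_{0}^{0}$ plus a fixed branch contribution as in the first paragraph of the proof of Theorem 2.7, rather than to Theorem 2.12.
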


\begin{proof}
Let $\{A_{1},A_{2}\}$, $\{B_{1},B_{2}\}$ and $\{C_{1},C_{2}\}$ be
the effective weighted sets as given by Remarks 2.23. Then, by
Definition 2.24, we have that;\\

$(A_{1}+B_{2})\equiv (B_{1}+A_{2})$ and $(B_{1}+C_{2})\equiv
(C_{1}+B_{2})$\\

By Theorem 2.12, we have that;\\

$(A_{1}+B_{1}+B_{2}+C_{2})\equiv (C_{1}+B_{1}+B_{2}+A_{2})$\\

It then follows, by Definition 2.6, that there exists a
$g_{n}^{1}$, containing $(A_{1}+B_{1}+B_{2}+C_{2})$ and
$(C_{1}+B_{1}+B_{2}+A_{2})$ as weighted sets. Clearly
$(B_{1}+B_{2})$ is contained in the fixed branch contribution of
this $g_{n}^{1}$. Removing this fixed branch contribution, we
obtain;\\

$A_{1}+C_{2}\equiv C_{1}+A_{2}$\\

By Definition 2.20, we then have that $A\equiv C$ as required.

\end{proof}

It follows immediately from Theorem 2.11 and Theorem 2.22 that;\\

\begin{theorem}
Let $C$ be a projective algebraic curve, then $\equiv$ is an
equivalence relation on generalised weighted sets for $C$ of a
given total multiplicity.
\end{theorem}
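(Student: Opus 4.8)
The plan is to establish the three defining properties of an equivalence relation for $\equiv$ on generalised weighted sets of a fixed total multiplicity, in each case reducing to the already-available statement that $\equiv$ is an equivalence relation on (effective) weighted sets of a given multiplicity, i.e. Theorem 2.12.

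Transitivity requires no work: it is precisely the content of the Transitivity of Linear Equivalence theorem for generalised weighted sets just proved (Theorem 2.26). For symmetry, suppose $A$ and $B$ are generalised weighted sets on $C$ of the same total multiplicity with $A\equiv B$, and by Remarks 2.23 write $A=A_{1}-A_{2}$, $B=B_{1}-B_{2}$ with $\{A_{1},A_{2}\}$ and $\{B_{1},B_{2}\}$ disjoint effective weighted sets. Since $A$ and $B$ have the same total multiplicity, the effective weighted sets $A_{1}+B_{2}$ and $B_{1}+A_{2}$ also have the same total multiplicity, so Definition 2.6 applies to them; and by Definition 2.24, $A\equiv B$ is by definition the statement $(A_{1}+B_{2})\equiv(B_{1}+A_{2})$ of linear equivalence of effective weighted sets. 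As $\equiv$ on effective weighted sets is symmetric --- immediate from Definition 2.6, since ``lying in a common $g_{n}^{r}$'' is a symmetric condition, and in any case part of Theorem 2.12 --- we obtain $(B_{1}+A_{2})\equiv(A_{1}+B_{2})$, which is exactly $B\equiv A$ read off from Definition 2.24 with the roles of $A$ and $B$ exchanged.

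For reflexivity, take any generalised weighted set $A=A_{1}-A_{2}$ with $\{A_{1},A_{2}\}$ disjoint and effective. Applying Definition 2.24 with $B=A$, so that $B_{1}=A_{1}$ and $B_{2}=A_{2}$, reduces $A\equiv A$ to the effective linear equivalence $(A_{1}+A_{2})\equiv(A_{1}+A_{2})$; this holds by reflexivity of $\equiv$ on effective weighted sets, which is part of Theorem 2.12 (and may alternatively be seen directly by adding $A_{1}+A_{2}$ as a fixed branch contribution to the empty $g_{0}^{0}$ given by $(c=0)=(c=\infty)$ for non-constant $c\in L^{*}$, which by Lemma 1.13 produces a $g_{n}^{0}$ containing $A_{1}+A_{2}$, whence the equivalence by Definition 2.6). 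Combining reflexivity, symmetry and transitivity gives the theorem. There is no genuinely difficult step; the only things to watch are the total-multiplicity bookkeeping that makes Definition 2.24 legitimately applicable at each stage, and --- should one wish to avoid quoting Theorem 2.12 for the effective case --- the direct verification of reflexivity and symmetry of $\equiv$ on effective weighted sets straight from Definition 2.6, both of which are routine.
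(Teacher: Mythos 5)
Your proposal is correct and matches the paper's intent: the paper states this result as an immediate corollary of the effective-case equivalence relation (Theorem 2.12) and the generalised transitivity theorem, offering no further proof, and your write-up simply makes explicit the routine reflexivity and symmetry checks via Definition 2.24 that the paper leaves to the reader. The multiplicity bookkeeping and the reduction of reflexivity to the empty $g_{0}^{0}$ plus Lemma 1.13 are exactly the right details to verify.
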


\begin{rmk}
Again, the definition of linear equivalence that we have given for
generalised weighted sets on a smooth projective algebraic curve
$C$ is equivalent to the modern definition for divisors. More
precisely, we have;\\

Modern Definition; Let $A$ and $B$ be divisors on a smooth
projective algebraic curve $C$, then $A\equiv B$ iff $A-B=div(g)$,
for some $g\in L(C)^{*}.$\\

See, for example, p161 of \cite{Shaf} for relevant definitions and
notation. In order to show that our definition is the same, use
Remarks 2.8 and the following simple argument;\\

$A\equiv B$ as generalised weighted sets iff $A_{1}+B_{2}\equiv
B_{1}+A_{2}$\\

where $\{A_{1},A_{2},B_{1},B_{2}\}$ are the effective weighted
sets given by Definition 2.24. Then;\\

$A_{1}+B_{2}\equiv B_{1}+A_{2}$ iff
$(A_{1}+B_{2})-(B_{1}+A_{2})=div(g)$ $(g\in L(C)^{*})$\\

by Remarks 2.8, where $div(g)$ is the modern definition. By a
straightforward calculation, we have that;\\

$(A_{1}+B_{2})-(B_{1}+A_{2})=A-B$ as divisors or generalised
weighted\\
\indent \ \ \ \ \ \ \ \ \ \ \ \ \ \ \ \ \ \ \ \ \ \ \ \ \ \ \ \ \
\ \ \ \ \ \ \ \ \ \ \  sets.

Hence, the notions of equivalence coincide.

\end{rmk}

We also have;\\

\begin{theorem}{Linear Equivalence Preserved by Addition}\\

Let $C'$ be a projective algebraic curve and suppose that
$\{A,B,C,D\}$ are generalised weighted sets on $C'$ with;\\

$A\equiv B$ and $C\equiv D$\\

then;\\

$A+C\equiv B+D$\\

\end{theorem}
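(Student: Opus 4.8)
The plan is to deduce the statement from its already-established analogue for effective weighted sets (Theorem 2.13, \emph{Linear Equivalence preserved by Addition}), by passing to the description of generalised weighted sets as differences of disjoint effective ones furnished by Remark 2.23 and Definition 2.24. So the first step is to write, using Remark 2.23, $A = A_1 - A_2$, $B = B_1 - B_2$, $C = C_1 - C_2$ and $D = D_1 - D_2$, where each of the pairs $\{A_1,A_2\}$, $\{B_1,B_2\}$, $\{C_1,C_2\}$, $\{D_1,D_2\}$ consists of disjoint effective weighted sets. Since $A\equiv B$ and $C\equiv D$, Definition 2.24 unwinds these hypotheses into the effective-weighted-set equivalences
\[
A_1 + B_2 \equiv B_1 + A_2, \qquad C_1 + D_2 \equiv D_1 + C_2 .
\]
These are genuine statements about effective weighted sets, and the equalities of total multiplicities needed to make them meaningful are forced by $A\equiv B$ and $C\equiv D$, so Theorem 2.13 applies to them.

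The second step is to apply Theorem 2.13 to the two equivalences just displayed, obtaining
\[
(A_1 + B_2) + (C_1 + D_2) \;\equiv\; (B_1 + A_2) + (D_1 + C_2).
\]
Since addition of effective weighted sets is commutative and associative, the left side equals $(A_1+C_1)+(B_2+D_2)$ and the right side equals $(B_1+D_1)+(A_2+C_2)$, so
\[
(A_1 + C_1) + (B_2 + D_2) \;\equiv\; (B_1 + D_1) + (A_2 + C_2).
\]
Now $A+C = (A_1+C_1) - (A_2+C_2)$ and $B+D = (B_1+D_1) - (B_2+D_2)$ (using that addition and subtraction of generalised weighted sets are well defined, Remark 2.23), and these exhibit $A+C$ and $B+D$ as differences of effective weighted sets. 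By Definition 2.24, read together with Remark 2.25 --- which allows one to test $\equiv$ against \emph{any} such difference representation rather than only the canonical disjoint one --- the relation $A+C \equiv B+D$ is equivalent to exactly the last displayed equivalence. Hence $A+C\equiv B+D$. I would also record at the outset that $\deg(A+C) = \deg A + \deg C = \deg B + \deg D = \deg(B+D)$, so that the two sides are generalised weighted sets of the same total multiplicity and the assertion is well posed.

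The whole argument is essentially bookkeeping with the effective-versus-virtual decomposition, and the one place that genuinely needs attention --- the step I expect to be the main (and only) obstacle --- is the final reduction: the pairs $(A_1+C_1, A_2+C_2)$ and $(B_1+D_1, B_2+D_2)$ need not be disjoint even though their constituents are, so they are not literally the canonical decompositions of Definition 2.24, and it is precisely Remark 2.25 that must be invoked to legitimise using them. No geometric input beyond Theorem 2.13 (and, through its proof, Lemma 1.13 and the machinery of $g_n^r$ and $\mathrm{div}$) is required.
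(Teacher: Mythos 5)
Your proposal is correct and follows essentially the same route as the paper's own proof: decompose via Remarks 2.23, unwind the hypotheses via Definition 2.24, apply the effective-case Theorem 2.13, and then invoke Remark 2.25 to handle the fact that $(A_1+C_1,A_2+C_2)$ and $(B_1+D_1,B_2+D_2)$ need not be disjoint decompositions. Your explicit flagging of that last non-disjointness point is exactly the subtlety the paper's proof also relies on.
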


\begin{proof}
Let $\{A_{1},A_{2}\}$, $\{B_{1},B_{2}\}$, $\{C_{1},C_{2}\}$ and
$\{D_{1},D_{2}\}$ be effective weighted sets as given by Remarks
2.19. Then, by Definition 2.20, we have that;\\

$A_{1}+B_{2}\equiv B_{1}+A_{2}$ and $C_{1}+D_{2}\equiv
D_{1}+C_{2}$\\

Hence, by Theorem 2.20;\\

$A_{1}+B_{2}+C_{1}+D_{2}\equiv B_{1}+A_{2}+D_{1}+C_{2}$ $(*)$\\

We clearly have that;\\

$A+C=(A_{1}+C_{1})-(A_{2}+C_{2})$ and $B+D=(B_{1}+D_{1})-(B_{2}+D_{2})$\\

as an identity of generalised weighted sets. Moreover, as\\
$(A_{1}+C_{1}),(A_{2}+C_{2}),(B_{1}+D_{1})$ and $(B_{2}+D_{2})$
are all effective, we can apply Remarks 2.23 and $(*)$ to obtain
the result.
\end{proof}

We now make the following definition;\\

\begin{defn}
Let $G$ be a generalised weighted set on a projective algebraic
curve $C$, then we define $|G|$ to be the collection of
generalised weighted sets $G'$ with $G'\equiv G$. We define
$order(|G|)$ to be the total multiplicity (possibly negative) of
any generalised weighted set in $|G|$.
\end{defn}

\begin{rmk}
If $G$ is an \emph{effective} weighted set, the collection defined
by Definition 2.30 is \emph{not} the same as the collection given
by Definition 2.17, as it includes virtual weighted sets. Unless
otherwise stated, we will use Definition 2.17 for \emph{effective}
weighted sets. This convention is in accordance with the Italian
terminology, we hope that this will not cause too much confusion
for the reader.

\end{rmk}

We now show that the notions of linear equivalence introduced
in this section are birationally invariant;\\

\begin{theorem}
Let $\Phi:C_{1}\leftrightsquigarrow C_{2}$ be a birational map.
Let $A$ and $B$ be generalised weighted sets on $C_{2}$, with
corresponding generalised weighted sets $[\Phi]^{*}A$ and
$[\Phi]^{*}B$ on $C_{1}$. Then $A\equiv B$, in the sense of either
Definition 2.6 or 2.24, iff $[\Phi]^{*}A\equiv [\Phi]^{*}B$.

\end{theorem}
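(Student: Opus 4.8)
The plan is to reduce the statement to the case of effective weighted sets and then invoke the birational invariance of $g_n^r$ from Theorem 1.14. First I would record two structural facts about the induced map $[\Phi]^*$ on generalised weighted sets. It is additive; and, being the linear extension of the branch bijection $[\Phi]^*\colon\bigcup_{O\in C_2}\gamma_O\to\bigcup_{O\in C_1}\gamma_O$ of Lemma 5.7 of \cite{depiro6}, it preserves total multiplicity and carries disjoint effective weighted sets to disjoint effective weighted sets (a branch $\delta$ of $C_1$ appears in $[\Phi]^*A_i$ precisely when $[\Phi^{-1}]^*\delta$ appears in $A_i$, and positive coefficients stay positive). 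Hence, if $A=A_1-A_2$ and $B=B_1-B_2$ are the canonical decompositions into disjoint effective weighted sets furnished by Remarks 2.23, then $\{[\Phi]^*A_1,[\Phi]^*A_2\}$ and $\{[\Phi]^*B_1,[\Phi]^*B_2\}$ are the corresponding canonical decompositions of $[\Phi]^*A$ and $[\Phi]^*B$. By Definition 2.24, $A\equiv B$ iff $A_1+B_2\equiv B_1+A_2$, while $[\Phi]^*A\equiv[\Phi]^*B$ iff $[\Phi]^*(A_1+B_2)\equiv[\Phi]^*(B_1+A_2)$; since $A_1+B_2$ and $B_1+A_2$ are effective of the same total multiplicity, it suffices to prove the theorem for effective weighted sets $G,G'$ on $C_2$ of equal total multiplicity.

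For the effective case I would argue directly from Definition 2.6 and Theorem 1.14. Suppose $G\equiv G'$ on $C_2$; by Definition 2.6 there is a $g_n^r$ on $C_2$ containing both $G$ and $G'$ as weighted sets. By Theorem 1.14 there is a canonically defined $g_n^r$ on $C_1$, and unwinding its proof this is exactly the collection $[\Phi]^*(g_n^r)$ of images under $[\Phi]^*$ of the weighted sets of the $g_n^r$ on $C_2$ --- the map $[\Phi]^*$ occurring there being the same linear extension of the branch bijection named in the statement of the present theorem. Therefore $[\Phi]^*G$ and $[\Phi]^*G'$ are both weighted sets of this single $g_n^r$ on $C_1$, so $[\Phi]^*G\equiv[\Phi]^*G'$ by Definition 2.6. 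For the converse I would apply the same argument to $\Phi^{-1}\colon C_2\leftrightsquigarrow C_1$, using that $[\Phi^{-1}]^*\circ[\Phi]^*$ is the identity on branches, hence on generalised weighted sets; thus $[\Phi]^*G\equiv[\Phi]^*G'$ yields $G=[\Phi^{-1}]^*[\Phi]^*G\equiv[\Phi^{-1}]^*[\Phi]^*G'=G'$. Combined with the reduction above, this gives the theorem.

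The one point demanding genuine care is the identification used above of the map on weighted sets produced by Theorem 1.14 with the map $[\Phi]^*$ named in the present statement. I would settle it by recalling from the proof of Theorem 1.14 that, for a common non-singular model $C^{ns}$ with morphisms $\Phi_1\colon C^{ns}\to C_1$ and $\Phi_2\colon C^{ns}\to C_2$, the induced $g_n^r$ on $C_1$ is $[\Phi_1^{-1}]^*\circ[\Phi_2]^*(g_n^r)$, and this composite acts on branches exactly as the bijection $[\Phi]^*=[\Phi_1^{-1}]^*\circ[\Phi_2]^*$ of Lemma 5.7. An alternative route, which sidesteps this identification at the cost of more bookkeeping with fixed branch contributions, would be to express linear equivalence through $div(g)$ via Theorem 2.7 and then appeal to Lemma 2.3, which gives $\Phi^*(div(g))=div(\Phi^*g)$ branchwise; I would keep that in reserve as a cross-check rather than the main line of argument.
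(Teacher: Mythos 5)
Your proposal is correct and follows essentially the same route as the paper: the effective case is handled by placing $A$ and $B$ in a common $g_{n}^{r}$ and transporting it via Theorem 1.14, and the generalised case is reduced to the effective one through Definition 2.24. The paper's own proof merely states that ``the same argument works'' for generalised weighted sets; your explicit check that $[\Phi]^{*}$ is additive and carries the canonical disjoint decompositions to canonical disjoint decompositions is exactly the detail being elided there.
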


\begin{proof}
Suppose that $A\equiv B$ in the sense of Definition 2.6. Then,
there exists a $g_{n}^{r}$ on $C_{2}$ containing $A$ and $B$ as
weighted sets. By Theorem 1.14, there exists a corresponding
$g_{n}^{r}$ on $C_{1}$, containing $[\Phi]^{*}A$ and $[\Phi]^{*}B$
as weighted sets. Hence, again by Definition 2.6,
$[\Phi]^{*}A\equiv [\Phi]^{*}B$. The converse is similar, using
$[\Phi^{-1}]^{*}$. If $A\equiv B$ in the sense of Definition 2.24,
then the same argument works.

\end{proof}

As a result of this theorem, we introduce the following
definition;\\

\begin{defn}
Let $\Phi:C_{1}\leftrightsquigarrow C_{2}$ be a birational map.
Then, given a generalised weighted set $A$ on $C_{2}$, we
define;\\

$[\Phi]^{*}|A|=|[\Phi]^{*}A|$\\

where, in the case that $A$ is effective, $|A|$ can be taken
either in the sense of Definition 2.17 or Definition 2.30.

\end{defn}

\begin{rmk}
The definition depends only on the complete series $|A|$, rather
than its particular representative $A$. This follows immediately
from Definition 2.17, Definition 2.30 and Theorem 2.32.

\end{rmk}

We finally introduce the following definition;\\

\begin{defn}{Summation of Complete Series}\\

Let $A$ and $B$ be generalised weighted sets, defining complete
series $|A|$ and $|B|$, in the sense of Definition 2.30. Then, we
define the sum;\\

$|A|+|B|$\\

to be the complete series, in the sense of Definition 2.30,
containing all generalised weighted sets of the form $A'+B'$ with
$A'\in |A|$ and $B'\in |B|$. If $A$ and $B$ are \emph{effective}
weighted sets with $|A|$, $|B|$ taken in the sense of Definition
2.17, then we make the same definition for the sum in the sense of
Definition 2.17.\\

\end{defn}

\begin{rmk}
This is a good definition by Theorem 2.13 and Theorem 2.29.
\end{rmk}

\begin{defn}{Difference of Complete Series}\\

Let $A$ and $B$ be generalised weighted sets, defining complete
series $|A|$ and $|B|$, in the sense of Definition 2.30. Then, we
define the difference;\\

$|A|-|B|$\\

to be the complete series, in the sense of Definition 2.30,
containing all generalised weighted sets of the form $A'-B'$ with
$A'\in |A|$ and $B'\in |B|$. If $A$ and $B$ are \emph{effective}
weighted sets with $|A|$, $|B|$ taken in the sense of Definition
2.17, then we can in certain cases define a difference in the
sense of Definition 2.17. (This is called the residual series, the
reader can look at \cite{Sev} for more details)

\end{defn}

\begin{rmk}
This is again a good definition, for generalised weighted sets
$\{A,B\}$, it follows trivially from the previous definition and
the fact that $\{A,-B\}$ are also generalised weighted sets.
\end{rmk}

\end{section}
\begin{section}{A geometrical definition of the genus of an
algebraic curve}
\end{section}

The purpose of this section is to give a geometrical definition of
the genus of an algebraic curve. In the case of a singular curve,
this cannot be achieved using purely algebraic methods. Our treatment follows the
presentation of Severi in \cite{Sev}.\\

We begin with the following lemma;\\

\begin{lemma}
Let $C$ be a projective algebraic curve, and suppose that a
$g_{n}^{1}$ is given on $C$, with no fixed branch contribution.
Then there exist a finite number of weighted sets $W_{\lambda}$ in
the $g_{n}^{1}$, possessing multiple branches.

\end{lemma}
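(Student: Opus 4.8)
The plan is to reduce the statement to a transversality/generic-position argument for a pencil of forms cutting out the $g_n^1$, combined with the fact that "ramification" of the associated map to $\mathbf{P}^1$ is a closed condition on the non-singular model. First I would invoke the results of Section~1: since the $g_n^1$ has no fixed branch contribution, by Theorem~1.3 (and the reductions used in Lemma~1.10 and Theorem~1.14) we may assume it is cut out exactly by a pencil $\Sigma=\{\phi-\lambda\phi'\}_{\lambda\in P^1}$ of algebraic forms having finite intersection with $C$, with no base branches. By Theorem~1.14 (Birational Invariance) and the remark following it, the problem may be transported to a non-singular model $(C^{ns},\Phi)$: a branch $\gamma_p$ of $C$ is multiple for the weighted set $W_\lambda$ exactly when the corresponding point $p_j\in C^{ns}$ in the fibre $\Gamma_{[\Phi]}(x,p)$ satisfies $I_{p_j}(C^{ns},\overline{\phi-\lambda\phi'})\geq 2$, i.e. the lifted pencil is tangent to $C^{ns}$ at that point. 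So it suffices to show that, for all but finitely many $\lambda\in P^1$, the divisor cut on $C^{ns}$ by $\overline{\phi_\lambda}$ is reduced.

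Next I would phrase this as a statement about the rational function $f={\phi\over\phi'}$, or rather the morphism $\bar f:C^{ns}\to P^1$ it induces on the non-singular model (which is a genuine morphism since $C^{ns}$ is a smooth curve and $\Sigma$ has no base branch after lifting). A branch $\gamma_p$ is multiple for $W_\lambda$ precisely when $ord_{\gamma_p}(f-\lambda)\geq 2$ by Lemma~2.1 (the non-base-branch clause), equivalently when the corresponding point of $C^{ns}$ is a ramification point of $\bar f$ lying over $\lambda$. The key step is therefore: the morphism $\bar f:C^{ns}\to P^1$ has only finitely many ramification points. In characteristic zero this is standard — the ramification locus is the (closed, proper) zero locus of the derivative, equivalently the support of the relative differentials sheaf, and it cannot be all of $C^{ns}$ since $\bar f$ is separable and non-constant; hence it is finite, and its image under $\bar f$ is a finite set of values $\lambda$. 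I would argue this directly with the power-series parametrisations guaranteed by Theorem~6.1 of \cite{depiro6}: at a point $q\in C^{ns}$ with local parameter $t$, write $f(t)=c+c_k t^k+\cdots$ with $c_k\neq 0$; then $ord_{\gamma_p}(f-c)=k$, so $q$ is a ramification point iff $k\geq 2$, iff the formal derivative $f'(t)$ vanishes at $q$; and $f'$, being a non-zero rational function on $C^{ns}$ (non-zero because $f$ is non-constant and $char(L)=0$), has finitely many zeros.

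Finally I would translate back: the finitely many ramification points of $\bar f$ map to finitely many values $\lambda_1,\dots,\lambda_m\in P^1$, and for every other $\lambda$ the weighted set $W_\lambda$ is a sum of $n$ distinct branches, hence has no multiple branch; I should also separately note the (at most one) value $\lambda=\infty$ and handle the weighted set there by the same local computation applied to $\phi'/\phi$, or simply absorb it into the finite exceptional set. Since the $g_n^1$ has no fixed branch contribution, the bijection between the weighted sets $W_\lambda$ of $Series(\Sigma)$ and the weighted sets of the $g_n^1$ (via Theorem~1.3 and Lemma~1.10) shows the exceptional set in the $g_n^1$ is also finite, which is the assertion.

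The main obstacle I anticipate is purely organisational rather than deep: making precise, using only the cited machinery of \cite{depiro6} (presentations, lifted forms, the fibre $\Gamma_{[\Phi]}(x,p)$, and Definition~5.15 of a branch), that "multiple branch of $C$ for $W_\lambda$" corresponds cleanly to "ramification point of the lifted pencil on $C^{ns}$", so that the reduction to the smooth model is legitimate; one must check that no base branches are introduced under lifting (which is why the hypothesis "no fixed branch contribution" is used) and that the correspondence of branches of Lemma~5.7 of \cite{depiro6} respects multiplicities here, exactly as in the proof of Theorem~1.14. Once that reduction is in place, the finiteness of the ramification locus is elementary in characteristic zero.
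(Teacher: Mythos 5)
Your argument is correct, but it takes a genuinely different route from the paper. The paper stays on $C$ itself and never passes to the non-singular model: it writes down the constructible condition $\theta(\lambda)$ on $Par_{\Sigma}$ asserting that every point of $\phi_{\lambda}\cap C$ is non-singular with $R.Mult_{y}(C,\phi_{\lambda})=1$, invokes Lemma 2.17 of \cite{depiro6} to see that $\theta$ holds on a non-empty open subset $U\subset Par_{\Sigma}$, uses Lemma 5.29 of \cite{depiro6} to convert transverse point-intersections into simple branches of $W_{\lambda}$, and concludes because $Par_{\Sigma}\setminus U$ is finite in a one-dimensional parameter space. Your proof instead transports the pencil to $C^{ns}$, identifies multiple branches of $W_{\lambda}$ with ramification points of the induced map $\bar f:C^{ns}\rightarrow P^{1}$, and kills the ramification locus by the non-vanishing of the derivative of a non-constant rational function in characteristic zero. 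What the paper's route buys is economy: it delegates all the genericity to an already-proved transversality lemma and avoids exactly the bookkeeping you flag as your main obstacle (presentations, lifted forms, base points introduced by the lifting, and the branch correspondence of Lemma 5.7 of \cite{depiro6}). What your route buys is more information: it locates the exceptional $\lambda$ precisely as the branch points of $\bar f$ (so one could, e.g., bound their number via the order of the Jacobian, which is essentially what Section 3 goes on to do), and it makes visible where separability enters --- a point the paper's proof leaves implicit inside the cited lemma. Your reduction to $C^{ns}$ does need the mobile-multiplicity formalism ($I^{\Sigma,mobile}$ and Lemma 5.27 of \cite{depiro6}) to be carried out cleanly, exactly as in the proof of Theorem 1.14, but since you identify that as the task and the machinery is available, I count this as a complete alternative proof rather than a gap.
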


\begin{proof}
We may assume that the $g_{n}^{1}$ is defined by a pencil
$\Sigma$, having finite intersection with $C$. Let
$\theta(\lambda)$ be the statement;\\

$\theta(\lambda)\equiv\forall y[(y\in \phi_{\lambda}\cap
C)\rightarrow y\in NonSing(C)\wedge
R.Mult_{y}(C,\phi_{\lambda})=1]$\\

See also Lemma 2.17 of \cite{depiro6}. Then $\theta$ defines a
constructible condition on $Par_{\Sigma}$ and moreover, using
Lemma 2.17 of \cite{depiro6}, we have that $\theta$ holds on an
open subset $U\subset Par_{\Sigma}$. For $\lambda\in U$, we have
that the intersection $(C\cap\phi_{\lambda})$ is transverse, in
the sense of Lemma 2.4 of \cite{depiro6}, and is contained in $W$.
Now, using Lemma 5.29 of \cite{depiro6} and the definition of
$(C\sqcap\phi_{\lambda})$, it follows that each branch of the
corresponding weighted set $W_{\lambda}$ is counted once and lies
inside $W$. Hence, if $W_{\lambda}$ is a weighted set, possessing
multiple branches, we must have that $\lambda\in
({Par_{\Sigma}\setminus U})$. As $Par_{\Sigma}$ has dimension $1$,
this is a finite set, hence the result follows.
\end{proof}

We now make the following definition;\\

\begin{defn}
Let $C$ be a projective algebraic curve and suppose that a
$g_{n}^{1}$ is given on $C$, with no fixed branch contribution.
Then we define the Jacobian of this $g_{n}^{1}$ to be the weighted set;\\

$Jac(g_{n}^{1})=\{\alpha_{\gamma_{1}},\ldots,\alpha_{\gamma_{j}},\ldots,\alpha_{\gamma_{r}}\}$\\

where $\{\gamma_{1},\ldots,\gamma_{j},\ldots,\gamma_{r}\}$
consists of the finitely many branches which are multiple for some
weighted set $W_{\lambda_{j}}$ of the $g_{n}^{1}$ and $\gamma_{j}$
appears with multiplicity $\alpha_{\gamma_{j}}+1$ in
$W_{\lambda_{j}}$.

\end{defn}

\begin{rmk}
This is a good definition by Lemma 3.1 and the fact that any branch
$\gamma$ can only appear in one weighted set $W_{\lambda}$, using
the hypothesis that the $g_{n}^{1}$ has no fixed branches.

\end{rmk}

We now analyse further the Jacobian of a $g_{n}^{1}$, with
\emph{no} fixed branch contribution. Using Theorem 1.14 of this
paper and Theorem 4.16 of \cite{depiro6}, we will derive general
results for projective algebraic curves from consideration of the
case where $C$ is a plane projective curve, having at most nodes
as singularities, $(\dag)$. Until the end of
Theorem 3.21, this assumption will be in force.\\

\begin{lemma}
Let the $g_{n}^{1}$, without fixed branch contribution, be given
on $C$. Then there exists a rational function $h$ on $C$,
defining this $g_{n}^{1}$, such that the weighted set;\\

$G=(h=\infty)$\\

\noindent consists of $n$ distinct branches, each counted once,
lying
inside\\
$NonSing(C)$.

\end{lemma}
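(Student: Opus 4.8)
The plan is to obtain $h$ by composing a rational function that already cuts out the given $g_{n}^{1}$ with a suitably chosen automorphism of $P^{1}$, arranged so that the fibre over $\infty$ is a \emph{generic} weighted set of the series.

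First I would represent the $g_{n}^{1}$ by a rational function. By the refined definition of a $g_{n}^{1}$, it is cut out by a pencil $\Sigma=\langle\phi,\phi'\rangle$ of algebraic forms, having finite intersection with $C$, after removing a fixed branch contribution $W_{0}$; since by hypothesis the $g_{n}^{1}$ has \emph{no} fixed branches, $W_{0}$ must in fact be the full fixed branch contribution of the pencil $\Sigma$. Setting $f=\phi/\phi'$, Lemma 2.4 gives that the $g_{n}^{1}$ associated to $f$, namely $(f)$, is $Series(\Sigma)$ with the fixed branch contribution of the pencil $\{\phi-\lambda\phi'\}=\Sigma$ removed, i.e. exactly the given $g_{n}^{1}$; moreover $deg(f)=n$.

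Next I would show that only finitely many parameters $\lambda\in P^{1}$ are ``bad'', where $\lambda$ is bad if $W_{\lambda}$ either has a multiple branch or contains a branch centred at a singular point of $C$. The first family is finite by Lemma 3.1. For the second, $Sing(C)$ is a finite set, each of its points is the centre of only finitely many branches, and --- since the $g_{n}^{1}$ has no fixed branches --- each branch of $C$ lies in at most one weighted set of the series; hence only finitely many $W_{\lambda}$ can involve a branch over $Sing(C)$. Let $B\subset P^{1}$ be the (finite) union of the two families and choose $\lambda_{0}\in P^{1}\setminus B$. Because the order of the series is $n$ and $W_{\lambda_{0}}$ has all branches simple and centred at nonsingular points, $W_{\lambda_{0}}$ consists of exactly $n$ distinct branches, each counted once, all lying in $NonSing(C)$.

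Finally I would choose an algebraic automorphism $\alpha$ of $P^{1}$ with $\alpha(\lambda_{0})=\infty$ and put $h=\alpha\circ f$. By exactly the computation in the proof of Theorem 2.7 --- composing a rational function with a Mobius transformation leaves the underlying $g_{n}^{1}$ unchanged and carries the weighted set $(f=\lambda)$ to $(h=\alpha(\lambda))$ --- the function $h$ defines the given $g_{n}^{1}$ and $G=(h=\infty)=(f=\lambda_{0})=W_{\lambda_{0}}$, which is of the asserted form. The step requiring the most care is the finiteness of the second family of bad parameters: the real inputs there are the no-fixed-branch hypothesis (so a branch meets only one $W_{\lambda}$) together with the finiteness of $Sing(C)$ and of the set of branches over each singular point, the multiple-branch family being already dispatched by Lemma 3.1. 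The remaining bookkeeping is just Theorem 2.7 applied with $\infty$, rather than $0$, as the distinguished value.
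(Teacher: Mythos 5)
Your proposal is correct and follows essentially the same route as the paper: represent the series by a rational function, pick a generic parameter whose weighted set consists of $n$ simple branches at nonsingular points, and move that parameter to $\infty$ by a Mobius transformation as in the proof of Theorem 2.7. The only (harmless) difference is that the paper gets both the simplicity and the nonsingularity of the generic fibre directly from the open condition $\theta(\lambda)$ in the proof of Lemma 3.1, whereas you use Lemma 3.1 only for multiplicity and supply a separate counting argument, via the no-fixed-branch hypothesis, for the finitely many fibres meeting $Sing(C)$.
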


\begin{proof}

By Theorem 2.7, we may assume that the given $g_{n}^{1}$ is
defined by $(g)$ for some rational function $g$ on $C$. Using the
proof of Lemma 3.1, we may assume, that, for generic $\lambda\in
P^{1}$, the weighted set $(g=\lambda)$ consists of $n$ distinct
branches, lying inside $NonSing(C)$, each counted once. Using the
proof of Theorem 2.7, we can find a Mobius transformation $\alpha$
of $P^{1}$, taking $\lambda$ to $\infty$, such that $h=\alpha\circ
g$ also defines the given $g_{n}^{1}$ and such that
$G=(h=\infty)=(g=\lambda)$. The result follows.

\end{proof}

We now show the following, the reader should refer to \cite{depiro6} for the relevant notation;\\

\begin{lemma}
Suppose that $deg(C)=m$, then there exists a homographic change of
variables of $P^{2}$, such that, in this new coordinate system $(x',y')$;\\

(i). The line at $\infty$ cuts $C$ transversely in $m$ distinct
non-singular points.\\

(ii). The tangent lines to $C$ parallel to the $y'$-axis all have
$2$-fold contact (contatto), and are based at non-singular points of $C$.\\

(iii). The branches of $J=Jac(g_{n}^{1})$ and $G=(h=\infty)$ are
all in finite position, with base points distinct from the points
of contact in $(ii)$.\\

\end{lemma}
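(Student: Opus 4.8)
The plan is a general-position argument. I will show that for a sufficiently general choice of the line at infinity $\ell_\infty$ and of the point $P_\infty$ at infinity of the $y'$-axis, all three conditions hold simultaneously, the set of choices excluded at each stage being a proper closed --- indeed finite --- subset. The geometric inputs are: since $C$ has at most nodes, $C$ is an irreducible reduced plane curve of degree $m$, hence (taking $m\geq 2$, the case $m=1$ being immediate) it has finitely many singular points, finitely many flexes (its intersection with its Hessian curve), finitely many bitangents, and finitely many tangent lines at its nodes, while its dual curve $C^{*}$ is a proper closed curve in $(P^{2})^{*}$; Lemma 3.1, which gives that $Jac(g_{n}^{1})$ involves only finitely many branches; and Remark 3.3, which gives that each branch of $C$ lies in exactly one weighted set of the $g_{n}^{1}$.

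First I would fix the rational function $h$. By Lemma 3.5 we may choose $h$ defining the $g_{n}^{1}$ so that $G=(h=\infty)$ consists of $n$ distinct branches at non-singular points; imposing on the generic parameter used in that proof the finitely many further conditions that $(h=\infty)$ avoid the centres of the finitely many branches of $J=Jac(g_{n}^{1})$ --- each of which, by Remark 3.3, lies in a single weighted set of the $g_{n}^{1}$ and so excludes one value of the parameter --- we may in addition take the centres of the branches of $G$ disjoint from those of $J$. Let $S$ be the now-fixed finite union of $Sing(C)$, the centres of the branches of $J$, and the centres of the branches of $G$.

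Next I would choose $\ell_\infty$. In the dual plane the forbidden lines are those meeting $S$ (a pencil of lines for each of the finitely many points of $S$) together with the tangent lines to $C$ (the curve $C^{*}$); this is a proper closed subset, so a general $\ell_\infty$ misses $S$ and is nowhere tangent to $C$, whence by Bezout's theorem it meets $C$ in $m$ distinct non-singular points, transversely. This gives $(i)$; and, since $\ell_\infty$ misses $S$, the branches of $J$ and of $G$ lie in finite position. Now I would choose $P_\infty\in\ell_\infty$, to serve as the point at infinity of the $y'$-axis, so that the lines parallel to the $y'$-axis are exactly the lines through $P_\infty$. We must keep $P_\infty$ off the finitely many points of $\ell_\infty$ lying on $C$, on a flex tangent of $C$, on a bitangent of $C$, on a tangent line of $C$ at one of its nodes, or on the tangent line $T_{q}$ to $C$ at one of the finitely many smooth points $q\in S$ (each such $T_{q}$ meets $\ell_\infty$ in a single point, being tangent to $C$ and hence distinct from $\ell_\infty$). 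For a general such $P_\infty$ every tangent line of $C$ through $P_\infty$ touches $C$ at a single finite non-singular point --- finite because the only line through $P_\infty$ and a point of $\ell_\infty\cap C$ is $\ell_\infty$ itself, and non-singular because we excluded the tangent lines at the nodes --- with contact exactly $2$, since we excluded flex tangents and bitangents; this is $(ii)$. Finally, no point of $S$ is a contact point of such a tangent, since a smooth such point would put $P_\infty$ on $T_{q}$ for some $q\in S$ and a node cannot be one, all these contact points being smooth; in particular the centres of the branches of $J$ and $G$ are distinct from the contact points of $(ii)$, completing $(iii)$. To conclude, pick any $P_\infty^{x}\in\ell_\infty\setminus\{P_\infty\}$ and an origin off $\ell_\infty$: the homography carrying the standard line at infinity and coordinate axes to $\ell_\infty$, $P_\infty^{x}$, $P_\infty$ yields a coordinate system $(x',y')$ with the three required properties.

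The main obstacle is the bookkeeping rather than any depth: at each stage one must verify that the excluded set is genuinely proper closed or finite, so that a good choice --- rational over the infinite field $L$ --- exists. The substantive facts are the finiteness of the flexes and bitangents of a nodal plane curve and the finiteness of $Jac(g_{n}^{1})$ (Lemma 3.1 together with Remark 3.3); granted these, the rest is the standard observation that the complement of a proper closed subset of $P^{2}$, or of a finite subset of $P^{1}$, is nonempty, together with a little care in the degenerate cases $m\leq 2$.
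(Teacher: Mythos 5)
Your proof is correct and follows essentially the same general-position argument as the paper: exclude the finitely many flex tangents, node tangents, and tangent lines to the branches of $J$ and $G$, and then make a generic choice of the line at infinity and of the point at infinity of the $y'$-axis (the paper chooses the point $P$ first and then a generic line of the pencil through it, whereas you reverse the order, which is immaterial). The only blemish is the citation in your first paragraph: the existence of $h$ with $(h=\infty)$ consisting of $n$ distinct simple branches at non-singular points is Lemma 3.4, not Lemma 3.5.
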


\begin{proof}
We use the fact that a generic point of $C$ has character $(1,1)$.
The proof of this result requires duality arguments, which may be
found later in the paper. It follows, by Remark 6.6 of
\cite{depiro6}, that there exist only finitely many non-ordinary
branches. Hence, there exist finitely many tangent lines
$\{l_{\gamma_{1}},\ldots,l_{\gamma_{r}}\}$, based at
$\{p_{1},\ldots,p_{r}\}$, (possibly with repetitions), such
that;\\

$I_{\gamma_{j}}(C,p_{j},l_{\gamma_{j}})\geq 3$, $(for\ 1\leq j\leq
r)$\\

By assumption, $C$ has at most finitely many nodes as
singularities. Let $\{q_{1},\ldots,q_{s}\}$ be the base points of
these nodes and suppose that
$\{l_{\gamma_{q_{1}^{1}}},l_{\gamma_{q_{1}^{2}}},\ldots,l_{\gamma_{q_{s}^{1}}},l_{\gamma_{q_{s}}^{2}}\}$
are the $2s$ tangent lines (possibly with repetitions)
corresponding to these nodes. Let
$\{l_{\gamma_{1}'},\ldots,l_{\gamma_{t}'}\}$ define the tangent
lines to each of the branches appearing in $(Jac(g_{n}^{1})\cup
G)$. Now choose a point $P$ not lying on $C$ or any of the above
defined tangent lines. Let $\Sigma=\{l_{\lambda}^{P}\}_{\lambda\in
P^{1}}$ be the pencil defined by all lines passing through the
point $P$. Then $\Sigma$ defines a $g_{m}^{1}$ on $C$ without
fixed branches. By the proof of Lemma 3.1, for generic $\lambda$,
$l_{\lambda}^{P}$ intersects $C$ transversely in $m$ distinct
branches, based at non-singular points of $C$. Moreover, we may
assume, using the fact that the pencil has no base branches, that
$(C\sqcap l_{\lambda}^{P})$ is disjoint from $Jac(g_{n}^{1})$ and
$G$ $(*)$. By construction, we also have that, if $l_{Px}$ belongs
to $\Sigma$ and defines the tangent line $l_{\gamma_{x}}$ to a
branch $\gamma_{x}$ based at $x$, in the sense of Definition 6.3
of \cite{depiro6}, then $x\in NonSing(C)$, $l_{Px}$ has $2$-fold
contact (contatto) with the branch $\gamma_{x}$, $(**)$, and the
branch $\gamma_{x}$ does not appear in $(Jac(g_{n}^1)\cup
G)$,(***). Now choose a homography, sending the point $[0:1:0]$
and the line $Z=0$, in the original coordinates $[X:Y:Z]$, to $P$
and $l_{\lambda}^{P}$. Let $[X':Y':Z']$ be the new coordinate
system defined by this homography. For the affine coordinate
system $(x',y')$, defined by $x'={X'\over Z'}$ and $y'={Y'\over
Z'}$, we have that the line at $\infty$ has the property $(i)$,
and, by $(*)$, the branches of $Jac(g_{n}^{1})$ and $G$ are all in
finite position. The lines parallel to the $y'$-axis correspond to
the lines, exluding $l_{\lambda}^{P}$, in the pencil defined by
$\Sigma$, in this new coordinate system. Hence, $(ii)$ follows
immediately from $(**)$ and $(iii)$ then follows from $(***)$. The
lemma is proved.

\end{proof}

We now claim the following;\\

\begin{lemma}
Let $C$ be given in the coordinate system defined by Lemma 3.5,
henceforth denoted by $(x,y)$. Then the $g_{m}^{1}$ on $C$, given
by the lines parallel to the $y$-axis, and the line at $\infty$,
is defined by $(x)$, as in Lemma 2.4. Moreover,
$J'=Jac(g_{m}^{1})$ consists exactly of the branches of contact
between $C$ and the lines parallel to the $y$-axis, while
$G'=(x=\infty)$ consists of $m$ distinct branches, centred at
non-singular points of $C$.

\end{lemma}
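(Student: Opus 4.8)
The plan is to recognise the pencil formed by the lines parallel to the $y$-axis together with the line at infinity as the pencil $\Sigma = \langle X, Z\rangle$ attached to the rational function $x = X/Z$ on $C$, and then to extract the three assertions from Lemmas 2.1 and 2.4 and properties (i)--(iii) of Lemma 3.5. In the projective coordinates $[X:Y:Z]$ with $x = X/Z$, $y = Y/Z$, a line parallel to the $y$-axis is $\{X - cZ = 0\}$ for $c \in L$ and the line at infinity is $\{Z = 0\}$, and these together are exactly the members of $\Sigma$, parametrised by $[c:1] \in P^{1}$ with $\lambda = \infty$ giving $Z = 0$. The base point $[0:1:0]$ of $\Sigma$ is the point $P$ chosen off $C$ in the construction of Lemma 3.5, so $\Sigma$ has finite intersection with $C$ and no base branches, and $x$ is non-constant on $C$. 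By the branched Hyperspatial Bezout Theorem the order of the series cut by $\Sigma$ is $\deg(C)\cdot 1 = m$, and by Theorem 1.3 its dimension is $1$; hence $\Sigma$ cuts out a $g_{m}^{1}$ with empty fixed branch contribution, which by the proof of Lemma 2.4, applied to the representation $x = X/Z$, is precisely the $g_{m}^{1}$ defined by $(x)$. This gives the first assertion.

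Next I would identify $G' = (x = \infty)$. By the description in Lemma 2.4, $(x = \infty)$ consists of the branches $\gamma$ with $val_{\gamma}(x) = \infty$, each counted $ord_{\gamma}(x)$ times, and $val_{\gamma}(x) = \infty$ holds exactly when $\gamma$ is centred at a point of $C \cap \{Z = 0\}$. By Lemma 3.5 (i) these are $m$ distinct non-singular points, each carrying a unique branch by Lemma 5.29 of \cite{depiro6} (equivalently Lemma 1.8). Since $[0:1:0] \notin C$, the form $X$ does not vanish at such a point, so $\gamma$ is a non-base branch of $\Sigma$ with $(X/Z)(p) = \infty$, and Lemma 2.1 gives $ord_{\gamma}(x) = I_{\gamma}(C, Z) = 1$, the last equality by the transversality in (i). Hence $G'$ consists of $m$ distinct branches, each counted once, centred at non-singular points of $C$.

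Finally I would compute $J' = Jac(g_{m}^{1})$. Definition 3.2 is applicable here because $\Sigma$ has no fixed branch contribution, so Lemma 3.1 and Remark 3.3 apply, and $J'$ is the weighted set of branches $\gamma_{p}$ that are multiple for some weighted set of the $g_{m}^{1}$. For an affine branch $\gamma_{p}$ the unique member of $\Sigma$ through $p$ is the vertical line $X - cZ = 0$ with $c = x(p)$, so $\gamma_{p}$ is multiple in the corresponding weighted set iff $I_{\gamma_{p}}(C, X - cZ) \geq 2$; and the member $Z = 0$ contributes nothing, since by (i) it meets every branch with multiplicity $1$. By Definition 6.3 of \cite{depiro6}, $I_{\gamma_{p}}(C, \ell) \geq 2$ for $\ell$ the vertical line through $p$ is exactly the statement that $\ell$ has contact of order $\geq 2$ with $\gamma_{p}$, i.e. that $\gamma_{p}$ is a branch of contact between $C$ and a line parallel to the $y$-axis; by (ii) any such branch is centred at a non-singular point and the contact is exactly $2$-fold, so it occurs in $J'$ with multiplicity $I_{\gamma_{p}}(C,\ell) - 1 = 1$. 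Thus $J'$ is precisely the weighted set of branches of contact between $C$ and the lines parallel to the $y$-axis.

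The step I expect to demand the most care is the bookkeeping for $J'$: one must check that the Jacobian in the sense of Definition 3.2 receives no spurious contributions --- none from the line at infinity, excluded by (i), and none from a branch at a node through which a vertical line happens to pass, excluded by (ii), which forces every vertical tangent line to be based at a non-singular point --- and that the contact branches are counted with the correct multiplicity. The identification of intersection multiplicity at a branch with the order of contact of the tangent line is the one point at which I would appeal carefully to the definitions of \cite{depiro6}.
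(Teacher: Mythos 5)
Your proof is correct and follows essentially the same route as the paper's: identify the pencil with $(X-\lambda Z)_{\lambda\in P^{1}}$ and invoke Lemma 2.4 for the first claim, read off $G'$ from Lemma 3.5(i), and characterise the multiple branches via $I_{\gamma_{p}}(C,\ell)\geq 2$ iff $\ell=l_{\gamma_{p}}$, with Lemma 3.5(ii) supplying the exact $2$-fold contact at non-singular points. The only step you leave implicit is that this equivalence requires every branch to have order $1$ (which holds since $C$ has at most nodes); the paper makes this explicit via Theorem 6.2 of \cite{depiro6}, but there is no gap.
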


\begin{proof}
In order to prove the first claim, first observe that, by its
construction, the given $g_{m}^{1}$ has no fixed branch
contribution. Moreover, it is generated by the lines $X=0$ and
$Z=0$, that is defined by the pencil of lines $(X-\lambda
Z)_{\lambda\in P^{1}}$. By Lemma 2.4, because $x$ is represented
by $({X\over Z})$, as a rational function on $C$, we have that the
series $(x)$ is defined by $(X-\lambda Z)_{\lambda\in P^{1}}$,
after removing its fixed branch contribution. As this series has
no fixed branch contribution, the result follows. In order to
compute $J'=Jac(g_{m}^{1})$, we need to determine;\\

$\{\lambda\in P^{1}:C\sqcap (X-\lambda Z)$\ contains\ a\ multiple\
branch$\}$\\

This corresponds to the set;\\

$\{\lambda\in P^{1}:I_{\gamma_{p}}(C,p,(X-\lambda Z))\geq 2\}$,
for some $p\in C\cap (X-\lambda Z)$, $\gamma_{p}$\\
\indent \ \ \ \ \ \ \ \ \ \ \ \ \ \ \ \ \ \ \ \ \ \ \ \ \ \ \ \ \ \ \ \ \ \ \ \ \ \ \ \ \ \ \ \ \ \ \  a\ branch\ at\ $p$.\\

As $C$ has at most nodes as singularities, the order of each
branch $\gamma$ on $C$ is $1$, see Definition 6.3 of
\cite{depiro6}. Using Theorem 6.2 of \cite{depiro6}, we then have
that $I_{\gamma_{p}}(C,p,(X-\lambda Z))\geq 2$ iff $(X-\lambda Z)$
is the tangent line $l_{\gamma_{p}}$ of $\gamma_{p}$. By
construction of the $g_{m}^{1}$, this can only occur if
$(X-\lambda Z)$ is the tangent line to an ordinary branch, see
Definition 6.3 of \cite{depiro6} again, centred at a non-singular
point of $C$, $(*)$. When $\lambda=\infty$, the corresponding line
in the pencil is given by the line at $\infty$, which cuts $C$
transversely, hence this possibility is excluded. Therefore, the
only possible values of $\lambda$ occur for lines parallel to the
$y$-axis. By $(*)$, for such a branch $\gamma_{p}$ of contact, we
have;\\

$I_{\gamma_{p}}(C,p,(X-\lambda Z))=2$\\

Now, by definition, $mult_{\gamma_{p}}(Jac(g_{m}^{1}))=2-1=1$.
Hence, the result follows. For the final part of the lemma, it is
easy to verify that $\{\gamma\in C:val_{\gamma}(x)=\infty\}$
correspond to the branches of intersection between $C$ and the
line at $\infty$. The result then follows by $(i)$ of Lemma 3.5.\\

\end{proof}

We now claim;\\

\begin{lemma}

Let $\gamma$ be a branch of $C$, in finite position, with
coordinates $(a,b)$, which does \emph{not} belong to
$J'=Jac(g_{m}^{1})$. Then one can find a
power series representation of $\gamma$ of the form;\\

$y(x)=b+c_{1}(x-a)+c_{2}(x-a)^{2}+\ldots$ $(*)$\\

\end{lemma}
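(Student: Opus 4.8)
Let $\gamma$ be a branch of $C$ in finite position with centre $(a,b)$, and suppose $\gamma$ does not belong to $J' = \text{Jac}(g_m^1)$, where $g_m^1$ is the pencil of lines parallel to the $y$-axis (together with the line at infinity). Then $\gamma$ admits a power series parametrisation of the form $y = b + c_1(x-a) + c_2(x-a)^2 + \cdots$, i.e. one in which $x-a$ itself serves as the uniformising parameter.

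Let me think about what's really going on. A branch $\gamma$ centred at $(a,b)$ has, by Theorem 6.1 of [depiro6], a parametrisation by algebraic power series $x = a + x(t)$, $y = b + y(t)$ with $x(t), y(t) \in L[[t]]$ vanishing at $t = 0$. The order of the branch (in the sense of Definition 6.3 of [depiro6]) is $\min(\text{ord}_t x(t), \text{ord}_t y(t))$... actually for a branch the order is $\text{ord}_t x(t)$ when we think of it appropriately — but here the key quantity is $\text{ord}_t(x(t))$, which equals $I_\gamma(C, (X - aZ))$, the intersection multiplicity of $C$ with the vertical line through the centre, computed at the branch.

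**The plan.** The whole point is that $\text{ord}_t x(t) = 1$. Indeed: the line $X - aZ = 0$ (the vertical line through $(a,b)$) is a member $\phi_\lambda$ of the pencil $\Sigma$ defining $g_m^1$, with $\lambda = a$. By Lemma 2.1 (or directly Theorem 6.1 of [depiro6]), $I_\gamma(C, X - aZ) = \text{ord}_t[(x(t) - 0)] = \text{ord}_t x(t)$ — here using that in affine coordinates the line is $x = a$, i.e. the function $x - a$, pulled back along the branch parametrisation. Now $\gamma \notin J' = \text{Jac}(g_m^1)$; since $g_m^1$ has no fixed branch contribution, each branch appears in exactly one weighted set of the $g_m^1$, and $\gamma$ appears in $C \sqcap (X - aZ)$. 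By Lemma 3.6, $J'$ consists precisely of the branches $\gamma_p$ for which $I_{\gamma_p}(C, (X - \lambda Z)) \geq 2$ for the appropriate $\lambda$ (the branches of contact with vertical tangent lines). Since $\gamma \notin J'$, we conclude $I_\gamma(C, X - aZ) = 1$, i.e. $\text{ord}_t x(t) = 1$.

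Once $\text{ord}_t x(t) = 1$, the series $x(t) = \alpha_1 t + \alpha_2 t^2 + \cdots$ with $\alpha_1 \neq 0$ is invertible in $L[[t]]$ in the sense of formal reversion: there is $t(s) \in L[[s]]$ with $t(0) = 0$, $t'(0) \neq 0$, such that $x(t(s)) = s$. Hence, substituting $s = x - a$, we may take $x - a$ itself as the uniformising parameter for $\gamma$: writing $u = x - a = x(t)$, we get $t = t(u)$ an algebraic power series in $u$, and then $y - b = y(t(u)) =: c_1 u + c_2 u^2 + \cdots \in L[[u]]$, which is precisely the asserted form $(*)$. (The reversion of an algebraic power series is again algebraic, and composition of algebraic power series is algebraic, so $y(x)$ is genuinely an algebraic power series in $x - a$; this is the content one cites from the relevant results on algebraic power series in [depiro6].)

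**The main obstacle.** The only real point requiring care is the identification $I_\gamma(C, X - aZ) = \text{ord}_t x(t)$ and the deduction that this equals $1$ exactly when $\gamma \notin J'$. One must be slightly careful about the case $\lambda = \infty$ (the line at infinity) — but that is excluded by hypothesis since $\gamma$ is in finite position, so the centre is a genuine affine point $(a,b)$ and the relevant pencil member is the affine vertical line $x = a$, not the line at infinity. One must also use that $C$ has at most nodes, so every branch of $C$ has order $1$ (Definition 6.3 of [depiro6]); this guarantees that a branch of order $1$ at $(a,b)$ that meets $x = a$ with multiplicity exactly $1$ is precisely a branch whose tangent is not vertical, equivalently $\text{ord}_t x(t) = 1$. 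The formal reversion step is entirely routine. I would write the proof as: (1) parametrise $\gamma$, (2) show $\text{ord}_t x(t) = 1$ using $\gamma \notin J'$ and Lemma 3.6, (3) revert to make $x - a$ the parameter, (4) read off $(*)$.
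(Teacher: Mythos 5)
Your proposal is correct and follows essentially the same route as the paper: parametrise the branch by Theorem 6.2 of \cite{depiro6}, use Lemma 3.6 together with the order-$1$ property of branches of a nodal curve to conclude that $\mathrm{ord}_{t}\,x(t)=1$ (the paper phrases this as ``$a_{1}\neq 0$, else the tangent is vertical and $\gamma\in J'$'', which is the contrapositive of your intersection-multiplicity argument), and then revert the series so that $x-a$ becomes the uniformising parameter. The only point you compress is the verification that the reverted sequence $(x,y(x))$ still parametrises $\gamma$ in the sense of Remarks 3.8 (i.e.\ still computes $I_{\gamma}(C,F)$ for every form $F$); the paper carries this out explicitly via $t(x')=x'v(x')$ with $v$ a unit, but as you say it is routine once $\mathrm{ord}_{t}\,x(t)=1$.
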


\begin{rmk}
In the representation $(*)$, given in Lemma 3.7, we have slightly
abused the terminology of Theorem 6.2 in \cite{depiro6}. We mean,
here, that $(x,y(x))$ should parametrise the branch $\gamma$, in
the sense that, for any algebraic function $F(x,y)$,
$F(x,y(x))\equiv 0$ iff $F$ vanishes on $C$, otherwise $F$ has
finite intersection with $C$ and;\\

$ord_{(x-a)}F(x,y(x))=ord_{(x-a)}F(a+(x-a),b+c_{1}(x-a)+\ldots)$\\
$\indent \ \ \ \ \ \ \ \ \ \ \ \ \ \ \ \ \ \ \ \ \ \
=I_{\gamma_{(a,b)}}(C,F)$

\end{rmk}

\begin{proof}{(Lemma 3.7)}\\

First make the linear change of coordinates $x'=x-a$ and $y'=y-b$,
so that, in this new coordinate system, the branch $\gamma$ is
centred at $(0,0)$. Using Theorem 6.2 of \cite{depiro6}, we can
find algebraic power series $\{x'(t),y'(t)\}$, parametrising the
branch $\gamma$ in the coordinate system $(x',y')$, of the form;\\

$x'(t)=a_{1}t+a_{2}t^{2}+\ldots$\\

$y'(t)=b_{1}t+b_{2}t^{2}+\ldots$\\

It is then a trivial calculation to check that;\\

$x(t)=a+a_{1}t+a_{2}t^{2}+\ldots$\\

$y(t)=b+b_{1}t+b_{2}t^{2}+\ldots$\\

parametrises the branch $\gamma$ in the coordinate system $(x,y)$,
with the terminology similar to the previous remarks.

Using the fact that the branch has order $1$, which is preserved
by the homographic change of coordinates, the vector
$(a_{1},b_{1})\neq 0$. If $a_{1}=0$, then the tangent line
$l_{\gamma}$ to the branch, in the coordinate system $(x',y')$,
would be parallel to the $y'$-axis, hence the translation of
$l_{\gamma}$ by $(a,b)$, which is the tangent line to $\gamma$ in
the coordinate system $(x,y)$ would be parallel to the $y$-axis.
Therefore, by Lemma 3.6, $\gamma$ would belong to
$J'=Jac(g_{m}^{1})$, contradicting the assumption of the lemma.
Hence, we can assume that $a_{1}\neq 0$. As ${dx'\over
dt}|_{t=0}\neq 0$, we can apply the inverse function theorem to
the power series $x'(t)$ and find an algebraic power series
$t(x')$, with ${dt\over dx'}|_{x'=0}\neq 0$, such that $x'(t(x'))=x'$. Then we can write;\\

$y'(t(x'))=b_{1}t(x')+b_{2}t(x')^{2}+\ldots$\\

We claim that the sequence $(x',y'(t(x')))$ parametrises the
branch $\gamma$ in the coordinate system $(x',y')$, $(*)$. We
clearly
have that, for any algebraic function $F(x',y')$;\\

$F(x'(t),y'(t))\equiv 0$ iff $F(x'(t(x')),y'(t(x')))\equiv 0$ iff
$F(x',y'(t(x')))\equiv 0$\\

If $ord_{t}F(x'(t),y'(t))=m<\infty$, then we have;\\

$F(x'(t),y'(t))=t^{m}u(t)$, for a unit $u(t)\in L[[t]]$.\\

We then have that;\\

$F(x',y'(t(x')))=t(x')^{m}u(t(x'))$, $(1)$\\

As ${dt\over dx'}|_{x'=0}\neq 0$, we have that;\\

$t(x')=x'v(x')$, for a unit $v(x')\in L[[x']]$, $(2)$\\

Combining $(1)$ and $(2)$ gives that;\\

$F(x',y'(t(x')))=(x'v(x'))^{m}u(x'v(x'))=(x')^{m}v(x')^{m}u(x'v(x')),$\\

\indent \ \ \ \ \ \ \ \ \ \ \ \ \ \ \ \ \ \ \ \ \ \ \ \ \ \ \
where
$v(x')^{m}u(x'v(x'))$ is a unit in $L[[x']]$\\

\noindent This implies that $ord_{x'}F(x',y'(t(x')))=m$ as well,
hence $(*)$ is shown. It follows easily that the sequence
$(a+x',b+y'(t(x')))$ parametrises the branch $\gamma$ in the
coordinate system $(x,y)$, with the above extension of
terminology. Hence, if we let $y(x)=b+y'(t(x-a))$, then so does
the sequence $(x,y(x))$, with the convention
of Remarks 3.8. The lemma is then shown.\\

\end{proof}

Now let $C$ be defined in the coordinate system $(x,y)$ by $f=0$
and let $h$ be the rational function, given by Lemma 3.4, in this
coordinate system. With hypotheses as in Lemma 3.7, any rational
function $\theta$ formally determines a Cauchy series
$\theta(x,y(x))$, in $(x-a)$, see the explanation at the beginning
of Section 2, which we will also denote by $\theta$.\\

We have that;\\

$f(x,y(x))\equiv 0$\\

and, hence;\\

$f_{x}+f_{y}{dy\over dx}=0$, ${dy\over dx}=-{f_{x}\over f_{y}}$\\

${dh\over dx}=h_{x}+h_{y}{dy\over dx}=h_{x}+h_{y}\centerdot
-{f_{x}\over
f_{y}}={h_{x}f_{y}-h_{y}f_{x}\over f_{y}}$ $(*)$\\

\noindent The calculation $(*)$ should be justified carefully at
the level of Cauchy series. The first part and the case when $h$
belongs to the polynomial ring $L[x,y]$ was considered in Lemma
2.10 of \cite{depiro6}, $(**)$. In general, we can find
$\{h_{1},h_{2}\}$ in $L[x,y]$ such that $h={h_{1}\over h_{2}}$.
Using the result $(**)$ and the quotient rule for differentiating
rational functions, it is sufficient to check that for the formal
(algebraic) power series in $(x-a)$, determined by $h_{2}$;\\

${d(1/h_{2})\over dx}={dh_{2}\over dx}\centerdot {-1\over
(h_{2})^{2}}$ $(***)$\\

This can be shown by a similar calculation to that in Lemma 2.10
of \cite{depiro6}. Namely, we can find a sequence of polynomials
$\{h_{2}^{m}\}_{m\geq 0}$ in $(x-a)$, converging to $h_{2}$ in the
power series ring $L[[x-a]]$. The result $(***)$ holds for each
$\{h_{2}^{m}\}$, hence, by general continuity results for
multiplication in $L[[x-a]]$, it is sufficient to show that;\\

${d(1/h_{2}^{m})\over dx}\rightarrow {d(1/h_{2})\over dx}$ and ${dh_{2}^{m}\over dx}\rightarrow {dh_{2}\over dx}$\\

The second part of this calculation was done in Lemma 2.10 of
\cite{depiro6}, (even in non-zero characteristic). The first part
follows from the second part by representing $(1/h_{2})$ as
$(x-a)^{-n}u_{2}(x-a)$, for some $n\geq 0$ and a unit
$u_{2}(x-a)\in L[[x-a]]$, and finding a sequence of units
$\{u_{2}^{m}(x-a)\}_{m\geq 1}$ in $L[[x-a]]$ such that
$u_{2}^{m}\rightarrow u_{2}$ and
$(1/h_{2}^{m})=(x-a)^{-n}u_{2}^{m}$. One can then use the product
rule for an algebraic power series and the function $(x-a)^{-n}$,
along with continuity of addition in $L[[x-a]]$.\\

Using $(*)$, we can consider ${dh\over dx}$ as a rational function
on the curve $C$ and, for a branch $\gamma$, define
$val_{\gamma}({dh\over dx})$ and $ord_{\gamma}({dh\over dx})$. For
convenience, we will use the notation $ord_{\gamma}({dh\over
dx})=-k$, for a positive integer $k$, to mean that
$val_{\gamma}({dh\over dx})=\infty$ and $ord_{\gamma}({dh\over
dx})=k$. We now claim the following;\\

\begin{lemma}
Let $\gamma$ be a branch of $C$, distinct from $G\cup G'\cup J'$,
then, for the $g_{n}^{1}$ defined by Lemma 3.1, $\gamma$ is
counted (contato) $s$-times in some weighted set iff
$ord_{\gamma}({dh\over dx})=s-1$. If $\gamma$ belongs to $G$, then
$ord_{\gamma}({dh\over dx})=-2$.

\end{lemma}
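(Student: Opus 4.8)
The plan is to work entirely with power series representations at the branch $\gamma$, exploiting the formula $(*)$ for $\frac{dh}{dx}$ obtained above, namely $\frac{dh}{dx} = \frac{h_x f_y - h_y f_x}{f_y}$ as a rational function on $C$. For the first assertion, assume $\gamma$ is a branch distinct from $G\cup G'\cup J'$. Since $\gamma\notin G'$, the branch is in finite position, with some center $(a,b)$; since $\gamma\notin J'$, Lemma 3.7 applies and gives a representation $y(x)=b+c_1(x-a)+c_2(x-a)^2+\ldots$ in which the parameter is $(x-a)$ itself, so that for any rational function $\theta$ on $C$ one has $ord_\gamma(\theta)=ord_{(x-a)}\theta(x,y(x))$ whenever $val_\gamma(\theta)$ is finite (and the pole statement when it is infinite). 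Now $\gamma$ is counted $s$-times in a weighted set $W_\lambda$ of the $g_n^1$ defined by Lemma 3.1 means precisely that $h$ takes a value $\lambda = val_\gamma(h) = h(a,b)$ at $\gamma$ with $ord_\gamma(h-\lambda)=s$ (using Lemma 2.4 and the fact that, $\gamma\notin G$, the pencil defining $(h)$ has no fixed branch at $\gamma$, so $ord_\gamma(h-\lambda)=I_\gamma(C,\phi-\lambda\phi')$). Expanding, $h(x,y(x)) = \lambda + d_s(x-a)^s + \ldots$ with $d_s\neq 0$, and differentiating the Cauchy series term by term gives $\frac{dh}{dx}(x,y(x)) = s\,d_s(x-a)^{s-1}+\ldots$, which has $(x-a)$-order exactly $s-1$ (here is where $char(L)=0$ is used, so that $s\,d_s\neq 0$). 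By the interpretation of $ord_\gamma$ via the representation of Lemma 3.7, this says $ord_\gamma\!\left(\frac{dh}{dx}\right)=s-1$, and the argument is reversible, giving the equivalence.

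For the second assertion, suppose $\gamma$ belongs to $G=(h=\infty)$. By Lemma 3.4, we may take $h$ so that $G$ consists of $n$ distinct branches, each counted once, lying in $NonSing(C)$; in particular $\gamma\in NonSing(C)$ is in finite position (since $\gamma\notin G'$) and is not in $J'$ (since $\gamma\notin J'$), so Lemma 3.7 again supplies a representation with parameter $(x-a)$. That $\gamma$ is counted once in $(h=\infty)$ means $ord_\gamma(h)=1$ with $val_\gamma(h)=\infty$, i.e. $h(x,y(x)) = \frac{u(x-a)}{(x-a)}$ for a unit $u$ in $L[[x-a]]$, equivalently $h(x,y(x)) = e_{-1}(x-a)^{-1}+e_0+e_1(x-a)+\ldots$ with $e_{-1}\neq 0$. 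Differentiating the Laurent/Cauchy series term by term gives $\frac{dh}{dx}(x,y(x)) = -e_{-1}(x-a)^{-2}+e_1+\ldots$, which has a pole of order exactly $2$ (again using $char(L)=0$ so that $-e_{-1}\neq 0$). By the convention $ord_\gamma\!\left(\frac{dh}{dx}\right)=-k$ meaning $val_\gamma=\infty$ and pole-order $k$, this is exactly $ord_\gamma\!\left(\frac{dh}{dx}\right)=-2$, as claimed.

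The one technical point that needs care — and which I expect to be the main obstacle — is justifying that $\frac{dh}{dx}$, defined abstractly by $(*)$ as a rational function on $C$, really does coincide with the \emph{formal derivative} of the Cauchy series $h(x,y(x))$ in the variable $(x-a)$, at each relevant branch. This is exactly the content of the paragraph following Lemma 3.7 in the text (the justification of $(*)$ at the level of Cauchy series, reducing via $h=h_1/h_2$, the quotient rule $(***)$, and the continuity/approximation argument from Lemma 2.10 of \cite{depiro6}). Granting that identification, the computations above are elementary formal power series manipulations: differentiate term by term, read off the lowest-order coefficient, and note it is nonzero because we are in characteristic zero. It is worth remarking explicitly that the hypothesis $\gamma\notin J'$ is what lets us use the good representation $y(x)$ of Lemma 3.7 (the tangent line is not vertical, so $x-a$ is a genuine uniformizer at $\gamma$), and the hypothesis $\gamma\notin G'$ ensures the branch is in finite position so the affine coordinate $x$ is defined there; both are used silently but essentially.
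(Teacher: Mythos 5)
Your proposal is correct and follows essentially the same route as the paper: represent $h$ at the branch via the Lemma 3.7 parametrisation as $\lambda+(x-a)^{s}\psi(x-a)$ (respectively $(x-a)^{-1}\psi(x-a)$ on $G$), differentiate the Cauchy series term by term, and read off that the leading coefficient $s\psi(0)$ (respectively $-\psi(0)$) is nonzero in characteristic zero. The technical point you flag — that the rational function $\tfrac{dh}{dx}$ defined by $(*)$ agrees with the formal derivative of the Cauchy series — is indeed exactly what the paper's discussion between Lemma 3.7 and Lemma 3.9 is there to supply, so nothing is missing.
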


\begin{proof}
For the first part of the lemma, suppose that $\gamma$ is counted
$s$-times in some weighted set. Then, by Lemma 3.4,
$val_{\gamma}(h)<\infty$ and $ord_{\gamma}(h)=s$. By Lemma 3.7 and
the construction before Lemma 2.1, $h$ determines an algebraic
power series, at the branch $\gamma$,
of the form;\\

$h=\lambda+(x-a)^{s}\psi(x-a)$, with $\psi(0)\neq 0,\lambda<\infty$\\

We then have that;\\

${dh\over dx}=(x-a)^{s-1}[s\psi(x-a)+(x-a)\psi'(x-a)]$\\

At $x=a$, the expression in brackets reduces to $s\psi(0)\neq 0$,
hence $ord_{\gamma}({dh\over dx})=s-1$ as required. The converse
statement is also clear by this calculation. If $\gamma$ belongs
to $G$, then, by Lemma 3.4, $val_{\gamma}(h)=\infty$ and
$ord_{\gamma}(h)=-1$. Then $h$ determines an algebraic power
series at $\gamma$ of the form;\\

$h=(x-a)^{-1}\psi(x-a)$, with $\psi(0)\neq 0$\\

We then have that;\\

${dh\over dx}=-(x-a)^{-2}[\psi(x-a)-(x-a)\psi'(x-a)]$\\

At $x=a$, the expression in brackets reduces to $\psi(0)\neq 0$,
hence $ord_{\gamma}({dh\over dx})=-2$ as required.
\end{proof}
It remains to consider the branches $G'\cup J'$. We achieve this
by the following geometric constructions;\\

$(i)$. Construction for $G'$;\\

We use the change of variables $x={1\over x'}$ and $y={y'\over
x'}$. This is a homography as the map;\\

$\Theta:(x',y')\mapsto ({1\over x'},{y'\over x'})$\\

is the restriction to affine coordinates of the map;\\

$\Theta:[X':Y':Z']\mapsto [Z':Y':X']$\\

As $\Theta$ is a homography, the character of corresponding
branches is preserved.  Let $F(x',y')$ define the equation of $C$
in this new coordinate system. The point $P$, given by $[0:1:0]$,
relative to the coordinate system $(x,y)$, is fixed by this
homography, hence the weighted set $G'_{1}$, corresponding to
$G'$, consists of branches in finite position relative to the
coordinate system $(x',y')$, and is defined by $C\sqcap (x'=0)$.
As the branches of $G'_{1}$ are simple, they cannot coincide with
any of the branches of contact of tangents to $C$ parallel to the
$y'$-axis. Hence, we can apply the power series method given by
Lemma 3.7. Let $H(x',y')$ be the rational function corresponding
to $h(x,y)$ from Lemma 3.4. We
claim the following;\\

\begin{lemma}
Let hypotheses be as in $(i)$. Then, for every branch $\gamma$ of
$G_{1}'$, we have that $ord_{\gamma}({dH\over dx'})=0$.

\end{lemma}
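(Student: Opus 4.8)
The plan is to reduce the statement to the same elementary power-series computation that was used for branches of $C$ distinct from $G\cup G'\cup J'$ --- essentially its ``$s=1$'' case --- now performed in the coordinate system $(x',y')$ of construction $(i)$, with $H$ and $x'$ playing the roles of $h$ and $x$. First I would fix a branch $\gamma$ of $G_1'=C\sqcap(x'=0)$, necessarily centred at a point $(0,b')$ on the line $x'=0$. By Lemma 3.5 $(i)$, transported through the homography $\Theta$ (which preserves the character of corresponding branches), $\gamma$ is centred at a non-singular point of $C$ and is counted once in the section cut by $x'=0$; being counted once, its tangent line is not $x'=0$ itself, hence not parallel to the $y'$-axis. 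This is precisely the hypothesis needed to apply the power-series method of Lemma 3.7, giving a representation $y'(x')=b'+c_1x'+c_2(x')^2+\cdots$ of $\gamma$ with $ord_{x'}F(x',y'(x'))=I_\gamma(C,F)$ for every algebraic form $F$ not vanishing on $C$.

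The essential input is the behaviour of $H$ at $\gamma$. Viewing $\gamma$ as a branch $\gamma^{old}$ of $G'=(x=\infty)$ on $C$ in the original coordinates, recall from the proof of Lemma 3.5 that the line at infinity was taken to be a generic line $l_\lambda^P$ of the pencil through $P$, chosen so that $C\sqcap l_\lambda^P$ is disjoint from $Jac(g_n^1)$ and from $G=(h=\infty)$; hence $\gamma^{old}\notin Jac(g_n^1)$ and $\gamma^{old}\notin G$. Since the $g_n^1$ has no fixed branch, $\gamma^{old}$ is not a base branch, so $val_{\gamma^{old}}(h)=\mu$ for some $\mu\in L$, and by Lemma 2.4 the branch $\gamma^{old}$ appears in the weighted set $(h=\mu)$ with multiplicity $ord_{\gamma^{old}}(h)$; as $\gamma^{old}$ is not multiple for the $g_n^1$, this multiplicity equals $1$. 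Applying the birational invariance of $ord_\gamma$ and $val_\gamma$ (Lemma 2.3) to $\Theta$ gives $val_\gamma(H)=\mu$ and $ord_\gamma(H)=1$, so through the representation above $H$ determines an algebraic power series at $\gamma$ of the form $H=\mu+x'\psi(x')$ with $\psi(0)\neq 0$.

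Finally, differentiating at the level of Cauchy series --- justified exactly as in the discussion validating the expression for $dh/dx$ as a rational function on $C$, in particular the verifications $(**)$ and $(***)$ there --- gives
$$\frac{dH}{dx'}=\psi(x')+x'\psi'(x'),$$
whose value at $x'=0$ is $\psi(0)\neq 0$. Hence $val_\gamma(dH/dx')=\psi(0)\notin\{0,\infty\}$, so $ord_\gamma(dH/dx')=0$, as claimed. The only delicate point is the middle step: one must be sure that the construction of $G'$ in Lemma 3.5 genuinely keeps its branches off $Jac(g_n^1)$, so that $ord_{\gamma^{old}}(h)=1$ and the series for $H$ has nonvanishing linear coefficient; everything else repeats the computation already carried out for branches distinct from $G\cup G'\cup J'$.
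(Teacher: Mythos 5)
Your proposal is correct and follows essentially the same route as the paper: use the disjointness of $G'$ from $G=(h=\infty)$ and from $J=Jac(g_{n}^{1})$ (built into the choice of the line at infinity in Lemma 3.5) to conclude that $val_{\gamma}(h)$ is finite and $ord_{\gamma}(h-val_{\gamma}(h))=1$, transfer this to $H$ through the homography, and then differentiate the resulting power series $H=c+c_{1}(x'-a)+\ldots$ with $c_{1}\neq 0$. The only cosmetic difference is that you argue directly from $\gamma\notin J$ that the branch is simple in its weighted set, where the paper argues by contradiction, and you cite Lemma 2.3 explicitly for the transfer step; both are equivalent to the paper's argument.
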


\begin{proof}
By $(i)$ and $(iii)$ of Lemma 3.5, we have that $G=(h=\infty)$ and
$G'$ are disjoint. Hence, for every branch $\gamma$ of $G'$,
$val_{\gamma}(h)<\infty$. For a given branch $\gamma$ of $G'$, let
$val_{\gamma}(h)=c$ and consider the rational function $h-c$. We
clearly have that $val_{\gamma}(h-c)=0$. If $ord_{\gamma}(h-c)\geq
2$, then $\gamma$ would be multiple for $(h=c)$, hence, by Lemma
3.4, would belong to $J=Jac(g_{n}^{1})$. This contradicts the
fact, from (i) of Lemma 3.5, that $J$ and $G'$ are disjoint.
Therefore, we must have that $ord_{\gamma}(h-c)=1$. It follows
that, for any branch $\gamma$ of $G_{1}'$, we can find a constant
$c$ such that $ord_{\gamma}(H-c)=1$ as well. Now, if $\gamma$ is
centred at $(a,b)$ in the coordinate system $(x',y')$, then, using
the method of Lemma 3.7, $H$ determines an algebraic power series
at $\gamma$
of the form;\\

$H(x',y'(x'))=c+c_{1}(x'-a)+\ldots$ $(c_{1}\neq 0)$\\

It follows, from differentiating this expression, that
$ord_{\gamma}({dH\over dx'})=0$ as required.

\end{proof}

We now claim;\\

\begin{lemma}
Let hypotheses be as in $(i)$. Then;\\

${dH\over dx'}={dh\over dx}.{dx\over dx'}$\\

as an identity of Cauchy series, for corresponding branches
satisfying the requirements that they are in finite position and
are not branches of contact for tangent lines parallel to the
$y'$-axis or $y$-axis respectively.
\end{lemma}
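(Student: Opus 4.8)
The plan is to read the claimed identity as the chain rule for the formal Cauchy series carried by a branch, and to reduce it to two facts already available. The first is the justification, given in the paragraph before Lemma~3.9, that for a rational function $\theta$ on $C$ and a branch in finite position which is not a branch of contact for the tangents parallel to the $y$-axis, the rational function $\frac{\theta_{x}f_{y}-\theta_{y}f_{x}}{f_{y}}$ has, on that branch, Cauchy series equal to the formal $x$-derivative of $\theta(x,y(x))$; that argument works for an arbitrary rational function, and verbatim in the $(x',y')$ chart with $(x,y,f)$ replaced by $(x',y',F)$. The second is that, by construction, $H$ is the rational function on $C$ with $H(x',y')=h(1/x',y'/x')$, and the substitution $x=1/x'$, $y=y'/x'$ is a change of projective coordinates (the homography $\Theta$), so a parametrisation of a branch in the $(x,y)$ chart is carried to one of the same branch in the $(x',y')$ chart by that substitution (cf.\ Lemma~2.3). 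Granting these, the lemma is essentially formal.

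First I would fix a branch $\gamma_{1}$, described in the $(x',y')$ chart, meeting the stated requirements: $\gamma_{1}$ is in finite position, centred at some $(a,b)$, and is not a branch of contact for the tangents parallel to the $y'$-axis, while the corresponding branch $\gamma$ in the $(x,y)$ chart is in finite position and not a branch of contact for the tangents parallel to the $y$-axis. Since $x=1/x'$ and both $x$ and $x'$ are finite on the branch, $a\neq 0$; all but finitely many branches meet these requirements. By Lemma~3.7 (and its evident analogue in the $(x',y')$ chart, using that every branch of $C$ has order $1$ by the standing assumption on $C$) $\gamma_{1}$ has a parametrisation $(x',y'(x'))$ in the sense of Remark~3.8; because $a\neq 0$, the substitution $x=1/x'$ is a unit power series in $(x'-a)$ and $y=y'(x')/x'$ is a power series in $(x'-a)$, and these are obtained from a parametrisation $(x,y(x))$ of $\gamma$ by the coordinate substitution. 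Hence the Cauchy series of any rational function on $\gamma_{1}$ is its Cauchy series on $\gamma$ rewritten in the variable $x'$; in particular $H(x',y'(x'))=h(1/x',y'(x')/x')$ is the Cauchy series of $H$ on $\gamma_{1}$ and also the Cauchy series of $h$ on $\gamma$ rewritten in $x'$.

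Next I would differentiate. Viewing $h$ along $\gamma_{1}$ as the composite $x'\mapsto(x(x'),y(x'))=(1/x',\,y'(x')/x')\mapsto h(x,y)$, in which $y=y(x)$ and $x=x(x')=1/x'$ are honest (Laurent, respectively unit power) series, the ordinary chain rule for formal series gives
\[
\frac{d}{dx'}\,h\bigl(x(x'),y(x')\bigr)=\Bigl(h_{x}+h_{y}\,\frac{dy}{dx}\Bigr)\frac{dx}{dx'}.
\]
Since $f(x,y(x))\equiv 0$ forces $\frac{dy}{dx}=-f_{x}/f_{y}$, the bracket equals $\frac{h_{x}f_{y}-h_{y}f_{x}}{f_{y}}$, which by the first fact above is the Cauchy series of the rational function $\frac{dh}{dx}$ on $\gamma$, hence (rewritten in $x'$) on $\gamma_{1}$; and $\frac{dx}{dx'}=\frac{d}{dx'}(1/x')=-1/(x')^{2}$, which is the Cauchy series on $\gamma_{1}$ of the rational function $\frac{dx}{dx'}=\frac{x_{x'}F_{y'}-x_{y'}F_{x'}}{F_{y'}}=-1/(x')^{2}$. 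So the right-hand side above is the Cauchy series of $\frac{dh}{dx}\cdot\frac{dx}{dx'}$. On the other hand, by the $(x',y')$-version of the first fact, $\frac{d}{dx'}H(x',y'(x'))$ is the Cauchy series of $\frac{dH}{dx'}$ on $\gamma_{1}$. Since $h(x(x'),y(x'))=H(x',y'(x'))$ as Cauchy series, equating the two computations yields the asserted identity on $\gamma_{1}$; as it holds on all but finitely many branches, $\frac{dH}{dx'}$ and $\frac{dh}{dx}\cdot\frac{dx}{dx'}$ also coincide as rational functions on $C$.

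I expect the only real labour to be the routine verification, at the level of Cauchy (Laurent) series, of the formal chain rule and of the passage of $\frac{d}{dx'}$ through the substitution $x=1/x'$. This runs along the same lines as the justification of the formula $\frac{dh}{dx}=\frac{h_{x}f_{y}-h_{y}f_{x}}{f_{y}}$ established before Lemma~3.9 — in particular the treatment there of $d(1/h_{2})/dx$ by approximating with polynomials and using continuity in $L[[x-a]]$ — and presents no genuine obstacle, precisely because the requirements imposed on the branch guarantee that $x'=a$ is finite and non-zero, so the substitution introduces no actual pole and the manipulations stay within honest power series.
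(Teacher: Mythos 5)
Your proposal follows the paper's own route: push the Lemma~3.7 parametrisation of the branch through the homography $x=1/x'$, $y=y'/x'$, identify the resulting sequence with the graph parametrisation $y(x)$ composed with $x(x')=1/x'$, and then apply a chain rule for Cauchy series justified by polynomial approximation and continuity in the non-archimedean topology, exactly as in the text. The one step you assert rather than argue --- that the second coordinate of the pushed-forward parametrisation really equals $y(x(x'))$ --- is precisely the point the paper labours over (its claim $(**)$, proved by a uniqueness-of-parametrisation argument via the inverse function theorem and the rational functions $\psi_{n}=\frac{1}{n!}\frac{d^{n}y}{dx^{n}}$), but granting that routine verification your argument is correct and essentially identical to the paper's.
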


\begin{proof}
Let $\gamma'$ and $\gamma$ be such corresponding branches, centred
at $p'=(a,b)$ and $p=({1\over a},{b\over a})$, of $F=0$ and $f=0$
respectively. Let $\gamma'$ be parametrised, as in Lemma 3.7, by
the sequence $(x',y'(x'))$. Then, we claim that the sequence;\\

$(x(x',y'(x')),y(x',y'(x')))=(x(x'),y(x',y'(x')))=({1\over x'},{y'(x')\over x'})$\\

parametrises the corresponding branch $\gamma$, in the sense of
Remarks 3.8, $(*)$. This follows easily from the fact that the
morphism $\Theta$, given in $(i)$, is a homography of $P^{2}$,
hence, in particular, one has that, for any algebraic function $\theta$;\\

$I_{\gamma'}(p',C,\Theta^{*}(\theta))=I_{\gamma}(p,C,\theta)$\\

We now claim that;\\

$(x(x'),y(x',y'(x')))=(x(x'),y(x(x')))$, $(**)$\\

as an identity of algebraic power series in $(x'-a)$. By $(*)$ and
Lemma 3.7, both sequences define valid parametrisations,
$(\theta_{1}(x-a),\theta_{2}(x-a))$ and
$(\theta_{1}(x-a),\theta_{3}(x-a))$ in the sense of Remarks 3.8,
of $\gamma$. Moreover, we clearly have that the initial terms of
both sequences are identically equal. Now, observe that
$ord_{(x'-a)}(x(x')-{1\over a})=1$, hence we may apply the method
of Lemma 3.7 (Inverse Function Theorem), in order to find an
algebraic power series $t(z)\in zL[[z]]$ such that;\\

$\theta_{1}(t(x'-a))={1\over a}+(x'-a)$\\

and\\

$({1\over a}+(x'-a),\theta_{2}(t(x'-a)))$, $({1\over a}+(x'-a),\theta_{3}(t(x'-a)))$\\

 both define valid parametrisations of $\gamma$ in the sense of Remarks 3.8, $(***)$. Now, supposing that
$\theta_{2}(t(x'-a))=\theta_{3}(t(x'-a))$, then\\
$(\theta_{2}-\theta_{3})(t(x'-a))\equiv 0$, hence, it follows
straightforwardly that\\
 $(\theta_{2}-\theta_{3})(x-a)\equiv 0$
and $\theta_{2}(x-a)=\theta_{3}(x-a)$. Then $(**)$ is shown. We
may, therefore, assume that;\\

$\theta_{2}(t(x'-a))=\sum_{n=0}^{\infty}a_{n}(x'-a)^{n}$, $\theta_{3}(t(x'-a))=\sum_{n=0}^{\infty}b_{n}(x'-a)^{n}$\\

and $a_{n}\neq b_{n}$ for some $n\geq 1$. Consider the rational
function\\
 $\psi_{n}={1\over n!}{d^{n}y\over dx^{n}}$ on $C$,
given by the explanation after the proof of Lemma 3.7. By $(***)$
and Lemma 2.1, we would then have that both
$val_{\gamma}(\psi_{n})=a_{n}$ and $val_{\gamma}(\psi_{n})=b_{n}$,
which is clearly a contradiction. Hence, the claim $(**)$ is
shown.\\

Now, we have, by $(**)$, for the corresponding branches $\gamma'$ and $\gamma$;\\

$H(x',y'(x'))=h(x(x'),y(x',y'(x')))=h(x(x'),y(x(x')))$, $(\dag)$\\

Applying the chain rule for differentiating Cauchy series, $(\dag\dag)$, to the composition;\\

$H:x'\mapsto x(x')\mapsto h(x(x'),y(x(x')))$\\

and using $(\dag)$, the lemma follows. However, we should still
justify the use of $(\dag\dag)$ in the following form;\\

A Chain Rule for Cauchy Series.\\

Let $q(x')\in L(x')$ be a rational function, such that $q(a)$ is
defined as an element of $L$, and let $h(x)$ define a Cauchy
series in $Frac(L[[x]])$. Then $H(x')=h(q(x')-q(a))$
defines a Cauchy series in $Frac(L[[x'-a]])$ and;\\

${dH\over dx'}={dh\over dx}|_{(q(x')-q(a))}.{dq\over dx'}$, $(*)$\\

as an identity of Cauchy series in $Frac(L[[x'-a]])$.\\

In order to see the first part of the claim, observe that
$q(x')-q(a)$ can be expanded as an algebraic power series in
$(x'-a)L[[x'-a]]$, hence the formal substitution of $q(x')-q(a)$
in $h(x)$ determines a Cauchy series in $Frac(L[[x'-a]])$. For the
second part of the claim, choose a sequence $\{h_{m}\}_{m\geq 0}$
of \emph{polynomials} in $L[x,1/x]$ such that
$\{h_{m}\}\rightarrow h$ in the non-archimidean topology induced
by the standard valuation on the field $Frac(L[[x]])$. Let
$H_{m}(x')=h_{m}(q(x')-q(a))$ be the corresponding rational
function, which also determines a Cauchy series in the field
$Frac(L[[x'-a]])$. By the chain rule for rational functions, $(*)$
holds, replacing $H$ by $H_{m}$ and $h$ by $h_{m}$. Hence, the
identity $(*)$ also holds at the level of Cauchy series in
$Frac(L[[x'-a]])$. We have that the sequence $\{H_{m}\}_{m\geq
0}\rightarrow H$, in the non-archimidean topology induced by the
standard valuation on the field $Frac(L[[x'-a]])$. This follows
from the fact that, for sufficently large $m$;\\

$ord_{(x'-a)}(h-h_{m})(q(x')-q(a))\geq ord_{x}(h-h_{m})$\\

By calculations shown above (even in non-zero characteristic), we
have that $({dh_{m}\over dx})_{m\geq 0}\rightarrow {dh\over dx}$
and $({dH_{m}\over dx'})_{m\geq 0}\rightarrow {dH\over dx'}$. The
result $(*)$ then follows by continuity of multiplication in
$L[[x'-a]]$ and uniqueness of
limits.\\

\end{proof}

We now combine Lemmas 3.10 and Lemmas 3.11, in order to obtain;\\

\begin{lemma}
Let $C$ be given in the original coordinate system $(x,y)$, then,
for every branch $\gamma$ of $G'$, we have that
$ord_{\gamma}({dh\over dx})=2$.

\end{lemma}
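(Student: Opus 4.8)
The plan is to use the homographic change of variables $x=1/x'$, $y=y'/x'$ from the construction $(i)$ for $G'$, under which a branch $\gamma$ of $G'=(x=\infty)$ becomes the corresponding branch $\gamma_1'$ of $G'_1=C\sqcap(x'=0)$, a branch which lies in finite position relative to $(x',y')$. Since Lemma 3.10 already gives $ord_{\gamma_1'}(dH/dx')=0$, the only work is to transport this across the coordinate change into a statement about $ord_\gamma(dh/dx)$. The mechanism is the chain rule of Lemma 3.11, $dH/dx'=(dh/dx)\cdot(dx/dx')$; since $x=1/x'$ on $C$ one has $dx/dx'=-(x')^{-2}$ as a rational function on $C$, and hence $dh/dx=-(x')^2\cdot dH/dx'$ as rational functions on $C$.

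First I would recall, from the discussion following Lemma 3.7, that $dh/dx$, $dH/dx'$ and $dx/dx'$ are honest rational functions on $C$ (via the formula $dh/dx=(h_xf_y-h_yf_x)/f_y$ together with the Chain Rule for Cauchy Series). Next, Lemma 3.11 establishes the identity $dH/dx'=(dh/dx)\cdot(dx/dx')$ of Cauchy series at every branch of $C$ which lies in finite position with respect to both coordinate systems and is not a branch of contact, hence at all but finitely many branches of $C$. Two rational functions on $C$ agreeing at infinitely many branches are equal, so $dh/dx=-(x')^2\cdot dH/dx'$ holds identically on $C$; in particular it may now be evaluated at the branches of $G'$, which are precisely among the finitely many branches excluded by Lemma 3.11 (they sit at $x=\infty$ in the $(x,y)$-model, so the corresponding branch in the $(x',y')$-model is at $(0,c)$ and fails the "finite position in both models" hypothesis).

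With the global identity in hand, fix a branch $\gamma$ of $G'$ and let $\gamma_1'$ be the corresponding branch of $G'_1$ under the homography $\Theta$. Since $G'=(x=\infty)$ consists of $m=deg(C)$ distinct branches, each counted once (Lemma 3.6), and $\Theta$ is a homography, $G'_1=C\sqcap(x'=0)$ likewise consists of $m$ distinct branches each counted once; hence $x'$ is a local uniformiser at $\gamma_1'$, i.e. $ord_{\gamma_1'}(x')=1$. Applying $ord_{\gamma_1'}$ to $dh/dx=-(x')^2\cdot dH/dx'$ and invoking Lemma 3.10, $ord_\gamma(dh/dx)=ord_{\gamma_1'}(dh/dx)=2\,ord_{\gamma_1'}(x')+ord_{\gamma_1'}(dH/dx')=2+0=2$, as claimed (in the signed convention $dh/dx$ has a genuine zero of order $2$ at $\gamma$, so $val_\gamma(dh/dx)=0$). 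The delicate point is exactly this last promotion of Lemma 3.11 from a Cauchy-series identity valid only at branches in finite position in both models — a hypothesis failing at every branch of $G'$ — to a global identity of rational functions on $C$; alternatively, one may apply the chain rule directly at $\gamma$, differentiating $h$ along $\gamma$ with respect to the local uniformiser $x'=1/x$, which reproduces $dh/dx=-(x')^2\cdot dH/dx'$ at $\gamma$ without reference to the global identity.
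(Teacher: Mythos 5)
Your proposal is correct and follows essentially the same route as the paper: the chain rule of Lemma 3.11 with $x=1/x'$ gives ${dH\over dx'}=-{1\over x'^{2}}{dh\over dx}$ at the branches in finite position, this is promoted to an identity of rational functions on $C$ (the paper does this by observing that a rational function whose Cauchy series vanishes identically at one such branch must vanish on $C$, via Lemma 2.1), and then Lemma 3.10 together with $ord_{\gamma}(x')=1$ at the simple branches of $G_{1}'$ and the birational invariance of $ord$ (Lemma 2.3) yields $ord_{\gamma}({dh\over dx})=2$. Your explicit justification that $ord_{\gamma_{1}'}(x')=1$ (from $G'$ consisting of $m$ distinct simple branches) is a point the paper leaves as "straightforward", but otherwise the two arguments coincide.
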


\begin{proof}
By Lemma 3.11 and the fact that $x(x')={1\over x'}$, we have;\\

${dH\over dx'}=-{1\over x'^{2}}{dh\over dx}=-x^{2}{dh\over dx}$, $(*)$\\

This identity holds at the level of Cauchy series in $L[[x'-a]]$
for corresponding branches satisfying the conditions of Lemma
3.10. However, using Lemma 3.10, we can also consider $G={dH\over
dx'}+{1\over x'^{2}}{dh\over dx}$ as a rational function on the
curve $C$. By $(*)$, for an appropriate branch $\gamma$,
satisfying the conditions of Lemma 3.10, we have that the Cauchy
series expansion of $G$ at $\gamma$ is identically zero. By Lemma
2.1, this can only occur if $G$ vanishes identically on $C$.
Hence, we can assume that $(*)$ holds at the level of rational
functions on $C$ as well. By Lemma 3.10, we found that, for a
branch $\gamma$ of $G_{1}'$, $ord_{\gamma}({dH\over dx'})=0$.
Using the fact that the branches of $G_{1}'$ are centred at
$x'=0$, we have, straightforwardly, that, for such a branch
$\gamma$, $ord_{\gamma}(-x'^{2}{dH\over dx'})=2$. Using Lemma 2.3,
applied to the homography $\Theta$ given in $(i)$, we have that
$ord_{\gamma}({dh\over dx})=2$ for any given branch of $G'$, as
required.

\end{proof}

(ii). Construction for $J'$;\\

We use the change of variables $x=y'$ and $y=x'$. This is a
homography as the map;\\

$\Theta:(x',y')\rightarrow (y',x')$\\

is the restriction to affine coordinates of the map;\\

$\Theta:[X':Y':Z']\mapsto [Y':X':Z']$\\

Again, as $\Theta$ is a homography, the character of corresponding
branches is preserved. Let $F(x',y')$ denote the equation of $C$
in this new coordinate system. The point $P$, given by $[0:1:0]$,
relative to the coordinate system $(x,y)$, corresponds to the
point $[1:0:0]$ in the coordinate system $(x',y')$. Hence, the
weighted set $J_{1}'$, corresponding to $J'$, consists of branches
in finite position relative to the coordinate system $(x',y')$,
defined by the branches of contact between $C$ and tangents
parallel to the $x'$-axis. In particular, they \emph{cannot} be
branches of contact between $C$ and tangents parallel to the
$y'$-axis. Hence, we can again apply the power series method given
by Lemma 3.7. Let $H(x',y')$ be the rational function
corresponding to $h(x,y)$ from Lemma 3.4. We claim the
following;\\

\begin{lemma}
Let hypotheses be as in $(ii)$. Then, for every branch $\gamma$ of
$J_{1}'$, we have that $ord_{\gamma}({dH\over dx'})=0$.
\end{lemma}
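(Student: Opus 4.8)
The plan is to mimic exactly the argument used for $G'$ in Lemma 3.10, transferred through the homography $\Theta:[X':Y':Z']\mapsto[Y':X':Z']$ of construction $(ii)$. First I would observe that, since $J'=Jac(g_{n}^{1})$ and $J=Jac(g_{n}^{1})$ are disjoint from $G=(h=\infty)$ by $(iii)$ of Lemma 3.5, for every branch $\gamma$ of $J'$ we have $val_{\gamma}(h)<\infty$; write $val_{\gamma}(h)=c$. The key point is to show $ord_{\gamma}(h-c)=1$. Indeed $val_{\gamma}(h-c)=0$, and if $ord_{\gamma}(h-c)\geq 2$ then $\gamma$ would be a multiple branch for the weighted set $(h=c)$ of the $g_{n}^{1}$, hence by Lemma 3.4 (or rather Definition 3.2 and Lemma 3.4) $\gamma$ would belong to $J=Jac(g_{n}^{1})$; but $(iii)$ of Lemma 3.5 says the branches of $J$ and of $J'$ (which appear in $Jac(g_{m}^{1})\cup G$ after the coordinate change — one should check the positions match through $\Theta$) are in finite position with base points distinct, so in particular $J$ and $J'$ are disjoint, a contradiction. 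Hence $ord_{\gamma}(h-c)=1$, and by Lemma 2.3 applied to the homography $\Theta$, for the corresponding branch $\gamma$ of $J_{1}'$ and the corresponding rational function $H$, we also get $ord_{\gamma}(H-c)=1$.

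Next I would invoke Lemma 3.7 to get a power series representation. The branches of $J_{1}'$ are, by construction in $(ii)$, branches of contact between $C$ and tangents parallel to the $x'$-axis, hence they are \emph{not} branches of contact for tangents parallel to the $y'$-axis, so $J_{1}'$ is disjoint from $Jac(g_{m}^{1})$ computed in the $(x',y')$ coordinates; thus Lemma 3.7 applies and each such branch $\gamma$, centred at some $(a,b)$, has a parametrisation $y'(x')=b+\ldots$. Substituting into $H$ as in the explanation following Lemma 3.7, and using $ord_{\gamma}(H-c)=1$, we obtain an algebraic power series expansion
\[
H(x',y'(x'))=c+c_{1}(x'-a)+\ldots,\qquad c_{1}\neq 0.
\]
Differentiating this Cauchy series with respect to $x'$ then gives $ord_{\gamma}\!\left(\frac{dH}{dx'}\right)=0$, which is the assertion of the lemma.

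The main obstacle — and the only non-routine part — is the verification that $J$ and $J'$ are genuinely disjoint, i.e.\ that a branch of contact with a tangent line parallel to the $y$-axis (a branch of $J'=Jac(g_{m}^{1})$) cannot be a multiple branch of the pencil defining the $g_{n}^{1}$. This is not automatic from the definitions; it is exactly what clause $(iii)$ of Lemma 3.5 was arranged to provide (the generic choice of the point $P$, hence of the coordinate system, makes the tangent lines to the branches of $Jac(g_{n}^{1})$ avoid being parallel to the $y$-axis, and keeps base points distinct). So the careful step is to trace through the coordinate change of Lemma 3.5 and of $(ii)$, checking that the disjointness and finite-position statements there survive and yield precisely the hypothesis ``$\gamma\notin J$'' needed to rule out $ord_{\gamma}(h-c)\geq 2$. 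Once that bookkeeping is in place, the rest is the same three-line differentiation argument as in Lemma 3.10.
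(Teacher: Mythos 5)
Your proposal follows the paper's own proof essentially verbatim: disjointness of $G$ and $J'$ from $(iii)$ of Lemma 3.5 gives $val_{\gamma}(h)=c<\infty$, disjointness of $J$ and $J'$ rules out $ord_{\gamma}(h-c)\geq 2$, and transferring through the homography $\Theta$ to the power series parametrisation of Lemma 3.7 and differentiating gives the result. Your closing remark about tracing the disjointness of $J$ and $J'$ through the coordinate changes is exactly the role clause $(iii)$ of Lemma 3.5 plays in the paper's argument, so the approaches coincide.
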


\begin{proof}
The proof is similar to Lemma 3.10. By (iii) of Lemma 3.5, we have
that $G=(h=\infty)$ and $J'$ are disjoint. Hence, for every branch
$\gamma$ of $J'$, $val_{\gamma}(h)<\infty$. For a given branch
$\gamma$ of $J'$, let $val_{\gamma}(h)=c$ and consider the
rational function $h-c$. We then have that $val_{\gamma}(h-c)=0$.
If $ord_{\gamma}(h-c)\geq 2$, then $\gamma$ would be multiple for
$(h=c)$, hence, by Lemma 3.4, would belong to $J=Jac(g_{n}^{1})$.
This contradicts the fact, from $(iii)$ of Lemma 3.5, that $J$ and
$J'$ are disjoint. Therefore, we must have that
$ord_{\gamma}(h-c)=1$. It follows that, for any branch $\gamma$ of
$J_{1}'$, we can find a constant $c$ such that
$ord_{\gamma}(H-c)=1$ as well. Now, if $\gamma$ is centred at
$(a,b)$ in the coordinate system $(x',y')$, then, using the method
of Lemma 3.7, $H$ determines an algebraic power series at $\gamma$
of the form;\\

$H(x',y'(x'))=c+c_{1}(x'-a)+\ldots$ $(c_{1}\neq 0)$\\

By differentiating this expression, we have that
$ord_{\gamma}({dH\over dx'})=0$ as required.

\end{proof}

We now make an obvious extension to Lemma 3.7;\\

\begin{lemma}
Let $\gamma$ be a branch of $C$, in finite position, with
coordinates $(a,b)$, such that the tangent line of $\gamma$ is not
parallel to the $x$-axis. Then one can find a power series
representation of $\gamma$ of the form;\\

$x(y)=a+c_{1}(y-b)+c_{2}(y-b)^{2}+\ldots$ $(*)$\\

\end{lemma}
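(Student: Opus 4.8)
The plan is to repeat the argument of Lemma 3.7 with the roles of $x$ and $y$ interchanged. First I would make the linear change of coordinates $x'=x-a$, $y'=y-b$, so that in the new system $\gamma$ is centred at the origin. By Theorem 6.2 of \cite{depiro6}, one can find algebraic power series $\{x'(t),y'(t)\}$ parametrising $\gamma$ in the coordinates $(x',y')$, say $x'(t)=a_{1}t+a_{2}t^{2}+\ldots$ and $y'(t)=b_{1}t+b_{2}t^{2}+\ldots$, and translating back, $x(t)=a+a_{1}t+\ldots$, $y(t)=b+b_{1}t+\ldots$ parametrise $\gamma$ in the coordinates $(x,y)$ in the extended sense of Remarks 3.8.

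The key point is the dictionary between the geometric hypothesis and the analytic one. Since $\gamma$ has order $1$ --- a property preserved by the homographic changes of variable in force --- the vector $(a_{1},b_{1})$ is non-zero, and it gives the direction of the tangent line $l_{\gamma}$. If $b_{1}=0$, then $l_{\gamma}$ would be parallel to the $x$-axis; as this is excluded by hypothesis, we must have $b_{1}\neq 0$. Because $\frac{dy'}{dt}\big|_{t=0}=b_{1}\neq 0$, I can then apply the inverse function theorem to the power series $y'(t)$, exactly as in Lemma 3.7, to obtain an algebraic power series $t(y')\in y'L[[y']]$ with $\frac{dt}{dy'}\big|_{y'=0}\neq 0$ and $y'(t(y'))=y'$, and substitution yields $x'(t(y'))=a_{1}t(y')+a_{2}t(y')^{2}+\ldots$.

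It then remains to check that the sequence $(x'(t(y')),\,y')$ parametrises $\gamma$ in the sense of Remarks 3.8. This is verbatim the verification marked $(*)$ in the proof of Lemma 3.7: writing $t(y')=y'v(y')$ for a unit $v(y')\in L[[y']]$, one gets for every algebraic $F$ that $F(x'(t),y'(t))\equiv 0$ iff $F(x'(t(y')),y')\equiv 0$, and when $ord_{t}F(x'(t),y'(t))=k<\infty$ one has $ord_{y'}F(x'(t(y')),y')=k$ as well. Setting $x(y)=a+x'(t(y-b))$ then gives a representation of $\gamma$ of the claimed form $x(y)=a+c_{1}(y-b)+c_{2}(y-b)^{2}+\ldots$, with $c_{1}=a_{1}v(0)$ (permitted to vanish, since only tangents parallel to the $x$-axis are excluded). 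There is essentially no obstacle here: the one thing to get right is the observation that the hypothesis ``$l_{\gamma}$ not parallel to the $x$-axis'' is precisely the condition $b_{1}\neq 0$ needed to invert $y'(t)$, mirroring the role played by $a_{1}\neq 0$ (i.e.\ $\gamma\notin J'$) in Lemma 3.7.
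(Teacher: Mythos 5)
Your proposal is correct and is exactly what the paper intends: its entire proof reads ``The proof is the same as Lemma 3.7,'' and you have simply carried out that symmetric argument, correctly identifying that the hypothesis ``$l_{\gamma}$ not parallel to the $x$-axis'' translates to $b_{1}\neq 0$, which is what licenses inverting $y'(t)$ in place of $x'(t)$.
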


\begin{proof}
The proof is the same as Lemma 3.7.
\end{proof}

We can also make a similar extension to the remarks made between
the proof of Lemma 3.7 and Lemma 3.9. Namely, we can
define ${dx\over dy}$, ${dh\over dy}$ (as rational functions on $C$), and we have that;\\

${dx\over dy}=-{f_{y}\over f_{x}}$,  ${dh\over dy}={h_{y}f_{x}-h_{x}f_{y}\over f_{x}}$\\

We now claim the following;\\

\begin{lemma}
Let $C$ be given in the original coordinate system $(x,y)$ and let
$\gamma$ be a branch of $C$, centred at $(a,b)$, with the property
that its tangent line $l_{\gamma}$ is neither parallel to the
$x$-axis nor to the $y$-axis. Then the sequences $(x(y),y(x(y))$
and $(x(y),y)$ both parametrise the branch $\gamma$ and moreover;\\

$(x(y),y(x(y)))=(x(y),y)$\\

\noindent as an identity of sequences of algebraic power series in
$L[[y-b]]$.

\end{lemma}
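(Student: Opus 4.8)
The plan is to trace both series $y(x)$ (furnished by Lemma 3.7, applicable since $l_{\gamma}$ is not parallel to the $y$-axis, equivalently $\gamma\notin J'$) and $x(y)$ (furnished by Lemma 3.14, applicable since $l_{\gamma}$ is not parallel to the $x$-axis) back to a single algebraic parametrisation $(x(t),y(t))=(a+a_{1}t+\cdots,\,b+b_{1}t+\cdots)$ of $\gamma$ obtained from Theorem 6.2 of \cite{depiro6}. Under the standing assumption $(\dag)$ every branch of $C$ has order $1$, so $(a_{1},b_{1})\neq(0,0)$, and the hypothesis that $l_{\gamma}$ is parallel to neither axis forces $a_{1}\neq 0$ \emph{and} $b_{1}\neq 0$. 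Writing $X(t)=x(t)-a$ and $Y(t)=y(t)-b$, the construction in the proof of Lemma 3.7 gives $y(x)=b+Y(T_{1}(x-a))$, where $T_{1}\in tL[[t]]$ is the compositional inverse of $X$ (existing because $a_{1}\neq 0$); similarly the construction in the proof of Lemma 3.14 gives $x(y)=a+X(T_{2}(y-b))$, where $T_{2}$ is the compositional inverse of $Y$ (existing because $b_{1}\neq 0$). Note that $x(y)-a=X(T_{2}(y-b))$ has $(y-b)$-adic order exactly $1$, its linear coefficient being $a_{1}/b_{1}\neq 0$.

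I would first check that $(x(y),y(x(y)))$ is a legitimate parametrisation of $\gamma$ in the sense of Remarks 3.8 (applied with $x$ and $y$ interchanged, as for Lemma 3.14). Since $x(y)-a\in(y-b)L[[y-b]]$, substituting $x\mapsto x(y)$ into $y(x)\in L[[x-a]]$ converges in $L[[y-b]]$, so $y(x(y))$ is well defined. For an algebraic form $F(x,y)$: if $F$ vanishes on $C$ then $F(x,y(x))\equiv 0$, hence $F(x(y),y(x(y)))\equiv 0$; otherwise $F(x,y(x))=(x-a)^{m}u(x-a)$ with $u$ a unit in $L[[x-a]]$ and $m=I_{\gamma}(C,F)$ by Lemma 3.7, and writing $x(y)-a=(y-b)v(y-b)$ with $v$ a unit in $L[[y-b]]$ we get $F(x(y),y(x(y)))=(y-b)^{m}v(y-b)^{m}u\big((y-b)v(y-b)\big)$, of $(y-b)$-adic order $m=I_{\gamma}(C,F)$. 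This is exactly the requirement defining a parametrisation of $\gamma$, so $(x(y),y(x(y)))$ parametrises $\gamma$; and $(x(y),y)$ parametrises $\gamma$ directly by Lemma 3.14.

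It then remains to prove the identity $y(x(y))=y$ in $L[[y-b]]$. Using the explicit forms above,
\[
y(x(y))-b=Y\big(T_{1}(x(y)-a)\big)=Y\big(T_{1}(X(T_{2}(y-b)))\big)=Y\big(T_{2}(y-b)\big)=y-b,
\]
where the third equality uses $T_{1}\circ X=\mathrm{id}$ and the last uses $Y\circ T_{2}=\mathrm{id}$; hence $y(x(y))=y$, and consequently $(x(y),y(x(y)))=(x(y),y)$. Alternatively one could argue exactly as in Lemma 3.11: after reparametrising both sequences so that their common first coordinate is literally $a+u$, if the second coordinates differed at minimal index $n\geq 1$ then the rational function $\psi_{n}=\frac{1}{n!}\frac{d^{n}y}{dx^{n}}$ on $C$ (as in the proof of Lemma 3.11) would, by Lemma 2.1, have two distinct values $val_{\gamma}(\psi_{n})$, a contradiction. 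The only genuine content is recognising that $y(x)$ and $x(y)$ are mutually inverse power series, which is precisely why both tangent hypotheses are needed — the first to form $y(x)$ and invert $X$, the second to form $x(y)$ and invert $Y$, so that the cancellation $T_{1}\circ X\circ T_{2}=T_{2}$ in the displayed computation is available; the rest is routine bookkeeping with the centring at $(a,b)$ and the convergence of the substitutions.
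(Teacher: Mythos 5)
Your proposal is correct, and the middle portion — verifying that $(x(y),y(x(y)))$ parametrises $\gamma$ by writing $F(x,y(x))=(x-a)^{m}u(x-a)$, substituting $x(y)-a=(y-b)v(y-b)$ with $v$ a unit, and reading off the $(y-b)$-adic order — is exactly the computation in the paper. Where you diverge is the final identity $y(x(y))=y$: the paper disposes of it by the uniqueness-of-parametrisation argument from Lemma 3.11 (two parametrisations of $\gamma$ with literal first coordinate $a+(y-b)\mapsto x(y)$ must agree, via the rational functions $\psi_{n}=\frac{1}{n!}\frac{d^{n}y}{dx^{n}}$ and $val_{\gamma}$), and leaves the details to the reader, whereas your primary route is the direct cancellation $Y(T_{1}(X(T_{2}(y-b))))=Y(T_{2}(y-b))=y-b$ of compositional inverses. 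Your computation is cleaner and fully explicit, but it does require fixing a \emph{single} underlying parametrisation $(x(t),y(t))$ of $\gamma$ from which both $y(x)$ and $x(y)$ are manufactured; that is legitimate here (and harmless, since by Lemma 2.1 and the Lemma 3.11 uniqueness both series are independent of the choice), but it is worth saying explicitly, which you essentially do. The paper's uniqueness argument avoids that bookkeeping at the cost of invoking the $\psi_{n}$ machinery. Since you also record the uniqueness route as an alternative, your write-up subsumes the paper's proof.
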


\begin{proof}

By Lemmas 3.7 and 3.14, the sequences $(x,y(x))$ and $(x(y),y)$
both parametrise the branch $\gamma$, as algebraic power series,
in $L[[x-a]]$ and $L[[y-b]]$ respectively. Moreover, we claim
that;\\

$ord_{(y-b)}(x(y)-a)=1$, $(*)$\\

This follows from a close inspection of the proof of Lemma 3.7,
using the fact that $\{a_{1},b_{1}\}$ given there are both
non-zero, by the hypotheses on the branch $\gamma$. Now, by $(*)$
and the fact that $y(x)$ defines an algebraic power series in
$L[[x-a]]$, the substitution of $x(y)$ for $x$ in this power
series defines  an algebraic power series in $L[[y-b]]$. We need
to show that the sequence $(x(y),y(x(y)))$ still parametrises
$\gamma$, $(**)$. The proof is very similar to Lemma 3.7. Suppose
that $F(x,y)$ is an algebraic function. Then we have that;\\

$F(x,y(x))\equiv 0$ iff $F(x(y),y(x(y)))\equiv 0$ iff $F$ vanishes
on $C$.\\

If $ord_{(x-a)}F(x,y(x))=m<\infty$, then we have;\\

$F(x,y(x))=(x-a)^{m}u(x,a)$, where $u(x,a)$ is a unit in
$L[[x-a]]$.\\

We then have that;\\

$F(x(y),y(x(y)))=(x(y)-a)^{m}u(x(y),a)$, $(1)$\\

By $(*)$, we have that;\\

$(x(y)-a)=(y-b)v(y,b)$, where $v(y,b)$ is a unit in $L[[y-b]]$, $(2)$\\

Combining $(1)$ and $(2)$ gives that;\\

$F(x(y),y(x(y)))=(y-b)^{m}v(y,b)^{m}u((y-b)v(y,b))$, where $u(x)$
is a\\
\indent \ \ \ \ \ \ \ \ \ \ \ \ \ \ \ \ \ \ \ \ \ \ \ \ \ \ \ \ \ \ \ \ \ \ \ \ \ \ \ \ \ \ \ \ \ \ \ \ \ \ \ \ \ \ \ \ \ \ \ \ \ \ \ \ \ \ \ \ \ \  unit in $L[[x]]$\\

It is easily checked that $v(y,b)^{m}u((y-b)v(y,b))$ defines a
unit in $L[[y-b]]$. Hence;\\

$ord_{(y-b)}F(x(y),y(x(y)))=m$\\

 as well. This proves $(**)$ as required. In order to show the last part of the lemma,
 we use $(*)$ above and the method of Lemma 3.11, (uniqueness of parametrisation
 of a branch with given first term in the sequence, this was the proof of $(**)$ in that Lemma).
 The details are left to the reader. \\

\end{proof}

\begin{lemma}
Let $C$ be given in the original coordinate system $(x,y)$.
Then;\\

${dh\over dy}={dh\over dx}.{dx\over dy}$ $(*)$\\

where the identity can be taken either at the level of Cauchy
series, for branches $\gamma$ of $C$ with the property that they
are in finite position and their tangent line is neither parallel
to the $x$-axis nor to the $y$-axis, or at the level of rational
functions on $C$.

\end{lemma}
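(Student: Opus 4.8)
The identity at the level of rational functions on $C$ will be immediate. Recall from the computation preceding Lemma 3.9 that, as rational functions on $C$, ${dh\over dx}={h_{x}f_{y}-h_{y}f_{x}\over f_{y}}$, and from the remarks preceding Lemma 3.17 that ${dx\over dy}=-{f_{y}\over f_{x}}$ and ${dh\over dy}={h_{y}f_{x}-h_{x}f_{y}\over f_{x}}$. Since neither $f_{x}$ nor $f_{y}$ vanishes identically on $C$, these quotients are genuine rational functions on $C$, and multiplying the first two gives
$${dh\over dx}\centerdot {dx\over dy}={h_{x}f_{y}-h_{y}f_{x}\over f_{y}}\centerdot\Bigl(-{f_{y}\over f_{x}}\Bigr)={h_{y}f_{x}-h_{x}f_{y}\over f_{x}}={dh\over dy},$$
which is $(*)$ in the first sense. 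So the only real content is the Cauchy series version.

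For that, I would fix a branch $\gamma$ of $C$, centred at some $(a,b)$, which is in finite position and whose tangent line is parallel to neither the $x$-axis nor the $y$-axis. By Lemma 3.16 (and the order computation $(*)$ inside its proof), $\gamma$ is parametrised by the algebraic power series $(x(y),y)$ in $L[[y-b]]$ with $ord_{(y-b)}(x(y)-a)=1$, and moreover $(x(y),y(x(y)))=(x(y),y)$. The point of this last identity is that the Cauchy series of $h$ at $\gamma$, taken in the uniformiser $(y-b)$, coincides with $h(x(y),y(x(y)))$, i.e. with the formal composite of the Cauchy series $h(x,y(x))\in Frac(L[[x-a]])$ of Lemma 3.7 with the substitution $x=x(y)$; the condition $ord_{(y-b)}(x(y)-a)=1$ guarantees that $(y-b)$ is a legitimate uniformiser, which is exactly where the hypothesis on the tangent line is used. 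I would then record that, under the same substitution, the Cauchy series of the rational function ${dh\over dx}$ at $\gamma$ in $(y-b)$ is obtained from its Cauchy series in $(x-a)$.

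The plan is then to differentiate the composite $h(x(y),y(x(y)))$ by a chain rule for Cauchy series. The version established in the proof of Lemma 3.11 applies to an inner \emph{rational} function, but the same approximation argument works verbatim when the inner function is an algebraic power series $q(y)\in L[[y-b]]$ with $ord_{(y-b)}(q(y)-q(b))=1$: one approximates $h(x,y(x))$ in the non-archimidean topology of $Frac(L[[x-a]])$ by Laurent polynomials $h_{m}\in L[x-a,1/(x-a)]$, for which ${d\over dy}h_{m}(q(y))=h_{m}'(q(y))\centerdot q'(y)$ is the classical chain rule for formal power series, and then passes to the limit using that substitution of an order-one power series is a continuous ring homomorphism $Frac(L[[x-a]])\to Frac(L[[y-b]])$ (so $ord_{(y-b)}(\psi(q(y)))\ge ord_{(x-a)}(\psi)$), together with the continuity of differentiation and of multiplication proved in Lemma 3.11 (valid even in non-zero characteristic) and uniqueness of limits. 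Taking $q(y)=x(y)$ yields ${dh\over dy}={dh\over dx}\big|_{x(y)}\centerdot{dx\over dy}$ as Cauchy series in $Frac(L[[y-b]])$. The main obstacle is exactly this mild upgrade of the chain rule, together with the bookkeeping of the two uniformisers; it is routine but must be spelled out. As a consistency check, and in the spirit of the proof of Lemma 3.13, the rational function identity already proved also forces the Cauchy series identity at \emph{every} admissible branch via Lemma 2.1, so the two halves of the statement agree.
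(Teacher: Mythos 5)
Your proposal is correct, but it reorganises the argument in a way worth comparing with the paper's. The paper proves the Cauchy series identity first, by composing $y\mapsto x(y)\mapsto h(x(y),y(x(y)))=h(x(y),y)$ (Lemma 3.15 --- note your citation of ``Lemma 3.16'' for this fact is an off-by-one, since that is the lemma being proved) and then establishing ``Another Chain Rule for Cauchy Series'' in which the \emph{inner} power series $x(y)$ is approximated by polynomials $x_{n}(y)\in L[y-b]$; the delicate step there is the convergence ${dh\over dx}|_{x_{n}(y)}\rightarrow {dh\over dx}|_{x(y)}$, which the paper handles by splitting ${dh\over dx}=h_{0}+h_{1}$ with $h_{1}\in (x-a)L[[x-a]]$. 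Only afterwards does the paper deduce the rational-function identity, via the vanishing argument from the beginning of Lemma 3.12. You invert this: the rational-function identity is obtained by direct algebra from the defining formulas ${dh\over dx}={h_{x}f_{y}-h_{y}f_{x}\over f_{y}}$, ${dx\over dy}=-{f_{y}\over f_{x}}$, ${dh\over dy}={h_{y}f_{x}-h_{x}f_{y}\over f_{x}}$ (valid once one notes, as you do, that $f_{x}$ and $f_{y}$ do not vanish identically on $C$), and your chain rule approximates the \emph{outer} function $h$ by Laurent polynomials $h_{m}$, as in the first chain rule of Lemma 3.11, using that substitution of the order-one series $x(y)-a$ is a continuous (indeed order-preserving) homomorphism. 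Both approximation strategies are sound; yours trades the paper's $h_{0}+h_{1}$ estimate for the elementary formal chain rule on Laurent monomials composed with an order-one power series, and your closing observation --- that the rational-function identity plus Lemma 2.1 already forces the Cauchy series identity at every admissible branch --- is a legitimate shortcut the paper does not exploit, provided one grants that the symbols ${dh\over dy}$, ${dx\over dy}$ at the level of Cauchy series are by definition the expansions of the corresponding rational functions (which is how the remarks between Lemmas 3.14 and 3.15 set things up).
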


\begin{proof}
For a branch $\gamma$, centred at $(a,b)$, satisfying the
requirements of the lemma, we
consider the composition;\\

$h:y\mapsto x(y)\mapsto h(x,y(x))|_{x(y)}=h(x(y),y(x(y)))=h(x(y),y)$ $(\dag)$\\

where the final identity comes from Lemma 3.15. Applying a chain
rule for Cauchy series, $(\dag\dag)$, to $(\dag)$, we obtain the
result $(*)$ at the level of Cauchy series. Again, we justify
$(\dag\dag)$ in the following form, one should compare the
result with the version given in Lemma 3.11;\\

Another Chain Rule for Cauchy Series.\\

Let $x(y)$ define an algebraic power series in $L[[y-b]]$, with
constant term $x(b)=a$, and let $h(x)$ define a Cauchy series in
$Frac(L[[x-a]])$. Then $g(y)=h(x(y))$ defines a Cauchy series in
$Frac(L[[y-b]])$ and;\\

${dg\over dy}={dh\over dx}|_{x(y)}.{dx\over dy}$ $(**)$\\

as an identity of Cauchy series in $Frac(L[[y-b]])$.\\

The first part of the claim follows easily from the obvious fact that\\
$x(y)-a$ belongs to $(y-b)L[[y-b]]$, hence, as $h(x)$ is a Cauchy
series in $Frac(L[[x-a]])$, the formal substitution of $x(y)$ in
$h(x)$ (and ${dh\over dx}$) defines a Cauchy series in
$Frac(L[[y-b]])$. In order to show $(**)$, let
$\{x_{n}(y)\}_{n\geq 1}$ define a sequence of \emph{polynomials}
in $L[y-b]$ with the property that $\{x_{n}(y)\}_{n\geq
1}\rightarrow x(y)$ in the non-archimidean topology on $L[[y-b]]$
and $x_{n}(b)=a$. Let $g_{n}(y)=h(x_{n}(y))$ be the corresponding
Cauchy series in
$Frac(L[[y-b]])$. Then we have that;\\

${dg_{n}\over dy}={dh\over dx}|_{x_{n}(y)}.{dx_{n}\over dy}$\\

as an identity of Cauchy series in $Frac(L[[y-b]])$, by the proof
of the previous version of the Chain Rule in Lemma 3.11. Now, by
proofs given in the paper \cite{depiro6}, we have that
$({dx_{n}\over dy})_{n\geq 1}\rightarrow {dx\over dy}$ as
algebraic power series in $L[[y-b]]$. We now claim that ${dh\over
dx}|_{x_{n}(y)}\rightarrow {dh\over dx}|_{x(y)}$ as Cauchy series
in $Frac(L[[y-b]])$, $(***)$. In order to see this, observe that
we can write ${dh\over dx}=h_{0}+h_{1}$, where $h_{1}$ belongs to
$(x-a)L[[x-a]]$, and $h_{0}={dh\over dx}-h_{1}$ is a finite sum of
terms of order at most $0$. We then have, by explicit calculation, that;\\

$h_{1}=\sum_{m\geq 1}a_{m}(x-a)^{m}$\\

$h_{1}(x(y))-h_{1}(x_{n}(y))=\sum_{m\geq
1}a_{m}[(x(y)-a)^{m}-(x_{n}(y)-a)^{m}]$\\

\indent \ \ \ \ \ \ $=\sum_{m\geq
1}a_{m}[[(x_{n}(y)-a)+(x-x_{n})(y)]^{m}-(x_{n}(y)-a)^{m}]$\\

\indent \ \ \ \ \ \  $=\sum_{m\geq 1}a_{m}(x-x_{n})(y)r_{m}(y)$, with $ord_{(y-b)}r_{m}(y)\geq m-1$\\

Using the definition of convergence, given any $s\geq 1$, we can
find $n(s)$ such that $ord_{(y-b)}(x-x_{n(s)})(y)\geq s$. The
above calculation then shows that
$ord_{(y-b)}[h_{1}(x(y))-h_{1}(x_{n(s)}(y))]\geq s$ as well.
Hence, we must have that $\{h_{1}(x_{n}(y))\}_{n\geq 1}\rightarrow
h_{1}(x(y))$ in $L[[y-b]]$. As $h_{0}$ is a finite sum of terms,
it follows easily, by continuity of the basic operations
$\{+,.\}$, that $\{h_{0}(x_{n}(y))\}_{n\geq 1}\rightarrow
h_{0}(x(y))$ in $Frac(L[[y-b]])$. Hence, the result $(***)$
follows. A similar argument shows that $\{g_{n}(y)\}_{n\geq
1}\rightarrow g(y)$ in $Frac(L[[y-b]])$. Hence, using arguments in
\cite{depiro6}, we have that $({dg_{n}\over dy})_{n\geq
1}\rightarrow g$ in $Frac(L[[y-b]])$, as well. Now, $(**)$ follows
from uniqueness of limits and continuity of multiplication for the
non-archimidean topology on $Frac(L[[y-b]])$.

In order to complete the proof of Lemma 3.16, it remains to prove
that the identity $(*)$ may be taken at the level of rational
functions on $C$. This follows from the same argument given at the
beginning of Lemma 3.12.

\end{proof}

We now show the following;\\

\begin{lemma}
Let hypotheses be as in $(ii)$, then;\\

${dH\over dx'}={dh\over dy}={dh\over dx}{dx\over dy}=-{f_{y}\over f_{x}}{dh\over dx}$\\

where the identities may be taken either at the level of Cauchy
series, for corresponding branches satisfying the requirement that
they are in finite position and are not branches of contact for
tangent lines parallel to the $x'$-axis or $y'$-axis and tangent
lines parallel to the $x$-axis or $y$-axis respectively, or at the
level of rational functions on $C$.
\end{lemma}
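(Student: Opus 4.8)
The plan is to run the same three–step scheme as in Lemma 3.12, now using the transposition homography $\Theta:[X':Y':Z']\mapsto[Y':X':Z']$ of construction $(ii)$, i.e. $x=y'$, $y=x'$. First I would record that, because $\Theta$ is a homography, $H$ is simply $h$ read in the new coordinates: $H(x',y')=h(y',x')$. Next, exactly as in the last clause of Lemma 3.15 and the proof of $(**)$ in Lemma 3.11, I would check that if $\gamma'$ is a branch of $C$ in finite position which is not a branch of contact of a tangent parallel to the $y'$-axis, parametrised by $(x',y'(x'))$ as in Lemma 3.7, then the corresponding branch $\gamma$ is parametrised by $(y'(x'),x')$. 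Since $y=x'$, putting $x'=y$ this says $\gamma$ is parametrised by $(x(y),y)$ with $x(y)=y'(y)$; this is a valid parametrisation in the sense of Lemma 3.14 because, under $\Theta$, the condition "$l_{\gamma'}$ not parallel to the $y'$-axis" is exactly "$l_{\gamma}$ not parallel to the $y$-axis", while "$\gamma'$ in finite position" together with the setup of $(ii)$ forces $l_{\gamma}$ not parallel to the $x$-axis. Consequently, differentiation along the branch with respect to $x'$ coincides with differentiation with respect to $y$: from $H(x',y'(x'))=h(x(x',y'(x')),y(x',y'(x')))=h(y'(x'),x')=h(x(y),y)\big|_{y=x'}$ one reads off ${dH\over dx'}={dh\over dy}$ as an identity of Cauchy series.

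Then I would simply invoke Lemma 3.16, which gives ${dh\over dy}={dh\over dx}{dx\over dy}$ at the level of Cauchy series (for branches whose tangent is parallel to neither axis, which is precisely the restriction in force here), together with the formula ${dx\over dy}=-{f_{y}\over f_{x}}$ recorded in the remarks following Lemma 3.15, to chain the four expressions together. Finally, to pass from Cauchy series to rational functions on $C$, I would argue as at the beginning of the proof of Lemma 3.12: the difference of any two of the displayed expressions is a rational function on $C$ whose Cauchy expansion at a branch $\gamma$ satisfying the stated conditions is identically zero; by Lemma 2.1 this is only possible if that rational function vanishes on all of $C$, so the identities hold as identities of rational functions as well.

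The only genuinely delicate point is the identification ${dH\over dx'}={dh\over dy}$: it rests on the uniqueness of a branch parametrisation with prescribed leading term (the argument that established $(**)$ in Lemma 3.11 and the last part of Lemma 3.15), combined with the bookkeeping of which exceptional tangent directions are interchanged by the transposition $\Theta$ — namely lines parallel to the $x'$-axis correspond to lines parallel to the $y$-axis and lines parallel to the $y'$-axis to lines parallel to the $x$-axis, which is exactly why the hypothesis sets in Lemmas 3.16 and 3.17 match up. Everything downstream of that identification is a direct application of Lemma 3.16 and the ${dx\over dy}$ formula, followed by the by-now routine Cauchy-series–to–rational-function upgrade.
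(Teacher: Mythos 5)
Your proposal follows essentially the same route as the paper's proof: parametrise the branch in the $(x',y')$ system, transport it through the transposition homography to get the parametrisation $(y'(x'),x')$ of the corresponding branch, identify this with the Lemma 3.14 parametrisation $(x(y),y)$ via the uniqueness argument from Lemma 3.11, read off ${dH\over dx'}={dh\over dy}$, and then chain through Lemma 3.16 and the ${dx\over dy}=-{f_y\over f_x}$ formula before upgrading from Cauchy series to rational functions by the vanishing argument of Lemma 3.12. You have also correctly isolated the one delicate step (the parametrisation identification) and the bookkeeping of which exceptional tangent directions are swapped by $\Theta$, which is exactly where the paper's proof concentrates its effort.
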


\begin{proof}
Let $\gamma$ and $\gamma'$ be such corresponding branches, centred
at $(a,b)$ and $(b,a)$ respectively, in the coordinate systems
$(x',y')$ and $(x,y)$. By Lemma 3.7, we can find a
parametrisation $(x',y'(x'))$ of $\gamma$ such that;\\

${dH\over dx'}={d\over dx'}H(x',y'(x'))$, $(1)$\\

 as a Cauchy series in
$Frac(L[[x'-a]])$. As the morphism $\Theta$ given in $(ii)$ is a
homography, we have that the sequence;\\

$(x(x',y'(x')),y(x',y'(x')))=(y'(x'),x')$\\

also parametrises $\gamma'$ in the sense of Remarks 3.8, see the
corresponding proof of Lemma 3.11. Moreover, we have that;\\

$H(x',y'(x'))=h(y'(x'),x')$, $(2)$\\

as a Cauchy series in $Frac(L[[x'-a]])$, by definition of $H$ and
$h$. By the hypotheses on $\gamma'$, we can find a parametrisation
of $\gamma'$, given by Lemma 3.14, of the form $(x(y),y)$. Making
the substitution $y=x'$, gives an identical parametrisation
$(x(x'),x')$ in the variable $x'$. Using the uniqueness result for
such parametrisations, see the proof of Lemma 3.11, we have
that;\\

$(y'(x'),x')=(x(x'),x')$, $(3)$\\

as sequences of algebraic power series in $Frac(L[[x'-a]])$. We,
therefore, have, combining $(1),(2)$ and $(3)$, that;\\

${dH\over dx'}={d\over dx'}h(y'(x'),x')={d\over dx'}h(x(x'),x')={dh\over dy}|_{y=x'}$\\

This gives the first identity at the level of Cauchy series in\\
$Frac(L[[x'-a]])$. The second identity, at the branch $\gamma'$,
for Cauchy series, comes from Lemma 3.16. The third identity, for
Cauchy series, follows from the remarks made between Lemmas 3.14
and 3.15. In order to obtain the identities at the level of
rational functions on $C$, it is sufficient to consider the case
${dH\over dx'}={dh\over dy}$, the other identities having already
been considered previously. Let $G$ be the rational function
corresponding to ${dh\over dy}$ in the coordinate system
$(x',y')$. We may consider ${dH\over dx'}-G$ as a rational
function on the curve $C$, in this coordinate system. For a
parametrisation $(x',y'(x'))$ of the branch $\gamma$, we then have that;\\

${dH\over dx'}(x',y'(x'))={dH\over dx'}={dh\over
dy}|_{y=x'}={dh\over dy}(y'(x'),x')=G(x',y'(x'))$\\

Hence, $({dH\over dx'}-G)(x',y(x'))\equiv 0$. This implies, by
arguments already shown in this paper, that ${dH\over dx'}-G$
vanishes identically on $C$, hence ${dH\over dx'}={dh\over dy}$ as
rational functions on $C$, as required.

\end{proof}

We can now show the following;\\

\begin{lemma}
Let $C$ be given in the original coordinate system $(x,y)$, then,
for each branch $\gamma$ of $J'$, $ord_{\gamma}({dh\over dx})=-1$.
\end{lemma}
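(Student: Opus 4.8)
The plan is to combine Lemmas 3.13 and 3.18, exactly as Lemma 3.12 combined Lemmas 3.10 and 3.11. Fix a branch $\gamma$ of $J'$, centred at $(a,b)$. By Lemma 3.6, $\gamma$ is a branch of contact between $C$ and the line parallel to the $y$-axis through $(a,b)$, which is therefore its tangent line $l_{\gamma}$; by part $(ii)$ of Lemma 3.5 this point is non-singular and the contact is exactly $2$-fold, so $I_{\gamma}(C,l_{\gamma})=2$. Since $C$ has at most nodes, $\gamma$ has order $1$, and hence, for any parametrisation $(x(t),y(t))$ of $\gamma$ as in Theorem 6.2 of \cite{depiro6}, one has $ord_{t}(x(t)-a)=2$ and $ord_{t}(y(t)-b)=1$.

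First I would show that $ord_{\gamma}({dh\over dy})=0$, with $val_{\gamma}({dh\over dy})$ finite and non-zero. By Lemma 3.18, ${dH\over dx'}={dh\over dy}$ as rational functions on $C$, where $H$ and the coordinates $(x',y')$ are those of construction $(ii)$; under the homography $\Theta$ of $(ii)$ the branch $\gamma\in J'$ corresponds to a branch $\gamma_{1}\in J_{1}'$. Applying Lemma 2.3 to $\Theta$ (just as in the proof of Lemma 3.12) gives $ord_{\gamma}({dh\over dy})=ord_{\gamma_{1}}({dH\over dx'})$ and $val_{\gamma}({dh\over dy})=val_{\gamma_{1}}({dH\over dx'})$, and Lemma 3.13 tells us the right-hand sides are $0$ and a finite non-zero constant respectively.

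Next I would compute $ord_{\gamma}({dx\over dy})$. By the remarks between Lemmas 3.14 and 3.15, ${dx\over dy}=-{f_{y}\over f_{x}}$ is a genuine rational function on $C$; since $l_{\gamma}$ is parallel to the $y$-axis we have $f_{x}(a,b)\neq 0$, so restricting along $\gamma$ and differentiating the identity $f(x(t),y(t))\equiv 0$ shows ${dx\over dy}$ agrees on $\gamma$ with the Cauchy series $x'(t)/y'(t)$, whose $t$-order is $2-1=1$ (here one uses that the characteristic is zero, so $x'(t)$ has $t$-order $1$ and $y'(t)$ has $t$-order $0$). As $val_{\gamma}({dx\over dy})=0$ it follows that $ord_{\gamma}({dx\over dy})=1$, i.e. $ord_{\gamma}({dy\over dx})=-1$ in the signed convention.

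Finally I would combine these. By Lemma 3.16, ${dh\over dy}={dh\over dx}\,{dx\over dy}$ as rational functions on $C$, and since ${dx\over dy}$ is not identically zero on $C$ it is invertible in $L(C)$, giving ${dh\over dx}={dh\over dy}\,{dy\over dx}$; taking $ord_{\gamma}$ and using additivity of the signed order (legitimate here because ${dh\over dy}$ is a unit at $\gamma$ while ${dy\over dx}$ has a pole there) yields $ord_{\gamma}({dh\over dx})=ord_{\gamma}({dh\over dy})+ord_{\gamma}({dy\over dx})=0+(-1)=-1$, as required. The only point needing real care is the passage between the derivative of the Cauchy series on the $t$-parametrised branch and the rational function ${dh\over dx}$ on $C$, together with the bookkeeping of the signed-order convention when multiplying functions whose values at $\gamma$ are $0$ and $\infty$; but this is precisely the sort of manipulation already carried out in Lemmas 3.11 and 3.16, so I expect no genuinely new obstacle.
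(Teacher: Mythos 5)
Your proposal is correct and follows the same skeleton as the paper's proof: both rest on the identity $\frac{dH}{dx'}=\frac{dh}{dy}=-\frac{f_{y}}{f_{x}}\frac{dh}{dx}$ from Lemma 3.17, on Lemma 3.13 giving $ord_{\gamma}(\frac{dH}{dx'})=0$, and on establishing $ord_{\gamma}(-\frac{f_{y}}{f_{x}})=1$ at a branch of $J'$, after which the signed orders add up to $-1$. (Note one citation slip: the identity $\frac{dH}{dx'}=\frac{dh}{dy}$ is Lemma 3.17, not Lemma 3.18 --- the latter is the statement you are proving.)

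Where you genuinely diverge is in the computation of $ord_{\gamma}(-\frac{f_{y}}{f_{x}})=1$. The paper isolates $ord_{\gamma}(f_{y})=1$ and proves it through intersection theory: it parametrises the vertical tangent line $l_{\gamma}$ by $(a,b+t)$, invokes the symmetry of $I_{q}$ for plane curves from \cite{depiro5} to get $f(a,b+t)=t^{2}u(t)$, differentiates to obtain $I_{q}(l_{\gamma},f_{y}=0)=1$, and then deduces that $f_{y}=0$ is non-singular at $q$ with tangent distinct from $l_{\gamma}$, so that the intersection of $C$ with $f_{y}=0$ is transverse and $I_{\gamma}(C,f_{y}=0)=1$. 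You instead differentiate $f(x(t),y(t))\equiv 0$ along the branch itself to get $-\frac{f_{y}}{f_{x}}=\frac{x'(t)}{y'(t)}$, and read off the $t$-order as $2-1=1$ from $ord_{t}(x(t)-a)=2$, $ord_{t}(y(t)-b)=1$. This is shorter and avoids the symmetry-of-intersection-multiplicity input entirely; it is the same computation the paper itself later uses in the duality argument of Theorem 5.13, so it is squarely within the available toolkit. Both routes need $char(L)\neq 2$ (your $x'(t)=2c_{2}t+\ldots$ versus the paper's $f_{y}(a,b+t)=2tu(t)+\ldots$), matching the paper's footnote. The only bookkeeping you should make explicit is that $ord_{\gamma}$ of the rational function $-\frac{f_{y}}{f_{x}}$ is by definition the $t$-order of its restriction to a branch parametrisation, and that the additivity of signed orders you invoke in the last step is exactly Case 3 of Lemma 2.10; neither is an obstacle.
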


\begin{proof}
By Lemma 3.17, we have that;\\

${dH\over dx'}=-{f_{y}\over f_{x}}{dh\over dx}$, $(*)$\\

as an identity of rational functions on $C$. We now claim that,
for a branch $\gamma$ of $J'$, $ord_{\gamma}(-{f_{y}\over
f_{x}})=1$ $(**)$. In order to see this, observe that, by $(ii)$
of Lemma 3.5, such a branch is based at a non-singular point $q$
of $C$. Hence, the vector $(f_{x},f_{y})$, evaluated at $q$, is
not identically zero. By hypotheses on $\gamma$, that the tangent
line is parallel to the $y$-axis, we have that $f_{y}(q)=0$, hence
$f_{x}(q)\neq 0$. In particular, this implies that
$0<val_{\gamma}(f_{x})<\infty$ and, therefore, that
$ord_{\gamma}(-f_{x})=0$. It also implies that
$val_{\gamma}(f_{y})=0$. In order to show $(**)$, it is therefore
sufficient to prove that $ord_{\gamma}(f_{y})=1$. By the result of
Lemma 2.1, this is equivalent to showing that;\\

$I_{\gamma}(C,f_{y}=0)=1$, $(\dag)$\\

We achieve this by the following method;\\

Let $q=(a,b)$ and let $l_{\gamma}$ be the tangent line to the
branch $\gamma$. By the assumption that $\gamma$ belongs to $J'$
,$(ii)$ of Lemma 3.5 and the fact that $\gamma$ is based at the
non-singular point $q$, we have;\\

$I_{\gamma}(C,l_{\gamma})=I_{q}(C,l_{\gamma})=I_{q}(f=0,l_{\gamma})=2$\\

As was shown in the paper \cite{depiro5}, $I_{q}$ is symmetric for
plane algebraic curves. Hence, we must have that;\\

$I_{q}(l_{\gamma},C)=I_{q}(l_{\gamma},f=0)=2$, $(\dag\dag)$\\

as well. By the assumption that $l_{\gamma}$ is parallel to the
$y$-axis, we clearly have that the sequence $(a,b+t)$ parametrises
this tangent line. Hence, using Lemma 2.1 and $(\dag\dag)$, we have that;\\

$ord_{t}f(a,b+t)=2$ and $f(a,b+t)=t^{2}u(t)$ for a unit $u(t)\in L[[t]]$\\

Applying the Rule for differentiating algebraic power series,
given in Lemma 3.7, we have that;\\

$f_{y}(a,b+t)=2tu(t)+t^{2}u'(t)$, (\footnote{An alternative argument is required when $char(L)=2$, we leave the details to the reader})\\

Hence;\\

$ord_{t}f_{y}(a,b+t)=I_{q}(l_{\gamma},f_{y}=0)=I_{q}(f_{y}=0,l_{\gamma})=1$\\

This clearly implies that $f_{y}=0$ is non-singular at $q$ and
that the tangent line $l_{(f_{y}=0)}$ is distinct from
$l_{\gamma}$. Hence, the intersection between $C$ and $f_{y}=0$ is
algebraically transverse at $q$, therefore, as was shown in
\cite{depiro5}, we obtain $(\dag)$ and then $(**)$ follows as
well.\\

In order to finish the proof of the lemma, we use the fact from
Lemma 3.13, that, for a branch $\gamma$ of $J'$,
$ord_{\gamma}({dH\over dx'})=0$. Combined with $(*)$ and $(**)$,
it follows easily that $ord_{\gamma}({dh\over dx})=-1$ as
required.

\end{proof}

We can now summarise what we have shown in the following theorem;\\

\begin{theorem}{Theorem on Differentials}

Let hypotheses and notation be as in the remarks immediately before Lemma 3.9, then;\\

$({dh\over dx}=0)=J\cup 2G'$ and $({dh\over dx}=\infty)=J'\cup 2G$\\

In particular, one has that $J+2G'\equiv J'+2G$.\\

\end{theorem}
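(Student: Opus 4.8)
The plan is to obtain both identities branch by branch from the three local computations already in place (Lemma 3.9, Lemma 3.12, Lemma 3.19) and then to deduce the stated linear equivalence from Lemma 2.4.

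First I would record the disjointness of the supports of $J$, $G$, $G'$, $J'$. The sets $G$ and $G'$ are disjoint since $G$ lies in finite position (Lemma 3.5$(iii)$) while $G'$ lies over the line at infinity (Lemma 3.6); $G$ is disjoint from $J'$ by $(iii)$ of Lemma 3.5, since the branches of $J'$ are the unique branches at their (non-singular, by $(ii)$) base points and those base points are distinct from the base points of $G$; the same two remarks give that $J$ is disjoint from $J'$, and $J$ is disjoint from $G'$ because one is finite and the other at infinity; finally $J$ is disjoint from $G$ because the $n$ branches of $G=(h=\infty)$ are each counted once (Lemma 3.4), hence simple for every weighted set of the $g_{n}^{1}$, hence absent from $Jac(g_{n}^{1})$. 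I would also note that, since the $g_{n}^{1}$ has no fixed branch contribution, each branch $\gamma$ of $C$ occurs in exactly one weighted set, namely $(h=val_{\gamma}(h))$, counted $ord_{\gamma}(h)\geq 1$ times; so ``$\gamma$ is counted $s$-times in some weighted set'' holds for a unique $s\geq 1$, and $s\geq 2$ is equivalent to $\gamma$ being multiple for that set, in which case $\gamma$ contributes to $J$ with multiplicity exactly $\alpha_{\gamma}=s-1$.

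Next I would run through every branch $\gamma$ of $C$, computing $val_{\gamma}(dh/dx)$ and $ord_{\gamma}(dh/dx)$, where by definition (as in Lemma 2.4) $(dh/dx=\mu)$ is the weighted set of branches $\gamma$ with $val_{\gamma}(dh/dx)=\mu$, counted $ord_{\gamma}(dh/dx)$ times. If $\gamma\in G$, Lemma 3.9 gives $val_{\gamma}(dh/dx)=\infty$ and $ord_{\gamma}(dh/dx)=2$. If $\gamma\in G'$, Lemma 3.12 (with its proof) gives $val_{\gamma}(dh/dx)=0$ and $ord_{\gamma}(dh/dx)=2$. If $\gamma\in J'$, Lemma 3.19 gives $val_{\gamma}(dh/dx)=\infty$ and $ord_{\gamma}(dh/dx)=1$. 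If $\gamma\notin G\cup G'\cup J'$ and $\gamma$ is counted $s$-times in its weighted set, Lemma 3.9 gives $ord_{\gamma}(dh/dx)=s-1$: when $s=1$ this means $val_{\gamma}(dh/dx)\notin\{0,\infty\}$, so $\gamma$ contributes to neither side; when $s\geq 2$ it means $val_{\gamma}(dh/dx)=0$ with a zero of order $s-1=\alpha_{\gamma}$, matching the multiplicity of $\gamma$ in $J$. Assembling these cases and using the disjointness from the previous step, the branches with $val_{\gamma}(dh/dx)=0$, counted with their orders, are exactly those of $J$ (with multiplicities $\alpha_{\gamma}$) together with those of $G'$ counted twice, so $(dh/dx=0)=J\cup 2G'$; and the branches with $val_{\gamma}(dh/dx)=\infty$ are exactly those of $J'$ counted once together with those of $G$ counted twice, so $(dh/dx=\infty)=J'\cup 2G$.

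Finally, since $(dh/dx=0)=J\cup 2G'$ contains $2G'$ and $G'$ has $m\geq 1$ branches, the function $dh/dx$ is non-constant on $C$; by Lemma 2.4 it defines a $g_{N}^{1}$, $N=deg(dh/dx)$, to which both $(dh/dx=0)$ and $(dh/dx=\infty)$ belong as weighted sets, whence $J+2G'\equiv J'+2G$ by Definition 2.6. I expect no real obstacle here beyond careful bookkeeping: keeping the sign convention for $ord_{\gamma}(dh/dx)$ straight (a positive value signals a zero, a negative one a pole) and checking that the supports of $J,G,G',J'$ are genuinely pairwise disjoint so that the contributions add without overlap --- both of which are essentially guaranteed by the choice of coordinates in Lemma 3.5.
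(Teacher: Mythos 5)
Your proposal is correct and follows essentially the same route as the paper: the paper's proof likewise assembles the branch-by-branch computations of Lemmas 3.9, 3.12 and 3.18 (your ``Lemma 3.19'' citation for the $J'$ case should be Lemma 3.18) and then invokes Lemma 2.4 and Definition 2.6 for the linear equivalence. Your explicit verification that the supports of $J$, $G$, $G'$, $J'$ are pairwise disjoint is a welcome piece of bookkeeping that the paper leaves implicit.
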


\begin{proof}
By Lemma 3.9, a branch $\gamma$, distinct from $G\cup G'\cup J'$,
is counted in $({dh\over dx}=0)$ exactly if it appears in
$J=Jac(g_{m}^{1})$, and is, moreover, also counted with the same
multiplicity. By Lemma 3.12, each branch $\gamma$ of $G'$ is
counted twice in $({dh\over dx}=0)$. Again, by Lemma 3.9, each
branch $\gamma$ of $G$ is counted twice in $({dh\over
dx}=\infty)$. Finally, by Lemma 3.18, each branch $\gamma$ of
$J'=Jac(g_{n}^{1})$ is counted once in $({dh\over dx}=\infty)$.
Hence, the first part of the lemma is shown. As ${dh\over dx}$
defines a rational function on $C$, if $s=deg({dh\over dx})$,
then, by Lemma 2.4, we can associate a $g_{s}^{1}$, defined by
$({dh\over dx})$, to which $J+2G'$ and $J'+2G$ belong as weighted
sets. Hence, by the definition of linear equivalence given in
Definition 2.6, we have that $J+2G'\equiv J'+2G$ as required.

\end{proof}

\begin{rmk}
In the above theorem, one may assume that $G$ and $G'$ denote
\emph{any} $2$ weighted sets of the given $g_{n}^{1}$ and
$g_{m}^{1}$ respectively. In order to see this, suppose that
$G_{1}$ and $G_{1}'$ are any two such weighted sets. Then we claim
that;\\

$J'+2G_{1}\equiv J'+2G$ and $J+2G_{1}'\equiv J+2G'$ $(*)$\\

In order to show $(*)$, by Theorem 2.7 we may assume that
$G-G_{1}=div(f_{1})$ and $G'-G_{1}'=div(f_{2})$ for rational
functions $\{f_{1},f_{2}\}$ on $C$. By Lemma 2.9, we then have
that;\\

 $2G-2G_{1}=div(f_{1}^{2})$ and $2G'-2G_{1}'=div(f_{2}^{2})$\\

  Hence, in particular, by Lemma 2.4 and Definition 2.6;\\

  $2G\equiv 2G_{1}$ and $2G'\equiv 2G_{1}'$ $(**)$\\

  The claim $(*)$ then follows by adding the fixed branch
  contributions $J'$ and $J$ to the series defining the
  equivalences in $(**)$ and using Lemma 1.13. From $(*)$, we
  deduce the equivalence;\\

  $J'+2G_{1}\equiv J+2G_{1}'$\\

  immediately from Theorem 3.19 and Theorem 2.10, as required.\\

\end{rmk}

We now improve Theorem 3.19 as follows;\\

\begin{theorem}
Let a $g_{n_{1}}^{1}$ and $g_{n_{2}}^{1}$ be given on $C$, with no
fixed branch branch contribution. Then, if
$J_{1}=Jac(g_{n_{1}}^{1})$, $J_{2}=Jac(g_{n_{2}}^{1})$ and
$\{G_{1},G_{2}\}$ denote any $2$ weighted sets appearing in
$\{g_{n_{1}}^{1},g_{n_{2}}^{1}\}$, then;\\

$J_{1}+2G_{2}\equiv J_{2}+2G_{1}$\\

\end{theorem}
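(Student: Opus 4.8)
The plan is to prove the statement under the standing assumption $(\dagger)$ that $C\subset P^{2}$ has at most nodes, and to regard the general case as following from this by the reduction already in force: if $\Phi:C\leftrightsquigarrow C'$ is a birational map onto a nodal plane curve (Theorem 4.16 of \cite{depiro6}), then by Theorem 1.14 the series $g_{n_1}^{1},g_{n_2}^{1}$ transport to series on $C'$, by Theorem 2.32 linear equivalence transports, and since $[\Phi]^{*}$ is a bijection of branches preserving the multiplicity of a branch in each weighted set, one has $[\Phi]^{*}(Jac(g_{n_i}^{1}))=Jac([\Phi]^{*}g_{n_i}^{1})$ and ``no fixed branch contribution'' is preserved. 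So I would reduce at once to $C$ nodal and plane.

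The key device is to run the construction of Lemmas 3.5--3.6 \emph{once, for both series simultaneously}. Let $h_1,h_2$ be the rational functions attached to $g_{n_1}^{1},g_{n_2}^{1}$ by Lemma 3.4, with canonical weighted sets $(h_1=\infty),(h_2=\infty)$ each consisting of $n_i$ distinct branches at nonsingular points. Choosing the point $P\notin C$ of Lemma 3.5 so that it avoids the finitely many tangent lines attached to the branches of $J_1=Jac(g_{n_1}^{1})$, $J_2=Jac(g_{n_2}^{1})$, $(h_1=\infty)$ and $(h_2=\infty)$, and choosing the generic line $l_\lambda^{P}$ so that $C\sqcap l_\lambda^{P}$ is disjoint from all of these sets --- all nonempty open conditions --- yields one coordinate system $(x,y)$ in which (i), (ii) of Lemma 3.5 hold for $C$ and (iii) holds \emph{both} for $(J_1,(h_1=\infty))$ and for $(J_2,(h_2=\infty))$. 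Then, by Lemma 3.6, the auxiliary series $g_{m}^{1}$ (lines parallel to the $y$-axis, plus the line at infinity, $m=deg(C)$), together with $J':=Jac(g_{m}^{1})$ and $G':=(x=\infty)$, is the \emph{same} for both.

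With this common setup I would apply the Theorem on Differentials (Theorem 3.19) to $g_{n_1}^{1}$ and to $g_{n_2}^{1}$ separately; combined with Remark 3.20 (which frees the weighted sets to be arbitrary elements of their series), this gives $J_1+2G'\equiv J'+2G_1$ and $J_2+2G'\equiv J'+2G_2$. It then remains to cancel the auxiliary data $J',G'$. Passing to generalised weighted sets (Definition 2.22) and using Definition 2.24 with the symmetry of $\equiv$ (Theorem 2.28), these two relations read $J_1-J'\equiv 2G_1-2G'$ and $J'-J_2\equiv 2G'-2G_2$; adding them by Theorem 2.29 gives $J_1-J_2\equiv 2G_1-2G_2$, and adding the reflexive relation $J_2+2G_2\equiv J_2+2G_2$, again by Theorem 2.29, yields $J_1+2G_2\equiv J_2+2G_1$. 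Since $J_1+2G_2$ and $J_2+2G_1$ are effective, this is exactly the assertion in the sense of Definition 2.6.

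The hard part will be bookkeeping rather than any deep difficulty: first, checking that the construction of Lemmas 3.5--3.6 can indeed be carried out for two series at once (only finitely many further generic conditions on $P$ and on $\lambda$), which is precisely what forces the \emph{same} $g_{m}^{1}$, hence the same $J'$ and $G'$, into both applications of Theorem 3.19 and thereby makes the cancellation legitimate; and second, being careful in the reduction step that $Jac$ and the ``no fixed branch contribution'' hypothesis are genuinely birationally invariant, since these are not isolated as separate lemmas but are consequences of Theorem 1.14 and the bijectivity of $[\Phi]^{*}$ on branches. The closing manipulation of equivalences is routine once one works in the generalised-weighted-set calculus of Section 2.
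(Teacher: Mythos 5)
Your proposal is correct and follows essentially the same route as the paper's proof: choose the rational functions $h_{1},h_{2}$ via Lemma 3.4, run the coordinate construction of Lemmas 3.5--3.6 so that a \emph{single} auxiliary $g_{m}^{1}$ with Jacobian $J'$ and weighted set $G'$ serves both series, apply Theorem 3.19 together with Remark 3.20 to each, and cancel $J'$ and $G'$ in the generalised-weighted-set calculus. The one point you make explicit that the paper only gestures at (``an inspection of Lemma 3.5 shows\ldots'') is that the genericity conditions on $P$ and on the line at infinity must be imposed for both series simultaneously so that the same pair $(J',G')$ appears in both equivalences; this is exactly the right reading, and the rest of your argument matches the paper's.
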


\begin{proof}
By Lemma 3.4, we can find rational functions $\{h_{1},h_{2}\}$,
defining the $\{g_{n_{1}}^{1},g_{n_{2}}^{1}\}$ on $C$, satisfying
the conclusion of the lemma with respect to
$G_{1}^{\infty}=(h_{1}=\infty)$ and
$G_{2}^{\infty}=(h_{2}=\infty)$. An inspection of Lemma 3.5 shows
that we can also obtain the conclusion there with $J_{1}$
replacing $J$ and $G_{1}^{\infty}$ replacing $G$. Now one can
follow through the proof up to Theorem 3.19, in order to obtain;\\

$J_{1}+2G'\equiv J'+2G_{1}^{\infty}$ and $J_{2}+2G'\equiv
J'+2G_{2}^{\infty}$\\

By Remarks 3.20, we can replace $G_{1}^{\infty}$ by $G_{1}$ and
$G_{2}^{\infty}$ by $G_{2}$, in order to obtain;\\

$J_{1}+2G'\equiv J'+2G_{1}$ and $J_{2}+2G'\equiv J'+2G_{2}$\\

We then obtain, by Theorem 2.24;\\

$J_{1}-2G_{1}\equiv J'-2G'\equiv J_{2}-2G_{2}$\\

Hence, by Theorem 2.23;\\

$J_{1}-2G_{1}\equiv J_{2}-2G_{2}$\\

and, therefore, the lemma follows from Theorem 2.24 again.

\end{proof}

We now improve Definition 3.2.\\

\begin{defn}
Let $C$ be a projective algebraic curve and let an arbitrary
$g_{n}^{1}$ be given on $C$. Let $K$ be the fixed branch
contribution of this $g_{n}^{1}$, with total multiplicity $m$ and
let $g_{n-m}^{1}$ be obtained by removing this fixed branch
contribution. Then we define;\\

$Jac(g_{n}^{1})=2K+Jac(g_{n-m}^{1})$\\

\end{defn}

\begin{rmk}
Note that, as a consequence of the definition, if a $g_{n}^{1}$ is
given on $C$, then a branch $\gamma$, which is counted $\beta$
times for the $g_{n}^{1}$ and $\beta+\alpha$ times in a particular
weighted set of the $g_{n}^{1}$, is counted $2\beta+\alpha-1$
times in $Jac(g_{n}^{1})$.

\end{rmk}

We then have an improved version of Theorem 3.21;\\

\begin{theorem}
Let $C$ be a plane projective algebraic curve and let a
$g_{n}^{1}$ be given on $C$. Then, if $G$ is any weighted set in
this $g_{n}^{1}$ and $J=Jac(g_{n}^{1})$, then the series $|J-2G|$
is \emph{independent} of the particular $g_{n}^{1}$.

\end{theorem}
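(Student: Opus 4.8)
The plan is to deduce the statement from Theorem 3.21 by a short sequence of reductions. First I would dispose of fixed branches. Suppose the given $g_{n}^{1}$ has fixed branch contribution $K$ of total multiplicity $m$, and let $g_{n-m}^{1}$ be the pencil obtained by deleting $K$; by Definition 3.22, $Jac(g_{n}^{1})=2K+Jac(g_{n-m}^{1})$. Any weighted set $G$ of the $g_{n}^{1}$ has the form $G=K+G'$ for a weighted set $G'$ of the $g_{n-m}^{1}$, so as an identity of generalised weighted sets $J-2G=\bigl(2K+Jac(g_{n-m}^{1})\bigr)-2\bigl(K+G'\bigr)=Jac(g_{n-m}^{1})-2G'$, whence $|J-2G|=\bigl|Jac(g_{n-m}^{1})-2G'\bigr|$. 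It therefore suffices to treat pencils with no fixed branch contribution, which I assume henceforth.

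Suppose next that $C$ is a plane projective curve with at most nodes as singularities, as in the running hypothesis of the development leading up to Theorem 3.21. Let $g_{n_{1}}^{1}$ and $g_{n_{2}}^{1}$ be two such pencils on $C$, put $J_{i}=Jac(g_{n_{i}}^{1})$, and let $G_{i}$ be any weighted set of $g_{n_{i}}^{1}$. Theorem 3.21 gives $J_{1}+2G_{2}\equiv J_{2}+2G_{1}$. By Definition 2.24, together with Remarks 2.25 (which permits an arbitrary, not necessarily disjoint, effective decomposition of each side), this says precisely that $J_{1}-2G_{1}\equiv J_{2}-2G_{2}$ as generalised weighted sets, and hence $|J_{1}-2G_{1}|=|J_{2}-2G_{2}|$ by Definition 2.30 and the fact that $\equiv$ is an equivalence relation on generalised weighted sets of a given total multiplicity. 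Specialising to $g_{n_{1}}^{1}=g_{n_{2}}^{1}$ with $G_{1}\neq G_{2}$ shows, as a special case, that $|J-2G|$ is also independent of the choice of weighted set $G$ inside a fixed pencil. This settles the nodal case.

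For an arbitrary plane projective curve $C$, I would pass to a nodal model: by Theorem 4.16 of \cite{depiro6} there is a birational map $\phi:C\leftrightsquigarrow C'$ with $C'$ a plane projective curve having at most nodes. The operator $[\phi]^{*}$ intertwines all the relevant constructions: by Theorem 1.14 it carries a $g_{n}^{1}$ on $C$ to a $g_{n}^{1}$ on $C'$, preserving the weighted sets and the multiplicities of their branches, so by Definitions 3.2 and 3.22 it carries $Jac(g_{n}^{1})$ to $Jac\bigl([\phi]^{*}g_{n}^{1}\bigr)$; and it preserves linear equivalence of generalised weighted sets, by the birational invariance of $\equiv$ established in Section 2. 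Thus $[\phi]^{*}$ identifies $|J-2G|$ on $C$ with $\bigl|Jac([\phi]^{*}g_{n}^{1})-2[\phi]^{*}G\bigr|$ on $C'$, and the nodal case applied on $C'$, followed by a pull-back along $[\phi^{-1}]^{*}$, gives the independence statement on $C$. (If the running nodal hypothesis is instead understood to persist in Theorem 3.24, this last paragraph is unnecessary.)

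The one step that is not purely formal is the birational invariance of the Jacobian, namely the identity $[\phi]^{*}Jac(g_{n}^{1})=Jac\bigl([\phi]^{*}g_{n}^{1}\bigr)$: one has to check that the fixed-branch bookkeeping built into Definition 3.22 is compatible with the branch-multiplicity statements of Theorem 1.14 and with the bijection on branches induced by $[\phi]^{*}$. I expect this to be the main obstacle; everything else is a direct appeal to Theorem 3.21 and routine manipulation of complete series of generalised weighted sets.
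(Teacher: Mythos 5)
Your proposal is correct and follows essentially the same route as the paper: strip the fixed branch contributions $K_{i}$, apply Theorem 3.21 to the resulting base-point-free pencils, and convert $J_{1}+2G_{2}\equiv J_{2}+2G_{1}$ into $J_{1}-2G_{1}\equiv J_{2}-2G_{2}$ via Definition 2.24 and the additivity of $\equiv$ (the paper adds $2K_{1}+2K_{2}$ to both sides of the equivalence where you cancel $2K$ directly in the generalised weighted set $J-2G$, but this is the same bookkeeping). Your final paragraph on passing to a nodal model, including the identity $[\phi]^{*}Jac(g_{n}^{1})=Jac([\phi]^{*}g_{n}^{1})$, is exactly what the paper defers to Lemma 3.25 and Theorem 3.26, so it is a legitimate extra precaution rather than a divergence.
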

\begin{proof}
Suppose that a $g_{n_{1}}^{1}$ and a $g_{n_{2}}^{1}$ are given on
$C$ and let $\{K_{1},K_{2}\}$ be their fixed branch contributions
of multiplicity $\{m_{1},m_{2}\}$ respectively. Removing these
contributions, we obtain a $g_{n_{1}-m_{1}}^{1}$ and a
$g_{n_{2}-m_{2}}^{1}$ on $C$ with no fixed branch contributions.
Let $\{G_{1},G_{2}\}$ be weighted sets of
$\{g_{n_{1}}^{1},g_{n_{2}}^{1}\}$, then we can find weighted sets $\{G_{1}',G_{2}'\}$
of $\{g_{n_{1}-m_{1}}^{1},g_{n_{2}-m_{2}}^{1}\}$ such that;\\

$G_{1}=K_{1}+G_{1}'$ and $G_{2}=K_{2}+G_{2}'$ $(*)$\\

Let $J_{1}'=Jac(g_{n_{1}-m_{1}}^{1})$ and
$J_{2}'=Jac(g_{n_{2}-m_{2}}^{1})$, then we have, by Theorem
3.21;\\

$J_{1}'+2G_{2}'\equiv J_{2}'+2G_{1}'$\\

Therefore, by Theorem 2.12;\\

$(J_{1}'+2K_{1})+2(K_{2}+G_{2}')\equiv
(J_{2}'+2K_{2})+2(K_{1}+G_{1}')$\\

If $J_{1}=Jac(g_{n_{1}}^{1})$ and $J_{2}=Jac(g_{n_{2}}^{1})$, we
then have, by Definition 3.22 and $(*)$, that;\\

$J_{1}+2G_{2}\equiv J_{2}+2G_{1}$\\

Hence, the result follows by Theorem 2.24 and Definition 2.25.
\end{proof}

We now transfer this result to an arbitrary projective algebraic
curve $C$, using birationality. We first require;

\begin{lemma}
Let $[\Phi]:C_{1}\leftrightsquigarrow C_{2}$ be a birational map.
Let a $g_{n}^{1}$ be given on $C_{2}$, with corresponding
$[\Phi]^{*}(g_{n}^{1})$ on $C_{1}$. Then we have that;\\

$[\Phi]^{*}(Jac(g_{n}^{1}))=Jac([\Phi]^{*}(g_{n}^{1}))$\\

\end{lemma}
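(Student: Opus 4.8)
The plan is to reduce the statement to the already-established birational invariance results of Section 2 together with the local characterisation of the Jacobian. First I would recall that, by Theorem~1.14, a birational map $[\Phi]:C_1\leftrightsquigarrow C_2$ induces a bijection $[\Phi]^*$ on branches, carrying the $g_n^1$ on $C_2$ to a $g_n^1$ on $C_1$, and that $[\Phi]^*$ commutes with taking fixed branch contributions. By Definition~3.22, $Jac(g_n^1)=2K+Jac(g_{n-m}^1)$ where $K$ is the fixed branch contribution, and since $[\Phi]^*(2K)=2[\Phi]^*(K)$ and $[\Phi]^*(K)$ is the fixed branch contribution of $[\Phi]^*(g_n^1)$, it suffices to prove the claim in the case where the $g_n^1$ has \emph{no} fixed branch contribution. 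So I would reduce at once to that case.

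Next I would use the local description of the Jacobian coming from Definition~3.2: for a $g_n^1$ with no fixed branch, $Jac(g_n^1)$ records, for each branch $\gamma$ that is multiple in the (unique) weighted set $W_\lambda$ containing it, the excess multiplicity $\alpha_\gamma$ where $\gamma$ is counted $\alpha_\gamma+1$ times in $W_\lambda$. Since $[\Phi]^*$ is a bijection on branches that carries weighted sets of $g_n^1$ to weighted sets of $[\Phi]^*(g_n^1)$ \emph{preserving multiplicities} (this is part of the content of Theorem~1.14, via Lemma~5.27 of \cite{depiro6} on mobile multiplicities), a branch $\gamma$ is counted $t$ times in a weighted set $W$ of $g_n^1$ if and only if $[\Phi]^*\gamma$ is counted $t$ times in $[\Phi]^*W$. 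In particular $\gamma$ is multiple for $g_n^1$ precisely when $[\Phi]^*\gamma$ is multiple for $[\Phi]^*(g_n^1)$, and the excess multiplicities agree. Hence $[\Phi]^*$ carries the weighted set $Jac(g_n^1)=\sum_j \alpha_{\gamma_j}[\gamma_j]$ to $\sum_j \alpha_{\gamma_j}[\,[\Phi]^*\gamma_j\,]=Jac([\Phi]^*(g_n^1))$ branch by branch, which is exactly the desired identity.

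The one point that needs genuine care — and which I expect to be the main obstacle — is verifying that the multiplicity-preservation of $[\Phi]^*$ on weighted sets holds at \emph{every} branch, including the finitely many branches where $C_2$ (or $C_1$) is singular, or where $[\Phi]$ is not a local isomorphism, and including the branches of contact that constitute the Jacobian. The proof of Theorem~1.14 handles this through presentations $\Phi_{\Sigma_i}$ of the morphisms to a common non-singular model $C^{ns}$, choosing the base loci disjoint from the relevant fibres, and identifying $I^{\Sigma,mobile}_{\gamma_p^j}(C,\phi_\lambda)$ with $I^{\Sigma,mobile}_{p_j}(C^{ns},\overline{\phi_\lambda})$ (equation $(1)$ in that proof). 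I would invoke exactly this machinery: pass to a common non-singular model, lift the pencil defining the $g_n^1$, observe that on $C^{ns}$ the Jacobian is computed the same way, and then transport back down both morphisms. Since the multiplicity of a branch in a weighted set is, after removing fixed contributions, the mobile intersection multiplicity, and this is preserved under the correspondence established in Theorem~1.14, the equality of Jacobians follows.

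Finally I would assemble the pieces: reduce to no fixed branch contribution via Definition~3.22 and linearity of $[\Phi]^*$; apply the branch-by-branch multiplicity preservation of Theorem~1.14 to conclude that multiple branches correspond and excess multiplicities are equal; and read off $[\Phi]^*(Jac(g_n^1))=Jac([\Phi]^*(g_n^1))$ from Definition~3.2. The details of the non-singular model computation are routine given Section~5 of \cite{depiro6} and the proof of Theorem~1.14, so I would state them briefly and refer the reader there.
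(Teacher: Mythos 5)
Your proposal is correct and follows essentially the same route as the paper: the paper's proof is a one-line appeal to the bijection on branches from Lemma 5.7 of \cite{depiro6}, Definition 3.22 of the Jacobian, and the definition of $[\Phi]^{*}(g_{n}^{1})$ in Theorem 1.14, which is exactly what you spell out in expanded form. Your extra care about multiplicity preservation at singular branches is already built into the multiplicity-preserving formula for $[\Phi]^{*}$ on weighted sets given in Theorem 1.14, so nothing further is needed.
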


\begin{proof}

The result follows trivially from the fact that $[\Phi]^{*}$
defines a bijection on branches, see Lemma 5.7 of \cite{depiro6},
the definition of $Jac(g_{n}^{1})$ given in Definition 3.22 and
the definition of $[\Phi]^{*}(g_{n}^{1})$ given in Theorem 1.14.

\end{proof}

We can now extend Theorem 3.24;\\

\begin{theorem}
Let $C$ be an arbitrary projective algebraic curve and let a
$g_{n}^{1}$ be given on $C$. Then, if $G$ is any weighted set in
this $g_{n}^{1}$ and $J=Jac(g_{n}^{1})$, then the series $|J-2G|$
is \emph{independent} of the particular $g_{n}^{1}$.

\end{theorem}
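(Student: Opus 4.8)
The plan is to reduce to the plane case already settled in Theorem 3.24 by transporting everything along a birational map. First I would choose, by Lemma 4.9 of \cite{depiro6} (and, to be scrupulous, refined via Theorem 4.16 of \cite{depiro6} so that the model has at most nodes as singularities, which is the situation under which Theorem 3.24 was established), a birational map $\Psi:C'\leftrightsquigarrow C$ with $C'$ a plane projective algebraic curve. By Theorem 1.14, $[\Psi]^{*}$ carries each $g_{n}^{1}$ on $C$ to a $g_{n}^{1}$ on $C'$, this correspondence being a bijection with inverse $[\Psi^{-1}]^{*}$; moreover, by the construction in that theorem, $[\Psi]^{*}$ sends each weighted set of a $g_{n}^{1}$ on $C$ to a weighted set of the image $g_{n}^{1}$ on $C'$.

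Now suppose two series $g_{n_{1}}^{1}$ and $g_{n_{2}}^{1}$ are given on $C$, with Jacobians $J_{1}=Jac(g_{n_{1}}^{1})$, $J_{2}=Jac(g_{n_{2}}^{1})$ and weighted sets $G_{1},G_{2}$ in them. The three compatibilities I would invoke are: Lemma 3.25, giving $[\Psi]^{*}(J_{i})=Jac([\Psi]^{*}g_{n_{i}}^{1})$; the remark of the previous paragraph, giving that $[\Psi]^{*}(G_{i})$ is a weighted set of $[\Psi]^{*}g_{n_{i}}^{1}$ on $C'$; and Definition 2.33 together with Theorem 2.32, giving $[\Psi^{-1}]^{*}|A|=|[\Psi^{-1}]^{*}A|$ for a complete series (in the sense of Definition 2.30) $|A|$ on $C'$, with $[\Psi^{-1}]^{*}$ a bijection between complete series on $C'$ and on $C$. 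Applying Theorem 3.24 to the plane curve $C'$ with the two series $[\Psi]^{*}g_{n_{1}}^{1}$ and $[\Psi]^{*}g_{n_{2}}^{1}$ yields the equality of complete series on $C'$
$$|[\Psi]^{*}J_{1}-2[\Psi]^{*}G_{1}| = |[\Psi]^{*}J_{2}-2[\Psi]^{*}G_{2}|.$$
Since $[\Psi^{-1}]^{*}$ is additive on generalised weighted sets and inverts $[\Psi]^{*}$ on branches, $[\Psi^{-1}]^{*}([\Psi]^{*}J_{i}-2[\Psi]^{*}G_{i})=J_{i}-2G_{i}$; hence applying the bijection $[\Psi^{-1}]^{*}$ to the displayed equality, and using $[\Psi^{-1}]^{*}|A|=|[\Psi^{-1}]^{*}A|$, gives $|J_{1}-2G_{1}|=|J_{2}-2G_{2}|$ on $C$, which is precisely the assertion.

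The only genuinely delicate point is the bookkeeping with the directions of the birational maps, and the verification that all three operations involved — forming the Jacobian (Lemma 3.25), passing to the complete series (Definition 2.33), and choosing a weighted-set representative inside a $g_{n}^{1}$ (Theorem 1.14) — commute with $[\Psi]^{*}$; once these commutations are recorded, the theorem is a one-line transport. A secondary point worth noting explicitly is that Theorem 3.24 rests on the standing assumption that the plane model carries at most nodes, so the reduction should use the nodal model furnished by Theorem 4.16 of \cite{depiro6} rather than an arbitrary plane birational model; this affects nothing in the argument above.
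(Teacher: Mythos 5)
Your argument is correct and is essentially the paper's own proof: both reduce to the plane case of Theorem 3.24 by transporting the two $g_{n}^{1}$'s, their Jacobians (Lemma 3.25) and their weighted sets (Theorem 1.14) along a birational map to a plane model, and then pull the resulting linear equivalence back using Theorem 2.32 and the linearity of $[\Phi]^{*}$ on branches. Your added remark that the plane model should be taken nodal (so that the standing hypothesis under which Theorem 3.24 was actually established is satisfied) is a sensible precision that the paper leaves implicit.
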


\begin{proof}
By Theorem 1.33 of \cite{depiro6}, we find a birational
$\Phi:C\leftrightsquigarrow C'$, with $C'$ a plane projective
curve. Suppose that a $g_{n_{1}}^{1}$ and a $g_{n_{2}}^{1}$ are
given on $C$. Let $\{G_{1},G_{2}\}$ be weighted sets of
$\{g_{n_{1}}^{1},g_{n_{2}}^{1}\}$ and let $\{J_{1},J_{2}\}$ be the
Jacobians of $\{g_{n_{1}}^{1},g_{n_{2}}^{1}\}$. Using Lemma 3.25,
we, therefore, obtain that\\
$\{[\Phi^{-1}]^{*}(G_{1}),[\Phi^{-1}]^{*}(G_{2})\}$ are weighted
sets of
$\{[\Phi^{-1}]^{*}(g_{n_{1}}^{1}),[\Phi^{-1}]^{*}(g_{n_{2}}^{1})\}$
and $\{[\Phi^{-1}]^{*}(J_{1}),[\Phi^{-1}]^{*}(J_{2})\}$ are
Jacobians of
$\{[\Phi^{-1}]^{*}(g_{n_{1}}^{1}),[\Phi^{-1}]^{*}(g_{n_{2}}^{1})\}$.
By the result of Theorem 3.24, we obtain;\\

$[\Phi^{-1}]^{*}(J_{1})-2[\Phi^{-1}]^{*}(G_{1})\equiv
[\Phi^{-1}]^{*}(J_{2})-2[\Phi^{-1}]^{*}(G_{2})$\\

Applying $[\Phi]^{*}$ to this equivalence, using Theorem 2.32 and
the fact that this map is linear
on branches, we obtain that;\\

$J_{1}-2G_{1}\equiv J_{2}-2G_{2}$\\

Hence, by Definition 2.30, $|J_{1}-2G_{1}|=|J_{2}-2G_{2}|$ as
required.
\end{proof}

Using this result, we are able to show the following;\\

\begin{theorem}
Let $C$ be a projective algebraic curve and let a $g_{n}^{r}$ be
given on $C$. Then, if $\{g_{n}^{1},g_{n}^{1'}\}$ are any
\emph{two} series with;\\

$g_{n}^{1}\subseteq g_{n}^{r}$ and $g_{n}^{1'}\subseteq
g_{n}^{r}$\\

then;\\

$Jac(g_{n}^{1})\equiv Jac(g_{n}^{1'})$\\
\end{theorem}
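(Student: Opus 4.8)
The plan is to extract everything from the invariance statement of Theorem 3.26 together with the observation that two weighted sets subordinate to the same $g_{n}^{r}$ are automatically linearly equivalent. First I would fix weighted sets $G_{1}$ of $g_{n}^{1}$ and $G_{1}'$ of $g_{n}^{1'}$, and set $J_{1}=Jac(g_{n}^{1})$, $J_{1}'=Jac(g_{n}^{1'})$ (each defined by Definition 3.22). Applying Theorem 3.26 to $g_{n}^{1}$ and then to $g_{n}^{1'}$, the complete series $|J_{1}-2G_{1}|$ and $|J_{1}'-2G_{1}'|$ are one and the same series on $C$; hence, by Definition 2.30, $J_{1}-2G_{1}\equiv J_{1}'-2G_{1}'$ as generalised weighted sets. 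In particular these two generalised weighted sets have the same total multiplicity, so, adding $2n$, $J_{1}$ and $J_{1}'$ have the same total multiplicity and the asserted equivalence is meaningful.

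Next I would use the hypotheses $g_{n}^{1}\subseteq g_{n}^{r}$ and $g_{n}^{1'}\subseteq g_{n}^{r}$: by Definition 2.14 the weighted set $G_{1}$ is a weighted set of $g_{n}^{r}$, and so is $G_{1}'$. Thus $G_{1}$ and $G_{1}'$ both occur as weighted sets in a single $g_{n}^{r}$, and Definition 2.6 gives $G_{1}\equiv G_{1}'$. Applying Theorem 2.29 (Linear Equivalence Preserved by Addition) with $A=C=G_{1}$ and $B=D=G_{1}'$ then yields $2G_{1}\equiv 2G_{1}'$.

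Finally I would combine the two equivalences by one more application of Theorem 2.29, this time with $A=J_{1}-2G_{1}$, $B=J_{1}'-2G_{1}'$, $C=2G_{1}$, $D=2G_{1}'$: since $(J_{1}-2G_{1})+2G_{1}=J_{1}$ and $(J_{1}'-2G_{1}')+2G_{1}'=J_{1}'$ by the arithmetic of generalised weighted sets (Remark 2.23), one obtains $J_{1}\equiv J_{1}'$, that is $Jac(g_{n}^{1})\equiv Jac(g_{n}^{1'})$; and since both sides are effective this coincides with equivalence in the sense of Definition 2.6 as well. There is essentially no analytic obstacle here: all the geometric content sits inside Theorem 3.26, whose proof runs through the differential calculus of Lemmas 3.9--3.18, Theorem 3.24, and the birational transport of Lemma 3.25. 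The only point I would check with care is the matching of total multiplicities at each invocation of $\equiv$ and of Theorem 2.29 (so that Definition 2.24 applies), which is immediate once one notes that $order(|J-2G|)$ is independent of the chosen $g_{n}^{1}$ by Theorem 3.26.
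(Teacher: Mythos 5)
Your proposal is correct and follows essentially the same route as the paper: fix weighted sets $G_{1}\in g_{n}^{1}$ and $G_{1}'\in g_{n}^{1'}$, invoke Theorem 3.26 to get $J_{1}-2G_{1}\equiv J_{1}'-2G_{1}'$, note $G_{1}\equiv G_{1}'$ because both lie in the common $g_{n}^{r}$, and conclude via Theorem 2.29. You merely spell out the intermediate step $2G_{1}\equiv 2G_{1}'$ and the bookkeeping of total multiplicities, which the paper leaves implicit.
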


\begin{proof}
Let $\{G,G'\}$ be weighted sets of $\{g_{n}^{1},g_{n}^{1'}\}$ and
let $\{J,J'\}$ be their Jacobians. By Theorem 3.26, we have that;\\

$J-2G\equiv J'-2G'$\\

As $\{G,G'\}$ both belong to the same $g_{n}^{r}$, we also have
that $G\equiv G'$. By Theorem 2.29, we then obtain that $J\equiv
J'$ as required.

\end{proof}

As a result of this theorem, we can make the following
definition;\\

\begin{defn}{Jacobian Series}\\

Let $C$ be a projective algebraic curve and let a $g_{n}^{r}$ be
given on $C$, with $r\geq 1$, then we define $Jacob(g_{n}^{r})$ to
be the \emph{complete} series containing the Jacobians
$Jac(g_{n}^{1})$ for \emph{any} $g_{n}^{1}\subseteq g_{n}^{r}$.
\end{defn}

\begin{rmk}
If we choose any two subordinate $g_{n}^{1}\subseteq g_{n}^{r}$,
$g_{n}^{1'}\subseteq g_{n}^{r}$ then, we have, by Theorem 2.18 and
Theorem 3.27, that $|Jac(g_{n}^{1})|=|Jac(g_{n}^{1'})|$. Hence,
there \emph{does} exist a complete linear series with the property
required of the definition.
\end{rmk}

We now make the further;\\

\begin{defn}
Let $C$ be a projective algebraic curve and let $G$ be an
effective weighted set, defining a complete series $|G|$, with
$dim(|G|)\geq 1$. Then, we define;\\

$|G_{j}|=Jacob(|G|)$\\

\end{defn}

We then have;\\

\begin{theorem}{Fundamental Theorem of Jacobian Series}\\

Let $C$ be a projective algebraic curve and let $\{A,B\}$ be
effective weighted sets on $C$, with multiplicity $\{m,n\}$
respectively, defining complete series $|A|$ and $|B|$, with
 $dim(|A|)\geq 1$ and $dim(|B|)\geq 1$. Then;\\

$|(A+B)_{j}|=|A_{j}|+|2B|=|B_{j}|+|2A|$\\

\end{theorem}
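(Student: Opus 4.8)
The plan is to reduce everything to the additivity of Jacobian series for $g_n^1$'s, which is essentially the content of Theorem 3.21. First I would unwind the definitions: $|(A+B)_j| = \mathrm{Jacob}(|A+B|)$ is the complete series containing $\mathrm{Jac}(g^1)$ for any $g^1 \subseteq |A+B|$, and likewise $|A_j| = \mathrm{Jacob}(|A|)$. So it suffices to exhibit, for suitable choices, a $g^1_{m+n} \subseteq |A+B|$, a $g^1_m \subseteq |A|$, and a $g^1_n \subseteq |B|$ such that the Jacobians satisfy the required numerical relation up to linear equivalence. Concretely, the goal is to show $\mathrm{Jac}(g^1_{m+n}) \equiv \mathrm{Jac}(g^1_m) + 2B$ for appropriate choices, and symmetrically $\mathrm{Jac}(g^1_{m+n}) \equiv \mathrm{Jac}(g^1_n) + 2A$; then Theorem 2.18 (or Theorem 3.27 together with Definition 2.30) gives equality of the complete series $|(A+B)_j|$, $|A_j| + |2B|$, and $|B_j| + |2A|$.

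The key step is the following: choose a $g^1_m \subseteq |A|$, realized (after Lemma 3.4) by a rational function $h_1$ with $(h_1 = \lambda_0) = A'$ for some weighted set $A' \in |A|$, and a $g^1_n \subseteq |B|$ realized by $h_2$ with $(h_2 = \mu_0) = B'$ for some $B' \in |B|$. The product $h_1 h_2$ then defines a $g^1_{?} $, but more to the point I would use a pencil argument: consider the $g^1$ associated to $h_1$ and add $B'$ as a fixed branch contribution via Lemma 1.13, and separately add $A'$ to the $g^1$ of $h_2$. The pencil whose general member is $h_1$ restricted and whose special member carries $B$ gives, by Definition 3.22 (the one that builds in $2K$ for the fixed part $K$), $\mathrm{Jac}$ of the enlarged series $= 2B' + \mathrm{Jac}(g^1_m)$. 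The same construction on the $B$ side gives $2A' + \mathrm{Jac}(g^1_n)$. Now I need that both of these enlarged series are subordinate to $|A+B|$ — this follows because $A' + B' \in |A+B|$ (Theorem 2.29, linear equivalence preserved by addition) and the enlarged $g^1$'s both contain the weighted set $A' + B'$, hence by Theorem 2.16 (amalgamation / completeness) they lie inside the complete series determined by $A'+B'$, which is $|A+B|$. Applying Theorem 3.27 (well-definedness of $\mathrm{Jacob}$ on subordinate $g^1$'s inside a fixed complete series) to the $g^1_{m+n}\subseteq |A+B|$ that is the common subordinate of these two enlargements yields $\mathrm{Jac}(g^1_{m+n}) \equiv 2B' + \mathrm{Jac}(g^1_m) \equiv 2A' + \mathrm{Jac}(g^1_n)$.

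Translating to complete series: $|(A+B)_j| = |\mathrm{Jac}(g^1_{m+n})| = |2B' + \mathrm{Jac}(g^1_m)|$. Since $B' \equiv B$ we have $|2B'| = |2B|$ (as in Remarks 3.20, $2B - 2B' = \mathrm{div}(f^2)$ for the $f$ with $B - B' = \mathrm{div}(f)$, using Lemma 2.9), and $|\mathrm{Jac}(g^1_m)| = \mathrm{Jacob}(|A|) = |A_j|$ by definition. Hence $|(A+B)_j| = |A_j| + |2B|$ by Definition 2.35 (summation of complete series, legitimate by Remark 2.36). The symmetric computation gives $|(A+B)_j| = |B_j| + |2A|$, completing the proof.

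The main obstacle will be the bookkeeping in the middle step — specifically verifying that the "add a fixed branch contribution to a $g^1$ defined by a rational function" operation yields precisely the Jacobian behaviour recorded in Definition 3.22 and Remark 3.23 (namely that a branch counted $\beta$ times for the series and $\beta + \alpha$ times in a given weighted set contributes $2\beta + \alpha - 1$ to the Jacobian), and that the two enlarged $g^1$'s genuinely share the common weighted set $A' + B'$ so that Theorem 2.16 applies. One must be slightly careful that when $A$ and $B$ (or $A'$ and $B'$) share branches, the fixed-branch contributions combine correctly; this is handled exactly as in the last case of the proof of Theorem 2.15 and in Theorem 3.24, and I would invoke those arguments rather than redo them. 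Everything else is formal manipulation of linear equivalence using Theorems 2.24, 2.29 and Definitions 2.30, 2.35.
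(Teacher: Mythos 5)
Your proposal is correct and follows essentially the same route as the paper: pick a $g_{m}^{1}\subseteq |A|$, add $B$ as a fixed branch contribution via Lemma 1.13 to get a $g_{m+n}^{1}\subseteq |A+B|$, read off $Jac(g_{m+n}^{1})=Jac(g_{m}^{1})+2B$ from Definition 3.22, and conclude via Remarks 3.29 and Definition 3.30; the symmetric construction gives the other equality. Your extra care with representatives $A'$, $B'$ and the explicit appeal to Theorem 3.27 are harmless refinements of the same argument.
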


\begin{proof}
Consider first the complete series $|A|+|B|=|A+B|$. By the
hypotheses, we can find a $g_{m}^{1}\subseteq |A|$, $(*)$. Adding
$B$ as a fixed branch contribution to this $g_{m}^{1}$, we obtain
a $g_{m+n}^{1}\subseteq |A+B|$. We then have, by Definition 3.22;\\

$Jac(g_{m+n}^{1})=Jac(g_{m}^{1})+2B$\\

It then follows immediately from Remarks 3.29 and Definition 3.30 that;\\

$|(A+B)_{j}|=|Jac(g_{m}^{1})+2B|=|Jac(g_{m}^{1})|+|2B|$\\

Again, by Remarks 3.29, $(*)$ and Definition 3.30, we have that;\\

$|Jac(g_{m}^{1})|=|A_{j}|$\\

Hence,\\

$|(A+B)_{j}|=|A_{j}|+|2B|$\\

as required. The remaining equality is similar.\\

\end{proof}

We now have;\\

\begin{theorem}{Canonical Series}\\

Let $C$ be a projective algebraic curve and let $A$ be an
effective weighted set, such that $dim(|A|)\geq 1$, $(*)$.
Then the series;\\

$|A_{j}|-|2A|$\\

depends only on $C$ and \emph{not} on the choice of $A$ with the
requirement $(*)$.
\end{theorem}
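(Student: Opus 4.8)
The plan is to obtain the statement as an essentially immediate consequence of the Fundamental Theorem of Jacobian Series, once complete series of generalised weighted sets are recognised as forming an abelian group. Fix two effective weighted sets $A$ and $B$ on $C$, each satisfying $dim(|A|)\geq 1$ and $dim(|B|)\geq 1$; the goal is the identity of complete series $|A_{j}|-|2A|=|B_{j}|-|2B|$, where the differences are formed in the generalised sense (the definition of $|G|$ for generalised weighted sets), so that they are unconditionally defined. Since the pair $(A,B)$ is arbitrary subject to the dimension hypotheses, proving this identity proves the theorem.

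Applying the Fundamental Theorem of Jacobian Series to $(A,B)$ --- its hypotheses being precisely $dim(|A|)\geq 1$ and $dim(|B|)\geq 1$ --- yields
\[
|(A+B)_{j}|=|A_{j}|+|2B|=|B_{j}|+|2A|,
\]
and hence $|A_{j}|+|2B|=|B_{j}|+|2A|$ as complete series. Under the summation operation of the Summation of Complete Series, with the Difference of Complete Series furnishing subtraction, the complete series of generalised weighted sets on $C$ form an abelian group; this is exactly what is recorded by the transitivity of linear equivalence for generalised weighted sets together with its invariance under addition. Adding $-(|2A|+|2B|)$ to both sides of the displayed identity then gives
\[
|A_{j}|-|2A|=|B_{j}|-|2B|,
\]
which is the assertion.

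I expect no real obstacle: the geometric content was absorbed into the Fundamental Theorem of Jacobian Series, which itself rests on the birational independence of $|J-2G|$ (Theorem 3.26) and the definition of $Jac(g_{n}^{1})$ for an arbitrary $g_{n}^{1}$. The only points needing a word of care are bookkeeping: that $|A_{j}|$ and $|B_{j}|$ are defined (which holds since $dim(|A|),dim(|B|)\geq 1$), that $|(A+B)_{j}|$ makes sense (the proof of the Fundamental Theorem exhibits a subordinate $g_{m+n}^{1}$ of $|A+B|$, so that series has dimension at least $1$), and that one may cancel $|2A|$ from $|A_{j}|+|2B|=|B_{j}|+|2A|$ --- all of which reduce to the equivalence-relation and addition results of Section 2.
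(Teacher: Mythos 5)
Your proposal is correct and is essentially the paper's own argument: apply the Fundamental Theorem of Jacobian Series to the pair $(A,B)$ to get $|A_{j}|+|2B|=|B_{j}|+|2A|$, then subtract $|2A|+|2B|$ from both sides using the addition and transitivity results for linear equivalence of generalised weighted sets. The extra bookkeeping you supply (that $|A_{j}|$, $|B_{j}|$ and $|(A+B)_{j}|$ are all defined, and that cancellation is legitimate) is a welcome elaboration of steps the paper leaves implicit, but it does not change the route.
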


\begin{proof}
Suppose that $\{A,B\}$ are effective weighted sets satisfying
$(*)$. By Theorem 3.31, we have that;\\

$|A_{j}|+|2B|=|B_{j}|+|2A|$\\

Subtracting the series $|2A|+|2B|$ from both sides, we obtain
that;\\

$|A_{j}|-|2A|=|B_{j}|-|2B|$\\

as required.\\
\end{proof}

We now make the following;\\

\begin{defn}{Genus of a Projective Algebraic Curve}\\

Let $C$ be a projective algebraic curve, then we define the
genus $\rho$ of $C$ by the formula;\\

$2\rho-2=r-2n$\\

where $n=order(|A|)$ and $r=order(|A_{j}|)$, for any effective
weighted set with the property that $dim(|A|)\geq 1$.

\end{defn}

\begin{rmk}
This is a good definition by Theorem 3.32.
\end{rmk}

It is clear from the definition that the genus $\rho$ is rational. We also have;\\

\begin{theorem}
The genus $\rho$ is a birational invariant.

\end{theorem}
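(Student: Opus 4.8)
The plan is to transport, via a birational map $\Phi:C_{1}\leftrightsquigarrow C_{2}$, all of the data entering Definition 3.34 from one curve to the other, checking that the two integers $n=order(|A|)$ and $r=order(|A_{j}|)$ are left unchanged. Concretely, let $[\Phi]^{*}$ be the bijection of branches of $C_{2}$ onto branches of $C_{1}$ supplied by Theorem 1.14, with inverse $[\Phi^{-1}]^{*}$. Since $\rho$ does not depend on the choice of effective weighted set $A$ with $dim(|A|)\geq 1$ (Theorem 3.32 and Remark 3.35), it suffices to fix one such $A$ on $C_{2}$, verify that $[\Phi]^{*}(A)$ is an admissible choice for computing the genus of $C_{1}$, and show that it yields the same value of $r-2n$.

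First I would record the elementary invariance properties of $[\Phi]^{*}$. Being a bijection on branches that is linear on weighted sets, it preserves total multiplicity, so $order([\Phi]^{*}(A))=order(A)$. By Theorem 1.14 it carries every $g_{n}^{s}$ on $C_{2}$ to a $g_{n}^{s}$ on $C_{1}$ of the same order and dimension, and by Theorem 2.32 it respects linear equivalence; hence, by Definition 2.33 and Remark 2.34, $[\Phi]^{*}|A|=|[\Phi]^{*}(A)|$, and in particular $dim(|[\Phi]^{*}(A)|)=dim(|A|)\geq 1$ and $order(|[\Phi]^{*}(A)|)=order(|A|)=n$. Thus $[\Phi]^{*}(A)$ contributes the same $n$ on $C_{1}$.

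Next I would identify the Jacobian series. Choose any $g_{n}^{1}\subseteq |A|$, possible because $dim(|A|)\geq 1$. Applying the bijection $[\Phi]^{*}$ to each weighted set of $g_{n}^{1}$ shows $[\Phi]^{*}(g_{n}^{1})\subseteq [\Phi]^{*}|A|=|[\Phi]^{*}(A)|$, and $[\Phi]^{*}(g_{n}^{1})$ is again a $g_{n}^{1}$ by Theorem 1.14. By Lemma 3.25, $Jac([\Phi]^{*}(g_{n}^{1}))=[\Phi]^{*}(Jac(g_{n}^{1}))$. Therefore, using Definitions 3.28 and 3.30, Remark 3.29, and the fact that the formation of the complete series transports along $[\Phi]^{*}$ (Theorems 2.18 and 2.32, Definition 2.33, Remark 2.34), we obtain $|([\Phi]^{*}(A))_{j}|=Jacob(|[\Phi]^{*}(A)|)=|[\Phi]^{*}(Jac(g_{n}^{1}))|=[\Phi]^{*}|Jac(g_{n}^{1})|=[\Phi]^{*}|A_{j}|$. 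Since $[\Phi]^{*}$ preserves total multiplicity, $order(|([\Phi]^{*}(A))_{j}|)=order(|A_{j}|)=r$. Plugging into Definition 3.34 on $C_{1}$ with the weighted set $[\Phi]^{*}(A)$ then gives $2\rho_{C_{1}}-2=r-2n=2\rho_{C_{2}}-2$, so $\rho_{C_{1}}=\rho_{C_{2}}$.

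The only genuinely delicate point is the bookkeeping in the previous paragraph: one must check that $[\Phi]^{*}$ intertwines the whole passage $g_{n}^{1}\mapsto Jac(g_{n}^{1})\mapsto Jacob(|A|)$ on $C_{2}$ with the corresponding passage on $C_{1}$, including the recipe of Definition 3.22 (strip the fixed branch contribution $K$ of total multiplicity $m$, take the Jacobian of the residual $g_{n-m}^{1}$, add $2K$ back). This is immediate: $[\Phi]^{*}$ is a multiplicity-preserving bijection on branches, so it carries the fixed branch contribution of $g_{n}^{1}$ to that of $[\Phi]^{*}(g_{n}^{1})$ and commutes with the operation ``$+2K$'', while the residual identity $Jac([\Phi]^{*}(g_{n-m}^{1}))=[\Phi]^{*}(Jac(g_{n-m}^{1}))$ for a $g_{n-m}^{1}$ without fixed branch contribution is exactly Lemma 3.25. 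Everything else is formal manipulation of complete series.
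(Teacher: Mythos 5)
Your proposal is correct and follows essentially the same route as the paper: transport the weighted set $A$ and its complete series along the branch bijection $[\Phi]^{*}$, use Theorem 1.14 and Theorem 2.32 to preserve dimension, order and linear equivalence, and Lemma 3.25 to identify the Jacobian series, so that both integers in Definition 3.33 are unchanged. Your version just spells out the bookkeeping (including the fixed-branch-contribution step of Definition 3.22) that the paper leaves implicit.
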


\begin{proof}
Let $\Phi:C\leftrightsquigarrow C'$ be a birational map. If $A$ is an effective weighted set on $C'$ with $dim(|A|)\geq 1$, then $[\Phi]^{*}A$ is an effective weighted set on $C$ with $dim(|[\Phi]^{*}A|)\geq 1$. We clearly have that $order(|A|)=order(|[\Phi]^{*}A|)$. We also have that $Jacob(|[\Phi^{*}]A|)=[\Phi]^{*}(Jacob(|A|))$, using Lemma 3.25. In particular, we then have that $order(|A_{j}|)=order(|([\Phi^{*}]A)_{j}|)$. The result then follows immediately from Definition 3.33.
\end{proof}

\begin{theorem}
Let $C$ be a plane projective algebraic curve of order $n$, having $d$ nodes as singularities, then;\\

$\rho={(n-1)(n-2)\over 2}-d$\\

In particular, the genus $\rho$ of any projective algebraic curve is an integer.

\end{theorem}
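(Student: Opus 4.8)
The plan is to choose a convenient effective weighted set $A$ on $C$ with $\dim(|A|)\geq 1$ and compute both $\operatorname{order}(|A|)$ and $\operatorname{order}(|A_{j}|)$ directly using the geometric setup developed in Lemmas 3.4 through 3.19. Concretely, I would take $C$ in the special coordinate system $(x,y)$ furnished by Lemma 3.5, and let $A = G = (h=\infty)$ be the weighted set of the $g_{n_0}^{1}$ cut out by a generic pencil of lines through a point; here the order of $|A|$ is the degree $n_0 = n$ of the plane curve, since a generic line meets $C$ transversely in $n$ points (using $(i)$ of Lemma 3.5). Thus $\operatorname{order}(|A|) = n$.

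The heart of the computation is $\operatorname{order}(|A_j|)$. By Definition 3.30, $|A_j| = Jacob(|A|)$, which by Remarks 3.29 is the complete series of $Jac(g_n^{1})$ for $g_n^{1}\subseteq|A|$; take the $g_n^{1}$ cut out by the lines parallel to the $y$-axis together with the line at infinity, as in Lemma 3.6, with associated rational function $x$ and also the rational function $\tfrac{dh}{dx}$ of the Theorem on Differentials (Theorem 3.19). By Theorem 3.19, $(\tfrac{dh}{dx}=0) = J\cup 2G'$ and $(\tfrac{dh}{dx}=\infty) = J'\cup 2G$, where $J = Jac(g_n^{1})$ for the original $g_{n_0}^1$, $J' = Jac(g_m^{1})$ for the line pencil $g_m^1$ with $m = \deg C = n$, and $G' = (x=\infty)$. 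Since $\tfrac{dh}{dx}$ is a rational function, $(\tfrac{dh}{dx}=0)$ and $(\tfrac{dh}{dx}=\infty)$ have the same total multiplicity, so $\operatorname{order}(J) + 2\operatorname{order}(G') = \operatorname{order}(J') + 2\operatorname{order}(G)$. Now $\operatorname{order}(G) = \operatorname{order}(G') = n$ (both are $n$ distinct simple branches on a line section), so $\operatorname{order}(J) = \operatorname{order}(J')$. Hence it suffices to compute $\operatorname{order}(J')$, the order of the Jacobian of the pencil of lines parallel to the $y$-axis: by Lemma 3.6 this consists of the branches of contact of such tangent lines, each counted once, so $\operatorname{order}(J')$ equals the number of such contact branches, which is the class-type count of tangents from the point $P$ — this is where the node count $d$ enters. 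The standard fact (provable by the dual-curve / Riemann–Hurwitz argument implicit in the paper's duality machinery, or directly: a generic plane curve of degree $n$ has $n(n-1)$ tangent lines through a generic point, and each node absorbs two of them) gives $\operatorname{order}(J') = n(n-1) - 2d$.

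Putting this together, $r = \operatorname{order}(|A_j|) = \operatorname{order}(J) = \operatorname{order}(J') = n(n-1) - 2d$ and $n_{\text{ord}} := \operatorname{order}(|A|) = n$, so Definition 3.33 gives
\[
2\rho - 2 = r - 2n_{\text{ord}} = n(n-1) - 2d - 2n = n^2 - 3n - 2d = (n-1)(n-2) - 2 - 2d,
\]
whence $2\rho = (n-1)(n-2) - 2d$, i.e. $\rho = \tfrac{(n-1)(n-2)}{2} - d$. Since this expression is manifestly an integer whenever $(n-1)(n-2)$ is even — which it always is — and since $\rho$ is a birational invariant (Theorem 3.36) while every projective algebraic curve is birational to a plane curve with at most nodes (Theorem 4.16 of \cite{depiro6}, quoted as Theorem 1.33 of \cite{depiro6}), the final clause follows: the genus of an arbitrary projective algebraic curve is an integer.

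The main obstacle I expect is the clean justification of the count $\operatorname{order}(J') = n(n-1) - 2d$, i.e. that the Jacobian of the pencil of parallel lines has order exactly $n(n-1)-2d$. This requires knowing that, for the generic direction, the tangent lines parallel to that direction have exactly $2$-fold contact at nonsingular points (granted by $(ii)$ of Lemma 3.5), that the only branches contributing to $J'$ are these contact branches each with multiplicity one (Lemma 3.6), and then a dimension/degree count — essentially the first Plücker-type formula — for the number of such tangents, with the correction $-2d$ coming from the two branches of each node failing to be ordinary-tangent branches in the relevant sense. The paper signals that the requisite "generic point has character $(1,1)$" input and the duality arguments appear later; modulo invoking those (as the proof of Lemma 3.5 already does), the computation is bookkeeping with orders of weighted sets via Theorem 3.19.
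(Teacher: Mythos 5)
Your proposal is correct and follows essentially the same route as the paper: both take a pencil of lines through a generic point as the relevant $g_{n}^{1}$, identify the order of its Jacobian with the class of $C$ (the tangency branches each counted once, the nodes contributing nothing), and invoke the Pl\"ucker formula $m=n(n-1)-2d$ of Theorem 4.3 to finish the arithmetic. The detour through Theorem 3.19 to equate $\operatorname{order}(J)$ with $\operatorname{order}(J')$ is harmless but unnecessary, since your chosen $g_{n_{0}}^{1}$ is itself a line pencil and the order of its Jacobian can be read off directly, which is exactly what the paper does.
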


\begin{proof}
Fix a generic point $P$ in the plane, and consider the pencil of lines passing through $P$. This defines a $g_{n}^{1}$ on $C$, where $n=order(C)$. Using Lemma 3.5 and the corresponding coordinate system $(x,y)$, we may assume that $Jac(g_{n}^{1})$ consists exactly of the non-singular branches of $C$ (each counted once) in finite position, whose tangent lines are parallel to the $y$-axis. (Here, the branch formulation of a $g_{n}^{1}$ is critical, the lines in the given pencil passing through the nodes of $C$ make no contribution to $Jac(g_{n}^{1})$ as they pass through each \emph{branch} of the node transversely.) It follows that $r$ in Definition 3.33 is exactly the number of these branches. By Remarks 4.2 below, this is exactly the class of $C$, which gives the following formula relating the class and the genus;\\

$2\rho-2=r-2n$, $(\dag)$\\

Now,  applying the Plucker formula, given in Section 4, we have;\\

$r=n(n-1)-2d$, $(*)$\\

Combining $(\dag)$ and $(*)$, we obtain;\\

$\rho={n(n-1)-2d\over 2}-(n-1)={(n-1)(n-2)\over 2}-d$\\

as required. The remaining part of the theorem then follows immediately from Theorem 3.35 and the fact that any projective algebraic curve is birational to a plane projective curve, having at most nodes as singularities. (Note when $C$ has no nodes, the genus gives the number of alcoves of $n$ lines in general position).

\end{proof}

\begin{theorem}

Let $C$ be a plane projective curve of order $n$, having $d$ nodes as singularities, then;\\

$d\leq {(n-1)(n-2)\over 2}$\\

In particular, $\rho\geq 0$, for any projective algebraic curve.

\end{theorem}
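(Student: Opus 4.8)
The plan is to read off the inequality $d\le\tfrac{(n-1)(n-2)}{2}$ from the genus formula for plane nodal curves established in the previous theorem, combined with a classical argument using auxiliary curves of degree $n-2$ and Bezout's theorem, and then to obtain $\rho\ge 0$ in general by birational invariance. By the previous theorem, an irreducible plane curve $C$ of order $n$ with $d$ nodes has $\rho=\tfrac{(n-1)(n-2)}{2}-d$, so the displayed inequality is precisely the assertion $\rho\ge 0$ for plane nodal curves; and once this is known, the ``in particular'' clause follows, because every projective algebraic curve is birational to a plane projective curve having at most nodes as singularities --- the same reduction used in the proof of the previous theorem, via Theorem 1.33 of \cite{depiro6} and the remarks of the final section --- while $\rho$ is a birational invariant, as shown above. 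It therefore suffices to prove $d\le\tfrac{(n-1)(n-2)}{2}$ for an irreducible plane curve $C$ of degree $n$ with exactly $d$ nodes; and I may assume $n\ge 3$, since for $n\le 2$ an irreducible curve is smooth and $\tfrac{(n-1)(n-2)}{2}=0$.

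For this I would argue by contradiction, supposing $d\ge\tfrac{(n-1)(n-2)}{2}+1$. Fix a set $S$ consisting of $\tfrac{(n-1)(n-2)}{2}+1$ of the nodes of $C$, and choose $n-3$ further distinct points $P_1,\dots,P_{n-3}$ on $\mathrm{NonSing}(C)$, none of them a node (possible since $C$ is infinite and has only finitely many singular points). The homogeneous forms of degree $n-2$ in three variables form an $L$-vector space of dimension $\binom{n}{2}$, and vanishing at a prescribed point is one linear condition; since $|S|+(n-3)=\bigl(\tfrac{(n-1)(n-2)}{2}+1\bigr)+(n-3)=\binom{n}{2}-1$, the subspace of degree-$(n-2)$ forms $G$ vanishing on $S\cup\{P_1,\dots,P_{n-3}\}$ has dimension at least $1$, so a nonzero such $G$ exists. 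Since $\deg G=n-2<n=\deg C$ and $C$ is irreducible, $C$ and the curve $G=0$ share no component, so by Bezout's theorem for plane projective curves (see \cite{depiro5}) the local intersection numbers satisfy $\sum_p I_p(C,G)=n(n-2)$, the sum over $p\in C\cap\{G=0\}$. On the other hand $I_p(C,G)\ge\mathrm{mult}_p(C)\,\mathrm{mult}_p(G)$, which is $\ge 2$ at each node of $S$ and $\ge 1$ at each $P_i$, so
\[
n(n-2)\;\ge\;2\Bigl(\tfrac{(n-1)(n-2)}{2}+1\Bigr)+(n-3)\;=\;(n-1)^2 .
\]
This yields $n^2-2n\ge n^2-2n+1$, a contradiction; hence $d\le\tfrac{(n-1)(n-2)}{2}$, and then $\rho=\tfrac{(n-1)(n-2)}{2}-d\ge 0$ for every projective algebraic curve by the reduction above.

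Conceptually the argument is routine, and the main work is expository. First, the existence of the auxiliary form $G$ is a bare dimension count on coefficients, but it should be phrased so that no general-position hypothesis on $S\cup\{P_i\}$ is invoked: any $\binom{n}{2}-1$ points impose at most that many linear conditions, which already forces the solution space to be nonzero. Second, I would need the estimate $I_p(C,G)\ge\mathrm{mult}_p(C)\,\mathrm{mult}_p(G)$ at a common point and the additivity of the local intersection numbers underlying the Bezout count --- both available from the plane-curve intersection theory of \cite{depiro5} --- together with the fact that the chosen points $P_i$ are genuinely smooth points of $C$, pairwise distinct and distinct from the nodes in $S$. Finally, the whole argument is characteristic-free, so nothing special happens when $\mathrm{char}(L)=2$ beyond what is already built into the genus formula of the previous theorem.
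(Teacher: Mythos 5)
Your proof is correct and follows essentially the same route as the paper: an auxiliary low-degree curve forced by a coefficient dimension count to pass through the nodes and some extra non-singular points, played off against Bezout's theorem and the local multiplicity $\geq 2$ at each node, with the ``in particular'' clause reduced to the plane nodal case via birational invariance and the genus formula of the preceding theorem. The only cosmetic difference is that the paper uses curves of degree $n-1$ through all $d$ nodes and derives the bound directly, whereas you use degree $n-2$ through $\tfrac{(n-1)(n-2)}{2}+1$ of them and argue by contradiction.
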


\begin{proof}
By $(*)$ of Theorem 3.36, we have;\\

$d\leq {n(n-1)\over 2}\leq {(n-1)(n+2)\over 2}$, $(**)$\\

As the parameter space $Par_{n-1}$ of plane curves, having dimension $n-1$, has dimension ${(n-1)(n+2)\over 2}$, by $(**)$, we can find a plane curve $C'$ of order $n-1$, passing through the $d$ nodes of $C$ and through ${(n-1)(n+2)\over 2}-d$ further non-singular points of $C$. As $C$ is irreducible, the curve $C'$ cannot contain $C$ as a component, therefore, by Bezout's Theorem, must intersect $C$ in exactly $n(n-1)$ points, counted with multiplicity. Using Lemma 1.11 of \cite{depiro8}, we have the total intersection multiplicity contributed by the nodes is at least $2d$, and, clearly, the total intersection multiplicity contributed by the further non-singular points is at least ${(n-1)(n+2)\over 2}-d$. Therefore,\\

$n(n-1)\geq (2d+{(n-1)(n+2)\over 2}-d)$\\

which gives that;\\

${(n-1)(n-2)\over 2}\geq d$\\

as required. Again, using the fact that any projective algebraic curve is birational to a plane projective curve, having at most nodes as singularities, and combining the first part of this theorem with Theorem 3.36, we obtain that $\rho\geq 0$.

\end{proof}

We now require the following definition;\\

\begin{defn}

Let $C$ be a projective algebraic curve. We say that $C$ is rational if it is birational to a line.

\end{defn}

We now show the following result;\\

\begin{theorem}

Let $C$ be a projective algebraic curve, then $C$ is rational if and only if its genus $\rho=0$.

\end{theorem}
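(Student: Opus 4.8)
The plan is to prove the two implications separately, leaning on the birational invariance of the genus and the explicit genus formula for plane nodal curves.

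For the forward direction, suppose $C$ is rational. I would fix a birational map $C\leftrightsquigarrow P^{1}$ and invoke the birational invariance of $\rho$ (Theorem 3.35), so that it is enough to show $\rho(P^{1})=0$. To compute this I would apply Definition 3.33 with $A$ a single point of $P^{1}$: then $|A|$ is the $g_{1}^{1}$ cut out by the pencil of linear forms, so $order(|A|)=1$ and $\dim|A|=1\geq 1$; and since every weighted set of a $g_{1}^{1}$ is a single branch counted once, it has no multiple branch, so $Jac$ of this $g_{1}^{1}$ is the empty weighted set by Definition 3.2, whence $order(|A_{j}|)=0$. The defining formula $2\rho-2=r-2n$ then gives $\rho(P^{1})=0$, hence $\rho(C)=0$. (Alternatively one could quote Theorem 3.36 with the line, $n=1$, $d=0$.)

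For the converse, suppose $\rho(C)=0$. I would first reduce to a plane model: by the reduction used in Theorem 3.36, $C$ is birational to a plane curve $C'$ of order $n$ with exactly $d$ nodes, and $\frac{(n-1)(n-2)}{2}-d=0$, so $d=\frac{(n-1)(n-2)}{2}$. The cases $n=1$ (a line, hence isomorphic to $P^{1}$) and $n=2$ (an irreducible conic, rational by projection from any of its points) are immediate, so I would assume $n\geq 3$ and build a $g_{1}^{1}$ on $C'$ from the adjoint curves of degree $n-2$. The key computation: the linear system $\Sigma$ of degree-$(n-2)$ plane curves through the $d$ nodes satisfies $\dim\Sigma\geq\frac{(n-2)(n+1)}{2}-d=n-2$; since $n-2<n$ and $C'$ is irreducible, no member of $\Sigma$ contains $C'$, so by Theorem 1.3 the series cut out on $C'$ has dimension exactly $\dim\Sigma\geq n-2$; Bezout gives total intersection number $n(n-2)$ with $C'$, of which at least $2d=(n-1)(n-2)$ is absorbed at the nodes, so the mobile part has order at most $n-2$; since dimension is always at most order (Lemma 1.16), the mobile series is exactly a $g_{n-2}^{n-2}$. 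I would then impose passage through $n-3$ generic non-singular points of $C'$, none of them base branches: as $C'$ lies in no member of $\Sigma$, these impose independent conditions, each step dropping the dimension by exactly one and the mobile order by at least one, so after $n-3$ steps I land on a series of dimension $1$ and order at most $1$, forced by Lemma 1.16 to be a $g_{1}^{1}$. Finally, by Lemma 2.4 this $g_{1}^{1}$ equals $(f)$ for a rational function $f$ on $C'$ with $\deg f=1$, so the associated rational map $C'\leftrightsquigarrow P^{1}$ has generic fibre of cardinality $1$ and is birational; hence $C'$, and with it $C$, is rational.

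The main obstacle is the bookkeeping for the adjoint system in the converse: establishing that the $d$ nodes impose enough (independent) conditions to force $\dim\Sigma\geq n-2$ — which rests squarely on $d=\frac{(n-1)(n-2)}{2}$ (i.e. on $\rho=0$) and on the singularities being ordinary nodes (each a single linear condition on $\Sigma$, each absorbing exactly $2$ in the intersection with a generic adjoint) — and then controlling the cut-down so that it terminates precisely at a $g_{1}^{1}$ rather than collapsing to order $0$; Lemma 1.16 together with Theorem 1.3 are the tools that pin these dimensions down.
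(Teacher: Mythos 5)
Your forward direction is essentially the paper's: birational invariance (Theorem 3.35) reduces to the line, where the genus is computed either from the definition or from Theorem 3.36; both routes are fine. Your converse, however, takes a genuinely different classical path. The paper works with the system of curves of degree $n-1$ through the $d$ nodes: a dimension count gives $h\geq 2(n-1)$, it then imposes $2n-3$ further generic simple points to get a $g_{n(n-1)}^{h_{1}}$ with $h_{1}\geq 1$ whose base-branch contribution is shown (via Theorem 1.3 and the Bezout bound) to be exactly $n(n-1)-1$, leaving a base-point-free $g_{1}^{1}$ in one stroke, and concludes by quoting that a simple $g_{1}^{1}$ defines a birational map to $P^{1}$ (Lemma 2.30 of the branches paper). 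You instead use adjoints of degree $n-2$: the count $\dim\Sigma\geq\frac{(n-2)(n+1)}{2}-\frac{(n-1)(n-2)}{2}=n-2$ together with Bezout ($n(n-2)-2d=n-2$ mobile) and Lemma 1.16 pins the residual series to a $g_{n-2}^{n-2}$ exactly, and you then cut down by $n-3$ generic points to reach a $g_{1}^{1}$, finishing via Lemma 2.4 and the degree-one criterion for birationality. The two arguments lean on the same pillars (Theorem 1.3 for exactness of the dimension of the cut series, Lemma 1.16 for $r\leq n$, Bezout, and the node count $d=\frac{(n-1)(n-2)}{2}$), but yours buys the clean intermediate statement that the adjoint system cuts a $g_{n-2}^{n-2}$ — at the cost of having to iterate the point-imposition and of treating $n=2$ separately (the degree-$0$ adjoints degenerate there), which you correctly do by projecting the conic from one of its points; the paper's choice of degree $n-1$ avoids both issues. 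Your step-by-step bookkeeping closes correctly because at each stage the inequalities "dimension $\geq(n-2)-k$" and "mobile order $\leq(n-2)-k$" squeeze against Lemma 1.16, which also implicitly rules out the nodes absorbing more than $2$ apiece — the point the paper handles explicitly with its case distinction $(\dag)$. I consider the proposal correct.
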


\begin{proof}
By Theorem 3.35, it is sufficient to show that the genus of a line is zero. As a line in the plane has no nodes and is of order $1$, this follows immediately from Theorem 3.36. Conversely, suppose that the genus $\rho$ of $C$ is zero. We can assume that $C$ is a plane projective curve, having just $d$ nodes as singularities. By Theorem 3.36, we then have that;\\

$d={(n-1)(n-2)\over 2}$ $(1)$\\

Now, consider the linear system $\Sigma$ of curves of degree $(n-1)$, passing through the $d$ nodes of $C$. As $Par_{n-1}$ has dimension ${(n-1)(n+2)\over 2}$, using $(1)$, this system has dimension;\\

$h\geq {(n-1)(n+2)\over 2}-{(n-1)(n-2)\over 2}=2(n-1)$ $(2)$\\

In particular, as we can assume that that $n\geq 2$, otherwise the result is proved, we have that $h\geq 2$. Now, choose a further $(2n-3)$ non-singular points on $C$. By $(2)$, there exists a linear system $\Sigma_{1}$ of plane curves of degree $(n-1)$, passing through the $d$ nodes of $C$ and these further non-singular points, and this system has dimension;\\

$h_{1}\geq (2n-2)-(2n-3)=1$ $(3)$\\

Now, as $C$ is irreducible, any form $F_{\lambda}$ in the linear system $\Sigma_{1}$ has finite intersection with $C$, hence, using the Branched Version of Bezout's Theorem, given in Theorem 5.13 of \cite{depiro6}, given a curve $F_{\lambda}$ belonging to $\Sigma_{1}$, $C\sqcap F_{\lambda}$ defines an effective weighted set $W_{\lambda}$ of total multiplicity $n(n-1)$. We, therefore, obtain a $g_{n(n-1)}^{h_{1}}$ on $C$. By construction, the number of base branches for this $g_{n(n-1)}^{h_{1}}$ is at least;\\

$2d+(2n-3)=(n-1)(n-2)+(2n-3)=n(n-1)-1$\\

We consider the following cases;\\

(i). One of these $n(n-1)-1$ base branches is $2$-fold for the $g_{n(n-1)}^{h_{1}}$.\\

(ii). There exists a further base branch for the $g_{n(n-1)}^{h_{1}}$. $(\dag)$\\

Using the Branched Version of Bezout's theorem again, we would then have that the total intersection multiplicity of intersection is contributed by the base branches. By Theorem 1.3, this implies that some form contains all of $C$, contradicting the fact that $C$ is irreducible. Hence, $(\dag)$ cannot occur. It follows that each base branch is $1$-fold, for the $g_{n(n-1)}^{h_{1}}$ and there are exactly $n(n-1)-1$ base branches. By Definition 1.4, we can remove this base branch contribution, in order to obtain a $g_{1}^{h_{1}}$ with no base branches. By Lemma 1.16 and $(3)$, we then have that $h_{1}=1$. The $g_{1}^{1}$ then obtained is simple, by Definition 2.29 of \cite{depiro6}, hence, by Lemma 2.30 of \cite{depiro6}, defines a birational map to $P^{1}$. This proves the theorem.

\end{proof}

\begin{rmk}
The characterisation of rational curves in terms of the vanishing of their genus may lead to applications in the field of Zariski structures, see the remarks immediately after Theorem 1.3 of \cite{depiro6}. As the technique we have used to define the genus of an algebraic curve is geometric rather than algebraic, one would hope to extend the definition to Zariski curves, and therefore, to find examples of non-algebraic Zariski curves which exhibit properties of lines.

\end{rmk}

\begin{section}{A Plucker Formula for Plane Algebraic Curves}

The purpose of this section is to give a geometric proof of an elementary
Plucker formula for plane projective algebraic curves. The proof
that we give depends on the method of the Italian school, however the underlying geometrical idea is due to Plucker. One can find other modern \emph{algebraic} proofs, see, for example \cite{GH}. In order to state the theorem, we first require the following definition;\\

\begin{defn}{Class of a Plane Algebraic Curve}\\

Let $C\subset P^{2}$ be a plane projective algebraic curve. Then
we define the class of $C$ to be the number of its tangent lines
passing through a generic point $Q\in P^{2}$.

\end{defn}

\begin{rmk}
In order to see that this is a good definition, let $F(X,Y,Z)$ be
a defining equation for $C$. For a non-singular point $p$ of $C$,
the equation of the tangent line $l_{p}$ is given by;\\

$F_{X}(p)U+F_{Y}(p)V+F_{Z}(p)W=0$\\

For the finitely many singular points $\{p_{1},\ldots
p_{j},\ldots,p_{r}\}$, which are the origins of branches
$\{\gamma_{1}^{1},\ldots,\gamma_{1}^{t(1)},\ldots,\gamma_{r}^{1},\ldots,\gamma_{r}^{t(r)}\}$,
we obtain finitely many tangent lines
$\{l_{\gamma_{1}^{1}},\ldots,l_{\gamma_{1}^{t(1)}},\ldots,l_{\gamma_{r}^{1}},\ldots,l_{\gamma_{r}^{t(r)}}\}$.
It is easily checked that the union $V$ of these finitely many
tangent lines is defined over the field of definition of $C$.
Hence, we may assume that $Q$ does not lie on any of these
tangents. It also follows from duality arguments, see Section 5,
that there exist finitely many bitangent lines
$\{l_{1},\ldots,l_{s}\}$ (Check this). Again, it is easily checked
that the union $W$ of these finitely many tangent lines is defined
over the field of definition of $C$. Hence, we can assume that $Q$
does not lie on any of these bitangents. Now, the condition
$Cl_{\geq m}(Q)$ that there exist at least $m$ tangent lines,
centred at non-singular points of $C$,
passing through $Q$, is given by;\\

$\exists_{x_{1}\neq \ldots\neq x_{m}}[\bigwedge_{1\leq j\leq
m}(NS(x_{j})\wedge
F_{X}(x_{j})Q_{0}+F_{Y}(x_{j})Q_{1}+F_{Z}(x_{j})Q_{2}=0)]$\\

The condition $Cl_{=m}(Q)$ that there exist exactly $m$ tangent
lines, centred at non-singular points of $C$, passing
through $Q$, is given by;\\

$Cl_{\geq m}(Q)\wedge\neg Cl_{\geq m+1}(Q)$\\

It is clear that each predicate $Cl_{m}$ is defined over the field
of definition of $C$, hence, if it holds for \emph{some} generic
$Q\in P^{2}$, it holds for \emph{any} generic $Q\in P^{2}$.
Finally, observe that for \emph{generic} $Q$, there can only exist
finitely many tangent lines passing through $Q$. (We leave the
reader to check this result.) Hence, the class does define a
non-negative integer $m\geq 0$. Observe that it is possible for
$m=0$, for example if $C$ is a line. Also, observe that it is
possible for $\neg Cl_{m}(P)$ $(m\geq 0)$ to hold, if $P$ is
\emph{not} generic, for example if $C$ is strange. Excluding
exceptional cases, one can show, using duality arguments, that a
generic point of a projective algebraic curve is an ordinary
simple point. In this case, it follows that the union $W$ of the
tangent lines to non-ordinary simple points is defined over the
field of definition of $C$, hence that the class is witnessed by
ordinary simple points.

\end{rmk}

We now give a statement of the elementary Plucker formula;\\

\begin{theorem}
Let $C$ be a plane projective algebraic curve of order $n$ and
class $m$, with $d$ nodes. Then;\\

$n+m+2d=n^{2}$\\

\end{theorem}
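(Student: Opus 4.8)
The plan is to derive the Plücker relation $n + m + 2d = n^2$ from the genus formula of Theorem 3.36 together with the already-established description of the Jacobian of the $g_n^1$ cut out by a pencil of lines through a generic point. First I would fix a generic point $P \in P^2$ and let $\Sigma$ be the pencil of all lines through $P$, which defines a $g_n^1$ on $C$ with no fixed branch contribution (as in the proof of Theorem 3.36). By Lemma 3.5 I may choose coordinates so that the tangent lines parallel to the $y$-axis all have $2$-fold contact at non-singular points, and the nodes are in general position relative to this pencil. The crucial observation, already used in Theorem 3.36, is that $\mathrm{Jac}(g_n^1)$ consists \emph{exactly} of the non-singular branches of $C$ in finite position whose tangent line is parallel to the $y$-axis, each counted once: a line through $P$ passing through a node meets each of the two branches of the node transversally, so contributes nothing to the Jacobian. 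Hence $r = \mathrm{order}(\mathrm{Jac}(g_n^1))$ equals the class $m$ of $C$ (the number of tangent lines from the generic point $P$, counted via Remarks 4.2, which identifies this count with branches of contact).

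Next I would substitute into the genus formula. From Theorem 3.36 applied to this particular $g_n^1$ we have $2\rho - 2 = r - 2n$ with $r = m$, so $2\rho - 2 = m - 2n$, i.e. $m = 2\rho - 2 + 2n$. On the other hand Theorem 3.36 also gives the closed form $\rho = \frac{(n-1)(n-2)}{2} - d$ for a plane curve of order $n$ with $d$ nodes. Combining these two expressions for $\rho$:
\[
m = 2\left(\frac{(n-1)(n-2)}{2} - d\right) - 2 + 2n = (n-1)(n-2) - 2d - 2 + 2n.
\]
Expanding $(n-1)(n-2) = n^2 - 3n + 2$ gives $m = n^2 - 3n + 2 - 2d - 2 + 2n = n^2 - n - 2d$, which rearranges precisely to $n + m + 2d = n^2$, as required.

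The only genuine content beyond bookkeeping is the claim that $r$ (the order of the Jacobian series, i.e. $\mathrm{order}(|A_j|)$ in Definition 3.33) coincides with the class $m$ as defined in Definition 4.1. This requires knowing that the branches of contact of tangent lines through a generic point $P$ are in bijection (counted once each) with the tangent lines from $P$ to non-singular ordinary points, and that no extra multiplicity or singular-point contribution arises; this is exactly what Remarks 4.2 records (a generic point of $C$ is an ordinary simple point, so the class is witnessed by ordinary simple points, and the lines through $P$ hitting the nodes are transverse to each branch). I expect this identification — pinning down that the "geometric" count via $\mathrm{Jac}(g_n^1)$ is the same as the "dual" count of tangent lines — to be the main obstacle, but it has been set up by Lemma 3.5, the branch-theoretic definition of $\mathrm{Jac}$, and Remarks 4.2, so the remaining argument is the short algebraic manipulation above. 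I would close by noting that the Plücker number $d$ of nodes and the formula are manifestly symmetric in the expected way, and that the integrality of $m$ (hence consistency) follows from Theorem 3.37.
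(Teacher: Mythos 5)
Your proposed derivation is circular. You obtain $n+m+2d=n^{2}$ by combining the class--genus relation $2\rho-2=m-2n$ with the closed form $\rho=\tfrac{(n-1)(n-2)}{2}-d$ from Theorem 3.36. But look at how Theorem 3.36 is proved in this paper: after establishing $2\rho-2=r-2n$ with $r=m$, it says ``Now, applying the Plucker formula, given in Section 4, we have $r=n(n-1)-2d$'' and combines the two to get $\rho=\tfrac{(n-1)(n-2)}{2}-d$. In other words, the only proof available of the formula you want to quote already uses Theorem 4.3 as an input. The paper's logical order is: Theorem 4.3 is proved first by an independent argument, and \emph{then} the genus formula of Theorem 3.36 is deduced from it (together with the class--genus relation). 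Your argument inverts that dependence without supplying the missing independent ingredient. The part of your argument that is sound --- the identification of $\mathrm{order}(\mathrm{Jac}(g_{n}^{1}))$ with the class $m$, giving $m=2\rho-2+2n$ --- only reduces the Pl\"ucker formula to the statement $\rho=\tfrac{(n-1)(n-2)}{2}-d$, which is exactly as hard and is not established anywhere in the paper prior to Theorem 4.3.

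To repair this you would need an independent computation of either $\rho$ or $m$ in terms of $n$ and $d$. The paper's own proof does the latter, and does it directly: it translates $C$ along the common direction of the $m$ tangent lines through a generic point $Q$, forms the algebraic family $C_{t}$ and the curve $C_{dt_{0}}$ ``infinitamente vicine'' to $C$ (essentially the derivative of the family, which plays the role of the polar-type curve of degree $n$), and then computes $I_{p}(C,C_{dt_{0}})$ locally: multiplicity $1$ at each of the $m$ class points, multiplicity $1$ at each of the $n$ points at infinity, multiplicity $2$ at each of the $d$ nodes, and $0$ elsewhere. Bezout then yields $m+n+2d=n\cdot n$. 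Nothing in that argument uses the genus. If you wish to keep your genus-based route, you would have to prove $\rho=\tfrac{(n-1)(n-2)}{2}-d$ from scratch (e.g.\ by an adjoint-curve dimension count), which is a substantial piece of work not present in, and not assumed by, this paper at the point where Theorem 4.3 is stated.
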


\begin{rmk}
We refer the reader to the papers \cite{depiro6} and \cite{depiro8} for some relevant
terminology. The reader should observe that we are using the
\emph{weaker} definition of a node as the origin of exactly $2$
non-singular branches with distinct tangent directions. The
original formula is usually stated with cusps, however we will not
require this stronger version.
\end{rmk}

\begin{proof}{Theorem 4.3}\\

Let the class of $m$ of $C$ be witnessed by a generic point $Q$ of
$P^{2}$. Let $\{\tau_{1},\ldots,\tau_{m}\}$ be the finitely many
tangents of $C$ passing through $Q$. We can assume that these tangent lines are all based at non-singular points of $C$ and do not coincide with the tangent directions of any of the finitely many nodes of $C$. Choose a line $l$, passing
though $Q$, which does not coincide with any of these tangent
lines and does not intersect $C$ in any of the finitely many
nodes. Let $\{X,Y,Z\}$ be a choice of coordinates such that the
line $l$ corresponds to $Z=0$, hence defines the line at infinity
in the affine coordinate system $\{x={X\over Z},y={Y\over Z}\}$.
Let $F(x,y)=0$ define $C$ in this coordinate system. By
construction, we have that the tangent lines
$\{\tau_{1},\ldots,\tau_{m}\}$ are all parallel in the coordinate
system $\{x,y\}$. Let $(\alpha,\beta)$ be the gradient vector of
each of these lines. We define the translation $C_{t}$ of $C$
as follows;\\

$C_{t}=\{(x,y):(x-t\alpha,y-t\beta)\in
C\}=\{(x,y):F(x-t\alpha,y-t\beta)=0\}$\\

Now observe that we can find polynomials $\lambda_{(i,j)}(t)$, for
$0\leq (i+j)\leq n$, such that;\\

$F(x-t\alpha,y-t\beta)=F(x,y;t)=\sum_{0\leq (i+j)\leq
n}\lambda_{(i,j)}(t)x^{i}y^{j}$ $(*)$\\

Let $\{T_{0},T_{1}\}$ be coordinates for $P^{1}$ such that
$t={T_{0}\over T_{1}}$. Making the substitutions $\{t={T_{0}\over
T_{1}},x={X\over Z},y={Y\over Z}\}$ in $(*)$, and projectivising
the resulting equation, by multiplying through by a suitable
denominator, we obtain an \emph{algebraic} (not necessarily
linear) family of curves, parametrised by $P^{1}$. Let
$Par_{t}\subset P^{{n(n+3)\over 2}}=Par_{n}$ be the image of the map;\\

$\Phi:P^{1}\rightarrow P^{{n(n+3)\over 2}}$, ${T_{0}\over
T_{1}}\mapsto
(\lambda_{(0,0)}({T_{0}\over T_{1}}):\ldots:\lambda_{(i,j)}({T_{0}\over T_{1}}):\ldots)$\\

Then $Par_{t}$ defines a projective algebraic curve, parametrised
by $P^{1}$, whose points determine each curve in the algebraic
family $\{C_{t}\}_{t\in P^{1}}$. For a given $t_{0}\in Par_{t}$,
we can define a tangent line $l_{t_{0}}$ to $Par_{t}$ by;\\

$(\lambda_{(0,0)}(t_{0})+\mu\lambda'_{(0,0)}(t_{0}):\ldots:\lambda_{(i,j)}(t_{0})+\mu\lambda'_{(i,j)}(t_{0}),\ldots)$\\

\begin{rmk}

Observe that, in the case when $t_{0}$ is a smooth point of
$Par_{t}$, $l_{t_{0}}$ defines \emph{the} tangent line of
$Par_{t}$ at $t_{0}$ in the sense of Section 1 of \cite{depiro6},
$(*)$. This follows easily from the fact that, for any homogeneous
polynomial $G(X_{(0,0)},\ldots,X_{(i,j)},\ldots)$ vanishing on
$Par_{t}$, we have from;\\

$G(\lambda_{(0,0)}(t),\ldots,\lambda_{(i,j)}(t),\ldots)=0$\\

that;\\

${\partial G\over \partial
X_{(0,0)}}|_{\lambda_{(0,0)}(t)}\lambda_{(0,0)}'(t)+\ldots+{\partial
G\over
\partial
X_{(i,j)}}|_{\lambda_{(i,j)}(t)}\lambda_{(i,j)}'(t)+\ldots=0$ $(1)$\\

and from;\\

$G(s\lambda_{(0,0)}(t_{0}),\ldots,s\lambda_{(i,j)}(t_{0}),\ldots)=0$\\

that;\\

${\partial G\over \partial
X_{(0,0)}}|_{\lambda_{(0,0)}(t_{0})}\lambda_{(0,0)}(t_{0})+\ldots+{\partial
G\over
\partial
X_{(i,j)}}|_{\lambda_{(i,j)}(t_{0})}\lambda_{(i,j)}(t_{0})+\ldots=0$ $(2)$\\

In order to extend these considerations to singular points, let
$U=\Phi^{-1}(NonSing(Par_{t}))$ and consider the covers $V\subset
U\times P^{{n(n+3)\over 2}}$ and $V^{*}\subset U\times P^{{n(n+3)\over 2}}$ defined by;\\

$V=\{(t,y):y\in l_{\Phi(t)}\}$\\

$V^{*}=\{(t,\lambda):H_{\lambda}\supset l_{\Phi(t)}\}$\\

Let $\bar V$ and $\bar V^{*}$ define the Zariski closure of these
two covers inside $P^{1}\times P^{{(n+1)(n+2)\over 2}}$. Let
$Par_{t}^{ns}$ be a nonsingular model of $Par_{t}$ with birational
map $\Phi':Par_{t}^{ns}\leftrightsquigarrow Par_{t}$ and let
$\Phi''=\Phi'^{-1}\circ\Phi$. Using the method of Lemma 6.13 in
\cite{depiro6}, one can show that, for $t_{0}\in P^{1}\setminus
U$, the fibre $\bar V^{*}(t_{0})$ consists of parameters for forms
$H_{\lambda}$ containing the tangent line $l_{\gamma^{j}_{p}}$,
where $\Phi(t_{0})=p$ and the branch $\gamma_{p}^{j}$ corresponds
to $p_{j}\in Par_{t}^{ns}$, with $\Phi''(t_{0})=p_{j}$. Using the
duality argument in Lemma 5.3 below, one then deduces that the
fibre $\bar V(t_{0})$ defines the tangent line
$l_{\gamma^{j}_{p}}$, $(**)$.\\

It then follows, by combining the arguments $(*)$ and $(**)$,
that, even for a singular point $t_{0}$ of $Par_{t}$, $l_{t_{0}}$
defines a tangent line to the branch corresponding to
$\Phi''(t_{0})$.\\

Following the Italian terminology, we refer to the \emph{pencil}
of curves defined by $l_{t_{0}}$, for given $t_{0}\in P^{1}$, as
the curves \emph{infinitamente vicine} to $C_{t_{0}}$, and we
refer to \emph{any} curve in the pencil, distinct from
$C_{t_{0}}$, by;\\

$F(x,y;t_{0}+dt_{0})$\\

This notation is motivated by the following fact;\\

Let $\Phi:P^{1}\rightarrow Par_{t}$ be the parametrisation of
$Par_{t}$, then, for \emph{any} polynomial representation of
$\Phi$ of the form;\\

$\Phi:t\mapsto (\lambda_{0}(t):\lambda_{1}(t):\ldots)$\\

$l_{t_{0}}$ is generated by
$(\lambda_{0}(t_{0}):\lambda_{1}(t_{0}):\ldots)$ and
$(\lambda'_{0}(t_{0}):\lambda'_{1}(t_{0}):\ldots)$\\

The proof of this fact follows easily from the above remarks or
from the observation that, if $h(t)$ is a polynomial with;\\

$(\lambda_{0}(t):\lambda_{1}(t):\ldots)=h(t)(\mu_{0}(t):\mu_{1}(t):\ldots)$\\

then $(\lambda_{0}'(t_{0}):\lambda_{1}'(t_{0}):\ldots)$
corresponds to;\\

$h(t_{0})(\mu_{0}'(t_{0}):\mu_{1}'(t_{0}):\ldots)+h'(t_{0})(\mu_{0}(t_{0}):\mu_{1}(t_{0}):\ldots)$\\

and hence belongs to the tangent line generated by;\\

$\{(\mu_{0}(t_{0}):\mu_{1}(t_{0}):\ldots),(\mu_{0}'(t_{0}):\mu_{1}'(t_{0}):\ldots)\}$\\

\end{rmk}

We now continue the proof of Theorem 4.3;\\

Claim 1: Let $t_{0}=0$, then the algebraic curve $C_{dt_{0}}$,
infinitamente vicine to $C_{t_{0}}=C$, passes through the finitely
many points $\{p_{1},\ldots,p_{m}\}$ which witness the class of
$C$ and, moreover;\\

$I_{p_{j}}(C_{t_{0}},C_{dt_{0}})=1$, for $1\leq j\leq m$\\

In order to prove Claim 1, let $p_{j}=(a_{j},b_{j})$ be such a
point, then we obtain a parametrisation of the tangent line
$l_{p_{j}}$ of the form;\\

$(x(s),y(s))=(a_{j}+\alpha s,b_{j}+\beta s)$\\

We may assume, see Remarks 4.2, that $p_{j}$ is an ordinary simple
point, hence that
$I_{p_{j}}(C_{t_{0}},l_{p_{j}})=2$. This implies that;\\

$F(x(s),y(s);t)=F(x(s)-\alpha t,y(s)-\beta t)=(s-t)^{2}u(s-t),$\\
\indent \ \ \ \ \ \ \ \ \ \ \ \ \ \ \ \ \ \ \ \ \ \ \ \ \ \ \ \ \ \ \ \ \ \ \ \ \ \ \ \ \ \ \ \ \ \ \ \ \ \ \ \ \ \ \ \ \  $u(s-t)$ a unit in $L[[s-t]]$ $(*)$\\

Now, differentiating both sides of the equation $(*)$ with respect
to $t$, we obtain that;\\

${\partial F\over\partial
t}|_{(x(s),y(s),t)}=-2(s-t)u(s-t)-(s-t)^{2}u'(s-t)\\
\indent \ \ \ \ \ \ \ \ \ \ \ \ \ \ \ \
=-(s-t)[2u(s-t)+(s-t)u'(s-t)]$\\

Setting $t=0$, we obtain that;\\

$F_{dt_{0}}(x(s),y(s))=-s[2u(s)+su'(s)]$\\

Hence, $I_{p_{j}}(C_{dt_{0}},l_{p_{j}})=1$. This implies, by
arguments given in \cite{depiro5}, that
$I_{p_{j}}(C_{t_{0}},C_{dt_{0}})=1$ as well.\\

Claim 2: Let $p$ be a non-singular point of $C$, in finite
position, then $C_{dt_{0}}$ passes through $p$ iff $p$ is one of
the finitely many points $\{p_{1},\ldots,p_{m}\}$ witnessing the
class of $C$.\\

One direction of the claim follows immediately from Claim 1.
Conversely, suppose that $p=(p_{1},p_{2})$ lies in finite position
and $C_{dt_{0}}(p_{1},p_{2})$, $(\dag)$. We have a parametrisation
of the line $L_{p}$ of the form;\\

$(x(t),y(t))=(p_{1}+\alpha t,p_{2}+\beta t)$\\

where $L_{p}$ denotes any of the tangent lines witnessing the
class of $C$, translated to $p$. We then have that;\\

$F(x(t),y(t);t)=F(p_{1},p_{2})=0$ $(**)$\\

Differentiating $(**)$ with respect to $t$, we obtain that;\\

$F_{x}|_{(x(t),y(t);t)}x'(t)+F_{y}|_{(x(t),y(t);t)}y'(t)+F_{t}|_{(x(t),y(t);t)}=0$\\

Setting $t=0$, we obtain that;\\

$F_{x}|_{p}\alpha+F_{y}|_{p}\beta+F_{dt_{0}}|_{p}=0$, $(***)$\\

By $(\dag)$, $(**)$ and the fact that $p$ is non-singular, we
obtain that $L_{p}$ is the tangent line to $C$ at $p$. Hence, $p$
must witness the class of $C$.\\

Claim 3. Let $\{q_{1},\ldots,q_{n}\}$ be the finitely many points
lying at infinity. Then $C_{dt_{0}}$ passes through $q_{j}$, for
$1\leq j\leq n$, and, moreover;\\

$I_{q_{j}}(C_{t_{0}},C_{dt_{0}})=1$\\

The points at infinity are given by the intersections of $C$ with
the line $l$ passing through $Q$. By the choice of $l$, given at
the beginning of the proof, these intersections all define
non-singular points of $C$. If $q\in (C\cap l)$ and
$I_{q}(C,l)\geq 2$, then $l$ would define the tangent line of $C$
at $q$. This contradicts the fact that $l$ was chosen to avoid the
finitely many tangent lines $\{\tau_{1},\ldots,\tau_{m}\}$
witnessing the class of $C$. Hence, $I_{q}(C,l)=1$, $(\dag)$, and
the fact that there exist $n$ distinct points at infinity then
follows by an application of Bezout's theorem, using the
assumption that $deg(C)=n$. We now claim that, if $q\in (C\cap
l)$, then $q$ belongs to $\{C_{t}\}_{t\in P^{1}}$, $(*)$. In order
to see this, first observe that the line $l$ at infinity (defined
by $Z=0$ in the choice of coordinates $\{X,Y,Z\}$) is \emph{fixed}
by the
translation $\theta_{t}$ along the tangent lines witnessing the class of $C$;\\

$\theta_{t}:(x,y)\mapsto (x-t\alpha,y-t\beta):(X:Y:Z)\mapsto
(X-t\alpha Z:Y-t\alpha Z:Z)$\\

The claim $(*)$ then follows from the fact that $C_{t}$ is defined
as $Zero(F\circ \theta_{t})$ and the defining equation $F$ of $C$
vanishes on $q$. We further claim that $q$ belongs to
$C_{dt_{0}}$, for $t_{0}=0$, $(**)$. In order to see this, choose
a system of coordinates $\{x',y'\}$ such that $q=(q_{1},q_{2})$ is
in finite position. It is a simple algebraic calculation to show
that we can find a polynomial $G(x',y';\bar z)$ such that the
family of curves $\{C_{t}\}_{t\in P^{1}}$ is represented by
$G(x',y';\Phi^{new}(t))$, for a choice of morphism
$\Phi^{new}:P^{1}\rightarrow P^{{n(n+3)\over 2}}$. As before,
let $Par_{t}^{new}$ be the image of $P^{1}$ under the morphism
$\Phi^{new}$. It is a straightforward algebraic calculation to
show that, if $\theta:P^{2}\rightarrow P^{2}$ is chosen to be a
\emph{homographic} change of variables, then the corresponding
induced morphism $\Theta:P^{{n(n+3)\over 2}}\rightarrow
P^{{n(n+3)\over 2}}$, on the parameter space for projective
algebraic curves of degree $n$, is also a homography. By
construction, we have that $\Theta\circ\Phi=\Phi^{new}$, $(***)$,
where $\Phi$ denoted the old parametrisation of $Par_{t}$. As
$\Theta$ is a homography, using the identity $(***)$ and the chain
rule, we obtain that, for corresponding points
$\{\Phi(t),\Phi^{new}(t)\}$ of $\{Par_{t},Par_{t}^{new}\}$, the
tangent line $l_{\Phi(t)}$ of $Par_{t}$ is mapped by $\Theta$ to
the tangent line $l_{\Phi^{new}(t)}$ of $Par_{t}^{new}$. It
therefore follows that, for given $t_{0}\in P^{1}$, the curves
infinitamente vicine to $C_{t_{0}}$ (see Remarks 4.5), can be
computed by differentiating with respect to the parameter $t$ in
\emph{either} of the coordinate systems $\{\{x,y\},\{x',y'\}\}$.
By $(*)$, we have that;\\

$G(q_{1},q_{2};t)=0$, for $t\in P^{1}$\\

Therefore, differentiating with respect to $t$, we have that;\\

${\partial G\over \partial t}(q_{1},q_{2};t)=0$\\

In particular, setting $t=t_{0}=0$, we obtain the claim $(**)$. We
now consider the pencil of curves defined by
$\{C_{t_{0}},C_{dt_{0}}\}$, which clearly has finite intersection
with $l$, hence defines a $g_{n}^{1}$. By $(**)$, we have that the
set of intersections $(C\cap l)$ are base points (branches) for
this $g_{n}^{1}$. By $(\dag)$ and results of \cite{depiro6}, we
have that the base branch contribution of any of these
intersections is $1$. In particular, it follows that, for a
\emph{generic} choice of $C_{dt_{0}}$, that
$I_{q_{j}}(l,C_{dt_{0}})=1$, for $1\leq j\leq n$. Now applying the
usual argument on tangent lines, we obtain that
$I_{q}(C_{t_{0}},C_{dt_{0}})=1$, for $1\leq j\leq n$ as well. The
result of Claim 3 then follows.\\

Claim 4. Let $\{p_{1},\ldots,p_{d}\}$ be the $d$ nodes of $C$.
Then $C_{dt_{0}}$ passes through $p_{j}$, for $1\leq j\leq d$,
and, moreover;\\

$I_{p_{j}}(C_{t_{0}},C_{dt_{0}})=2$\\

Let $p_{j}=(c_{j},d_{j})$  be such a point, and let;\\

$(x(t),y(t))=(c_{j}+\alpha t,d_{j}+\beta t)$\\

be a parametrisation of any of the tangent lines $\{\tau_{1},\ldots,\tau_{m}\}$, translated to $p_{j}$. We have that;\\

$F(x(t),y(t);t)=F(c_{j},d_{j})=0$ $(*)$\\

Moreover, as $(x(t),y(t))$ defines a node of the translated curve $C_{t}$, we have that $F_{x}|_{(x(t),y(t);t)}=0$ and $F_{y}|_{(x(t),y(t);t)}=0$. Hence, differentiating $(*)$ with respect to $t$, we obtain that;\\

$F_{t}|_{(x(t),y(t);t)}=0$\\

Setting $t=0$, we obtain that $F_{dt_{0}}|_{p_{j}}=0$, that is $C_{dt_{0}}$ passes through $p_{j}$. Hence, the first part of the claim is shown. The second part of the claim depends heavily on a geometric argument. Let $l_{t_{0}}$ be the tangent line to $Par_{t}$, at $t_{0}$, as given immediately before Remarks 4.5. As explained in Remarks 4.5, the curve $C_{dt_{0}}$ corresponds to a point $Q$ on $l_{t_{0}}$, and we denote by $O$, the point corresponding to $C_{t_{0}}$ in $Par_{t}$, hence $l_{t_{0}}=l_{OQ}$. By Remarks 4.5, the line $l_{t_{0}}$ defines the tangent line to the branch $\gamma_{0}^{t_{0}}$, corresponding to $t_{0}$, in the parametrisation $\Phi:P^{1}\rightarrow Par_{t}$.\\

Similarly to the argument in Remarks 4.5, let;\\

 $U=(\Phi^{-1}(NonSing(Par_{t}))\setminus\{t_{0}\})\subset P^{1}$\\

and consider the covers $V\subset U\times P^{{(n+1)(n+2)\over 2}}$ and $V^{*}\subset U\times P^{{(n+1)(n+2)\over 2}}$ defined by;\\

$V=\{(t,y):y\in l_{O\Phi(t)}\}$\\

$V^{*}=\{(t,\lambda):H_{\lambda}\supset l_{O\Phi(t)}\}$\\

Let $\bar V$ and $\bar V^{*}$ define the Zariski closure of these
two covers inside $P^{1}\times P^{{n(n+3)\over 2}}$. Using the method of Lemma 6.14 in \cite{depiro6}, one can show that the fibre $\bar V^{*}(t_{0})$ consists of parameters for forms $H_{\lambda}$ containing the tangent line $l_{t_{0}}$. Using the duality argument in Lemma 5.4 below, one then deduces that the fibre $\bar V(t_{0})$ defines the tangent line $l_{t_{0}}$. The details are left to the reader. We now have that $\bar V(t_{0},Q)$ holds, hence, as $t_{0}$ is regular for the cover $(\bar V/P^{1})$, given generic $t_{0}'\in{\mathcal V}_{t_{0}}\cap P^{1}$, we can find $Q'\in{\mathcal V}_{Q}\cap P^{n(n+3)\over 2}$, such that $\bar V(t_{0}',Q')$, hence, $Q'$ belongs to $l_{O\Phi(t_{0}')}$. Denote by $D_{Q'}$ the plane curve of order $n$ corresponding to the point $Q'$ in $Par_{n}$ We now consider the pencil of curves $\mathcal{P}$ defined by the line $l_{O\Phi(t_{0}')}\subset Par_{n}$. By construction, this pencil contains the plane curves $\{C_{t_{0}},C_{t_{0}'},D_{Q'}\}$. As $t_{0}'\in Par_{t}$, the plane curve $C_{t_{0}'}$ is an infinitesimal translation of $C_{t_{0}}$ in the direction defined by the tangent lines $\{\tau_{1},\ldots,\tau_{m}\}$. By the assumption at the beginning of the proof, for a given node $p_{j}$, this direction does not coincide with either of its two distinct tangent lines $\{l_{\gamma_{p_{j}}^{1}},l_{\gamma_{p_{j}}^{2}}\}$. We are, therefore, able to apply Theorem 1.13 of \cite{depiro8}, to conclude that $C_{t_{0}}\cap C_{t_{0}'}\cap{\mathcal V}_{p_{j}}$ consists of exactly two distinct points $\{p_{j}^{1},p_{j}^{2}\}$, situated on the branches $\{\gamma_{p_{j}}^{1},\gamma_{p_{j}}^{2}\}$ of $C_{t_{0}}$, and moreover these intersections are transverse. By elementary facts on linear systems, $\{p_{j}^{1},p_{j}^{2}\}$ are base points for the pencil defined by ${\mathcal P}$, and, moreover, $C_{t_{0}}\cap C_{t_{0}'}=C_{t_{0}}\cap D_{Q'}$. It follows immediately that $C_{t_{0}}\cap D_{Q'}\cap{\mathcal V}_{p_{j}}$ also consists of exactly the two points $\{p_{j}^{1},p_{j}^{2}\}$. Again, by elementary facts on linear systems, see \cite{depiro6} for more details, these intersections are also transverse. We have, therefore, shown, using the non-standard definition of intersection multiplicity, given in \cite{depiro5} or \cite{depiro6}, that $I_{p_{j}}(C_{t_{0}},C_{dt_{0}})=2$. Hence, Claim 4 is shown.\\

We now complete the proof of Theorem 4.3. Combining Claims 1,2,3 and 4, the total multiplicity of intersection between $C$ and $C_{dt_{0}}$ is $n+m+2d$. By construction, $deg(C)deg(C_{dt_{0}})=n.n=n^{2}$. Hence, Bezout's Theorem gives that $n+m+2d=n^{2}$, as required. By the results of \cite{depiro5}, Plucker's formula also holds for the algebraic definition of intersection multiplicity.\\

\end{proof}

\begin{rmk}

The proof that we have given follows Plucker's original geometrical idea and the presentation of Severi in \cite{Sev}. Although long, the methods used are almost entirely geometrical, and adapt easily to handle cases where the singularities of $C$ are more complicated. The reader is invited to extend the approach to these situations. One can find algebraic proofs in the literature, for example in \cite{GH}. Unfortunately, these proofs generally fail to give a precise calculation of intersection multiplicity for $C$ and some other curve $C'$, a problem that we observed in Remarks 1.12 of \cite{depiro8}.

\end{rmk}

\end{section}

\begin{section}{The Transformation of Branches by Duality}

The purpose of this section is to give a general account of the
theory of duality and to develop the connection with the theory of
branches given in \cite{depiro6}. We first give a brief account of
Grassmannians on $P^{w}$. We define;\\

$G_{w,j}=\{P_{j}\subset P^{w}\}$, $(0\leq j\leq w)$\\

where $P_{j}$ is a plane of dimension $j$. It follows from
classical well known arguments that $G_{w,j}$ may be given the
structure of a smooth algebraic variety, see for example \cite{GH}
p193. (These arguments begin with the observation that $G_{w,j}$
can be identified with the set of $j+1$-dimensional subspaces of a
$w+1$-dimensional vector space over $L$.) The fact that $G_{w,j}$
defines a projective algebraic variety follows from the Plucker
embedding;\\

$\rho:G_{w,j}\rightarrow P(\wedge^{j+1}L^{w+1})=P^{C^{w+1}_{j+1}-1}$\\

given by sending a $j+1$ dimensional subspace of $L^{w+1}$, with
basis $\{v_{1},\ldots,v_{j+1}\}$, to the multivector
$v_{1}\wedge\ldots\wedge v_{j+1}$.  It is easily checked that the
Plucker map $\rho$ defines a morphism of algebraic varieties, is
injective on points and its differential $d\rho_{x}$ has maximal
rank, for $x\in G_{w,j}$. In the case when the underlying field
$L=\mathcal{C}$, one can then use the Immersion Theorem and Chow's
Theorem to show that $Image(\rho)$ has the structure of a
projective algebraic variety. In general, one can show;\\

The image of the Grassmannian $G_{w,j}$ under the Plucker
embedding $\rho$ is defined by a linear system of quadrics.\\

For want of a convenience reference, we leave the reader to check
that the proof given of this result in \cite{GH} holds for
arbitrary characteristic. We now observe the following duality
between $G_{w,j}$ and $G_{w,w-(j+1)}$, for $0\leq j\leq w-1$;

\begin{lemma}

If $P_{j}\subset P^{w}$ is a $j$-dimensional plane, then;\\

$\{\lambda\in P^{w*}:H_{\lambda}\supset P_{j}\}$ $(1)$\\

determines a $w-(j+1)$-dimensional plane $P^{*}_{w-(j+1)}$ in
$P^{w*}$. Conversely, if $P^{*}_{w-(j+1)}\subset P^{w*}$ is a
$w-(j+1)$-dimensional plane, then;\\

$\{x\in P^{w}:x\in\bigcap_{\lambda\in P^{*}_{w-(j+1)}}H_{\lambda}\}$ $(2)$\\

determines a $j$-dimensional plane $P_{j}$ in $P^{w}$. Moreover,
these correspondences are inverse and determine a closed
projective variety;\\

$I\subset G_{w,j}\times G_{w,w-(j+1)}$\\

\end{lemma}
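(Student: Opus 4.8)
### Proof proposal for Lemma 5.1

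The plan is to work entirely in coordinates, translating incidence of a plane and a hyperplane into a bilinear pairing, and then to identify the relevant loci inside the Grassmannians with linear subspaces, cut out explicitly by linear equations. First I would fix homogeneous coordinates $[x_0:\ldots:x_w]$ on $P^w$ and the dual coordinates $[\lambda_0:\ldots:\lambda_w]$ on $P^{w*}$, so that the hyperplane $H_\lambda$ is $\{x: \sum_i \lambda_i x_i = 0\}$. A $j$-plane $P_j$ corresponds to a $(j+1)$-dimensional linear subspace $V\subset L^{w+1}$; the condition $H_\lambda \supset P_j$ is precisely $\lambda \perp V$ in the standard pairing, i.e. $\lambda$ lies in the annihilator $V^\circ \subset (L^{w+1})^*$. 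Since $\dim V^\circ = (w+1)-(j+1) = w-j$, the projectivisation of $V^\circ$ is a plane of dimension $w-(j+1)$ in $P^{w*}$, which establishes that the set $(1)$ is a $(w-(j+1))$-plane. The converse statement is the symmetric assertion for the double dual: given a $(w-(j+1))$-plane $P^*_{w-(j+1)}$, corresponding to a subspace $W^*\subset (L^{w+1})^*$ of dimension $w-j$, the set $(2)$ is the projectivisation of the common kernel $\bigcap_{\lambda\in W^*}\ker\lambda = (W^*)^\circ \subset L^{w+1}$, which has dimension $(w+1)-(w-j) = j+1$, hence gives a $j$-plane.

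Next I would check that the two correspondences are mutually inverse. This is the elementary linear-algebra fact that for a subspace $V$ of a finite-dimensional space, the double annihilator $(V^\circ)^\circ$ equals $V$ under the canonical identification of $L^{w+1}$ with its double dual; translating back through the Plücker description of planes, this says that starting from $P_j$, forming $(1)$, and then forming $(2)$ recovers $P_j$, and symmetrically. I would state this as a short lemma on annihilators and then simply cite it.

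Finally, for the scheme-theoretic statement that the graph
$$I \subset G_{w,j}\times G_{w,w-(j+1)}$$
is a closed projective subvariety, the plan is to exhibit $I$ as the incidence locus $\{(P_j, P^*_{w-(j+1)}) : P^*_{w-(j+1)} \text{ is the } (w-(j+1))\text{-plane } (1) \text{ associated to } P_j\}$ and to show it is closed. Concretely, over the product of Grassmannians there are tautological subbundles (or, after passing to Plücker coordinates, one can write the incidence condition as the vanishing of a family of bilinear expressions in the Plücker coordinates of the two factors): $(P_j,P^*)\in I$ iff every vector in the subspace defining $P_j$ is annihilated by every functional in the subspace defining $P^*$. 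This is a closed condition — it is the vanishing of a matrix of pairings, equivalently a determinantal/rank condition — so $I$ is Zariski closed in $G_{w,j}\times G_{w,w-(j+1)}$, and since both Grassmannians are projective (via the Plücker embedding, as recalled just above, valid in arbitrary characteristic), so is $I$. The bijectivity established in the previous paragraph shows $I$ is the graph of the duality correspondence, so in particular both projections $I\to G_{w,j}$ and $I\to G_{w,w-(j+1)}$ are isomorphisms.

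The main obstacle I anticipate is purely bookkeeping: being careful and consistent about the identification of a plane in $P^w$ with a linear subspace of $L^{w+1}$ versus its Plücker coordinate vector, and making sure the annihilator dimension count and the "double annihilator" argument are phrased so that the inverse correspondences come out exactly right (off-by-one errors in $j$ versus $j+1$ are the obvious trap). The closedness of $I$ is routine once the incidence condition is written in Plücker coordinates; no genuine difficulty is expected there beyond checking the bilinear expressions are honestly polynomial in the two sets of coordinates.
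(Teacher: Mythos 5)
Your proposal is correct and follows essentially the same route as the paper: the paper also translates $H_{\lambda}\supset P_{j}$ into the linear conditions $a_{0i}\lambda_{0}+\ldots+a_{wi}\lambda_{w}=0$ on a spanning set $\{\bar a_{0},\ldots,\bar a_{j}\}$ (your annihilator $V^{\circ}$), counts dimensions, deduces inverseness from the bilinear relations $\sum_{k}a_{ki}b_{ki'}=0$ plus a dimension argument (your double-annihilator lemma), and obtains closedness of $I$ by expressing these relations polynomially on the standard affine charts $U_{I}\times U_{J}$ and patching, which matches your Pl\"ucker-coordinate/rank-condition formulation. The only caution: your closing assertion that the two projections of $I$ are isomorphisms of varieties is not part of the lemma and is exactly the point the paper declines to claim in positive characteristic (see Remark 5.2, where only set-theoretic bijectivity is asserted); it is harmless here since nothing in the lemma depends on it, but you should not present it as established.
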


\begin{proof}
The proof is quite elementary. If $P_{j}\subset P^{w}$ is a
$j$-dimensional plane, then one can find an independent sequence
$\{\bar a_{0},\ldots,\bar a_{j}\}$ defining it,$(*)$, see Section
1 of \cite{depiro6}. The condition that a hyperplane
$H_{\lambda}$ contains $P_{j}$ is then given by the conditions;\\

$a_{0i}\lambda_{0}+a_{1i}\lambda_{1}+\ldots+a_{wi}\lambda_{w}=0$,
$(0\leq i\leq j)$\\

It is elementary linear algebra, using $(*)$, to see that these
conditions determine a plane of codimension $(j+1)$ in $P^{w*}$.
For the converse direction, if $P^{*}_{w-(j+1)}\subset P^{w*}$ is
a $w-(j+1)$-dimensional plane, then one can find an independent
sequence $\{\bar b_{0},\ldots,\bar b_{w-(j+1)}\}$ defining it,
$(**)$. The condition that $x\in P^{w}$ is contained in the
intersection of the planes defined by $P^{*}_{w-(j+1)}$ is then
given by the conditions;\\

$x_{0}b_{0i'}+x_{1}b_{1i'}+\ldots+x_{w}b_{wi'}=0$, $(0\leq i'\leq
w-(j+1))$\\

By the same elementary linear algebra argument, using $(**)$,
these conditions determine a plane of codimension $(w-j)$ in
$P^{w}$, that is a plane of dimension $j$. The fact that these
correspondences are inverse follows immediately from the
relations;\\

$a_{0i}b_{0i'}+a_{1i}b_{1i'}+\ldots+a_{wi}b_{wi'}=0$, $(0\leq
i\leq j,0\leq i'\leq w-(j+1))$ $(\dag)$\\

and an elementary dimension argument. The final part of the lemma
follows by checking that the relations $(\dag)$ can be defined by
matrix multiplication on representatives of $G_{w,j}$ and
$G_{w,w-(j+1)}$. If $\{U_{I}\}$ and $\{U_{J}\}$ define the
standard open affine covers of these Grassmannians, as given on
p193 of \cite{GH}, then these relations clearly define closed
algebraic subvarieties $I_{I,J}\subset U_{I}\times U_{J}$. Using
standard patching arguments, we then obtain a closed algebraic
subvariety $I\subset G_{w,j}\times G_{w,w-(j+1)}$ determining the
duality correspondence.\\
\end{proof}

\begin{rmk}

The duality correspondence $I$ clearly induces bijective
maps, in the sense of model theory;\\

$*:G_{w,j}\rightarrow G_{w,w-(j+1)}$\\

$*^{-1}:G_{w,w-(j+1)}\rightarrow G_{w,j}$\\

When the underlying field $L$ has characteristic $0$, it follows that they define isomorphisms in the sense of algebraic geometry. However, in non-zero characteristic,  it is difficult to determine whether these maps define morphisms or
are seperable, therefore, whether the maps are inverse in the sense of algebraic geometry. As we will be concerned with the behaviour of algebraic curves under
duality, we will be able to deal with this problem using arguments
on Frobenius that we have already seen in \cite{depiro6}. The
reader should also note that $*$ is \emph{not}
canonical, even as a set theoretic map, for it depends on a
particular identification of $P^{w}$ and $P^{w*}$. We will,
henceforth, denote the (set theoretic) inverse $*^{-1}$ by $*$.
This is motivated by the fact that we can identify $P^{w}$
naturally with $P^{w**}$, in which case the relation $(2)$,
defining $*^{-1}$ in the previous lemma, becomes an instance of
the relation $(1)$.

\end{rmk}

As an application of the above, we have;\\

\begin{lemma}{Tangent Variety of a Projective Algebraic Curve}\\

Let $C\subset P^{w}$ be \emph{any} projective algebraic curve and let;\\

$V\subset NonSing(C)\times P^{w}$ be $\{(x,y):x\in
NonSing(C)\wedge y\in l_{x}\}$\\

Then $V$ defines an irreducible algebraic variety and, if
${\overline V}\subset C\times P^{w}$ defines its Zariski closure,
then, for a singular point $p$, which is the origin of branches
$\{\gamma_{p}^{1},\ldots,\gamma_{p}^{m}\}$, the fibre ${\overline
V}(p)$ consists exactly of the finite union $\bigcup_{1\leq j\leq
m}l_{\gamma_{p}^{j}}$ of the tangent lines to the $m$ branches at
$p$.\\

\end{lemma}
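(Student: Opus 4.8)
The plan is to analyze the incidence variety $V \subset \mathrm{NonSing}(C) \times P^w$ fibre by fibre, using the branch machinery from \cite{depiro6} together with the non-singular model. First I would establish irreducibility of $V$: the projection $V \to \mathrm{NonSing}(C)$ is a morphism whose fibre over each $x$ is the single point $(x, l_x)$ — indeed for a non-singular point of an algebraic curve the tangent line $l_x$ is well-defined and depends algebraically on $x$ (the map $x \mapsto l_x$ is given, in suitable coordinates, by the projectivized derivative of a local parametrization, cf. the tangent-line constructions already used in Section~4). So $V$ is the graph of a morphism defined on the irreducible quasi-projective curve $\mathrm{NonSing}(C)$, hence $V$ is irreducible of dimension $1$, and its Zariski closure $\overline V \subset C \times P^w$ is an irreducible projective curve.

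Next I would identify the fibre $\overline V(p)$ for a singular point $p$. Let $(C^{ns}, \Phi)$ be a non-singular model of $C$, so that the branches $\gamma_p^1, \dots, \gamma_p^m$ at $p$ correspond to the points $q_1, \dots, q_m \in C^{ns}$ in the fibre $\Gamma_{[\Phi]}(x,p)$, in the sense of Section~5 of \cite{depiro6}. The key observation is that $V$ pulls back, under $\mathrm{id} \times \Phi$ on the domain, to (an open subset of) the analogous incidence variety $V^{ns}$ on $C^{ns}$, because $\Phi$ is a local isomorphism near a generic point of each branch and it carries the tangent line of the branch $\gamma_p^j$ (as defined via the branch parametrization, Definition~6.3 and Theorem~6.1 of \cite{depiro6}) to the tangent line $l_{\gamma_p^j}$. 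Now approach $q_j$ in $C^{ns}$ along its (unique, by non-singularity of $C^{ns}$) branch: the image curve $\overline{V^{ns}}$ is closed, so the limit point of $(\Phi(q), l_{\Phi(q)})$ as $q \to q_j$ exists and equals $(p, l_{\gamma_p^j})$ by continuity of the tangent-line morphism on $C^{ns}$ and the definition of the branch tangent. This shows $\bigcup_{j} l_{\gamma_p^j} \subseteq \overline V(p)$ (as a set of lines, i.e. points of $P^w$ — here one must be slightly careful: $\overline V(p)$ is a set of \emph{points} $y \in P^w$, and what one shows is that $y$ lies in some $l_{\gamma_p^j}$; the statement should be read as the union of the line-sets).

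For the reverse inclusion I would use that $\overline V$ has dimension $1$ and the projection $\overline V \to C$ is finite-to-one away from $p$ but has a $1$-dimensional fibre over each point only if... — more precisely, I would argue that $\overline V(p)$, being the fibre of a projective morphism from an irreducible curve, is $\le 1$-dimensional, and that it is contained in the union $\bigcup_j l_{\gamma_p^j}$ because any point of $\overline V(p)$ is a limit of points $y_i \in l_{x_i}$ with $x_i \to p$ in $C$, hence (after passing to a subsequence landing on a fixed branch, using the finiteness of the branch set at $p$) $x_i$ approaches $p$ along some branch $\gamma_p^j$, and the lines $l_{x_i}$ then converge to $l_{\gamma_p^j}$, so the limit $y$ lies on $l_{\gamma_p^j}$. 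This uses the non-standard / specialization language: a point of the Zariski closure corresponds to a specialization of a generic point of $V$, i.e. of $(x', l_{x'})$ with $x'$ generic in $C^{ext}$; Lemma~1.8 then assigns to $x'$ (once it specializes to $p$) a unique branch $\gamma_p^j$, and the tangent line specializes accordingly.

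The main obstacle I expect is the clean passage between "limit of tangent lines as $x \to p$ along a branch" and "tangent line of the branch $l_{\gamma_p^j}$" in a way that is uniform and valid in all characteristics — i.e. checking that the tangent-line morphism $x \mapsto l_x$ on $C^{ns}$, extended over the preimages $q_j$ of $p$, takes the value $l_{\gamma_p^j}$ there. This is essentially a compatibility between the two notions of tangent (the morphism-theoretic one on the smooth model and the branch-parametrization one of Definition~6.3 of \cite{depiro6}), and I would handle it by the same power-series computation used elsewhere in this paper: parametrize the branch by algebraic power series $(x_1(t), \dots, x_w(t))$ with $x_i(0)$ the coordinates of $p$, so that $l_{\gamma_p^j}$ is the line through $p$ in the direction $(x_1'(0), \dots, x_w'(0))$, and observe that for $t \ne 0$ the point $(x_i(t))$ is a non-singular point of $C$ whose tangent line has direction $(x_i'(t))$, which specializes to $(x_i'(0))$ as $t \to 0$; continuity of specialization (as in \cite{depiro4}, used repeatedly above) then gives the claim. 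In non-zero characteristic one must ensure $(x_i'(0)) \ne 0$, which holds when the branch has order $1$, and otherwise invoke the Frobenius arguments of \cite{depiro6} referenced in Remark~5.2; since the statement only concerns the \emph{set} $\overline V(p)$, the possible inseparability of $*$ noted in that remark does not interfere.
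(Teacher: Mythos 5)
Your route is genuinely different from the paper's. The paper does not argue by taking limits of tangent lines directly: it introduces the dual incidence variety $V^{*}\subset C\times P^{w*}$ of hyperplanes containing the tangent lines, shows via the Grassmannian duality correspondence $I\subset G_{w,1}\times G_{w,w-2}$ of Lemma 5.1 that $\overline{V}$ and $\overline{V}^{*}$ are dual correspondences (so that the fibre $\overline{V}(p)$ consists of the lines cut out by the planes in $\overline{V}^{*}(p)$), and then simply quotes the description of $\overline{V}^{*}(p)$ from Lemma 6.13 of \cite{depiro6}. You instead work directly on the non-singular model and argue by specialisation: a point of $\overline{V}(p)$ comes from $(x',y')$ with $x'$ infinitesimally close to $p$, Lemma 1.8 places $x'$ on a unique branch, and the tangent line $l_{x'}$ specialises to $l_{\gamma_{p}^{j}}$. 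That is a legitimate and arguably more self-contained argument, and it makes the forward and reverse inclusions symmetric; what the paper's duality detour buys is precisely that the delicate convergence statement is inherited from the hyperplane version already proved in \cite{depiro6}, rather than re-proved.

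Two slips to repair. First, $V$ is not the graph of $x\mapsto l_{x}$: its fibre over $x$ is the whole line $\{x\}\times l_{x}$, so $V$ is an irreducible \emph{surface}, not a curve; irreducibility still follows (it is the total space of a family of lines over the irreducible base $NonSing(C)$, or the image of the tautological $P^{1}$-bundle over the graph in $C\times G_{w,1}$), and the bound $\dim\overline{V}(p)\leq 1$ follows because a $2$-dimensional fibre of the irreducible surface $\overline{V}$ would be all of $\overline{V}$, contradicting surjectivity onto $C$ — but your stated reasons for both facts are based on the wrong dimension. Second, the identification of $\lim_{t\to 0}[x_{1}'(t):\cdots:x_{w}'(t)]$ with the tangent line of the branch in the sense of Definition 6.3 of \cite{depiro6} is exactly the point that can fail, or at least requires a separate argument, when $char(L)$ divides the order of the branch; saying one would ``invoke the Frobenius arguments'' is where the real work lies, and it is the content of the quoted Lemma 6.13 of \cite{depiro6} that the paper leans on. If you want your direct proof to stand on its own in all characteristics, you must supply that compatibility; otherwise you should, like the paper, cite the hyperplane version and dualise.
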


\begin{proof}
 The fact that $V$ defines an irreducible algebraic variety follows
easily from arguments given in Section 1 of \cite{depiro6}. Let
$G_{w,1}$ be the Grassmannian of lines in $P^{w}$, let $G_{w,w-2}$
be the Grassmannian of planes of codimension $2$ in $P^{w*}$ and
let $I$ be the duality correspondence between $G_{w,1}$ and
$G_{w,w-2}$, as defined in Lemma 5.1. Without loss of generality,
we can find algebraic forms $\{G_{1},\ldots,G_{w-1}\}$ defining
$NonSing(C)$, see Section 1 of \cite{depiro6}. For each $G_{j}$,
with $1\leq j\leq w-1$, the differential $dG_{j}$ determines a morphism;\\

$dG_{j}:NonSing(C)\rightarrow P^{w*}:x\mapsto dG_{j}(x)=({\partial
G_{j}\over\partial X_{0}}(x):\ldots:{\partial G_{j}\over\partial
X_{n}}(x))$\\

We then obtain a morphism;\\

$\Phi_{1}:NonSing(C)\rightarrow G_{w,w-2}:x\mapsto
(dG_{1}(x),\ldots,dG_{w-1}(x))=P_{x}$\\

Using the duality correspondence $I$ and the observation that, for\\
$x\in NonSing(C)$, the tangent line $l_{x}$ is determined by the
intersection of the hyperplanes determined by $dG_{j}(x)$, for
$1\leq j\leq w-1$, see Section 1 of \cite{depiro6},
we obtain a morphism;\\

$\Phi_{2}:NonSing(C)\rightarrow G_{w,1}:x\mapsto l_{x}$\\

By construction, we must clearly have the duality
$I(l_{x},P_{x})$, whenever $x\in NonSing(C)$, $(\dag)$. Let
$C_{1}=\overline {Im(\Phi_{1})}$ and
$C_{2}=\overline{Im(\Phi_{2})}$. Let $\Gamma_{\Phi_{1}}\subset
C\times C_{1}$ and $\Gamma_{\Phi_{2}}\subset C\times C_{2}$ be the
irreducible correspondences defined by
$\overline{Graph(\Phi_{1})}$ and $\overline{Graph(\Phi_{2})}$. Let
$\Gamma_{\Phi_{2}}^{*}\subset C\times G_{w,w-2}$ be the dual
correspondence to
$\Gamma_{\Phi_{2}}$ defined by;\\

$\Gamma_{\Phi_{2}}^{*}(x,y)\equiv\exists
z(\Gamma_{\Phi_{2}}(x,z)\wedge I(z,y))$\\

We then have that $\Gamma_{\Phi_{2}}^{*}$ defines a closed
irreducible projective variety. By $(\dag)$,
$\Gamma_{\Phi_{2}}^{*}$ agrees with $\Gamma_{\Phi_{1}}$ on the
open subset obtained by restricting to $NonSing(C)$. Hence, we
have that $\Gamma_{\Phi_{1}}$ defines the dual correspondence to
$\Gamma_{\Phi_{2}}$. Let $W_{\Phi_{1}}\subset C\times C_{1}\times
P^{w*}$ and $W_{\Phi_{2}}\subset C\times C_{2}\times
P^{w}$ be defined by;\\

$W_{\Phi_{1}}=\{(y,P,z):\Gamma_{\Phi_{1}}(y,P)\wedge z\in P\}$\\

$W_{\Phi_{2}}=\{(y,l,z):\Gamma_{\Phi_{2}}(y,l)\wedge z\in l\}$\\

We have that $W_{\Phi_{1}}$ and $W_{\Phi_{2}}$ are closed
irreducible projective varieties. Let $p_{13}:C\times
C_{1,2}\times P^{w}\rightarrow C\times P^{w}$ be the projection
map. Let $V^{*}$ be defined as in Lemma 6.13 of \cite{depiro6}. We
have that $p_{13}(W_{\Phi_{1}})$ and $p_{13}(W_{\Phi_{2}})$ agree
with $V^{*}$ and $V$, restricted to $NonSing(C)$, hence
$p_{13}(W_{\Phi_{1}})=\bar V^{*}$ and $p_{13}(W_{\Phi_{2}})=\bar
V$. It follows that, for a singular point $p$ of $C$, the fibre
$\bar V(p)$ consists of a finite number of lines determined by the
planes appearing in the fibre $\bar V^{*}(p)$. The result of the
theorem then follows from the description of the fibre $\bar
V^{*}(p)$ given in Lemma 6.13 of \cite{depiro6}.
\end{proof}

One can also formulate the following "desingularised version" of Lemma 5.3;\\

\begin{lemma}

Let $C\subset P^{w}$ be any projective algebraic curve, with a choice of nonsingular model $C^{ns}\subset P^{w'}$ and birational morphism $\Phi:C^{ns}\rightarrow C$. Then;\\

Desingularised Version of Lemma 5.3:\\

Let $U=\Phi^{-1}(NonSing(C))\subset C^{ns}$ and let;\\

$V\subset U\times P^{w}$ be $\{(x,y):x\in U\wedge y\in l_{\Phi(x)}\}$\\

Then $V$ defines an irreducible algebraic variety and if ${\overline V}\subset C^{ns}\times P^{w}$ defines its Zariski closure, then, for $p_{j}\in C^{ns}$, corresponding to a branch $\gamma_{p}^{j}$ of $C$, the fibre ${\overline V}(p_{j})$ consists exactly of the tangent line $l_{\gamma_{p}^{j}}$.\\

\end{lemma}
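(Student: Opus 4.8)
The plan is to mimic the proof of Lemma 5.3 essentially verbatim, replacing the curve $C$ by its non-singular model $C^{ns}$ and the tangent-variety construction on $NonSing(C)$ by the corresponding construction on the open set $U=\Phi^{-1}(NonSing(C))$. First I would note that $V$ is irreducible: it fibres over $U$ with each fibre $l_{\Phi(x)}$ a line, and the total space is the image (or graph) of the composite morphism $U\to C\hookrightarrow P^w$ followed by the tangent-line assignment $x\mapsto l_{\Phi(x)}$, using the arguments of Section 1 of \cite{depiro6}; irreducibility of $U$ (as an open subset of the irreducible curve $C^{ns}$) then passes to $V$.

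Next I would reconstruct the dual picture. Choose algebraic forms $\{G_1,\dots,G_{w-1}\}$ cutting out $NonSing(C)$ and pull them back, so that on $U$ we get a morphism $\Phi_1:U\to G_{w,w-2}$, $x\mapsto (dG_1(\Phi(x)),\dots,dG_{w-1}(\Phi(x)))=P_{\Phi(x)}$, and, via the duality correspondence $I$ of Lemma 5.1, a morphism $\Phi_2:U\to G_{w,1}$, $x\mapsto l_{\Phi(x)}$, with $I(l_{\Phi(x)},P_{\Phi(x)})$ holding on $U$. Taking Zariski closures of the graphs inside $C^{ns}\times G_{w,w-2}$ and $C^{ns}\times G_{w,1}$ gives irreducible correspondences $\Gamma_{\Phi_1},\Gamma_{\Phi_2}$, and exactly as in Lemma 5.3 one checks $\Gamma_{\Phi_1}$ is the dual correspondence of $\Gamma_{\Phi_2}$ (they agree on the dense open set $U$, and duality respects Zariski closure). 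Forming the incidence varieties $W_{\Phi_1}\subset C^{ns}\times C_1\times P^{w*}$ and $W_{\Phi_2}\subset C^{ns}\times C_2\times P^{w}$ and projecting to $C^{ns}\times P^{w}$ (resp.\ $C^{ns}\times P^{w*}$), I get $\overline V$ and a dual $\overline V^{*}$. The key point is then to identify the fibre $\overline V^{*}(p_j)$ for $p_j\in C^{ns}$ lying over a singular point $p=\Phi(p_j)$ of $C$: here I would invoke the desingularised description of the fibre given in the ``desingularised version'' of Lemma 6.13 of \cite{depiro6}, which says precisely that this fibre consists of the codimension-2 plane dual to the tangent line $l_{\gamma_p^j}$ of the single branch $\gamma_p^j$ corresponding to $p_j$. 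Dualising back through $I$ gives $\overline V(p_j)=l_{\gamma_p^j}$, a single line rather than the union of all branch tangents at $p$.

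The main obstacle I anticipate is making precise the claim that the fibre of $\overline V$ over $p_j$ is exactly $l_{\gamma_p^j}$ and nothing more. Unlike Lemma 5.3, where the fibre over a singular point $p\in C$ legitimately collects all $m$ branch tangents $\bigcup_j l_{\gamma_p^j}$, here we must see that passing to the non-singular model ``separates'' the branches, so that over each distinct point $p_j\in C^{ns}$ only the one branch tangent survives in the limit. This is exactly the content that must be extracted from Lemma 6.13 of \cite{depiro6} in its desingularised form (the correspondence $\Gamma_{[\Phi]}(x,p)$ separating branches over $p$), together with the fact that the closure $\overline{Graph(\Phi_2)}$ is taken in $C^{ns}\times G_{w,1}$ rather than $C\times G_{w,1}$, so the fibre over $p_j$ is governed by the limiting behaviour along the single analytic arc $\gamma_p^j$. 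In characteristic zero this is routine given the cited lemma; I would remark that, as in Remarks 5.2, any separability/Frobenius subtleties in positive characteristic are handled by the arguments already used for Frobenius in \cite{depiro6}, and I would defer the fully detailed verification of the fibre computation to the reader with a pointer to Lemma 6.13 of \cite{depiro6}.
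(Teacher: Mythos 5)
Your proposal is correct and follows essentially the same route as the paper, which itself disposes of this lemma in one line ("merely a question of changing the parameter space from $C$ to $C^{ns}$ and adapting the argument of the previous lemma"); you have simply carried out that adaptation explicitly, including the correct identification of the crux — that the fibre description over $p_{j}$ comes from the desingularised form of Lemma 6.13 of \cite{depiro6}, which is exactly the ingredient the paper also relies on (compare the parallel use in Remarks 4.5).
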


\begin{proof}

The proof is merely a question of changing the parameter space from $C$ to $C^{ns}$ and adapting the argument of the previous lemma. The details are left to the reader.

\end{proof}

In a similar vein, we also have;\\

\begin{lemma}{Intuitive Construction of Tangent Lines}\\

Let $C\subset P^{w}$ be a projective algebraic curve and let $O\in C$ be a given fixed point, possibly singular. Let;\\

$V\subset(NonSing(C)\setminus \{O\})\times P^{w}$ be $\{(x,y):x\in
NonSing(C)\wedge y\in l_{Ox}\}$\\

Then $V$ defines an irreducible algebraic variety and, if
${\overline V}\subset C\times P^{w}$ defines its Zariski closure,
then, if $O$ is the origin of branches
$\{\gamma_{p}^{1},\ldots,\gamma_{p}^{m}\}$, the fibre ${\overline
V}(O)$ consists exactly of the finite union $\bigcup_{1\leq j\leq
m}l_{\gamma_{p}^{j}}$ of the tangent lines to the $m$ branches at
$O$.\\

\end{lemma}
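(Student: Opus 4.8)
The plan is to imitate almost verbatim the argument of Lemma 5.3 (Tangent Variety), with the fixed pencil of tangent lines $\{l_x\}$ at nonsingular points replaced by the pencil of secant lines $\{l_{Ox}\}$ through the fixed point $O$. First I would record that $V$ is irreducible: it is the image, under the obvious projection, of the incidence variety $\{(x,[\mu_1:\mu_2],y): x\in NonSing(C)\setminus\{O\},\ y=\mu_1 O+\mu_2 x\}$, which fibres over the irreducible curve $NonSing(C)\setminus\{O\}$ with irreducible (in fact $\mathbb P^1$) fibres, so by the standard results of Section 1 of \cite{depiro6} it is an irreducible algebraic variety, and hence so is its Zariski closure $\overline V\subset C\times P^w$.

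The heart of the matter is identifying the fibre $\overline V(O)$. I would work birationally: let $(C^{ns},\Phi)$ be a nonsingular model of $C$, so that the $m$ branches $\gamma_O^1,\dots,\gamma_O^m$ of $C$ at $O$ correspond to the finitely many points $q_1,\dots,q_m\in C^{ns}$ with $\Phi(q_k)$ specialising to $O$ (as in Definition 5.15 and Lemma 5.7 of \cite{depiro6}, and as used in Lemma 1.8 here). The secant map $x\mapsto l_{Ox}$ lifts to a rational map $C^{ns}\dashrightarrow G_{w,1}$ which, because $C^{ns}$ is nonsingular, \emph{extends to a morphism} $\Psi: C^{ns}\to G_{w,1}$ near each $q_k$; one then checks that $\Psi(q_k)=l_{\gamma_O^k}$, i.e. the limiting position of the chord $\overline{Ox}$ as $x$ runs along the branch $\gamma_O^k$ towards $O$ is precisely the tangent line to that branch. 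This last identification is exactly the ``intuitive'' characterization of the tangent line to a branch: parametrising $\gamma_O^k$ by an algebraic power series $(x_1(t),\dots,x_w(t))$ with $x_i(0)$ the coordinates of $O$ (Theorem 6.1 of \cite{depiro6}), the chord joining $O$ to the point at parameter $t$ is spanned by $O$ and $(x_i(t)-x_i(0))_i = t\cdot(\text{unit vector of power series})$, whose $t\to 0$ limit in $G_{w,1}$ is the line spanned by $O$ and the leading coefficient vector — and that line is by definition $l_{\gamma_O^k}$. I would then transfer back to $C$: since $V$ is the image of the graph construction on $C^{ns}$ under $\Phi\times\mathrm{id}$, the fibre $\overline V(O)$ is the union over $k$ of the images $\Psi(q_k)=l_{\gamma_O^k}$, giving $\overline V(O)=\bigcup_{k=1}^m l_{\gamma_O^k}$ as claimed. (If $O\in NonSing(C)$ this degenerates to the single tangent $l_O$, consistent with Lemma 5.4 of \cite{depiro6}.)

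To make this rigorous I would package it in the same way as Lemma 5.3: introduce the incidence correspondence $W\subset C^{ns}\times G_{w,1}\times P^w$ cut out by $\Gamma_\Psi(q,l)\wedge z\in l$ where $\Gamma_\Psi=\overline{Graph(\Psi)}$, note $W$ is closed and irreducible, and observe that its image under the projection $p_{13}$ to $C^{ns}\times P^w$ agrees, over $U=\Phi^{-1}(NonSing(C))\setminus\{O\}$, with the pullback of $V$; hence $p_{13}(W)$ is the desingularised analogue $\overline V^{ns}$ of $\overline V$, and the fibre over each $q_k$ is $l_{\gamma_O^k}$. Pushing forward along $\Phi\times\mathrm{id}$ and using properness to control the fibre over $O$ yields the statement. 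The main obstacle is the clean proof that the secant map extends over $q_k$ and that its value there is the branch tangent: one must be careful that $O$ is allowed to be singular and to lie on several branches simultaneously, so the chords from $O$ along different branches can have genuinely different limits, and one needs the power-series description of the branch tangent (Definition 6.3 and Theorem 6.2 of \cite{depiro6}) to pin the limit down. In characteristic $p$ one should, as elsewhere in the paper, check the argument is insensitive to inseparability — here it is, since the limiting computation only uses the \emph{order} of vanishing of the chord direction, which is the branch order and is preserved; I would remark on this briefly and leave the routine verifications to the reader.
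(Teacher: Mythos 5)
Your proposal is correct in substance but takes a genuinely different route from the paper. The paper disposes of this lemma in one line: it repeats the template of Lemma 5.3, i.e.\ it introduces the dual cover $V^{*}\subset (NonSing(C)\setminus\{O\})\times P^{w*}$ of hyperplanes containing the chords $l_{Ox}$, invokes the duality correspondence $I$ between $G_{w,1}$ and $G_{w,w-2}$ from Lemma 5.1, and then imports the description of the fibre $\bar V^{*}(O)$ from Lemma 6.14 of \cite{depiro6} (forms containing the branch tangents at $O$), so that $\bar V(O)$ is read off as the corresponding union of lines. You instead bypass duality entirely: you pass to the nonsingular model, extend the secant map $x\mapsto l_{Ox}$ to a morphism $C^{ns}\to G_{w,1}$ at each point $q_{k}$ over $O$, and compute its value there directly from the power-series parametrisation of the branch, identifying the limit of chords with the branch tangent. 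What the paper's route buys is economy and uniformity with Lemma 5.3 -- all the computational content is already paid for in \cite{depiro6} -- while your route is more self-contained and makes the geometric mechanism (limiting position of secants) explicit; in effect you re-prove the relevant content of Lemma 6.14 of \cite{depiro6} rather than citing it. Two small points to tidy up: for a branch of order $\alpha>1$ the chord direction is $t^{\alpha}\cdot(\hbox{unit})$, not $t\cdot(\hbox{unit})$, though the limiting argument is unaffected since only the leading coefficient vector matters; and in homogeneous coordinates one should work in an affine chart at $O$ (or with the Pl\"ucker product $O\wedge X(t)$ divided by its leading power of $t$) to ensure the leading direction is not proportional to $O$ itself, so that the limit in $G_{w,1}$ is well defined and agrees with the tangent line in the sense of Definition 6.3 of \cite{depiro6}.
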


\begin{proof}

The proof is the same as Lemma 5.3, except that we use the definition of $V^{*}$ given in Lemma 6.14 of \cite{depiro6} and the description of the fibre $\bar V^{*}(O)$ given in Lemma 6.14 of \cite{depiro6}.

\end{proof}

We also require the following result;\\

\begin{lemma}{Convergence of Intersections of Tangents}\\

Let $C\subset P^{2}$ be a plane projective algebraic curve, not equal to a line, let $O$ be a nonsingular point, which is the origin of a branch $\gamma_{O}$ of character $(1,k-1)$, $(k\geq 2)$, and suppose that $char(L)$ is zero or coprime to $k$. Let;\\

$V\subset(NSing(C)\setminus \{O\})\times P^{2}=\{(x,y):x\in NSing(C)\wedge y\in l_{O}\cap l_{x}\}$\\

Then $V$ defines a generically finite cover of $NonSing(C)$. Moreover, if $\bar V\subset C\times P^{2}$ defines its Zariski closure, then the fibre $\bar V(O)$ consists exactly of the point $O$.

\end{lemma}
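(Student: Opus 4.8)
The plan is to split the statement into its two halves: that $V$ is a generically finite cover, which follows from a standard Gauss-map argument, and that $\bar V(O)=\{O\}$, which I would pin down by an explicit power-series computation along the single branch $\gamma_O$, in the spirit of Lemmas 3.7--3.9. For the cover assertion, note that the tangent morphism $x\mapsto l_x$ on $NonSing(C)$ cannot be constant: were it identically $l_O$, every non-singular point of $C$ would lie on $l_O$ (the tangent line at a point passes through that point), forcing $C=\overline{NonSing(C)}\subseteq l_O$ and hence $C=l_O$, contrary to hypothesis. Thus $U'=\{x\in NonSing(C):l_x\neq l_O\}$ is a nonempty, hence dense, open subset of $C$, and over $U'\setminus\{O\}$ the projection $V\to NonSing(C)$ is the graph of the morphism $x\mapsto l_O\cap l_x$; in particular it is bijective there, so $V$ is a generically finite (indeed birational) cover of $NonSing(C)$. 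Its Zariski closure $\bar V\subset C\times P^{2}$ is irreducible and, being closed in a projective variety and dominating $C$, surjects onto $C$; in particular $\bar V(O)\neq\emptyset$.

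For the fibre $\bar V(O)$, choose affine coordinates $(x,y)$ with $O$ the origin and $l_O$ the $x$-axis, and parametrise $\gamma_O$ by algebraic power series (Theorem 6.1 of \cite{depiro6}); since the branch has order $1$, a formal reparametrisation lets us assume
\[
x(t)=t,\qquad y(t)=c\,t^{k}+\ldots,\qquad c\neq 0,
\]
where $k=I_{\gamma_O}(C,l_O)$ is read off from the character $(1,k-1)$. For an infinitesimal $\tau$, the point $q=(\tau,y(\tau))$ lies on $\gamma_O^{ext}$ and its tangent line $l_q$ passes through $q$ with direction $(1,y'(\tau))$, where $y'(\tau)=kc\,\tau^{k-1}+\ldots$. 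Since $char(L)$ is zero or coprime to $k$ we have $kc\neq 0$, so $y'(\tau)$ is $\tau^{k-1}$ times a unit of $L[[\tau]]$; in particular $l_q$ has slope $y'(\tau)\neq 0$, distinct from the slope $0$ of $l_O$, and $l_O\cap l_q$ is the single point with $y$-coordinate $0$ and $x$-coordinate $\tau-y(\tau)/y'(\tau)$. As $y(\tau)/y'(\tau)=\tfrac1k\tau(1+\ldots)$ is infinitesimal,
\[
l_O\cap l_q=\Bigl(\tfrac{k-1}{k}\,\tau+\ldots,\ 0\Bigr),
\]
which is infinitely near the origin, so $sp(l_O\cap l_q)=O$. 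Because $O$ is non-singular, every point of $C^{ext}$ infinitely near $O$ is a non-singular point of $C^{ext}$ lying on $\gamma_O^{ext}$ by Lemma 1.8, so $\bar V(O)$ is exactly the set of specialisations $sp(l_O\cap l_q)$ as $q$ ranges over such points; by the above this set is $\{O\}$, and combined with $\bar V(O)\neq\emptyset$ this gives $\bar V(O)=\{O\}$.

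The main obstacle is to make the computation of the second paragraph rigorous inside the formal/non-standard machinery of the paper: one must justify that the tangent line at the generic non-singular point $q$ is indeed computed from the derivative of the branch parametrisation (which rests on the power-series description of tangent lines in \cite{depiro6}, compare Lemma 3.7 and the remarks following it), carry the intersection point $l_O\cap l_q$ through the specialisation map, and --- crucially --- verify that the hypothesis on $char(L)$ is exactly what keeps $y'(\tau)$ of order $k-1$. If instead $char(L)=k$, the order of $y'(\tau)$ jumps, $y(\tau)/y'(\tau)$ can acquire a non-zero constant term, and $l_O\cap l_q$ would specialise to a point of $l_O$ distinct from $O$, so the lemma genuinely fails in that case, which is why the characteristic condition cannot be dropped.
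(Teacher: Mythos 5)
Your proposal is correct, and the central computation is the same as the paper's: parametrise $\gamma_O$ as $(t,y(t))$ with $ord_t y(t)=k$, form the tangent line at the infinitesimal point $(\tau,y(\tau))$, intersect with $l_O$ to get the $x$-coordinate $(\tau y'(\tau)-y(\tau))/y'(\tau)$, and use the coprimality of $char(L)$ and $k$ to see $ord_\tau y'(\tau)=k-1$ while $ord_\tau(\tau y'(\tau)-y(\tau))\geq k$, so the intersection is infinitesimal and specialises to $O$. Your diagnosis of the role of the characteristic hypothesis is exactly right. Where you diverge from the paper is in how the fibre $\bar V(O)$ is controlled. The paper first removes the finitely many points whose tangent is $l_O$, observes the restricted cover $V^{res}$ has irreducible closure with fibres of generic size one, and invokes Theorem 3.3 of \cite{Z} to conclude that $\bar V^{res}(O)$ is a \emph{single} point; the infinitesimal computation is then only needed for one point $(\epsilon,y(\epsilon))$ to identify that point as $O$. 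You instead characterise $\bar V(O)$ as the full set of specialisations $sp(l_O\cap l_q)$ over all $q\in C^{ext}\cap\mathcal{V}_O\setminus\{O\}$ and show each one equals $O$. That is also valid, but it quietly relies on two extra facts you should make explicit: that the fibre of a Zariski closure over $O$ is exactly the set of specialisations from fibres over points of $\mathcal{V}_O$ (standard in the saturated-model framework the paper uses), and that, $O$ being non-singular, \emph{every} point of $C^{ext}\cap\mathcal{V}_O$ is reached by the branch parametrisation (Lemma 5.4 and Theorem 6.1 of \cite{depiro6}), so that your power-series estimate, which is uniform in $\tau$, really covers the whole fibre. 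The paper's route buys the single-point conclusion from general Zariski-structure machinery and needs only one specialisation; yours avoids citing Theorem 3.3 of \cite{Z} at the cost of this uniformity check. Either way the lemma is established.
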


\begin{proof}

The fact that $V$ defines a generically finite cover of $Nonsing(C)$ follows easily from the assumption that $C$ is not a line. The infinite fibres of the cover $V$ correspond to the, at most, finitely many points $\{p_{1},\ldots,p_{n}\}\subset (NonSing(C)\setminus\{O\})$ such that $l_{O}$ defines their tangent line. Let $U=(NonSing(C)\setminus\{O,p_{1},\ldots,p_{n}\})$ and let $V^{res}$ be the restriction of the cover $V$ to $U$. We clearly have that $\bar V(O)=\bar V^{res}(O)$ and that the fibres of $V^{res}$ consist of a unique intersection. In particular, the Zariski closure $\bar V^{res}$ is irreducible. Using the fact that $O$ is nonsingular and an important property of Zariski structures, given in Theorem 3.3 of \cite{Z}, it follows that the fibre $\bar V^{res}(O)$, hence the fibre $\bar V(O)$ also consists of a unique point $O'$. In order to show that $O=O'$, we employ an argument using infinitesimals;\\

We may assume that $O$ lies at the origin $(0,0)$ of the affine coordinate system $(x,y)$ and that the tangent line $l_{O}$ corresponds to the line $y=0$ in this coordinate system. Using Lemma 3.7, we can find a parametrisation of the branch $\gamma_{O}$, in the sense of Theorem 6.1 of \cite{depiro6}, of the form $(t,y(t))$, where $y(t)$ is an algebraic power series. By the assumption on the tangent line $l_{O}$, the definition of a parametrisation in Theorem 6.1 of \cite{depiro6} and the assumption on the character of the branch, we must have that $ord_{t}(y(t))=k$, so we can assume that $y(t)=t^{k}u(t)$, for the given $k\geq 2$, where $u(t)\in L[[t]]$ is a unit. Let $F(x,y)$ define $C$ in the coordinate system $(x,y)$, then we have that;\\

$F(t,y(t))=0$ and $F_{x}|_{(t,y(t))}.1+F_{y}|_{(t,y(t))}.y'(t)=0$, $(*)$\\

as a formal identity in the power series ring $L[[t]]$, see \cite{depiro6} for similar calculations. We now work in the nonstandard model $K=L[[\epsilon]]^{alg}$. It follows easily from the paper \cite{depiro3} that we can interpret $\epsilon$ as an infinitesimal in ${\mathcal V}_{0}$. By construction, we clearly have that the identity $(*)$ holds, replacing $t$ by $\epsilon$, $(**)$. By definition of the specialisation map, given in \cite{depiro3}, and $(**)$, we then have that $(\epsilon,y(\epsilon))\in NonSing(C)\cap{\mathcal V}_{(0,0)}$. Using $(**)$ again, we also have that the equation of the tangent line $l_{(\epsilon,y(\epsilon))}$ in the nonstandard model $K$ is given by;\\

$(y-y(\epsilon))=y'(\epsilon)(x-\epsilon)$ $(***)$\\

We now compute the intersection of $l_{(\epsilon,y(\epsilon))}$ with $y=0$. By $(***)$, we obtain that;\\

$l_{(0,0)}\cap l_{(\epsilon,y(\epsilon))}=(x_{\epsilon},0)$, where $x_{\epsilon}={(\epsilon y'(\epsilon)-y(\epsilon))\over y'(\epsilon)}$\\

Using the assumption on $char(L)$, it is easy to calculate that $ord_{t}(ty'(t)-y(t))\geq k$ and $ord_{t}(y'(t))=k-1$. It follows that $ord_{t}(x_{t})\geq 1$, considered as an element of $L[[t]]$, hence, by the definition of the specialisation map in \cite{depiro3}, we must have that $x_{\epsilon}\in{\mathcal V}_{0}$ and $(x_{\epsilon},0)\in{\mathcal V}_{(0,0)}={\mathcal V_{O}}$. By construction, we have that $\bar V^{res}((\epsilon,y(\epsilon)),(x_{\epsilon},0))$, hence, by specialisation, we have that $\bar V^{res}(O,O)$. It follows that the fibre $\bar V^{res}(O)$ consists exactly of the point $O$ as required.
\end{proof}

We now make the following definition;\\

\begin{defn}
Let $C\subset P^{2}$ be a plane projective curve. We define a nonsingular point $p$, which is the origin of a branch $\gamma_{p}$ having character $(1,r)$, for $r\geq 2$, to be a flex. We define $p$ to be an ordinary flex, if $r=2$.

\end{defn}

We now consider the duality construction applied to plane projective curves.
We claim the following;\\

\begin{lemma}

Let $C\subset P^{2}$ be a plane projective curve, defined in the coordinate system $\{X,Y,Z\}$ by the irreducible polynomial $F(X,Y,Z)$. Then, if $C$ is not a line, the differential;\\

$dF:NonSing(C)\rightarrow P^{2*}:x\mapsto l_{x}=({\partial F\over\partial X}(x):{\partial F\over\partial Y}(x):{\partial F\over\partial Z}(x))$\\

defines a morphism, with the property that $C^{*}=\overline{Im(dF)}\subset P^{2*}$ is also a plane projective curve. If $char(L)\neq 2$, then the following conditions are equivalent;\\

(i). $C$ has finitely many flexes.\\

(ii). $dF:C\leftrightsquigarrow C^{*}$ defines a birational morphism and $C=C^{**}$.\\

In particular, if $char(L)=0$, then $(ii)$ always holds, hence any plane projective curve over $L$, with $char(L)=0$, can only have finitely many flexes, and if $C$ has infinitely many flexes, the duality morphism $dF$ is inseperable.

\end{lemma}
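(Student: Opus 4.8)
The plan is to treat the three assertions separately. For the first: at a nonsingular point $x$ of $C$ the partial derivatives of $F$ do not all vanish (otherwise $l_{x}$ would be undefined), and by Section 1 of \cite{depiro6} the point $({\partial F\over\partial X}(x):{\partial F\over\partial Y}(x):{\partial F\over\partial Z}(x))$ of $P^{2*}$ is exactly the line $l_{x}$; hence $dF$ is a well-defined morphism on $NonSing(C)$. Since $C$ is irreducible, $NonSing(C)$ is irreducible, so $C^{*}=\overline{Im(dF)}$ is an irreducible projective variety; it has dimension $1$ unless $dF$ is constant, but if $dF\equiv \ell$ then every nonsingular point of $C$ lies on the line $\ell$ (as $l_{x}\ni x$), forcing $C=\ell$, contrary to hypothesis. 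Thus $C^{*}$ is a plane projective curve.

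The heart of the matter is a local power-series computation, in the style of Lemma 3.7 and the remarks following its proof. Fix a nonsingular point $p$ which is the origin of a branch $\gamma_{p}$ of character $(1,r)$; choosing affine coordinates so that $p=(a,b)$ and $l_{p}$ is not vertical, parametrise $\gamma_{p}$ by $(a+u,\,y(a+u))$ with $y(a+u)-b=c_{1}u+c_{r+1}u^{r+1}+\cdots$ and $c_{r+1}\neq 0$. Then in an affine chart of $P^{2*}$ one gets $dF(u)=\bigl(y'(a+u):-1:y(a+u)-(a+u)y'(a+u)\bigr)$, and since $y'(a+u)-c_{1}=(r+1)c_{r+1}u^{r}+\cdots$, each coordinate of $dF(u)-dF(0)$ has order in $u$ equal to $1$ when $r=1$ (leading coefficient $2c_{1}'$, nonzero precisely because $char(L)\neq 2$) and order $\geq r\geq 2$ when $r\geq 2$. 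Consequently, for $char(L)\neq 2$: $d(dF)_{p}\neq 0$ — equivalently $dF$ is unramified at $p$ and restricts to a biunivocal correspondence of $\gamma_{p}$ onto its image branch of $C^{*}$ — if and only if $\gamma_{p}$ has character $(1,1)$; and $p$ is a flex (that is, $r\geq 2$) if and only if $d(dF)_{p}=0$. In particular, passing to the non-singular model when necessary, $dF$ is separable $\iff$ $d(dF)$ is not identically $0$ $\iff$ the generic branch of $C$ has character $(1,1)$ $\iff$ $C$ has only finitely many flexes (the flexes lying in $Sing(C)$ together with the ramification locus of $dF$). This already gives $(ii)\Rightarrow(i)$, since a birational morphism is separable, and it gives the final inseparability clause, since infinitely many flexes forces $dF$ non-separable, hence inseparable.

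For $(i)\Rightarrow(ii)$, assume $C$ has finitely many flexes. Then the generic point $p$ of $C$ is the origin of a branch $\gamma_{p}$ of character $(1,1)$, the line $l_{p}$ is a nonsingular point of $C^{*}$, and $dF$ is unramified at $p$, carrying $\gamma_{p}$ biunivocally onto a branch $\delta$ of $C^{*}$ at $\ell_{0}:=l_{p}$. Apply the intuitive construction of tangent lines, Lemma 5.5, to $C^{*}\subset P^{2*}$ at the branch $\delta$: the tangent line to $\delta$ at $\ell_{0}$ is the limit, as $\ell\to\ell_{0}$ along $\delta$, of the line of $P^{2*}$ joining $\ell_{0}$ to $\ell$; that line is the pencil of lines of $P^{2}$ through the point $\ell_{0}\cap\ell$, and writing $\ell=l_{x}$ for $x=dF^{-1}(\ell)$ near $p$ on $\gamma_{p}$, it is the pencil through $l_{p}\cap l_{x}$. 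Since $\gamma_{p}$ has character $(1,1)=(1,k-1)$ with $k=2$ coprime to $char(L)$, Lemma 5.6 gives $\lim_{x\to p}(l_{p}\cap l_{x})=p$. Hence the tangent line to $\delta$ at $\ell_{0}$ is dual to the point $p$, so by the description of $C^{**}=\overline{Im(dF^{*})}$ furnished by Lemma 5.3 applied to $C^{*}$, we have $p\in C^{**}$. As $p$ ranges over a dense open subset of $C$ we get $C\subseteq C^{**}$; in particular $C^{**}$ is not a point, so $C^{*}$ is not a line, $C^{**}$ is an irreducible curve, and $C=C^{**}$. Moreover the Gauss map $dF^{*}:C^{*}\leftrightsquigarrow C^{**}=C$ sends $\ell_{0}=l_{p}\mapsto p$, so $dF^{*}\circ dF$ is the identity on a dense open subset of $C$; hence $(dF)^{\#}:L(C^{*})\hookrightarrow L(C)$ is onto, i.e.\ $dF$ is birational onto $C^{*}$, proving $(ii)$. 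Finally, for the "in particular'' clause: in characteristic $0$ every dominant morphism of curves is separable, so $dF$ is separable, which by the above forces the generic branch of $C$ to be of character $(1,1)$; then the proof of $(i)\Rightarrow(ii)$ (which used only $char(L)\neq 2$ and the genericity of character $(1,1)$) yields $(ii)$, and combined with $(ii)\Rightarrow(i)$ this shows $C$ has finitely many flexes.

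The main obstacle I expect is the interface between the local Gauss-map computation and the duality lemmas: one must make the power-series manipulation of $dF$ rigorous at the level of Cauchy series (as was done for the chain rules in Section 3), and, crucially, verify that an unramified $dF$ really does carry the generic branch $\gamma_{p}$ \emph{biunivocally} onto a single branch $\delta$ of $C^{*}$, so that Lemma 5.6's limit genuinely computes the tangent line to $\delta$ and thereby delivers the reflexivity conclusion $C=C^{**}$. The characteristic hypothesis $char(L)\neq 2$ enters at exactly one point — the nonvanishing of the leading coefficient $2c_{1}'$ for a branch of character $(1,1)$ — and the conic in characteristic $2$ shows the equivalence genuinely fails without it.
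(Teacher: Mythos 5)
Your proposal is correct and follows essentially the same route as the paper: the same local computation of the Gauss map $\bigl(y'(u):-1:y(u)-uy'(u)\bigr)$ showing that $d(dF)$ vanishes exactly at branches of character $(1,r)$ with $r\geq 2$ (whence infinitely many flexes forces inseparability, giving $(ii)\Rightarrow(i)$ and the final clause), and the same use of Lemma 5.5 applied to $C^{*}$ together with Lemma 5.6 and a specialisation/closedness argument on the incidence relation to identify the tangent line of the image branch with the dual of $p$, giving $dG\circ dF=Id$ on an open set and hence $(i)\Rightarrow(ii)$. The only differences are cosmetic (your non-constancy argument for $C^{*}$ being a curve is in fact a touch cleaner than the paper's appeal to infinitely many distinct tangents).
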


\begin{proof}

The fact that $dF$ defines a morphism on $NonSing(C)$ follows easily from the fact that the partial derivatives $\{{\partial F\over \partial X},{\partial F\over\partial Y},{\partial F\over\partial Z}\}$ cannot all vanish at a nonsingular point of $C$. By  basic results in algebraic geometry, the image $Im(dF)\subset P^{2*}$ is constructible and irreducible. As $C$ is not a line, it must have infinite distinct tangent lines, hence, $C^{*}=\overline{Im(dF)}$ defines a plane projective algebraic curve. We first show;\\

$(i)\Rightarrow(ii)$\\

As $C$ has finitely many flexes, there exists an open subset $U\subset NonSing(C)$ with the property that, for $x\in U$, the corresponding branch $\gamma_{x}$ has character $(1,1)$. As $C^{*}$ is a projective algebraic curve, we can find a further open set $V\subset U$ such that $dF(V)\subset NonSing(C^{*})$. If $G(U,V,W)$ is a defining equation for $C^{*}$, then, using the first part of this Lemma and Lemma 5.1, in order to identify $P^{2**}$ with $P^{2}$, we obtain a morphism;\\

$dG:NonSing(C^{*})\rightarrow P^{2}$\\

We now claim that;\\

$(dG\circ dF)=Id_{V}:V\rightarrow P^{2}$ $(*)$\\

This implies immediately that $C^{**}=C$ and $dF:C\leftrightsquigarrow C^{*}$ is a birational map, with birational inverse $dG:C^{*}\leftrightsquigarrow C$. We now show $(*)$. Suppose that $x_{0}\in V$, with corresponding $y_{0}=dF(x_{0})\in NonSing(C^{*})$. As the branch $\gamma_{x_{0}}$ has character $(1,1)$, and $char(L)\neq 2$, we have that the result of Lemma 5.6 holds for $x_{0}$, (replacing O in the Lemma). That is, if $V_{1}\subset C\times P^{2}$ is the closed subvariety given in Lemma 5.6, then the fibre $V_{1}(x_{0})=x_{0}$ and, if $x\in C$, with $x\neq x_{0}$, then $V_{1}(x)=l_{x_{0}}\cap l_{x}$. As $y_{0}\in NonSing(C^{*})$, we can apply Lemma 5.5, and obtain a closed subvariety $V_{2}\subset C^{*}\times P^{2*}$ such that the fibre $V_{2}(y_{0})=l_{y_{0}}$ and, if $y\in C^{*}$, with $y\neq y_{0}$, then $V_{2}(y)=l_{y_{0}y}$. Shrinking $V$ if necessary, we can assume that if $x\in V$ with $x\neq x_{0}$, then $dF(x)\neq y_{0}$ and $l_{x}\cap l_{x_{0}}$ is a point, $(\dag)$. Now, define the closed relation $S\subset V\times P^{2}\times P^{2*}$ by;\\

$S(x,x',y)\equiv V_{1}(x,x')\wedge V_{2}(dF(x),y)$\\

By construction of $\{V_{1},V_{2}\}$, we have that if $x\in V$ with $x\neq x_{0}$, then the fibre $S(x)=\{(x',y):x'\in l_{x_{0}}\cap l_{x}, y\in l_{y_{0}dG(x)}\}$. Using the assumption $(\dag)$, the fibre $S(x)$ consists of a point and a line. Moreover, the point of intersection $l_{x_{0}}\cap l_{x}\in P^{2}$ is in dual correspondence with the line $l_{y_{0}dF(x)}\subset P^{2*}$, $(**)$. Using Lemma 5.1, this follows from determining the intersection of the pencil of lines parametrised by $l_{y_{0}dF(x)}=l_{dF(x_{0})dF(x)}$. By construction of $dF$, this is exactly the intersection of tangent lines $l_{x_{0}}\cap l_{x}$.\\

We now use $(**)$ and a simple limiting argument, to show that this duality must also hold for the fibre $S(x_{0})$. Let $R_{2}\subset P^{2}\times P^{2*}$ be the incidence relation, given by $R_{2}(u,v)$ iff $v\in H_{u}$, where $H_{u}$ is the linear form with coefficients given by $u$. By $(**)$, we have that;\\

$S(x)\subset R_{2}$ $(x\in V,x\neq x_{0})$ $(***)$\\

Using elementary model theoretic arguments, $(***)$ is a closed condition on $V$. Hence, we must have that $S(x_{0})\subset R_{2}$ as well, $(****)$.\\

Again, by construction of $\{V_{1},V_{2}\}$, we have that the fibre $S(x_{0})=\{(x_{0},y):y\in l_{y_{0}}\}$. By $(****)$, this shows that $x_{0}$ must be in dual correspondence with $l_{y_{0}}$. We, therefore, must have that $dG(y_{0})=x_{0}$, hence $(*)$ is shown.\\

$(ii)\Rightarrow (i)$.\\

There are two approaches to this problem. We will show later in the section that if $(ii)$ holds and $[(dF)^{-1}]^{*}$ is the induced bijection on branches guaranteed by Lemma 5.7 of \cite{depiro6}, then, for a given branch $\gamma$ of $C$ with character $(\alpha,\beta)$, the corresponding branch $\gamma^{*}$ of $C^{*}$ has character $(\beta,\alpha)$. If $C$ had infinitely many flexes, this would imply that $C^{*}$ had infinitely many singular points, which is impossible. Hence, $(i)$ must hold. For now, we will give a more direct algebraic proof.\\

Suppose that $(ii)$ holds and $C$ has infinitely many flexes. By Remarks 6.6 of \cite{depiro6} and using a similar argument to the above, we can assume that there exists an open $U\subset NonSing(C)$, with the property that $dF(U)\subset NonSing(C^{*})$ and every $p\in U$ is a flex. For ease of notation, we abbreviate the dual morphism $dF$ by $F^{dual}$. We will show directly that if $p$ is in $U$, then;\\

$dF^{dual}_{(a,b)}:T_{p,C}\rightarrow T_{F^{dual}(p),C^{*}}$ is identically zero, $(*)$.\\

where we have used the differential  and tangent space notation, given on p170 of \cite{Mum}. By standard algebraic considerations, see Section 1 of \cite{depiro6}, this implies that the morphism $F^{dual}$ is ramified in the sense of algebraic geometry at every point of $U$. Using results of \cite{depiro7}, see particularly Theorem 2.8, and a standard results about Zariski covers, that there can only exist finitely many ramification points, we conclude that the morphism $F_{dual}$ is inseperable. In particular, it cannot be birational as required.\\

In order to show $(*)$, let $\{X,Y,Z\}$ be a choice of coordinates for $P^{2}(L)$ and, without loss of generality, assume that $U\subset (Z\neq 0)$. Working in the coordinate system $\{x={X\over Z},y={Y\over Z}\}$,without loss of generality, we can assume that a given flex $O$ of $U$ is located at the origin $(0,0)$ of the coordinate system $(x,y)$ and its tangent line corresponds to $y=0$. Arguing as in Lemma 5.5, we can find a parametrisation of the branch $\gamma_{O}$ of the form $(t,y(t))$, where $ord_{t}y(t)\geq 3$. As in Lemma 5.6, we can consider $x_{\epsilon}=[\epsilon:y(\epsilon):1]$ as defining a point in $C\cap{\mathcal V}_{O}$, for an appropriate choice of non-standard model $K$. The coordinates of $F^{dual}(x_{\epsilon})$ in the dual space $P^{2*}$ are then given by taking the cross product $\phi(\epsilon)\times\phi'(\epsilon)$, where $\phi(\epsilon)=(\epsilon,y(\epsilon),1)$. We have;\\

$\phi(\epsilon)\times\phi'(\epsilon)=(\epsilon,y(\epsilon),1)\times (1,y'(\epsilon),0)=(-y'(\epsilon),1,\epsilon y'(\epsilon)-y(\epsilon))$\\

so $F^{dual}(x_{\epsilon})=[-y'(\epsilon):1:\epsilon y'(\epsilon)-y(\epsilon)]\in C^{*}\cap{\mathcal V}_{F^{dual}(O)}$, see also the corresponding calculation in Lemma 5.6. Now let $\{U,V,W\}$ be projective coordinates
for $P^{2*}(L)$ and let $\{u={U\over V},w={W\over V}\}$. Let $F^{dual}_{u}$ and $F^{dual}_{w}$ be the components of $F^{dual}$ with respect to the affine coordinate system $(u,w)$. Then;\\

$F^{dual}_{u}(\epsilon,y(\epsilon))=-y'(\epsilon)$\ \ \ \ \  $F^{dual}_{v}(\epsilon,y(\epsilon))=\epsilon y'(\epsilon)-y(\epsilon)$\\

Differentiating these expressions with respect to $\epsilon$, see \cite{depiro6} for the justification of such calculations in non-zero characteristic, we obtain;\\

${\partial F_{u}^{dual}\over \partial x}|_{(\epsilon,y(\epsilon))}.1+{\partial F_{u}^{dual}\over \partial y}|_{(\epsilon,y(\epsilon))}.y'(\epsilon)=-y''(\epsilon)$\\

${\partial F_{v}^{dual}\over \partial x}|_{(\epsilon,y(\epsilon))}.1+{\partial F_{v}^{dual}\over \partial y}|_{(\epsilon,y(\epsilon))}.y'(\epsilon)=\epsilon y''(\epsilon)$ $(**)$\\

Now setting $\epsilon=0$ in $(**)$ and using the fact that $ord_{t}y(t)\geq 3$, we obtain immediately the result $(*)$, hence this direction of the lemma is shown.\\

In order to finish the result, observe that if $char(L)=0$, one can use Lemma 5.6 directly and the argument of the first part of this Lemma $((i)\Rightarrow(ii))$ to show directly that the dual morphism $dF:C\leftrightsquigarrow C^{*}$ is birational. If $C$ has infinitely many flexes, then one can use the argument of the second part of the Lemma $((ii)\Rightarrow (i))$ to show that the dual morphism $dF:C\rightarrow C^{*}$ is inseperable.

\end{proof}

\begin{rmk}
For the remainder of this section, we will always assume that $char(L)\neq 2$ a given projective algebraic curve $C$ has \emph{finitely} many flexes. As the duality morphism is then birational, this will allow us to use the theory of branches that we have have developed in previous papers. The exceptional case that $C$ has infinitely many flexes was studied extensively in the paper \cite{Pardini}. We give a brief summary of the main results;\\

(Corollary 2.2) Let $C$ be a non-singular projective algebraic curve of degree $n$, with infinitely many flexes. Then, if $char(L)=p\neq 2$, we have that $p|n-1$.\\

(Proposition 3.7) Let $C$ be a non-singular projective algebraic curve of degree $p+1$, with infinitely many flexes, and $char(L)=p\neq 2$, then $C$ is projectively equivalent to the plane curve with equation;\\

$XY^{p}+YZ^{p}+ZX^{p}=0$\\

In particular, by direct calculation, the duality morphism $dF$ is purely inseperable, hence, biunivocal, and $C=C^{**}$.\\

(Corollary 4.3, Lemma 4.4 and Proposition 4.5)\\

Let $C$ be a generic non-singular projective algebraic curve of degree $dp+1$, with $(d>1)$, then the duality morphism $dF$ is biunivocal, but $C\neq C^{**}$.\\

The case of singular projective algebraic curves with infinitely many flexes is more difficult. For example, the graph of Frobenius, given by;\\

$YZ^{p-1}=X^{p}$\\

has degree $p$, infinitely many flexes and is singular. The dual curve $C^{*}$ is given by $X=0$, in particular $C^{**}$ is a point, hence $C\neq C^{**}$, and the duality morphism $dF$ is purely inseperable, therefore, biunivocal.\\

We will return to these examples and the case when $char(L)=2$ in the final section of this paper.

\end{rmk}

We now make the following definition;\\

\begin{defn}

Let $C\subset P^{2}$ be a plane projective curve. We define a multiple tangent line $l$ of $C$ to be a line which is tangent to \emph{at least} 2 branches of $C$.

\end{defn}

We now prove the following;\\

\begin{lemma}

Let $C$ be a projective algebraic curve, not equal to a line, with finitely many flexes, then every multiple tangent line of $C$ corresponds to a singularity of $C^{*}$, in particular, $C$ has finitely many multiple tangent lines.

\end{lemma}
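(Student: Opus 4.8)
The plan is to exploit the duality morphism $dF:C\leftrightsquigarrow C^{*}$, which, under the standing assumption that $char(L)\neq 2$ and $C$ has finitely many flexes, is birational by Lemma 5.8, together with the desingularised tangent-variety construction of Lemma 5.4. The key observation is that a multiple tangent line $l$ of $C$ is, by definition, tangent to at least two distinct branches $\gamma_p^{i}$ and $\gamma_q^{j}$ of $C$ (with $p$ possibly equal to $q$), and these branches correspond to distinct points of a nonsingular model $C^{ns}$. I would then show that, under the bijection on branches induced by $dF$, both branches map to branches of $C^{*}$ whose origin is the point $l\in P^{2*}$, so that $l$ is the centre of at least two branches of $C^{*}$ — i.e. a singular point of $C^{*}$.

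First I would fix a nonsingular model $\Phi:C^{ns}\rightarrow C$, and use Lemma 5.4 to identify, for each branch $\gamma_p^{j}$ corresponding to $p_j\in C^{ns}$, the tangent line $l_{\gamma_p^{j}}$ as the value $\overline{V}(p_j)$ of the Zariski closure of the tangent cover. The duality morphism $dF$, restricted to the open set $U=\Phi^{-1}(NonSing(C))$, is the composite $x\mapsto l_{\Phi(x)}$, landing in $C^{*}$; since $dF$ is birational, it extends (via Lemma 5.7 of \cite{depiro6}) to a bijection $[dF]^{*}$ on branches, and a branch $\gamma_p^{j}$ of $C$ maps to a branch of $C^{*}$ whose centre is exactly the point of $P^{2*}$ obtained as the specialisation of $l_{\Phi(x)}$ as $x$ specialises to $p_j$ along the branch — which is precisely $l_{\gamma_p^{j}}$ by Lemma 5.4. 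Hence if $l=l_{\gamma_p^{i}}=l_{\gamma_q^{j}}$ with $\gamma_p^{i}\neq\gamma_q^{j}$, the two distinct branches $[dF]^{*}(\gamma_p^{i})$ and $[dF]^{*}(\gamma_q^{j})$ of $C^{*}$ are both centred at the single point $l\in C^{*}$. By Lemma 5.29 of \cite{depiro6}, a nonsingular point of $C^{*}$ is the centre of a unique branch; therefore $l$ must be a singular point of $C^{*}$.

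Finally, since $C^{*}$ is a plane projective algebraic curve (again by Lemma 5.8, using that $C$ is not a line so it has infinitely many distinct tangent lines), it has only finitely many singular points; as the map sending a multiple tangent line to its corresponding point of $C^{*}$ is injective (distinct lines give distinct points of $P^{2*}$), $C$ has only finitely many multiple tangent lines, which is the remaining assertion.

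\textbf{Main obstacle.} The delicate point is justifying that, under $[dF]^{*}$, the branch $\gamma_p^{j}$ of $C$ really maps to a branch of $C^{*}$ \emph{centred at $l_{\gamma_p^{j}}$}, rather than merely to some branch lying over the image. This requires matching the branch-transformation machinery of \cite{depiro6} (Lemma 5.7 and the behaviour of branches under birational maps) with the geometric description of $dF$ as $x\mapsto l_x$ and the fibre computation of Lemma 5.4 — essentially checking that specialisation of the parametrised branch commutes with the tangent-line construction. I expect this compatibility statement to be where the real work lies; the counting and finiteness arguments afterward are routine given that $C^{*}$ is an algebraic curve.
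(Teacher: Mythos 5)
Your overall route is the same as the paper's: push branches through the birational duality map $dF$, show the image branch of $C^{*}$ is centred at the dual point $O_{\gamma}$ of $l_{\gamma}$, deduce that a multiple tangent gives two distinct branches of $C^{*}$ at one point (hence a singular point, since a nonsingular point carries a unique branch), and finish with finiteness of singular points of the plane curve $C^{*}$. But the step you defer in your ``main obstacle'' paragraph is precisely the entire content of the paper's proof, and as written your appeal to Lemma 5.4 does not deliver it: Lemma 5.4 computes the limit of the tangent \emph{lines} inside $P^{2}$, namely the fibre $\overline{V}(Q_{\gamma})$ of the closure of the cover in $C^{ns}\times P^{2}$, whereas what you need is the value at $Q_{\gamma}$ of the extension to $C^{ns}$ of the dual-point map $x\mapsto l_{\Phi(x)}\in P^{2*}$ (this value is the centre of the corresponding branch of $C^{*}$, by Lemma 5.7 of \cite{depiro6}). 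That the closure of the line-cover in $P^{2}$ and the closure of the graph of the dual-point map in $P^{2*}$ are compatible -- ``the dual of the limit is the limit of the duals'' -- is exactly the claim $(*)$ that the paper proves, and it is not automatic.

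The paper closes this gap with a short incidence argument you could adopt verbatim: define the closed relation $S\subset C^{ns}\times P^{2}\times P^{2*}$ by $S(x,y,z)\equiv \overline{V}(x,y)\wedge\Gamma(x,z)$, where $\Gamma$ is the (closure of the) graph of the birational morphism $C^{ns}\rightarrow C^{*}$ induced by $dF\circ\Phi$, and let $R_{1}\subset P^{2}\times P^{2*}$ be the incidence relation $u\in H_{v}$. For $x\in\Phi^{-1}(NonSing(C))$ the fibre $S(x)$ is the pair (tangent line $l_{\Phi(x)}$, its dual point), so $S(x)\subset R_{1}$; since $S$ and $R_{1}$ are closed, the inclusion persists at every $Q_{\gamma}\in C^{ns}$. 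By Lemma 5.4 the first coordinate of the fibre $S(Q_{\gamma})$ is all of $l_{\gamma}$, while the second is the single point $z=(dF\circ\Phi)(Q_{\gamma})$; the incidence then says $l_{\gamma}\subseteq H_{z}$, which forces $H_{z}=l_{\gamma}$ and hence $z=O_{\gamma}$. With that in hand, the rest of your argument (two distinct branches of $C^{*}$ centred at $O_{l}$, so $O_{l}$ is singular by the fact that a nonsingular point is the origin of a single branch, and there are only finitely many singular points and the line-to-dual-point assignment is injective) is exactly the paper's conclusion, so the only thing missing is this compatibility lemma, which you correctly identified but did not prove.
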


\begin{proof}

By the hypotheses and Lemma 5.7, the duality morphism $dF$ is birational. By Lemma 5.7 of \cite{depiro6}, the duality morphism induces a bijection $[(dF)^{-1}]^{*}$ between the branches of $C$ and of $C^{*}$. We first claim the following;\\

Let $\gamma$ be a branch of $C$ with tangent line $l_{\gamma}$, and let $O_{\gamma}$ be the point defined by $l_{\gamma}$ in $P^{2*}$, then the corresponding branch $[(dF)^{-1}]^{*}(\gamma)$ of $C^{*}$ passes through $O_{\gamma}$. $(*)$\\

The claim $(*)$ is trivially true, by definition of the duality morphism, if $\gamma$ is centred at a non-singular point of $C$, $(**)$. Otherwise, we obtain the result by a straightforward limiting argument;\\

We use the notation and variety $\bar V$ of Lemma 5.4. Then define $S\subset C^{ns}\times P^{2}\times P^{2*}$ by;\\

$S(x,y,z)\equiv {\overline V}(x,y)\wedge \Gamma_{[\Phi\circ dF]}(x,z)$\\

where $\Phi\circ dF:C^{ns}\rightarrow C^{*}$ is a birational morphism. Let $R_{1}\subset P^{2}\times P^{2*}$ be the incidence relation defined by $R_{1}(u,v)$ iff $u\in H_{v}$, where $H_{v}$ is the linear form with coefficients given by $v$. If $x\in \Phi^{-1}(NonSing(C))$, then, by $(*)$ and Lemma 5.4, we have that the fibre $S(x)$ consists of the tangent line $l_{\Phi(x)}\subset P^{2}$ and its corresponding point of the dual space $dF\circ\Phi(x)\in P^{2*}$. In particular $S(x)\subset R_{1}$, $(***)$. As $S$ and $R_{1}$ are closed varieties, the relation $(***)$ holds for all
$x\in C^{ns}$. Now fix any branch $\gamma$ of $C$ with corresponding point $Q_{\gamma}\in C^{ns}$, then, by $(***)$, the fibre $S(Q_{\gamma})$ consists of the tangent line $l_{\gamma}$ and its corresponding point $O_{\gamma}\in P^{2*}$. From the definition of $S$ we have that $\Phi\circ dF(Q_{\gamma})=O_{\gamma}$. Hence, the corresponding branch $[(dF)^{-1}]^{*}(\gamma)$ of $C^{*}$ must be centred at $O_{\gamma}$, see Lemma 5.7 of \cite{depiro6}. Therefore, the result $(*)$ is shown.\\

Now, if $l$ is a multiple tangent line to $C$, by the previous definition, there exist at least $2$ distinct branches $\{\gamma_{1},\gamma_{2}\}$ of $C$ such that $l=l_{\gamma_{1}}=l_{\gamma_{2}}$. By the claim $(*)$, if $O_{l}$ is the corresponding point of $P^{2*}$, then the corresponding branches $\{[(dF)^{-1}]^{*}(\gamma_{1}),[(dF)^{-1}]^{*}(\gamma_{2})\}$ of $C^{*}$ both pass through $O_{l}$. By Lemma 5.4 of \cite{depiro6}, $O_{l}$ must be a singular point of $C^{*}$. Hence, the Lemma follows immediately from the fact that a plane projective curve can only have finitely many singular points.

\end{proof}

\begin{lemma}

Let $C$ be a projective algebraic curve, with finitely many flexes, then $Cl(C)$, as defined in Definition 4.1, is the same as $deg(C^{*})$ and $deg(C)$ is the same as $Cl(C^{*)}$. In particular, if $C$ has at most nodes as singularities, then;\\

$deg(C^{*})=n(n-1)-2d$\\

\end{lemma}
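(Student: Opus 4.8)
The plan is to prove the two duality statements $Cl(C) = \deg(C^*)$ and $\deg(C) = Cl(C^*)$, and then derive the numerical consequence by combining with the Plücker formula of Theorem 4.3. First I would establish $Cl(C) = \deg(C^*)$. By Definition 4.1, $Cl(C)$ is the number of tangent lines of $C$ passing through a generic point $Q \in P^2$. Under the identification of $P^{2*}$ with the space of lines, a tangent line of $C$ through $Q$ corresponds to a point of $C^*$ lying on the line $H_Q \subset P^{2*}$ dual to $Q$. Since $C$ has finitely many flexes, Lemma 5.7 gives that $dF : C \leftrightsquigarrow C^*$ is birational, and Lemma 5.3 (together with its desingularised version Lemma 5.4) shows that the tangent lines of $C$ are in bijective correspondence with the points of $C^*$ via the branch map $[(dF)^{-1}]^*$, at least after discarding the finitely many multiple tangent lines handled by Lemma 5.11. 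Thus the tangent lines through $Q$ correspond exactly to the intersection points $C^* \cap H_Q$, which for generic $Q$ (hence generic $H_Q$) number exactly $\deg(C^*)$ by Bézout's theorem — using that $C^*$ is irreducible so $H_Q$ is not a component, and that genericity of $Q$ ensures the intersection is transverse and avoids the finitely many singular points of $C^*$ coming from multiple tangents and flexes.

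For the reverse statement $\deg(C) = Cl(C^*)$, I would invoke $C = C^{**}$, which holds by Lemma 5.7 since $C$ has finitely many flexes (condition $(ii)$ there). Then $\deg(C) = \deg(C^{**})$, and applying the already-established identity $Cl = \deg(\,\cdot\,)^*$ to the curve $C^*$ — which itself has finitely many flexes, since flexes of $C^*$ correspond to singularities of $C^{**} = C$ by the branch-character exchange promised in the proof of Lemma 5.7 — gives $Cl(C^*) = \deg(C^{**}) = \deg(C)$. I should be careful that $C^*$ genuinely has finitely many flexes so that Lemma 5.7 applies to it; this follows from the character-swapping property (a branch of character $(\alpha,\beta)$ dualises to one of character $(\beta,\alpha)$), which forces flexes of $C^*$ to sit over the finitely many singular points of $C$.

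For the final numerical claim, suppose now $C$ has at most nodes as singularities, with $\deg(C) = n$ and $d$ nodes. Theorem 4.3 gives $n + m + 2d = n^2$ where $m = Cl(C)$. By the first part of this lemma, $m = \deg(C^*)$, so $\deg(C^*) = m = n^2 - n - 2d = n(n-1) - 2d$, which is the asserted formula.

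The main obstacle I expect is the genericity bookkeeping in the $Cl(C) = \deg(C^*)$ step: one must simultaneously ensure that the generic point $Q$ avoids (a) the union of tangent lines to non-ordinary branches, (b) the multiple tangent lines of $C$ (finite by Lemma 5.11), and (c) that the dual hyperplane $H_Q$ meets $C^*$ transversely away from its singular locus, so that the count of $C^* \cap H_Q$ is exactly $\deg(C^*)$ with no multiplicities. Establishing that the branch-to-point correspondence of Lemma 5.3/5.4 is actually a bijection on the relevant open sets — rather than merely surjective — and that it is compatible with incidence with $H_Q$, is where the duality machinery of Lemmas 5.1–5.4 must be deployed carefully; the character-exchange property needed for the converse direction is stated as forthcoming later in the section, so I would either cite it forward or give the direct Bézout argument above which sidesteps it for the first identity and uses only $C = C^{**}$ for the second.
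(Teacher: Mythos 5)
Your first identity and the numerical consequence follow essentially the paper's route: you dualise the generic point $Q$ to a generic line $H_{Q}\subset P^{2*}$, match tangent lines of $C$ through $Q$ with points of $C^{*}\cap H_{Q}$ via the birational duality map of Lemma 5.7, and then feed $m=Cl(C)$ into Theorem 4.3. The only cosmetic difference is that the paper counts $\deg(C^{*})$ as the number of \emph{distinct} intersections of $C^{*}$ with a generic line (Definition 1.12 of the branches paper) and closes the count by a two-sided argument ($Cl(C)\geq n$, and $Cl(C)\geq n+1$ forces an $(n+1)$-st intersection, contradiction), which sidesteps the Bezout/transversality bookkeeping you list as your main obstacle; your genericity conditions (avoiding multiple tangents, tangents at non-ordinary or singular branches, and the finite complement of the canonical sets $U_{[dF]},V_{[dF]}$) are exactly what that argument needs, so this part is sound.

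The weak point is your second identity. You apply the first identity to $C^{*}$, and to legitimise this you assert that $C^{*}$ has finitely many flexes because of the branch-character exchange. That exchange is Theorem 5.13, proved \emph{later} in the section, and in positive characteristic it carries the extra hypothesis that $\{\alpha,\alpha+\beta\}$ be coprime to $char(L)$ for every branch, which is not among the hypotheses of the present lemma (and genuinely fails for some curves with finitely many flexes, cf. the example after Theorem 5.13); so as written your justification does not cover the lemma's stated generality, and the forward citation is at best an unnecessary dependency. The paper avoids this entirely: the proof of Lemma 5.7 already gives that the duality morphism $dG$ of $C^{*}$ is the birational inverse of $dF$, with $C^{**}=C$, so one simply reruns the same counting argument with $C^{*}$ in place of $C$ and $C^{**}=C$ in place of $C^{*}$ --- tangent lines of $C^{*}$ through a generic point of $P^{2*}$ correspond to points of $C$ on the dual generic line of $P^{2}$, giving $Cl(C^{*})=\deg(C)$ with no statement about the flexes of $C^{*}$ needed. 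Replacing your character-exchange appeal by this symmetric argument closes the gap.
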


\begin{proof}

By Definition 1.12 of \cite{depiro6}, $deg(C^{*})$ is given by the number of distinct intersections $\{p_{1},\ldots,p_{n}\}$ of $C^{*}$ with a generic line $l\subset P^{2*}$. Let $O_{l}$ be the corresponding generic point of $P^{2}$ and let $\{L_{p_{1}},\ldots,L_{p_{n}}\}$ be the corresponding lines of $P^{2}$. We clearly have that each line $L_{p_{j}}$ passes through $O_{l}$, for $1\leq j\leq n$. If $\{U_{[dF]},V_{[dF]}\}$ are the canonical open subsets of $\{C,C^{*}\}$, with respect to the birational morphism $dF$, see Section 1 of \cite{depiro6}, we can assume that all the intersections $\{p_{1},\ldots,p_{n}\}$ lie inside $V_{[dF]}$. Let $\{q_{1},\ldots,q_{n}\}$ be the corresponding non-singular points of $C$. By the definition of the duality map $dF$, the tangent line $l_{q_{j}}$ is exactly $L_{p_{j}}$, for $1\leq j\leq n$. Hence, by Definition 4.1, we must have that $Cl(C)\geq n$. If $Cl(C)\geq n+1$, we could, without loss of generality, find a further $q_{n+1}$, distinct from $\{q_{1},\ldots,q_{n}\}$, lying inside $U_{[dF]}$, witnessing the class of $C$, see Definition 4.1. In this case, the tangent line $l_{q_{n+1}}$ would pass through $O$, hence, again just using the definition of the duality map $dF$, the corresponding point $p_{n+1}$ of $P^{2*}$ would lie on $C^{*}\cap l\cap V_{[dF]}$. This would give at least $n+1$ intersections of $C$ and $l$, which is a contradiction. Hence, $Cl(C)=deg(C^{*})=n$ as required. The claim that $deg(C)=Cl(C^{*})$ follows from the same argument, replacing $C$ by $C^{*}$ and $C^{*}$ by $C^{**}$, and using the fact that $C=C^{**}$. Finally, the relation $deg(C^{*})=n(n-1)-2d$ follows immediately from the Plucker formula, proved in Theorem 4.3, and the relation $Cl(C)=deg(C^{*})$, which we have just shown.

\end{proof}

We now show the following important result;\\

\begin{Theorem}{Transformation of Branches by Duality}\\

Let $C$ be a plane projective algebraic curve, with finitely may flexes, then, if $\gamma$ is a branch of $C$ with character $(\alpha,\beta)$, such that $\{\alpha,\alpha+\beta\}$ are coprime to $char(L)=p$, the corresponding branch of $C^{*}$ has character $(\beta,\alpha)$.

\end{Theorem}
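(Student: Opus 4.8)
The plan is to reduce the statement to a local computation at the branch $\gamma$, using a power series parametrisation and the duality morphism $dF$. First I would invoke Lemma 5.7 (which applies since $C$ has finitely many flexes and $char(L)\neq 2$) to know that $dF:C\leftrightsquigarrow C^{*}$ is birational, and Lemma 5.7 of \cite{depiro6} to transport the branch $\gamma$ to a well-defined branch $\gamma^{*}=[(dF)^{-1}]^{*}(\gamma)$ of $C^{*}$. By the claim $(*)$ in the proof of Lemma 5.11, $\gamma^{*}$ is centred at the point $O_{\gamma}\in P^{2*}$ corresponding to the tangent line $l_{\gamma}$. So the character of $\gamma^{*}$ can be read off from a parametrisation of $\gamma^{*}$, which in turn is obtained by applying $dF$ (i.e. the cross-product $\phi(t)\times\phi'(t)$, as in the calculations of Lemmas 5.5--5.7 and Remarks 5.8) to a parametrisation $\phi(t)$ of $\gamma$.

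The key computation is as follows. After a homographic change of coordinates (permissible since homographies preserve the character of branches, and the induced map on $P^{2*}$ is again a homography, as already noted in the proof of Claim 3 of Theorem 4.3), I may assume $\gamma$ is centred at the origin of an affine chart $(x,y)$ with tangent line $y=0$. By Lemma 3.7 the branch admits a parametrisation of the form $\phi(t)=(t^{\alpha},y(t))$ is \emph{not} quite available since the tangent is $y=0$; rather, using Theorem 6.1 of \cite{depiro6} and the definition of character $(\alpha,\beta)$, I can take $x(t)=t^{\alpha}u_{1}(t)$, $y(t)=t^{\alpha+\beta}u_{2}(t)$ with $u_{1},u_{2}$ units (after a further coordinate normalisation, $x(t)=t^{\alpha}$). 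Then the dual parametrisation is the projective class of
\[
\phi(t)\times\phi'(t)=\bigl(-y'(t),\ x'(t),\ x(t)y'(t)-x'(t)y(t)\bigr),
\]
an affine version of the cross-product computed in Remarks 5.8. I would compute the $t$-orders of the three entries: $\operatorname{ord}_{t}(x'(t))=\alpha-1$ (here the hypothesis $p\nmid\alpha$ is used so that differentiation does not kill the leading term), $\operatorname{ord}_{t}(y'(t))=\alpha+\beta-1$ (using $p\nmid\alpha+\beta$), and $\operatorname{ord}_{t}(x(t)y'(t)-x'(t)y(t))=\alpha+\beta+\alpha-1$ — the cancellation of the $t^{\alpha+\beta-1}$-terms in $x y' - x' y$ is exactly the standard Wronskian-type identity, and one checks the next term survives precisely under the coprimality hypotheses. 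Dividing through by the lowest order $t^{\alpha-1}$, the dual branch is parametrised (in the chart where the second coordinate is nonzero, i.e. centred at $O_{\gamma}$) by a pair whose orders are $0$ (the normalised second coordinate), $\beta$ (the first coordinate, from $y'(t)/x'(t)$), and $\alpha+\beta$ (the third coordinate). Reading off the character from these orders in the sense of Definition 6.3 of \cite{depiro6}, the first nonzero order above the constant term is $\beta$ and the tangential order is $\alpha+\beta$, giving character $(\beta,(\alpha+\beta)-\beta)=(\beta,\alpha)$.

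The main obstacle is bookkeeping the characteristic-$p$ subtleties and making the order computations rigorous at the level of Cauchy/algebraic power series rather than formal manipulation: one must verify that the leading coefficients of $x'(t)$, $y'(t)$ and of the Wronskian $x y' - x' y$ are genuinely nonzero, which is where the coprimality of $\alpha$ and $\alpha+\beta$ with $p$ enters, and one must confirm that the parametrisation obtained for $\gamma^{*}$ is a legitimate branch parametrisation in the sense of Theorem 6.1 of \cite{depiro6} (this is guaranteed abstractly by Lemma 5.7 of \cite{depiro6}, so the computation only needs to identify \emph{which} parametrisation it is, up to reparametrisation, using the uniqueness-of-leading-term argument from the proof of Lemma 3.11). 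A secondary point to handle cleanly is the passage from the affine cross-product formula to the honest projective duality morphism and the identification of $P^{2**}$ with $P^{2}$ via Lemma 5.1, so that ``character of the dual branch'' is computed in the correct chart centred at $O_{\gamma}$; this is routine given Remarks 5.8 but should be stated explicitly. Finally, I would remark that applying the theorem twice recovers $C=C^{**}$ compatibly with Lemma 5.7, which serves as a consistency check: character $(\alpha,\beta)\mapsto(\beta,\alpha)\mapsto(\alpha,\beta)$, provided $\beta$ and $\alpha+\beta$ are also coprime to $p$ — the asymmetry in hypotheses being exactly what one expects.
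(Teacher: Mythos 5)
Your route is genuinely different from the paper's. The paper deliberately avoids the local dual-parametrisation computation you propose: it obtains the order of $\gamma^{*}$ geometrically, by comparing the $g_{m}^{1}$ cut on $C$ by the rational function $-f_{y}/f_{x}$ with the $g_{m'}^{1}$ transferred from the pencil of lines through a generic point of the dual plane (the identification $(\dag\dag\dag)$, proved via the Jacobians of the pencils $g_{n}^{1,t}$), and then invokes Theorem 1.12 to produce $\beta$ distinct nearby transverse intersections, giving $I_{\gamma^{0*}}(C^{*},l)=\beta$ for a generic line $l$ through the centre; the class is then obtained with no further computation, by running the same argument on $C^{*}$ and using $C^{**}=C$. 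Your appeal to the birationality of $dF$ (Lemma 5.8) and to Lemma 5.7 of \cite{depiro6} to legitimise the pushed-forward parametrisation does answer the paper's stated objection to parametrisation proofs (a primitive parametrisation pushed forward under a birational map stays primitive), so in characteristic zero your argument is correct and is a leaner alternative to the paper's.

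The gap is in the key order computation in characteristic $p$. With $x(t)=t^{\alpha}u_{1}(t)$, $y(t)=t^{\alpha+\beta}u_{2}(t)$ one gets $xy'-x'y=\beta\,u_{1}u_{2}\,t^{2\alpha+\beta-1}+(u_{1}u_{2}'-u_{1}'u_{2})\,t^{2\alpha+\beta}$ (note the exponent is $2\alpha+\beta-1$, not $\alpha+\beta-1$ as you wrote), so the leading coefficient is killed precisely when $p\mid\beta$, and $p\mid\beta$ is compatible with the hypotheses, which only require $\alpha$ and $\alpha+\beta$ to be prime to $p$. In that case $\mathrm{ord}_{t}(xy'-x'y)\geq 2\alpha+\beta$, the contact of the image branch with its tangent exceeds $\alpha+\beta$, and your reading of the character gives class strictly greater than $\alpha$. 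This is not a removable bookkeeping issue: for $p=3$ and $y=x^{4}+x^{5}$ (branch at the origin of character $(1,3)$, so $\alpha=1$ and $\alpha+\beta=4$ are prime to $3$; the curve has finitely many flexes and $dF$ is separable and generically injective, hence birational) one computes $xy'-x'y=x^{5}$, so the image branch is parametrised by $(2t^{3}+t^{4},\,t^{5})$ and has character $(3,2)$, not $(3,1)$. Thus your argument establishes the order statement $\alpha^{*}=\beta$ under the stated hypotheses, but the class statement additionally needs $p\nmid\beta$; the sentence ``one checks the next term survives precisely under the coprimality hypotheses'' is exactly where the proof breaks. It is worth observing that the same condition is implicit in the paper's own proof, since the class there is obtained by applying the order argument to $C^{*}$, whose branch has order $\beta$, so the coprimality needed at that second stage again concerns $\beta$; your computation, made precise, is valuable precisely because it exposes this, but as written the proposal does not prove the theorem in the stated generality.
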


\begin{rmk}
One can find algebraic "proofs" of this result in the literature. These proofs use a local parametrisation of the branch and an analysis of the resulting parametrisation after applying the duality morphism. Unfortunately, such proofs fail to give the correct answer in the case when $C$ has infinitely many flexes. The explanation of this discrepancy is that in such cases, the resulting "parametrisations" fail to give parametrisations in the sense of Theorem 6.1 of \cite{depiro6}. One requires the fact that the duality morphism is birational in order to show this stronger claim. This suggests that a geometric proof of this result is required. The one that we give is based on Severi's methods.
\end{rmk}
\begin{proof}
Let $\gamma^{0}$ be a given branch of $C$, with character $(\alpha,\beta)$, as in the statement of the theorem, centred at $O$, and let $l_{\gamma^{0}}$ be its tangent line. By $(*)$ of Lemma 5.11, the corresponding branch $\gamma^{0*}$ of $C^{*}$ is centred at the corresponding point $O'=O_{l_{\gamma}^{0}}$ of the dual space $P^{2*}$. Let $l$ be a generic line through $O'$, and let $A_{l}$ be the corresponding point of the dual space $P^{2}$. As $O_{l_{\gamma}^{0}}\in l$, we have, by duality, that $A_{l}\in l_{\gamma^{0}}$, and, by genericity, that $A_{l}\neq O$. The line $l$ then parametrises the pencil of lines passing through $A_{l}$. Now pick a further generic point $B$ belonging to $l$ and let $l_{B}$ be the corresponding line in the dual space $P^{2}$. Again, as $B\in l$, we obtain, by duality that $A_{l}\in l_{B}$, and, by genericity, that $l_{B}\neq l_{\gamma^{0}}$. Now choose a parametrisation $\Phi:P^{1}\rightarrow l_{B}$ such that $\Phi(0)=A_{l}$. We will denote the corresponding pencil of lines in $P^{2*}$ by $\{l_{t}\}_{t\in P^{1}}$. By construction, we have that $l_{0}=l$ and that the pencil
$\{l_{t}\}_{t\in P^{1}}$ is centred at $B$. Now, considering $P^{2*}$ as parametrising the set of lines in $P^{2}$, it defines a $g_{n}^{2}$ on $C$, where $n=deg(C)$. Each line $l_{t}\subset P^{2*}$, then defines a $g_{n}^{1}\subset g_{n}^{2}$ on $C$. We will denote this $g_{n}^{1}$ by $g_{n}^{1,t}$.\\

By a suitable choice of coordinates, we may, without loss of generality, assume that the line $l_{B}$ corresponds to the line $l_{\infty}$ in the affine coordinate system $\{x={X\over Z},y={Y\over Z}\}$, the point $\Phi(0)=A_{l}$ corresponds to $[0:1:0]$, the point $O$ corresponds to $[0:0:1]$ and the point $\Phi(\infty)$ corresponds to $[1:0:0]$. By choice of $l_{B}$, we can assume that the line $l_{\infty}$ intersects $C$ transversely in ordinary simple points $\{p_{1},\ldots,p_{n}\}$. Let $t_{j}=\Phi^{-1}(p_{j})$ and let $U=P^{1}\setminus\{t_{1},\ldots,t_{n},\infty\}$. By the choice of $A_{l}$, we can assume that $0\in U$. Now, for $t\in U$, the corresponding $g_{n}^{1,t}$, defined above, has no fixed branch contribution. We then have that $g_{n}^{1,t}=(h_{t})$, where $h_{t}$ is the non-constant rational function on $C$ defined by $h_{t}(x,y)=x-ty$, see Lemma 2.4. In particular, we have that $g_{n}^{1,0}$ is defined by $(x=0)$. Now let $U_{[dF]}$ be the canonical set associated to the birational morphism $dF:C\leftrightsquigarrow C^{*}$, and let $V\subset U_{[dF]}$ be obtained by removing;\\

(i). All the finitely many flexes from $U_{[dF]}$.\\

(ii). All the finitely many points on $C\cap l_{\infty}$ from $U_{[dF]}$.\\

(iii). All the finitely many non-singular points of $U_{[dF]}$, whose\\
 \indent \ \ \ \ \ \ \ tangent line is parallel to the $y$-axis.\\

Now, using the method before Lemma 3.9, we can, for $t\neq\infty$,  define the rational function ${dh_{t}\over dx}$. By the calculation there, if $C$ is defined by $f(x,y)=0$ in the coordinate system $(x,y)$, we have that;\\

${dh_{t}\over dx}={h_{x}f_{y}-h_{y}f_{x}\over f_{y}}=1+t{f_{x}\over f_{y}}$\\

Then, using the notation of Lemma 2.4, for $t\in {U\setminus\{0\}}$, $({dh_{t}\over dx}=0)$ is the same as $(-{f_{y}\over f_{x}}=t)$, considered as weighted sets on $C$. As $-{f_{y}\over f_{x}}$ is a non-constant rational function on $C$, then, by Lemma 2.4, we can associate a $g_{1}^{m}=(-{f_{y}\over f_{x}})$ to it.\\

Let $J_{t}=Jac(g_{n}^{1,t})$, see Definition 3.2 and Definition 3.22. Then $J_{t}$ consists of a weighted set of branches $\{\alpha_{1}\gamma_{1}^{t},\ldots,\alpha_{r}\gamma_{r}^{t}\}$. We claim the following;\\

$(J_{t}\cap V)=(g_{1}^{m}\cap V)$ for $(t\in {U\setminus\{0\}})$ $(\dag)$\\

In order to show $(\dag)$, let $\gamma$ be a branch of $V$. By the definition of $V$,
 $\gamma$ is centred at the finite position $(a,b)$, has character $(1,1)$ and its tangent line $l_{\gamma}$ is not parallel to the $y$-axis. Let $(x,y(x))$ be a parametrisation of $\gamma$, in the form given by Lemma 3.7. If $\gamma$ belongs to $J_{t}$, then $ord_{\gamma}(h_{t})=2$, $val_{\gamma}(h_{t})<\infty$ and $h_{t}$ determines an algebraic power series at the branch $\gamma$;\\

$h_{t}=\lambda+(x-a)^{2}\psi(x-a)$ with $\psi(0)\neq 0,\lambda<\infty$\\

We then have that;\\

${dh_{t}\over dx}=(x-a)[2\psi(x-a)]+[(x-a)\psi'(x-a)]$ ($char(L)\neq 2$)\\

At $x=a$, the expression in brackets reduces to $2\psi(0)\neq 0$, hence $ord_{\gamma}({dh_{t}\over dx})=1$ and $val_{\gamma}({dh_{t}\over dx})=0$. This shows that $\gamma$ is counted once in the weighted set $({dh_{t}\over dx}=0)$, and, therefore, once in the weighted set $W_{t}$ of the corresponding $g_{m}^{1}$ defined above. If $\gamma$ belongs to the $g_{m}^{1}$, then, $val_{\gamma}({dh_{t}\over dx})=0$, for some $t\in U\setminus\{0\}$. Reversing the above argument, using the fact that $val_{\gamma}(h_{t})<\infty$, we obtain that $ord_{\gamma}({dh_{t}\over dx})=2$, hence $\gamma$ belongs to $J_{t}$ as required. Therefore, $(\dag)$ is shown. Note that an almost identical argument to the above was carried out in Lemma 3.9.\\

We now consider the behaviour of the $g_{m}^{1}$ at the branch $\gamma^{0}$. We claim that $\gamma^{0}$ is counted $\beta$ times in the weighted set $W_{0}$ of this $g_{m}^{1}$, $(\dag\dag)$. Let $(x(s),y(s))$ be a parametrisation of the branch at $(0,0)$. Using the definition of a parametrisation in Theorem 6.1 of \cite{depiro6} and the fact that the tangent line $l_{\gamma_{0}}$ is given by $x=0$, we obtain immediately that $ord_{s}x(s)=\alpha+\beta$ and $ord_{s}y(s)=\alpha$. Using the fact that $f(x(s),y(s))=0$, we obtain that;\\

$f_{x}|_{(x(s),y(s))}x'(s)+f_{y}|_{(x(s),y(s))}y'(s)=0$\\

If either $f_{x}$ or $f_{y}$ is identically zero on $C$, then the image of the map $dF$ is contained in a line, which implies, by Lemma 5.8, that $C$ has infinitely many flexes. Hence, we must have that $x'(s)\equiv 0$ iff $y'(s)\equiv 0$. If both $x'(s)\equiv 0$ and $y'(s)\equiv 0$, we can find algebraic power series $\{x_{1}(s),y_{1}(s)\}$ such that $x(s)=x_{1}(s^{p})$ and $y(s)=y_{1}(s^{p})$. This would contradict the construction of a parametrisation, as defined in Theorem 6.1 of \cite{depiro6}, see also the method used in \cite{depiro7}, Remarks 2.3. Hence, we have $x'(s)\neq 0$, and $y'(s)\neq 0$, and;\\

$-{f_{y}\over f_{x}}|_{(x(s),y(s))}={x'(s)\over y'(s)}$\\

By the assumption on $\{\alpha,\beta\}$ in the statement of the theorem, we obtain that $ord_{s}(-{f_{y}\over f_{x}}|_{(x(s),y(s))})=(\alpha+\beta-1)-(\alpha-1)=\beta$. By definition of the $g_{m}^{1}$, this implies $(\dag\dag)$.\\

Now consider the $g_{m'}^{1}$ on $C^{*}$, given by the pencil $\{l_{t}\}_{t\in P^{1}}$. By Theorem 1.14, and the fact that $C$ and $C^{*}$ are birational, this transfers to a $g_{m'}^{1}$ on $C$. By choice of $B$, the $g_{m'}^{1}$ on $C$ and $C^{*}$ have no base branches. We claim that $g_{m}^{1}=g_{m'}^{1}$, $(\dag\dag\dag)$. Let $\{V,V'\}$ be canonical sets associated to the birational map $dF:C\leftrightsquigarrow C^{*}$, where $V$ was defined above. Then, using Lemma 2.17 of \cite{depiro6}, there exists an open subset $U'\subset U\subset P^{1}$ such that, for $t\in U'$, the corresponding weighted sets $\{W_{t},W_{t}'\}$ of the $\{g_{m}^{1}, g_{m'}^{1}\}$ on $C$, consist of branches(points) which are simple for $\{W_{t},W_{t}'\}$ respectively, based inside $V$ or $V'$. In this case, a point $p\in W_{t}'$ corresponds to a transverse intersection between $l_{t}$ and $C^{*}$. The line $l_{p}$ in the dual space $C$ is then a tangent line to a non-singular point $p'$ of $C$, having character $(1,1)$. We, therefore, have that $I_{p'}(C,l_{p})=2$, hence $p'$ is counted once for $Jac(g_{n}^{1,t})$. Combining this with the result $(\dag)$, we obtain that $W_{t}=W_{t}'$, hence $(\dag\dag\dag)$ is shown.\\

We now finish the proof of the theorem. By Theorem 1.12, the fact that the $g_{m}^{1}$ on $C$ has no fixed branches, and the result $(\dag\dag)$, for generic $t\in{\mathcal V}_{0}$, the weighted set $W_{t}\cap C\cap\gamma^{0}$ of the $g_{m}^{1}$ consists of $\beta$ distinct branches(points), centred in $U'\cap \gamma^{0}$. By $(\dag\dag\dag)$, these correspond to $\beta$ transverse intersections $C^{*}\cap l_{t}\cap\gamma^{0*}$. It follows immediately that $I_{\gamma^{0*}}(C^{*},l_{0})=\beta$, hence, as $l_{0}=l$ was chosen to be generic through $O'$, the order of the branch is $\beta$. We now follow through the same argument, replacing $C$ by $C^{*}$ and $C^{*}$ by $C^{**}=C$. It follows immediately that the class of the branch $\gamma^{0*}$ must be the order of the branch $\gamma^{0}$. Hence, the character of the branch $\gamma^{0*}$ is $(\beta,\alpha)$ as required.

\end{proof}

\begin{rmk}

The assumption that $\{\alpha,\alpha+\beta\}$ are co-prime to $char(L)=p$ is necessary. Consider the projective algebraic curve $C$ defined by;\\

$y=x^{p}+x^{p+2}$\\

In projective coordinates, this is defined by $YZ^{p+1}=X^{p}Z^{2}+X^{p+2}$ and the duality morphism $dF$ is given by;\\

$dF:[X:Y:Z]\mapsto [2X^{p+1}:-Z^{p+1}:2X^{p}Z-YZ^{p}]$\\

$\indent \ \ \ \ \ \ \ \ \ \ (x,y)\mapsto (-2x^{p+1},y-2x)$\\

The duality morphism is clearly seperable, hence $C$ must have finitely many flexes. $C$ is non-singular in the affine coordinate system $(x,y)$, and the character of the unique branch $\gamma_{0}$ of $C$ at the origin $(0,0)$ is $(1,p-1)$. The proof of the theorem shows that the order of the corresponding branch $\gamma_{0}^{*}$ of $C^{*}$ is $(p+1)\neq (p-1)$, for $char(L)\neq 2$.\\

One can easily construct further examples using appropriate rational parametrisations.

\end{rmk}

\begin{section}{A Generalised Plucker Formula}

The purpose of this section is to give a geometric proof of a generalisation of the Plucker formula of Section 4. We also discuss the question of representation of plane algebraic curves in greater detail. We first require the following definition;\\

\begin{defn}

We will define a plane projective algebraic curve $C$ to be normal if it is has finitely many flexes and all its branches $\gamma$ have character $(\alpha,\beta)$, with $\{\alpha,\alpha+\beta\}$ coprime to $char(L)=p$.

\end{defn}

We first show the following;\\

\begin{lemma}
Let $C$ be a normal plane projective curve with $class(C)=m$, as defined in Definition 4.1, $genus(C)=\rho$, as defined in Definition 3.33 and $order(C)=n$. Then;\\

$\rho={1\over 2}[m+\sum_{\gamma}(\alpha(\gamma)-1)]-(n-1)$\\

where the sum is taken over the finitely many singular branches and, for such a branch $\gamma$, $\alpha(\gamma)$ gives the order of the branch. In particular, if $C$ is a normal projective curve, having at most nodes as singularities, we obtain the formula shown earlier;\\

$\rho={m\over 2}-(n-1)$

\end{lemma}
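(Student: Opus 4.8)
The strategy is to compute the genus $\rho$ using the geometric definition of Definition 3.33, applied to a conveniently chosen $g_n^1$ on $C$, namely the pencil of lines through a generic point $Q \in P^2$. By Definition 3.33, we have $2\rho - 2 = r - 2n$, where $n = order(|A|)$ and $r = order(|A_j|)$ for any effective weighted set $A$ with $\dim(|A|) \geq 1$; here I take $A$ to be a general member $G$ of this $g_n^1$, so that $order(|G|) = n = deg(C)$, and $|G_j| = Jacob(|G|)$ contains $Jac(g_n^1)$ by Definition 3.28 and Remarks 3.29. Thus it remains only to compute $order(Jac(g_n^1))$, i.e. the total multiplicity of the Jacobian weighted set of the pencil of lines through $Q$.

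\textbf{Key steps.} First I would invoke Lemma 3.5 to choose coordinates $(x,y)$ so that $Q$ is the point at infinity in the $y$-direction, so that the $g_n^1$ is given by the lines parallel to the $y$-axis together with the line at infinity; this is the $g_m^1$ of Lemma 3.6 (with $m = n$), defined by the rational function $x$, with no fixed branch contribution. Next, I would analyze $Jac(g_n^1)$ branch by branch using the Theorem on Differentials (Theorem 3.19) and its local computations (Lemmas 3.9, 3.18): a branch $\gamma$ centred at a nonsingular point contributes to $Jac(g_n^1)$ exactly when $x$ has a multiple zero along $\gamma$, i.e. when $l_\gamma$ is parallel to the $y$-axis, and it is then counted $ord_\gamma(dh/dx)$-times where $h = x$. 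For a branch $\gamma$ of character $(\alpha, \beta)$ whose tangent is parallel to the $y$-axis (the generic situation after applying Lemma 3.5, which forces all relevant tangencies to be $2$-fold at nonsingular points), the local order of contact of $l_\gamma$ with $\gamma$ is $\alpha + \beta$, contributing $\alpha + \beta - 1$; but a \emph{singular} branch $\gamma$ of character $(\alpha, \beta)$ with $\alpha \geq 2$ \emph{always} forces $l_\gamma$ to meet $C$ with multiplicity $\geq \alpha + 1$ along $\gamma$ (since the branch is singular), and by genericity of $Q$ this is the only extra contribution, namely $\alpha(\gamma) - 1$ per singular branch, \emph{independently} of the tangent direction. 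Here I need the normality hypothesis: $\{\alpha, \alpha+\beta\}$ coprime to $char(L)$ ensures the differentiation arguments of Section 3 (which rely on $ord_s$ of derivatives behaving as in characteristic zero) go through, and the finiteness of flexes guarantees only finitely many branches contribute.

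\textbf{Counting.} Assembling the contributions: the nonsingular branches with tangent parallel to the $y$-axis each contribute $1$ (by Lemma 3.5(ii), these tangencies are simple $2$-fold contacts at nonsingular points), and by Lemma 5.13 the number of such branches is precisely $class(C) = m$ (the number of tangents through the generic point $Q$). Each singular branch $\gamma$ contributes an additional $\alpha(\gamma) - 1$ beyond what a simple branch would give; but a simple branch ($\alpha = 1$) with tangent \emph{not} parallel to the $y$-axis contributes $0$, so the total from singular branches that do not also count toward the class is $\sum_\gamma (\alpha(\gamma) - 1)$, the sum over singular branches. Hence $r = order(Jac(g_n^1)) = m + \sum_\gamma(\alpha(\gamma) - 1)$, and plugging into $2\rho - 2 = r - 2n$ gives
$$\rho = \tfrac{1}{2}\Bigl[m + \sum_\gamma(\alpha(\gamma) - 1)\Bigr] - (n-1),$$
as claimed. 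The special case of at most nodes follows since nodes are origins of branches of order $1$, so the sum vanishes.

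\textbf{Main obstacle.} The delicate point is the bookkeeping of the Jacobian contribution at a singular branch: I must carefully separate the contribution coming from the branch being \emph{singular} (which is $\alpha(\gamma)-1$, present no matter where $Q$ is, because the local intersection of any line through the branch's center with that branch is at least $\alpha$, with equality $\alpha$ for a generic direction and $\alpha + \beta$ for the tangent direction — wait, this needs care) from the contribution coming from the branch's tangent happening to point at $Q$ (which by genericity of $Q$ does not occur for singular branches after choosing $Q$ generically). The cleanest route is to use Theorem 3.19 together with the alternative description via Definition 3.22 and Remarks 3.23: a branch counted $\beta$ times for the $g_n^1$ and $\beta + \alpha$ times in a particular weighted set is counted $2\beta + \alpha - 1$ times in $Jac(g_n^1)$; since the pencil of lines has no fixed branches ($\beta = 0$), a branch counted $k$ times in its (unique) weighted set contributes $k - 1$. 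Then $order(Jac(g_n^1)) = \sum_\gamma(k_\gamma - 1)$ over all branches that are multiple in their weighted set, and I must show this sum equals $m + \sum_{\gamma \text{ sing}}(\alpha(\gamma) - 1)$ by identifying, for generic $Q$, exactly which branches are multiple: the $m$ nonsingular branches with tangent through $Q$ (each counted twice, contributing $1$ each, by Lemma 3.5 and Lemma 5.13), and every singular branch of order $\alpha \geq 2$ (counted $\alpha$ times generically since a general line through the branch center meets the branch with multiplicity $\alpha$, contributing $\alpha - 1$). Verifying that these are disjoint classes and exhaust all multiple branches for generic $Q$ is the heart of the argument, and this is where the genericity of $Q$ (avoiding the finitely many tangent and multiple-tangent lines, Lemma 5.12, and the finitely many flexes) is essential.
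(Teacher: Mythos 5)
Your proposal is correct and follows essentially the same route as the paper: both compute the Jacobian of the pencil of lines through a generic point, identify its order as $m+\sum_{\gamma}(\alpha(\gamma)-1)$ (the $m$ simple $2$-fold tangency branches witnessing the class, plus each singular branch counted $\alpha(\gamma)-1$ times since a generic line through its centre meets it with multiplicity exactly $\alpha(\gamma)$), and substitute into $2\rho-2=r-2n$ from Definition 3.33. The extra coordinate normalization via Lemma 3.5 and the differential lemmas you invoke are not needed in the paper's argument, and your final bookkeeping via Remarks 3.23 (no fixed branches, so a branch counted $k$ times contributes $k-1$) resolves the hesitation in your last paragraph exactly as the paper does.
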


\begin{proof}
By Remarks 4.2 and the fact that $C$ has finitely many flexes, we can suppose that, for generic $P$, the $m$ tangent lines of $C$ passing through $P$ are based at ordinary simple points. In particularly, $P$ does not lie on any of the finitely many tangent lines belonging to the singular branches of $C$. We consider the $g_{n}^{1}$ defined by the pencil of lines passing through $P$. We have that $Jac(g_{n}^{1})$ consists exactly of the $m$ ordinary branches witnessing the class of $C$ and the finitely many singular branches $\gamma$, each counted $\alpha(\gamma)-1$ times. In particular;\\

 $order(Jac(g_{n}^{1}))=m+\sum_{\gamma}(\alpha(\gamma)-1)$\\

  Now we obtain the first part of the lemma from the fact that $order(g_{n}^{1})=n$ and Definition 3.33 of the genus of $C$. If $C$ has at most nodes as singularities, then it has \emph{no} singular branches. Therefore, $(\alpha(\gamma)-1)=0$ for any branch $\gamma$ of $C$. The second part of the lemma then follows from the previous formula.

\end{proof}

Using duality, we have;\\

\begin{lemma}

Let $C$ be a normal plane projective algebraic curve, not equal to a line, with the invariants $\{m,n,\rho\}$ as defined above. Then;\\

$\rho={1\over 2}[n+\sum_{\gamma}(\beta(\gamma)-1)]-(m-1)$\\

where the sum is taken over the finitely many flexes, and, for such a branch $\gamma$, $\beta(\gamma)$ gives the class of the branch.

\end{lemma}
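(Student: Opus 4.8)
The plan is to apply the previous lemma to the dual curve $C^{*}$ and then translate the resulting invariants back to $C$ using the duality theory developed in Section 5. Since $C$ is normal and not a line, Lemma 5.7 guarantees that the duality morphism $dF:C\leftrightsquigarrow C^{*}$ is birational, and $C^{**}=C$. Applying Lemma 6.2 to $C^{*}$ gives
\[
\rho^{*}={1\over 2}\left[m^{*}+\sum_{\gamma^{*}}(\alpha(\gamma^{*})-1)\right]-(n^{*}-1),
\]
where $\rho^{*}=genus(C^{*})$, $m^{*}=class(C^{*})$, $n^{*}=order(C^{*})$, and the sum runs over the singular branches of $C^{*}$. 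The genus is a birational invariant by Theorem 3.35, so $\rho^{*}=\rho$. By Lemma 5.12, $order(C^{*})=class(C)=m$ and $class(C^{*})=order(C)=n$, so $n^{*}=m$ and $m^{*}=n$.

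The main point is to identify the branches of $C^{*}$ with those of $C$ and track their characters. By Theorem 5.13 (Transformation of Branches by Duality), applicable since $C$ is normal, the bijection $[(dF)^{-1}]^{*}$ of Lemma 5.7 sends a branch $\gamma$ of $C$ with character $(\alpha(\gamma),\beta(\gamma))$ to a branch $\gamma^{*}$ of $C^{*}$ with character $(\beta(\gamma),\alpha(\gamma))$. In particular, $\gamma^{*}$ is a singular branch of $C^{*}$ (i.e. $\alpha(\gamma^{*})=\beta(\gamma)\geq 2$) if and only if $\gamma$ is a flex of $C$ (i.e. $\beta(\gamma)\geq 2$); this also uses the fact that $C^{*}$ is itself normal, which I would need to check — it follows because $C=C^{**}$ has finitely many flexes, so $C^{*}$ has finitely many singular branches, and the coprimality condition on the characters $(\beta(\gamma),\alpha(\gamma))$ of branches of $C^{*}$ is exactly the coprimality condition already assumed for $C$. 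Hence the sum $\sum_{\gamma^{*}}(\alpha(\gamma^{*})-1)$ over singular branches of $C^{*}$ equals $\sum_{\gamma}(\beta(\gamma)-1)$ over the finitely many flexes of $C$.

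Substituting $\rho^{*}=\rho$, $n^{*}=m$, $m^{*}=n$, and the branch identification into the displayed formula for $\rho^{*}$ yields
\[
\rho={1\over 2}\left[n+\sum_{\gamma}(\beta(\gamma)-1)\right]-(m-1),
\]
which is exactly the assertion. I would close by noting that all the quantities involved ($\rho$, $m$, $n$, and the characters of the finitely many flexes) are genuinely well-defined: $\rho$ by Theorem 3.35, $m$ by Remarks 4.2, and the finiteness of the flex set by Lemma 5.7. The main obstacle I anticipate is the bookkeeping in the previous paragraph: verifying that $C^{*}$ satisfies the hypotheses of both Lemma 6.2 and of being normal, so that Lemma 6.2 genuinely applies to $C^{*}$, and that the branch correspondence exactly matches singular branches of $C^{*}$ with flexes of $C$ with no off-by-one or multiplicity discrepancies in the sum. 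Everything else is a direct substitution using Lemma 5.12 and Theorem 5.13.
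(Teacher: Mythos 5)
Your overall strategy---dualise, apply the previous lemma to $C^{*}$, and translate back using $order(C^{*})=class(C)$, $class(C^{*})=order(C)$, the Transformation of Branches theorem for the characters, and birational invariance of the genus---is exactly the paper's strategy; the paper simply re-runs the Jacobian computation of Lemma 6.2 on $C^{*}$ rather than quoting that lemma. The one genuine problem is your verification that $C^{*}$ is normal. Normality of $C$ says that every branch of character $(\alpha,\beta)$ has $\{\alpha,\alpha+\beta\}$ coprime to $p=char(L)$; normality of $C^{*}$ would require $\{\beta,\alpha+\beta\}$ coprime to $p$ for the dual branch of character $(\beta,\alpha)$, and this is \emph{not} the same condition, contrary to your claim that it is ``exactly the coprimality condition already assumed for $C$.'' For instance, a flex of character $(1,p)$ is allowed on a normal curve $C$ (both $1$ and $p+1$ are prime to $p$, so the branch-transformation theorem even applies to it), yet its dual branch has order $p$, so $C^{*}$ fails the definition of normal. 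Hence in positive characteristic you cannot simply cite Lemma 6.2 with $C^{*}$ in place of $C$; in characteristic $0$ your shortcut is harmless, since there normality reduces to having finitely many flexes.

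The gap is repairable, and the repair is what the paper actually does: the full normality of $C^{*}$ is never needed, only (a) that $C^{*}$ has finitely many flexes---true because, by the branch-transformation theorem applied to the branches of $C$ (this is where $C$'s coprimality is used), flexes of $C^{*}$ correspond to the finitely many singular branches of $C$---and (b) the orders of the branches of $C^{*}$, again supplied by that theorem. With these, one computes directly that the Jacobian of the $g_{m}^{1}$ on $C^{*}$ cut by the lines through a generic point has order $n+\sum_{\gamma}(\beta(\gamma)-1)$, and Definition 3.33 together with Theorem 3.35 gives the stated formula; the coprimality of the characters of the branches of $C^{*}$ never enters. (Two citation slips: birationality of $dF$ and $C=C^{**}$ is Lemma 5.8, not 5.7, and the Transformation of Branches result is the separately numbered Theorem 5.1, not 5.13.)
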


\begin{proof}
As $C$ is normal and not equal to a line, we may apply Lemma 5.8, Lemma 5.12 and Theorem 5.1. In particular, we have that $C^{*}$ has finitely many flexes, and, as in the previous lemma, for generic $P$, the tangent lines of $C^{*}$, passing through $P$, are based at ordinary simple points. We consider the $g_{n'}^{1}$ on $C^{*}$ defined by the pencil of lines passing through $P$. We have that $n'=order(C^{*})=class(C)=m$ and $class(C^{*})=order(C)=n$ by Lemma 5.12. We also have that $Jac(g_{m}^{1})$ consists exactly of the $n$ ordinary branches witnessing the class of $C^{*}$ and the finitely many singular branches, each counted $(\beta(\gamma)-1)$ times, by Theorem 5.1. Hence;\\

$order(Jac(g_{m}^{1}))=n+\sum_{\gamma}(\beta(\gamma)-1)$\\

where the sum is taken over the finitely many flexes of $C$, using the fact that $C$ and $C^{*}$ are birational, given in Lemma 5.8. We also have that $\rho=genus(C)=genus(C^{*})$ by Lemma 5.8 and Theorem 3.35. Hence, we obtain the first part of the theorem from the fact that $order(g_{m}^{1})=m$ and Definition 3.33 of the genus of $C^{*}$.

\end{proof}

\begin{theorem}{Generalised Plucker Formula}\\

Let $C$ be a normal plane projective algebraic curve, not equal to a line, with $\{m,n\}$ defined as in the previous lemmas. Then;\\

$3m-3n=\sum_{\gamma}(\beta(\gamma)-1)-\sum_{\gamma}(\alpha(\gamma)-1)$\\

where the sums are taken over the finitely many flexes and finitely many singular branches of $C$ respectively. In particular, if $C$ is a plane projective curve, having at most nodes as singularities and no flexes, then it is either a line or a smooth conic.

\end{theorem}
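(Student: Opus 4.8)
The plan is to subtract the two genus formulas of Lemma 6.3 and Lemma 6.4 to obtain the generalised Plucker relation, and then to extract the classification of nodal flexless curves as the degenerate case. First I would write down the two expressions for $2\rho$:
\[
2\rho = m + \sum_{\gamma}(\alpha(\gamma)-1) - 2(n-1) = n + \sum_{\gamma}(\beta(\gamma)-1) - 2(m-1),
\]
where the first sum runs over the finitely many singular branches and the second over the finitely many flexes. Both are legitimate since $C$ is normal and not a line, so Lemma 6.3, Lemma 6.4 (and behind them Lemma 5.8, Lemma 5.12, Theorem 5.1) all apply. Equating the two right-hand sides and rearranging gives
\[
m + \sum_{\gamma}(\alpha(\gamma)-1) - 2n + 2 = n + \sum_{\gamma}(\beta(\gamma)-1) - 2m + 2,
\]
hence $3m - 3n = \sum_{\gamma}(\beta(\gamma)-1) - \sum_{\gamma}(\alpha(\gamma)-1)$, which is exactly the asserted formula. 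This first part is essentially a one-line computation; the only thing to be careful about is that the finiteness of both sums has already been established (finitely many singular points, each the origin of finitely many branches; finitely many flexes by Lemma 5.8), so the formula is a genuine identity of integers.

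For the final assertion, suppose $C$ is a plane projective curve with at most nodes as singularities and no flexes, and $C$ is not a line. Since a node is the origin of two ordinary ($(1,1)$) branches and $C$ has no flexes, every branch of $C$ has character $(1,1)$; in particular $\{\alpha,\alpha+\beta\} = \{1,2\}$ is coprime to $p = \mathrm{char}(L)$ provided $p \neq 2$ — and here I would invoke the standing exception for $\mathrm{char}(L)=2$ noted throughout the paper, so $C$ is normal. Both sums in the generalised Plucker formula are then empty (no singular branches, no flexes), giving $3m - 3n = 0$, i.e. $m = n$. Now combine $m = n$ with the ordinary Plucker formula of Theorem 4.3, $n + m + 2d = n^2$, where $d$ is the number of nodes: this yields $2n + 2d = n^2$, so $d = \tfrac{n(n-2)}{2} = \tfrac{n^2-2n}{2}$.

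Finally I would feed this back into the genus formula. By Theorem 3.36, $\rho = \tfrac{(n-1)(n-2)}{2} - d = \tfrac{(n-1)(n-2)}{2} - \tfrac{n(n-2)}{2} = \tfrac{(n-2)}{2}\big((n-1) - n\big) = -\tfrac{n-2}{2}$. But $\rho \geq 0$ by Theorem 3.37, forcing $n - 2 \leq 0$, i.e. $n \leq 2$. If $n = 1$ then $C$ is a line, contrary to assumption; if $n = 2$ then $d = 0$ and $C$ is a smooth conic. Hence $C$ is a smooth conic, completing the proof. The main obstacle — really the only delicate point — is bookkeeping around $\mathrm{char}(L)=2$: one must check that "no flexes, at most nodes" genuinely places $C$ inside the class of normal curves so that Lemma 6.3, Lemma 6.4 and the generalised formula are available, and this is exactly where the paper's blanket $\mathrm{char}(L)\neq 2$ caveat is used; everything else is substitution into Theorems 3.36, 3.37 and 4.3.
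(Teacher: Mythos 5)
Your proposal is correct and takes essentially the same route as the paper: equate the two genus expressions (the formulas of Lemmas 6.2 and 6.3 — note your citation slip calling them 6.3 and 6.4) to get $3m-3n=\sum_{\gamma}(\beta(\gamma)-1)-\sum_{\gamma}(\alpha(\gamma)-1)$, then in the nodal, flexless case deduce $m=n$, feed this into Theorem 4.3 to get $d=\tfrac{n(n-2)}{2}$, and rule out $n\geq 3$. The only (inessential) difference is that you derive the contradiction via $\rho=-\tfrac{n-2}{2}<0$ using Theorem 3.36 and $\rho\geq 0$, whereas the paper appeals directly to the node bound $d\leq\tfrac{(n-1)(n-2)}{2}$ of Theorem 3.37 — the same ingredient in a slightly different guise.
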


\begin{proof}
The first part of the theorem follows immediately by combining the first formulas
 given in Lemma 6.2 and Lemma 6.3. If $C$ is a smooth plane projective curve, having at most nodes as singularities and no flexes, then it must be normal as all its branches $\gamma$ have character $(1,1)$ ($char(L)\neq 2$). If $C$ is not a line, then we can apply the first part of the theorem, to obtain that $m=n$. Using the Plucker formula given in Theorem 4.3, we have that $m=n^{2}-n-2d$, where $d$ is the number of nodes of $C$. Hence, $n^{2}-n-2d=n$, which gives that $d={n(n-2)\over 2}$. If $n\geq 3$, this contradicts Theorem 3.37. Hence, $n=2$ and $d=0$. The second part of the theorem then follows.

\end{proof}

We now apply the above formulas to the study of normal plane projective curves, having at most nodes as singularities.

\begin{theorem}

Let $C$ be a normal plane projective curve, not equal to a line, having at most nodes as singularities, with the convention on summation of branches given above and $\{m,n,\rho,d\}$ as defined in the previous lemmas. Then, we obtain the class formula;\\

$3m-3n=\sum_{\gamma}(\beta(\gamma)-1)$ $(1)$\\

and the genus formula;\\

$6\rho+3n-6=\sum_{\gamma}(\beta(\gamma)-1)$ $(2)$\\

and the node formula;\\

$3n(n-2)-6d=\sum_{\gamma}(\beta(\gamma)-1)$ $(3)$\\

In particular, if $C$ has at most ordinary flexes, and $i$ is the number of these flexes, we obtain the class formula, referred to as Plucker III' in \cite{Sev};\\

$3m=3n+i$ $(4)$\\

and the genus formula;\\

$6\rho=i-3n+6$ $(5)$\\

and the node formula, referred to as Plucker III in \cite{Sev};\\

$6d=3n(n-2)-i$ $(6)$\\

\end{theorem}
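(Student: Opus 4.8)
The plan is to derive all six formulas mechanically from the two genus formulas of Lemmas 6.2 and 6.3 together with the ordinary Plucker formula of Theorem 4.3, exactly as the paper has set things up. Since $C$ is normal and has at most nodes as singularities, it has no singular branches at all, so $\sum_{\gamma}(\alpha(\gamma)-1)=0$; the sums over flexes in the statement are the only surviving contributions. First I would write down the Generalised Plucker Formula of Theorem 6.4, which in this case reads $3m-3n=\sum_{\gamma}(\beta(\gamma)-1)$, giving $(1)$ immediately.

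Next, to obtain $(2)$, I would take the genus formula from Lemma 6.2, which for a curve with no singular branches is $\rho=\tfrac{m}{2}-(n-1)$, i.e.\ $m=2\rho+2n-2$. Substituting this into $(1)$ gives $3(2\rho+2n-2)-3n=\sum_{\gamma}(\beta(\gamma)-1)$, that is $6\rho+3n-6=\sum_{\gamma}(\beta(\gamma)-1)$, which is $(2)$. For $(3)$, I would use the ordinary Plucker formula $n+m+2d=n^2$ of Theorem 4.3, so $m=n^2-n-2d$; substituting into $(1)$ gives $3(n^2-n-2d)-3n=\sum_{\gamma}(\beta(\gamma)-1)$, i.e.\ $3n^2-6n-6d=\sum_{\gamma}(\beta(\gamma)-1)$, which is $3n(n-2)-6d=\sum_{\gamma}(\beta(\gamma)-1)$, giving $(3)$.

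Finally, for the specialised formulas $(4)$--$(6)$, I would observe that if every flex of $C$ is ordinary then each flex branch $\gamma$ has character $(1,2)$, so $\beta(\gamma)=2$ and $\beta(\gamma)-1=1$; hence $\sum_{\gamma}(\beta(\gamma)-1)=i$, the number of flexes. Substituting $\sum_{\gamma}(\beta(\gamma)-1)=i$ into $(1)$, $(2)$, $(3)$ yields $3m-3n=i$ (so $3m=3n+i$, formula $(4)$), $6\rho+3n-6=i$ (so $6\rho=i-3n+6$, formula $(5)$), and $3n(n-2)-6d=i$ (so $6d=3n(n-2)-i$, formula $(6)$).

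There is essentially no obstacle here: the theorem is a corollary assembled from results proved earlier in the excerpt, and the work is pure substitution. The only point requiring a word of care is the vanishing of the $\alpha$-sum, which follows from the definition of ``node'' adopted in Remarks 4.4 (a node is the origin of two \emph{non-singular} branches), so that a curve with at most nodes as singularities genuinely has no singular branches; and the identification $\beta(\gamma)-1=1$ for an ordinary flex, which is immediate from Definition 5.8. I would state both of these observations explicitly at the start of the proof and then carry out the substitutions in the order above.
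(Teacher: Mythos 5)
Your proposal is correct and follows essentially the same route as the paper: formula $(1)$ from the Generalised Plucker Formula with the $\alpha$-sum vanishing for a nodal curve, $(2)$ by substituting $\rho=\tfrac{m}{2}-(n-1)$ from Lemma 6.2, $(3)$ by substituting $m=n(n-1)-2d$ from Theorem 4.3, and $(4)$--$(6)$ from $\beta(\gamma)-1=1$ at an ordinary flex. The only quibble is a citation slip (the flex definition is Definition 5.7, not 5.8), which does not affect the argument.
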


\begin{proof}

The class formula $(1)$ follows from the Generalised Plucker formula and the fact that $(\alpha(\gamma)-1)=0$ for any branch $\gamma$ of $C$, as $C$ has at most nodes as singularities. The genus formula $(2)$ follows from $(1)$ and the formula $\rho={m\over 2}-(n-1)$, given in Lemma 6.2. The node formula $(3)$ follows from $(1)$ and the Plucker formula $m=n(n-1)-2d$, given in Theorem 4.3. The formulas $(4),(5),(6)$ all follow immediately from the corresponding formulas $(1),(2),(3)$ and the fact that;\\

$i=\sum_{\gamma}(\beta(\gamma)-1)$\\

as an ordinary flex has character $(1,2)$, hence, for the corresponding branch $\gamma$, $(\beta(\gamma)-1)=1$.

\end{proof}
\end{section}
\end{section}

\end{document}